\newcounter{moncompteur}
\renewenvironment{enumerate}{\begin{list}{\textup{(}\textit{\roman{moncompteur}}\textup{)}}{\usecounter{moncompteur}}}{\end{list}}
\newcommand{\ddr}{\mathrm{d}}
\newcommand{\R}{\mathbb{R}}
\newcommand{\E}{\mathbb{E}}
\renewcommand{\P}{\mathbb{P}}
\newcommand{\N}{\mathbb{N}}
\newcommand{\egaldistr}{\overset{\mathcal{L}}{=}}
\newcommand{\dd}{\mathrm{d}}
\newcommand{\ind}[1]{\mathbf{1}_{\left\{#1\right\}}}
\newcommand{\floor}[1]{{\left\lfloor  #1 \right\rfloor}}
\newcommand{\crochet}[1]{{\left\langle #1 \right\rangle}}
\renewcommand{\bar}[1]{\overline{#1}}
\newcommand{\Coag}{\mathrm{Coag}}
\theoremstyle{plain}
\newtheorem{theorem}{Theorem}[section]
\newtheorem{theostar}{Theorem}[section]
\newtheorem{lemma}[theorem]{Lemma}
\newtheorem{corollary}[theorem]{Corollary}
\newtheorem{proposition}[theorem]{Proposition}
\theoremstyle{definition}
\newtheorem{definition}[theorem]{Definition}
\theoremstyle{remark}
\newtheorem{remark}[theorem]{Remark}
\date{\today}
\title{ \bf Coalescences in Continuous-State Branching Processes}
\author{  \bf Cl\'ement Foucart \thanks{ LAGA, Université Paris 13, France. \textrm{foucart@math.univ-paris13.fr}} \and \bf Chunhua Ma \thanks{School of Mathematical Sciences and LPMC, Nankai University, China. \textrm{mach@nankai.edu.cn}} \and \bf Bastien Mallein \thanks{LAGA, Université Paris 13, France. \textrm{mallein@math.univ-paris13.fr}}}
\renewcommand{\emph}[1]{\textit{#1}}
\numberwithin{equation}{section}
\begin{document}

\maketitle
\vspace{0.5cm}

\begin{abstract}
Consider a continuous-state branching population 
constructed as a flow of nested subordinators. Inverting the subordinators and reversing time give rise to a flow of coalescing Markov processes (with negative jumps) which correspond to the ancestral lineages of individuals in the current generation. The process of the ancestral lineage of a fixed individual is the Siegmund dual process of the continuous-state branching process. We study its semi-group, its long-term behavior and its generator. In order to follow the coalescences in the ancestral lineages and to describe the backward genealogy of the population, we define non-exchangeable Markovian coalescent processes obtained by sampling independent Poisson arrival times over the flow. These  coalescent processes are called consecutive coalescents, as only consecutive blocks can merge. They are characterized in law by finite measures on $\mathbb{N}$ which can be thought as the offspring distributions of some inhomogeneous immortal Galton-Watson processes forward in time.
\end{abstract}
\textbf{Keywords:} Branching processes, coalescent processes, continuous-state branching processes, flow of subordinators,  genealogy, duality.

\section*{Introduction}
\label{sec:introduction}

Random population models can be divided in two classes, those with constant finite size and those whose size is varying randomly. It is known since the 2000s that populations with constant finite size, evolving by resampling, have genealogies given by exchangeable coalescents. These 
processes, defined by M\"ohle and Sagitov \cite{MR1880231}, Pitman \cite{Pitman}, Sagitov \cite{MR1742154} and Schweinsberg \cite{MR1781024}, are generalisations of Kingman's coalescent for which multiple coalescences of ancestral lineages are allowed. They correspond to the genealogy backward in time of so-called generalized Fleming-Viot processes. Those processes, which can be seen as scaling limits of Moran models \cite{MR0127989}, were defined and studied by Donnelly and Kurtz \cite{DonnKurtz} (via a particle system called lookdown construction) and by Bertoin and Le Gall \cite{MR1990057} (via flows of exchangeable bridges). Both constructions are similar in many aspects and are summarized via the notion of flow of partitions, see Labbé \cite{MR3227064,MR3224288} and Foucart \cite{MR3069373}.  We refer to Bertoin's book \cite{MR2253162} for a comprehensive account on exchangeable coalescents.

The main objective of this work is to study coalescent processes induced by branching processes. We briefly explain how branching concepts have been   developed from the sixties to the beginning of the twenty-first century. Continuous-state branching processes (CSBPs for short) are positive Markov processes representing the size of a continuous population. They have been defined by Ji\v{r}ina \cite{MR0101554} and Lamperti \cite{MR0208685} and are known to be scaling limits of Galton-Watson Markov chains, see Grimvall \cite{MR0362529} and Lamperti \cite{MR0217893}.  The most famous CSBP is certainly the Feller's branching diffusion 
\[\ddr X_t=\sigma \sqrt{X_t}\ddr B_t+\beta X_t\ddr t\]
which is the rescaled limit of binary branching processes, see Feller \cite{MR0046022} and Ji\v{r}ina \cite{MR0247676}. Feller's CSBP is the only CSBP with continuous paths, other ones have positive jumps which represent macroscopic reproduction events in the population.

At about the same time as the rise of exchangeable coalescents, considerable research was devoted to  the study of the genealogy of branching processes forward in time. Galton-Watson processes have a natural lexicographical tree's genealogy. This representation leads Aldous \cite{MR1207226} and Duquesne and Le Gall \cite{MR1954248} to study scaling limits of discrete trees and establish remarkable convergences towards Brownian continuum tree in the Feller diffusion case and L\'evy continuum tree in the case of a general CSBP. Another natural genealogy for a branching population is provided by Bertoin and Le Gall in their precursor article \cite{MR1771663} in terms of flows of subordinators. At any fixed times $s<t$, the population between time $s$ and $t$ is represented by a subordinator (a Lévy process with non-decreasing paths). Individuals are ordered in such a way that ancestors from time $s$ are the jumps locations of the subordinator and each ancestor from time $s$ has a family at time $t$ whose size is the size of the jump. 

Both representations with trees and subordinators are future-oriented and less attention has been paid to the description of coalescences in ancestral lineages of continuous-state branching processes. 
We briefly review some methods that have been developed recently in order to study the genealogy backwards in time of branching processes.

When reproduction laws are stable, branching and resampling population models can be  related through renormalisation by the total size  and  random time-change. We refer to Berestycki et al \cite{MR2349577}, Birkner et al. \cite{MR2120246}, Foucart and Hénard \cite{MR3035751} and Schweinsberg \cite{MR1983046}. The connection between exchangeable coalescents and CSBPs is particular to stable laws and the study of the genealogy of a general branching process requires a different method. 

One approach consists of conditioning the process to be non-extinct at a given time, sampling two or more individuals uniformly in the population and study the time of coalescence of their ancestral lineages. This program is at the heart of the works of Athreya \cite{MR3012088}, Duquesne and Labb\'e \cite{MR3164759}, Harris et al. \cite{2017arXiv170300299H}, Johnston \cite{2017arXiv170908500J}, Lambert \cite{MR2014270} and Le \cite{MR3189452}. 

Starting from a different point of view, Bi and Delmas \cite{MR3531711} and Chen and Delmas \cite{MR3025710} have considered stationary subcritical branching population obtained as processes conditioned on the non-extinction. The genealogy is then studied via a Poisson representation of the population. We refer also to Evans and Ralph \cite{MR2582640} for a study in the same spirit. 

A third approach is to represent the backwards genealogy through point processes. Aldous and Popovic \cite{MR2193998} and Popovic \cite{MR2100386} have shown how to encode the genealogy of a critical Feller diffusion with a Poisson point process on $\mathbb{R}_+\times \mathbb{R}_+$ called Coalescent Point Process. Atoms of the coalescent point process represents times of coalescences between two ``consecutive'' individuals in the boundary of the Brownian tree. Such a description was further developed by Lambert and Popovic \cite{MR3059232} for a L\'evy continuum tree. In this general setting, multiple coalescences are possible and the authors build a point process with multiplicities, which records both the coalescence times and the number of involved mergers in the families of the current population. Their method requires in particular to work with the height process introduced by Le Gall and Le Jan in \cite{MR1617047}. 

In the present article, we choose a different route and seek for a dynamical description of the genealogy. We first observe that flows of subordinators provide a continuous branching population whose size is infinite at any time and whose ancestors are arbitrarily old. We then study the inverse flow which tracks backward in time the ancestral lineage of an individual in the current population. This process corresponds to the Siegmund dual of the CSBP. 

In a second time, we construct random partitions by sampling arrival times of an independent Poisson process (with a fixed intensity) on the flow. We then describe how partitions coagulate when time's arrow points to the past and define new elementary non-exchangeable Markovian coalescents. We call these processes \textit{consecutive coalescents} as only consecutive blocks will be allowed to merge. Our method follows closely that of Bertoin and Le Gall for exchangeable coalescents (\cite{MR1990057}, \cite{LGB2}, \cite{LGB3}). Heuristically, the exchangeable bridges are replaced by subordinators and the uniform random variables by arrival times of a Poisson process. 

Consecutive coalescents are simple dual objects of continuous-time Galton-Watson processes and allow one to simplify the description of the genealogy of general CSBPs given by the Coalescent Point Process as introduced in \cite[Section 4]{MR3059232}. We shall also answer an open question in \cite[Remark 6]{MR3059232} by showing how to define the complete genealogy of individuals standing in the current generation when the so-called Grey's condition is satisfied. 

In the case of Neveu's CSBP (which does not fulfill Grey's condition), Bertoin and Le Gall in \cite{MR1771663}  have shown that the genealogy of the CSBP, started from a fixed size (without renormalization nor time-change) is given by a Bolthausen-Sznitman coalescent. We will see that for this CSBP, the consecutive coalescents have simple explicit laws. This will enable us to recover results of M\"ohle \cite{MR3333734} and M\"ohle and Kukla \cite{MR3758343} about the number of blocks in a Bolthausen-Sznitman coalescent. 

\bigskip

We wish to mention that Grosjean and Huillet  in \cite{MR3581248} have studied a recursive balls--in--boxes model which can be seen as a consecutive coalescent in discrete time. Moreover, Johnston and Lambert \cite{JohnstonLambert} have independently considered Poissonization techniques for studying the coalescent structure in branching processes. 

\bigskip

The paper is organized as follows. In Section \ref{sec:csbp}, we recall the definition of a continuous-state branching process and how Bochner's subordination can be used to provide a representation of the genealogical structure associated to CSBPs. In Section \ref{sec:inverseFlow}, we investigate the inverse flow by characterizing its semi-group and studying its long-term behavior. In Section \ref{fellerflow}, we provide a complete study of the inverse flow in the case of the Feller diffusion. We recover with an elementary approach the Coalescent Point Process of Popovic \cite{MR2100386}. In Section \ref{sec:markovianCoalescent}, we study the coalescences in the inverse flow of a general CSBP by defining the consecutive coalescents. We describe the genealogy of the whole population standing at the current generation under the Grey's condition (recalled in Section \ref{sec:csbp}). In Section \ref{martingaleproblem}, we investigate the infinitesimal dynamics of the inverse flow. The process of the ancestral lineage of a fixed individual is characterized by its generator. In Section \ref{sec:examples}, we provide some examples for which calculations can be made explicitly.

\tableofcontents
\paragraph{Notation.}
In the rest of the article, $\egaldistr$ denotes equality in law between random variables. Condition $\int_0 f(x) \ddr x < \infty$ means there exists $\epsilon>0$ such that $\int_0^\epsilon f(x) \ddr x < \infty$, and similarly $\int^\infty f(x) \ddr x < \infty$ means there exists $A>0$ such that $\int_A^\infty f(x) \ddr x < \infty$. For any $n,m \in \mathbb{N}$ such that $n\leq m$, the integer interval between $n$ and $m$ is denoted by $[|n,m|]$.

\section{Generalities on continuous-state branching processes}
\label{sec:csbp}
This section is divided in two parts. In the first one, we introduce the continuous-state branching processes as well as their fundamental properties. In the second one, we show how continuous-state branching processes can be constructed as flows of subordinators. Their main properties are also stated.

\label{sec:preliminariesCsbp}

\subsection{Continuous-state branching processes}

We give here an overview of continuous-state branching processes and their fundamental properties. Most statements in this section can be found for instance in \cite[Chapter 3]{MR2760602} or \cite[Chapter 12]{MR3155252}.

\begin{definition}
A continuous-state branching process is a Feller process $(X_t,t\geq 0)$, taking values in $[0,\infty]$, with $0$ and $\infty$ being absorbing states, whose semi-group satisfies the so-called branching property:
\begin{equation}
  \label{eqn:branchProp}
  \forall x,y\geq 0, \forall t\geq 0, \  X_t(x+y)\overset{\mathcal{L}}{=}X_t(x)+\tilde{X}_t(y),
\end{equation}
where $(X_t(x),t\geq 0)$ and $(\tilde{X}_t(y),t\geq 0)$ are two independent processes with the same law as $(X_t,t\geq 0)$, started respectively from $x$ and $y$. 
\end{definition}

The branching and the Markov properties ensure that for all $t \geq 0$ there exists a map $\lambda\in (0,\infty)\mapsto v_{t}(\lambda)$, which satisfies for all $\lambda>0$, $x \geq 0$ and $t,s\geq 0$
\begin{equation}
  \label{cumulant}
  \mathbb{E}[e^{-\lambda X_{t}(x)}]=\exp(-xv_{t}(\lambda))  \text{ and } v_{s+t}(\lambda)=v_{s}\circ v_t(\lambda).
\end{equation}
Silverstein \cite{MR0226734} shows that $t\mapsto v_{t}(\lambda)$ is the unique solution to the integral equation
\begin{equation}
  \label{odev}
  \forall t\in [0,\infty), \forall \lambda \in (0,\infty)/\{\rho\}, \quad \int_{v_{t}(\lambda)}^{\lambda}\frac{\ddr z}{\Psi(z)}=t
\end{equation}
where $\rho:=\inf\{z>0; \Psi(z)\geq 0\}$ is the largest positive root of $\Psi$, a L\'evy-Khintchine function of the form
\begin{equation}\label{LKpsi}
  \Psi(q)=\frac{\sigma^{2}}{2}q^{2}-\beta q+\int_{0}^{\infty}\left(e^{-qx}-1+qx\mathbbm{1}_{x\leq 1}\right)\pi(\ddr x),
\end{equation}
with $\sigma^2 \geq 0$, $\beta \in \R$ and $\pi$ a measure on $(0,\infty)$ satisfying $\int (1 \wedge x^2)  \pi(\ddr x) < \infty$. The function $\Psi$ is called branching mechanism and characterizes the law of the CSBP. We shall say later CSBP$(\Psi)$ for a CSBP with branching mechanism $\Psi$. 
The extended generator of the CSBP$(\Psi)$ is as follows
\begin{equation}
  \mathcal{L}f(z)=z\frac{\sigma^2}{2}f''(z)+\beta zf'(z)+z\int_{0}^{\infty}\left(f(z+h)-f(z)-hf'(z)\mathbbm{1}_{h\leq 1}\right)\pi(\ddr h)
\end{equation}
for any $f\in C_0^2$ \footnote{The space of twice differentiable continuous functions over $(0,\infty)$ vanishing at $\infty$.}.  The CSBP has infinite variations if
\begin{equation}
  \label{eqn:driftless}
  \int_0^1 x \pi(\ddr x) = \infty \quad \text{or} \quad \sigma^2 > 0.
\end{equation}
An important family of branching mechanisms are those of the form \[\Psi(q)=\frac{\sigma^2}{2}q^2-\beta q+c_\alpha q^{\alpha}\] with $\sigma^{2}\geq 0$, $c_\alpha\geq 0$ and $\alpha \in (0,2)$. The L\'evy measure $\pi$ associated to such a mechanism $\Psi$ is
\[
  \pi(\ddr h)=c'_\alpha h^{-1-\alpha}\ddr h, \quad \text{ with }  c'_\alpha=\frac{\alpha(\alpha-1)}{\Gamma(2-\alpha)}c_\alpha.
\]

The CSBP($\Psi$) is said to be supercritical, critical or subcritical if respectively $\Psi'(0+)<0$, $\Psi'(0+)=0$ or $\Psi'(0+)>0$. In the subcritical and critical cases, the largest root $\rho$ is $0$. In the supercritical case $\rho\in (0,\infty]$. The following theorem due to Grey \cite{MR0408016} summarizes the possible behaviors at the boundaries of a CSBP$(\Psi)$. 

\begin{theostar}[Grey, \cite{MR0408016}]\label{Grey} 
Consider $(X_{t}(x), t\geq 0)$ a CSBP$(\Psi)$ started from $x$.
\begin{enumerate}
  \item For any $x\geq 0$, $$\mathbb{P}(\underset{t\rightarrow \infty}\lim X_{t}(x)=0)=1-\mathbb{P}(\underset{t\rightarrow \infty}\lim X_{t}(x)=\infty)=e^{-x\rho}.$$ 
  \item For any $t>0$, the limit $v_t(\infty):=\underset{\lambda \rightarrow \infty}{\lim}v_t(\lambda)$ exists in $(0,\infty)$ if and only if $\Psi(u)\geq 0$ for some $u\geq 0$ and
  \begin{equation}
    \label{Extinction}
    \int^{\infty}\frac{\ddr u}{\Psi(u)}<\infty \quad \text{(condition for extinction).}
  \end{equation}
If \eqref{Extinction} holds, then for any $t\geq 0$, $\mathbb{P}(X_t(x)=0)=e^{-xv_t(\infty)}>0$.
  \item Under condition \eqref{Extinction}, the following events coincide almost-surely
\[\left\{\underset{t\rightarrow \infty}{\lim} X_t(x)=0\right\}=\{\exists t\geq 0: X_t(x)=0\}.\]
  \item For any $t>0$, the limit $v_t(0):=\underset{\lambda \rightarrow 0}{\lim}v_t(\lambda)$ exists in $(0,\infty)$ if and only if $\Psi(u)<0$ for some $u\geq 0$ and 
  \begin{equation}
    \label{Explosion}
    \int_{0}\frac{\ddr u}{|\Psi(u)|}<\infty \quad \text{(condition for explosion)}.
  \end{equation}
If (\ref{Explosion}) holds, then for any $t\geq 0$, $\mathbb{P}(X_t(x)=\infty)=1-e^{-xv_t(0)}$.
  \item Under condition \eqref{Explosion}, the following events coincide
\[\left\{\underset{t\rightarrow \infty}{\lim} X_t(x)=\infty\right\}=\{\exists t\geq 0: X_t(x)=\infty\}.\]
\end{enumerate}
\end{theostar}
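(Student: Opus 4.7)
The whole argument rests on analyzing the Laplace exponent $v_t(\lambda)$ through the ODE obtained by differentiating \eqref{odev}, namely $\partial_t v_t(\lambda) = -\Psi(v_t(\lambda))$ with $v_0(\lambda)=\lambda$. The key preliminary observation I would establish is that for every $\lambda>0$, $v_t(\lambda)\to\rho$ as $t\to\infty$: since $\rho$ is the largest zero of $\Psi$, on $(0,\rho)$ we have $\Psi<0$ so $t\mapsto v_t(\lambda)$ is strictly increasing and bounded by $\rho$, while on $(\rho,\infty)$ we have $\Psi>0$ (past any other zero), so $t\mapsto v_t(\lambda)$ is strictly decreasing and bounded below by $\rho$; the fixed points of the flow are exactly the zeros of $\Psi$, so monotone convergence forces the limit to be $\rho$.

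For (i), I plug this into \eqref{cumulant}: $\lim_{t\to\infty}\mathbb{E}[e^{-\lambda X_t(x)}]=e^{-x\rho}$, and since the limit is independent of $\lambda\in(0,\infty)$, letting $\lambda\to 0^+$ and $\lambda\to\infty$ separately shows that the limit random variable is supported on $\{0,\infty\}$, with mass $e^{-x\rho}$ at $0$. For (ii), I let $\lambda\to\infty$ in \eqref{odev}. If $v_t(\infty)$ is finite, the integral $\int^\infty dz/\Psi(z)$ must converge and $\Psi$ must eventually be positive (otherwise $dz/\Psi(z)$ is not integrable in the required sense and $v_t(\lambda)$ stays bounded only by a root above $\lambda$); conversely, if $\Psi(u)\geq 0$ for some $u$ and $\int^\infty dz/\Psi(z)<\infty$, then solving \eqref{odev} for $v_t(\infty)$ gives a finite value characterized by $\int_{v_t(\infty)}^\infty dz/\Psi(z)=t$, which is well-defined and lies in $(\rho,\infty)$. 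The equality $\mathbb{P}(X_t(x)=0)=\lim_{\lambda\to\infty}\mathbb{E}[e^{-\lambda X_t(x)}]=e^{-xv_t(\infty)}$ then follows from monotone convergence.

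For (iii), I use that under \eqref{Extinction}, $v_t(\infty)$ decreases to $\rho$ as $t\to\infty$ (by the same ODE argument: $v_t(\infty)$ is a solution of the flow started from $+\infty$, hence decreases to the fixed point $\rho$). Therefore $\mathbb{P}(\exists t\geq 0:X_t(x)=0)=\lim_{t\to\infty}\mathbb{P}(X_t(x)=0)=\lim_{t\to\infty}e^{-xv_t(\infty)}=e^{-x\rho}=\mathbb{P}(\lim_{t\to\infty}X_t(x)=0)$ by (i). Since the event on the left is contained in the event on the right, and they have the same probability, they coincide a.s.

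Parts (iv) and (v) are proved by the symmetric argument at the other endpoint, letting $\lambda\to 0^+$ in \eqref{odev}: $v_t(0)$ is finite iff $\Psi$ is negative near $0$ (forcing $\Psi'(0+)<0$, the supercritical case) and $\int_0 dz/|\Psi(z)|<\infty$, and then $\mathbb{P}(X_t(x)=\infty)=1-\lim_{\lambda\to 0^+}\mathbb{E}[e^{-\lambda X_t(x)}]=1-e^{-xv_t(0)}$. The identification of $\{\lim X_t=\infty\}$ with $\{\exists t:X_t=\infty\}$ follows as in (iii), using that $v_t(0)$ increases to $\rho$. The only real subtlety — which I would handle carefully — is the case analysis on the sign of $\Psi$ near $0$ and $\infty$: one must check that the monotonicity of the flow together with the positions of the roots of $\Psi$ forces the correct direction of convergence, so that the integral condition is both necessary and sufficient for $v_t$ to reach $\rho$ in finite time from $\infty$ (or $0$).
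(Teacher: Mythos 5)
The paper itself offers no proof of Theorem~\ref{Grey} --- it is quoted from Grey's 1974 article --- so your attempt has to be judged on its own terms; the route you take (analysing the flow $t\mapsto v_t(\lambda)$ through \eqref{odev} and translating back via \eqref{cumulant}) is the standard one. Parts (ii)--(v) are essentially right as sketched: the monotone limits $\lambda\to\infty$ and $\lambda\to 0^+$ in \eqref{odev}, the monotone-convergence identities $\mathbb{P}(X_t(x)=0)=e^{-xv_t(\infty)}$ and $\mathbb{P}(X_t(x)=\infty)=1-e^{-xv_t(0)}$, and the ``inclusion plus equal probabilities'' argument for (iii) and (v) (which uses that $0$ and $\infty$ are absorbing, as in the paper's definition, together with $v_t(\infty)\downarrow\rho$ and $v_t(0)\uparrow\rho$) all work, \emph{granted} part (i). A minor slip: in (iv) the limit $v_t(0)$ is automatically finite since $v_t(0)\le v_t(\lambda)<\infty$; the real dichotomy you must argue is $v_t(0)>0$ versus $v_t(0)=0$, which is indeed what your integral-equation argument gives.

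The genuine gap is in (i). Your argument only shows that $\mathbb{E}[e^{-\lambda X_t(x)}]\to e^{-x\rho}$ for each fixed $\lambda$, i.e.\ convergence \emph{in law} of $X_t(x)$ to a $\{0,\infty\}$-valued variable; but the statement (and your own use of it in (iii) and (v)) is about the almost-sure limit $\lim_{t\to\infty}X_t(x)$, whose existence you never establish --- the phrase ``the limit random variable'' presupposes it, and convergence of one-dimensional laws does not by itself yield an a.s.\ limit. The missing ingredient is a bounded (super/sub)martingale argument: since $\mathbb{E}[e^{-\lambda X_{t+s}(x)}\mid \mathcal{F}_t]=e^{-X_t(x)v_s(\lambda)}$, in the critical and subcritical cases $v_s(\lambda)\le\lambda$ makes $e^{-\lambda X_t(x)}$ a bounded submartingale for every $\lambda>0$, while in the supercritical case any $\lambda\in(0,\rho)$ gives $v_s(\lambda)\ge\lambda$ and hence a bounded supermartingale (and for $\rho<\infty$ the choice $\lambda=\rho$ gives a bounded martingale, since $v_s(\rho)=\rho$). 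Martingale convergence then yields a.s.\ convergence of $X_t(x)$ in $[0,\infty]$, and only at that point does your Laplace-transform computation (the limit $e^{-x\rho}$ being independent of $\lambda$, so that letting $\lambda\to 0^+$ and $\lambda\to\infty$ shows the limit is supported on $\{0,\infty\}$ with mass $e^{-x\rho}$ at $0$) identify the limit and complete (i), and with it (iii) and (v).
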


The event $\{X_t(x)=0 \text{ for some } t\geq 0\}$ is called extinction, and $\{X_t(x)=\infty \text{ for some } t\geq 0\}$ is called explosion. We refer to the integral conditions \eqref{Extinction} and \eqref{Explosion} as Grey's condition for extinction and explosion respectively. Lambert \cite{MR2299923} and Li \cite{MR1727226} have studied the quasi-stationary distribution of subcritical CSBPs conditioned on the non-extinction.

\begin{theostar}[Lambert \cite{MR2299923}, Li \cite{MR1727226}] In the subcritical case, under Grey's condition for extinction $\int^{\infty}\frac{\ddr u}{\Psi(u)}<\infty$, there exists a probability measure $\nu$ over $(0,\infty]$ such that for any Borelian set $A\subset [0,\infty]$
\[
  \nu(A):=\underset{t\rightarrow \infty}{\lim} \mathbb{P}(X_t(x)\in A|X_t(x)>0).
\]
The Laplace transform of $\nu$ is given by 
\begin{equation}
  \label{qsdl}
  \int_{0}^{\infty}e^{-uz}\nu(\ddr z)=1-e^{-\Psi'(0+)\int_{u}^{\infty}\frac{\ddr x}{\Psi(x)}} \text{ for any } u\geq 0.
\end{equation} 
\end{theostar}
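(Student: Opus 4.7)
The plan is to compute the limit as $t \to \infty$ of the conditional Laplace transform $\mathbb{E}[e^{-\lambda X_t(x)} \mid X_t(x) > 0]$ in closed form, and then invoke the continuity theorem for Laplace transforms. Using \eqref{cumulant} together with Grey's identity $\mathbb{P}(X_t(x) = 0) = e^{-x v_t(\infty)}$ from Theorem~\ref{Grey}, I would first rewrite
\begin{equation*}
\mathbb{E}[e^{-\lambda X_t(x)} \mid X_t(x) > 0] = \frac{e^{-x v_t(\lambda)} - e^{-x v_t(\infty)}}{1 - e^{-x v_t(\infty)}}.
\end{equation*}
In the subcritical regime, $\rho = 0$ and $v_t(\mu) \to 0$ as $t \to \infty$ for every $\mu \in (0, \infty]$, so a Taylor expansion of the three exponentials reduces the task to identifying $\lim_{t \to \infty} v_t(\lambda)/v_t(\infty)$.

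To compute this ratio, I would apply Silverstein's integral equation \eqref{odev} at $\mu = \lambda$ and $\mu = \infty$ and subtract, which yields the identity
\begin{equation*}
\int_{v_t(\lambda)}^{v_t(\infty)} \frac{\ddr u}{\Psi(u)} = \int_\lambda^\infty \frac{\ddr u}{\Psi(u)},
\end{equation*}
valid for all $t \geq 0$. As $t \to \infty$, both bounds on the left tend to $0$. Since $\Psi(u) \sim \Psi'(0+)\,u$ near the origin (with $\Psi'(0+) > 0$ by subcriticality), the left-hand side is asymptotically equivalent to $\Psi'(0+)^{-1} \log\bigl(v_t(\infty)/v_t(\lambda)\bigr)$, and rearranging gives
\begin{equation*}
\lim_{t \to \infty} \frac{v_t(\lambda)}{v_t(\infty)} = \exp\!\left(-\Psi'(0+) \int_\lambda^\infty \frac{\ddr u}{\Psi(u)}\right),
\end{equation*}
which, together with the previous step, produces the right-hand side of \eqref{qsdl}. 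The limit function is continuous in $\lambda$ and tends to $1$ as $\lambda \to 0+$ (since $\Psi(u) \sim \Psi'(0+) u$ near $0$ forces $\int_0 \ddr u/\Psi(u) = \infty$), so by the continuity theorem for Laplace transforms the conditional laws converge weakly to a probability measure $\nu$ with the announced Laplace transform.

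The main obstacle is the asymptotic analysis of the integral $\int_{v_t(\lambda)}^{v_t(\infty)} \ddr u/\Psi(u)$ as both bounds shrink to $0$: one must justify replacing $\Psi(u)$ by $\Psi'(0+)u$ uniformly on a vanishing interval. A clean way is to pick an arbitrary small threshold $\varepsilon > 0$, use $\Psi(u)/(\Psi'(0+)u) \to 1$ uniformly on $(0,\varepsilon]$ to get tight two-sided bounds on the integral, and then let $\varepsilon \to 0$. An auxiliary a priori bound ensuring that $v_t(\lambda)/v_t(\infty)$ stays bounded away from $0$ and $1$ along the way (so that the log of the ratio converges to a finite limit) will also be needed; this can be extracted from the same integral identity by a monotonicity argument in $\lambda$.
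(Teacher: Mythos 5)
Your proposal is correct, but it is worth noting that the paper itself does not prove this statement: it is quoted from Lambert and Li, and the closest the paper comes is the proof of Proposition~\ref{warmup}, where the same key limit $\lim_{t\to\infty}v_t(\lambda)/v_t(\infty)$ is computed by a different device. There, differentiating in the spatial variable gives $\frac{\ddr}{\ddr u}v_t(u)=\Psi(v_t(u))/\Psi(u)$, hence $\frac{v_t(\lambda)}{v_t(\infty)}=\exp\bigl(-\int_{\lambda}^{\infty}\frac{\Psi(v_t(u))}{v_t(u)}\frac{\ddr u}{\Psi(u)}\bigr)$, and since $z\mapsto\Psi(z)/z$ decreases to $\Psi'(0+)$ as $z\downarrow 0$ (convexity) the limit follows by monotone convergence, with no uniformity question at all. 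Your route, subtracting the two Silverstein identities to get $\int_{v_t(\lambda)}^{v_t(\infty)}\frac{\ddr u}{\Psi(u)}=\int_{\lambda}^{\infty}\frac{\ddr u}{\Psi(u)}$ and then linearizing $\Psi$ near $0$, is equally valid and essentially the same amount of work: the two-sided bound $(1\mp\varepsilon)\Psi'(0+)u\leq\Psi(u)\leq(1\pm\varepsilon)\Psi'(0+)u$ on $(0,\delta]$ directly sandwiches $\log\bigl(v_t(\infty)/v_t(\lambda)\bigr)$ between $(1-\varepsilon)$ and $(1+\varepsilon)$ times $\Psi'(0+)\int_{\lambda}^{\infty}\frac{\ddr u}{\Psi(u)}$ once both endpoints lie below $\delta$, so the auxiliary a priori bound you anticipate needing is in fact automatic. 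Your reduction of the conditional Laplace transform via $\mathbb{P}(X_t(x)=0)=e^{-xv_t(\infty)}$ from Theorem~\ref{Grey} and the Taylor expansion is also correct, as is the tightness check at $\lambda\to 0+$. Two small points to make explicit: the identity $\int_{v_t(\infty)}^{\infty}\frac{\ddr u}{\Psi(u)}=t$ requires Grey's condition and a passage to the limit $\lambda\to\infty$ in \eqref{odev}; and the continuity-theorem argument delivers weak convergence of the conditional laws together with \eqref{qsdl}, which is the substance of the result, whereas convergence of $\mathbb{P}(X_t(x)\in A\mid X_t(x)>0)$ for every Borel set $A$, as literally stated, is a stronger assertion established in the cited references.
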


\subsection{Flows of subordinators }

Observe that on the one hand, by the the branching property of CSBP, the random variable $X_t(x)$ is a positive infinitely divisible random variable, parametrized by $x$. Therefore, for all $t \geq 0$, the process $x \mapsto X_t(x)$ is a positive Lévy process, hence a subordinator. In particular, the map $\lambda\mapsto v_t(\lambda)$ is the Laplace exponent of this (possibly killed) subordinator, and can be written as
\begin{equation}
  \label{eqn:levykinDecompositionv_t}
  v_t(\lambda)=\kappa_t+d_t\lambda+\int_0^{\infty}(1-e^{-\lambda x})\ell_t(\ddr x)
\end{equation}
with $\kappa_t\geq 0$, $d_t\geq 0$ and $\ell_t$ a L\'evy measure on $\mathbb{R}_+$ such that $\int_{0}^{\infty}(1\wedge x)\ell_t(\ddr x)<\infty$. 

\begin{remark}
Note that the quantities $v_t(\infty)$ and $v_t(0)$ defined in Theorem~\ref{Grey} can be rewritten, with the formula in \eqref{eqn:levykinDecompositionv_t}
\[
  v_t(\infty) = \ell_t([0,\infty)) \quad \text{and} \quad v_t(0) = \kappa_t.
\]
Therefore \eqref{Extinction} is equivalent to the finiteness of the measure $\ell_t$ for all $t > 0$, and \eqref{Explosion} to the positivity of $\kappa_t$ for all $t > 0$.
\end{remark}

On the other hand, the semigroup property entails that for any $s,t\geq 0$,
\begin{equation}
  \label{compositionv}
  v_{t+s}=v_t\circ v_s.
\end{equation}
Bochner's subordination implies that if $Y^{(t)}$ is a subordinator with Laplace exponent $v_t$ and $Y^{(s)}$ is a subordinator with Laplace exponent $v_s$, then $Y^{(t)} \circ Y^{(s)}$ is a subordinator with Laplace exponent $v_t\circ v_s=v_{t+s}$. Therefore, writing $\tilde{X}$ an independent copy of the CSBP $X$, we have
\[
  \forall x \geq 0, \, X_{t+s}(x) \egaldistr \tilde{X}_t(X_s(x)).
\]
This last observation leads Bertoin and Le Gall \cite{MR1771663} to consider representing a CSBP as a flow of subordinators, which we now define.

\begin{definition}
\label{flowdef2}
A flow of subordinators is a family $(X_{s,t}(x), s \leq t, x \geq 0)$ satisfying the following properties:
\begin{enumerate}
  \item For every $s \leq t$, $x \mapsto X_{s,t}(x)$ is a càdlàg subordinator, with same law as $x \mapsto X_{0,t-s}(x)$.
  \item For every $t \in \R$, $(X_{r,s}, r \leq s \leq t)$ and $(X_{r,s}, t\leq r \leq s)$ are independent.
  \item For every $r \leq s \leq t$, $X_{r,t} = X_{s,t} \circ X_{r,s}$.
  \item For every $s \in \R$ and $x \geq 0$, we have $X_{s,s}(x) = x = \lim_{t \to s} X_{s,t}(x)$ in probability.
\end{enumerate}
\end{definition}

\begin{remark}
The convergence in (\textit{iv})  also holds uniformly on compact sets by second Dini's theorem.
\end{remark}

It was proved by Bertoin and Le Gall \cite{MR1771663} that any CSBP can be constructed as a flow of subordinators. For the sake of completeness, we prove here that CSBP and flow of subordinators are in one-to-one map.

\begin{lemma}
Let $(X_{s,t}(x), s \leq t, x \geq 0)$ be a flow of subordinators as in Definition~\ref{flowdef2}, there exists a branching mechanism $\Psi$ such that for all $s \in \R$ and $x \geq 0$, $(X_{s,s+t}(x), t \geq 0)$ is a CSBP($\Psi$) starting from $x$. Reciprocally, for each branching mechanism $\Psi$, there exists a flow of subordinators such that for all $s \in \R$ and $x \geq 0$, $(X_{s,s+t}(x), t \geq 0)$ is a CSBP($\Psi$) starting from $x$.
\end{lemma}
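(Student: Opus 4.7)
Fix $s \in \R$ and $x \geq 0$, and set $Y_t := X_{s,s+t}(x)$ for $t \geq 0$. Since $y \mapsto X_{s,s+t}(y)$ is a subordinator by (i), it has stationary independent increments starting from $0$, which immediately gives the branching property \eqref{eqn:branchProp} for $(Y_t)$. The Markov property follows by combining the cocycle (iii), written as $X_{s,s+t+u}(x) = X_{s+t,s+t+u} \circ X_{s,s+t}(x)$, with the independence (ii), which makes $X_{s+t,s+t+u}$ independent of the $\sigma$-field generated by $(X_{r,r'},\; r \leq r' \leq s+t)$. The càdlàg property in $t$ is inherited from the subordinators in $x$ together with (iv), which gives right-continuity at $t=0$. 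Writing $v_t$ for the Laplace exponent of $x \mapsto X_{s,s+t}(x)$ (independent of $s$ by (i)), the cocycle and independence force $v_{t+u} = v_t \circ v_u$, so Silverstein's representation \eqref{odev} yields a branching mechanism $\Psi$ such that $(Y_t)$ is a CSBP$(\Psi)$, with the same $\Psi$ for every $s$ and $x$.

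\textbf{Direction 2 (CSBP $\Rightarrow$ flow).} Given $\Psi$ with Laplace semigroup $(v_t,\,t \geq 0)$, the plan is first to construct the flow on a countable dense set of times and then to extend. For any finite increasing rational sequence $q_0 < q_1 < \cdots < q_n$, I take independent subordinators $\xi_1, \ldots, \xi_n$ with Laplace exponents $v_{q_1 - q_0}, \ldots, v_{q_n - q_{n-1}}$, set $X_{q_{i-1}, q_i} := \xi_i$, and define $X_{q_i, q_j} := \xi_j \circ \cdots \circ \xi_{i+1}$ for $i < j$. Bochner's subordination combined with $v_{t+s} = v_t \circ v_s$ ensures that these finite-dimensional marginals are consistent under grid refinement, so Kolmogorov's extension theorem produces a rational-indexed family satisfying the rational analogues of (i)--(iv). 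To obtain the flow at arbitrary real times, I would define $X_{s,t}(x) := \lim_{q \downarrow s,\; q' \downarrow t} X_{q,q'}(x)$ along rationals; existence of the limit and its càdlàg character in $x$ follow from continuity of $(v_h)$ at $h = 0$ together with the Dini-type uniform control noted in the remark after Definition~\ref{flowdef2}.

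\textbf{Main obstacle.} The first direction is essentially a direct unpacking of the axioms, once one identifies $v_t$ in \eqref{cumulant} with the Laplace exponent of the spatial subordinator. The real work is in the second direction, and specifically in verifying that the composition property (iii) survives the passage from rationals to reals \emph{simultaneously} for every triple $r \leq s \leq t$, rather than pointwise for each triple. This demands a joint regularization of the family of subordinators instead of triple-by-triple limits, and is the step I would need to treat with care in the construction.
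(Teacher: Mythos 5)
Your Direction 1 is essentially the paper's argument and is fine: the paper likewise reads the branching property off the independent and stationary increments of $y \mapsto X_{s,s+t}(y)$, obtains the Markov property from the cocycle (iii) together with the independence (ii), and deduces the Feller property from continuity in probability at $t=0$ (Definition~\ref{flowdef2}(iv)). One small clarification: what one actually needs for the definition of a CSBP is the Feller property, not merely càdlàg paths, and the paper gets this directly from (iv); your mention of ``càdlàg in $t$'' is an unnecessary (and not quite justified) detour.

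Direction 2 is where there is a genuine gap, and you have yourself put your finger on it. The paper does \emph{not} reconstruct the flow from scratch: it simply invokes Bertoin and Le Gall \cite[Proposition 1]{MR1771663}, which already provides a process $(S^{(s,t)}(a))$ satisfying, \emph{almost surely and simultaneously for all triples $r \leq s \leq t$}, the cocycle identity $S^{(s,t)} \circ S^{(r,s)} = S^{(r,t)}$, together with the independence, marginal, and finite-dimensional requirements. What remains in the paper's proof is only to observe that these properties match Definition~\ref{flowdef2}(i)--(iii) and that (iv) follows from the Feller property of the CSBP. By contrast, your proposal attempts to carry out essentially the same construction that Bertoin and Le Gall perform in that cited reference: a Kolmogorov-extension argument on rational times, followed by a right-limit regularization. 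The hard step --- upgrading the cocycle identity from a null set depending on $(r,s,t)$ to a single null set off which it holds for all real triples --- is exactly what you flagged as ``the step I would need to treat with care,'' and it is not filled in. This requires a joint regularization of the two-parameter family (typically a monotonicity and density argument in both time indices and the space variable simultaneously), not triple-by-triple limits. As written, your Direction 2 is an honest and accurate plan for the construction, but it is not a proof; if you do not want to reprove Bertoin--Le Gall's result, citing \cite[Proposition 1]{MR1771663} is the efficient route, and is what the paper does.
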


\begin{proof}
Let $(X_{s,t}(x), s \leq t, x \geq 0)$ be a flow of subordinators. By Definition~\ref{flowdef2}(ii) and (iii), we have that $t \mapsto X_{s,s+t}(x)$ is a Markov process for all $x \geq 0$ and $s \in \R$. Moreover, Definition~\ref{flowdef2}(iv) implies this Markov process to be continuous in probability, therefore Feller, and by Definition~\ref{flowdef2}(i), we conclude that this Markov process is homogeneous in time, and satisfies the branching property, as
\[
  X_{s,s+t}(x+y) = X_{s,s+t}(x) + \left(  X_{s,s+t}(x+y) -  X_{s,s+t}(x) \right),
\]
and $ X_{s,s+t}(x+y) -  X_{s,s+t}(x) $ is independent of $X_{s,s+t}(x)$ and has same law as $X_{s,s+t}(y)$. Reciprocally, by \cite[Proposition 1]{MR1771663}, given a branching mechanism $\Psi$, there exists a process $(S^{(s,t)}(a), s \leq t, a \geq 0)$ such that almost surely
\begin{enumerate}
  \item for all $s \leq t$, $a \mapsto S^{(s,t)}(a)$ is a càdlàg subordinator with Lévy-Khintchine exponent $\lambda \mapsto v_{t-s}(\lambda)$, defined in~\eqref{odev},
  \item for all $t \in \R$, $(S^{(r,s)}, r \leq s \leq t)$ and $(S^{(r,s)}, t \leq r \leq s)$ are independent,
  \item for all $r \leq s \leq t$, $S^{(s,t)} \circ S^{(r,s)} = S^{(r,t)}$,
  \item the finite dimensional distributions of $t \mapsto S^{(s,s+t)}(a)$ are the ones of a CSBP($\Psi$).
\end{enumerate}
One readily observe that points (i)--(iii) imply Definition~\ref{flowdef2}(i)--(iii). Moreover, by the fourth point, $S^{(s,s+t)}(a)$ has the law of a CSBP($\Psi$) $X_t$ starting from $X_0=a$. As $X$ is a Feller process, we have $\lim_{t \to 0} X_t=a$ in probability, thus (iv) yields $\lim_{t \to 0} S^{(s,s+t)}(a) =a$ in probability, completing the proof.
\end{proof}

A noteworthy consequence of the above lemma is that if $(X_{s,t}(x), s \leq t, x \geq 0)$ is a flow of subordinators associated to the branching mechanism $\Psi$, we have that for all $s \leq t$ and $x \geq 0$,
\begin{equation}
  \label{eqn:lapTransform}
 \forall \lambda \in (0,\infty), \  \E\left( \exp\left( - \lambda X_{s,t}(x) \right) \right) = \exp(-x v_{t-s}(\lambda)),
\end{equation}
where $v_{t-s}(\lambda)$ is the function defined in \eqref{odev}.
One can think of this flow of subordinators as a way to couple on the same probability space every Markov property and every branching property \eqref{eqn:branchProp}, for all values of $t,x,y$ simultaneously in one process.

The flow of subordinators provides a genuine continuous-space branching population model. More precisely, the interval $[0,X_{s,t}(x)]$ can be interpreted as the set of descendants at time $t$ of the population that was represented at time $s$ as the interval $[0,x]$.
With this interpretation, the genealogy forward in time of the population is defined as follows. If $X_{s,t}(y-)<X_{s,t}(y)$, we say that for all $z \in (X_{s,t}(y-),X_{s,t}(y)]$, the individual $z$ at time $t$ is a descendant of the individual $y$ living at time $s$. If $X_{s,t}(y-) = X_{s,t}(y)$ (i.e. $X_{s,t}$ is continuous at $y$), we then say that individual $z = X_{s,t}(y)$ at time $t$ is the descendant of the individual $y$ living at time $s$ if and only if $y = \inf\{ x > 0: X_{s,t}(x) = z\}$. One can observe that the cocycle property ensures that this construction indeed defines a genealogy. If $z$ at time $t$ is a descendant of $y$ at time $s$, which is a descendant of $x$ at time $r$, we have
\[
  X_{s,t}(y-) < z \leq X_{s,t}(y) \quad \text{and} \quad X_{r,s}(x-) < y \leq X_{r,s}(x).
\]
By the cocycle property ($X_{r,t}=X_{s,t}\circ X_{r,s}$) and as $X_{s,t}$ is non-decreasing then
\[
  X_{r,t}(x-)=X_{s,t}(X_{r,s}(x-))\leq X_{s,t}(y-)<z \quad \text{and} \quad X_{r,t}(x)=X_{s,t}(X_{r,s}(x))\geq X_{s,t}(y)\geq z,
\]
thus $z$ at time $t$ is a descendant of $x$ at time $r$. Similar computations can be written if $X_{s,t}$ is continuous at point $y$ and/or $X_{r,s}$ is continuous at point $x$.

Recall the condition \eqref{eqn:driftless} for the sample paths of the CSBP$(\Psi)$ to have infinite variations. This condition ensures the subordinator $X_{s,t}$ to be driftless, i.e. $d_{r} = 0$ for all $r \geq 0$ in \eqref{eqn:levykinDecompositionv_t}. As a result, under \eqref{eqn:driftless}, the range $X_{s,t}([0,\infty))$ of the subordinator has zero Lebesgue measure, ensuring that almost every individual $x$ at time $t$ belong to one of the infinite families of ancestors at time $s$. This, assumption \eqref{eqn:driftless} often simplifies the interpretation of results obtained in this article.
Under this assumption, we denote by $J_{s,t} = \{ x \geq 0 : X_{s,t}(x) \neq X_{s,t}(x-)\}$ the set of jumps of $X_{s,t}$. By definition of the genealogy, almost surely the population at time $t$, indexed by $\R_+$, can be partitioned according to their ancestor at time $s$ by $\left\{ (X_{s,t}(y-),X_{s,t}(y)], y \in J_{s,t} \right\}$.

Recall that according to Theorem~\ref{Grey}-(ii), Grey's condition $\int^{\infty}\frac{\ddr u}{\Psi(u)}<\infty$ entails that for any $t>0$, $\ell_t([0,\infty])<\infty$. Under this condition, the subordinators $X_{s,t}$ are therefore compound Poisson processes. In particular, the set $J_{s,t}$ is the set of arrival times of a Poisson process with intensity $v_t(\infty)$. Note that the partition $\left\{ (X_{s,t}(y-),X_{s,t}(y)], y \in J_{s,t} \right\}$ consists of a family of consecutive intervals. This justifies the introduction of consecutive coalescents on $\N$ in Section~\ref{sec:markovianCoalescent}.

\section{The inverse flow}
\label{sec:inverseFlow}

We start this section by a preliminary observation on the genealogy backward in time of a CSBP. Consider the Poisson point process on $\mathbb{R}_+\times (0,\infty)$
\begin{equation}
  \label{eqn:ancestorRecording}
  \mathcal{E}_t = \sum_{\substack{x \geq 0}} \delta_{\left(a_tx,\Delta X_{-t,0}(x))\right)}
\end{equation}
with some renormalisation constant $a_t>0$ for all $t>0$. Recall $\rho$ the largest positive root of $\Psi$ and $\nu$ the quasi-stationary distribution \eqref{qsd} of a subcritical CSBP conditioned on the non-extinction.

\begin{proposition}\label{warmup}
Assume $\int^{\infty}\frac{\ddr u}{\Psi(u)}<\infty$ and set $a_t=1$ if $\Psi'(0+)<0$, $a_t=v_t(\infty)$ if $\Psi'(0+)\geq 0$. Then
\begin{equation}
  \label{eqn:originOfSpecies}
  \lim_{t \to \infty} \mathcal{E}_t = \mathcal{E}_\infty \quad \text{ in law, for the topology of weak convergence}
\end{equation}
where $\mathcal{E}_\infty$ is a Poisson point process with intensity respectively 
$\rho\ddr x \otimes \delta_\infty(\ddr z)$ when $\Psi'(0+)<0$, $\ddr x \otimes \delta_\infty(\ddr z)$ when $\Psi'(0+)=0$, and $\ddr x\otimes \nu(\ddr z)$ when $\Psi'(0+)>0$.
\end{proposition}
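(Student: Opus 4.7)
My plan is to identify $\mathcal{E}_t$ as a Poisson point process on $\R_+\times(0,\infty]$, reduce the asserted weak convergence to vague convergence of its intensity, and then verify the latter in each of the three regimes by means of Silverstein's integral equation \eqref{odev}. By the Lévy-Itô decomposition of the subordinator $x\mapsto X_{-t,0}(x)$, whose Laplace exponent is $v_t$ as in \eqref{eqn:levykinDecompositionv_t}, the atoms $\{(x,\Delta X_{-t,0}(x)):\Delta X_{-t,0}(x)>0\}$ form a Poisson point process with intensity $\ddr x\otimes\ell_t(\ddr z)$; under Grey's extinction this measure is finite, of total mass $v_t(\infty)-\kappa_t$. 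After the change of variable $x\mapsto a_tx$ the intensity of $\mathcal{E}_t$ becomes $\tfrac{1}{a_t}\ddr x\otimes\ell_t(\ddr z)$, and by the Laplace-functional characterisation of Poisson point processes it is enough to prove that $\tfrac{1}{a_t}\ell_t$ converges vaguely on $(0,\infty]$ to the prescribed $z$-marginal.

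The central tool is the identity
\[
\int_{v_t(\lambda)}^{v_t(\infty)}\frac{\ddr z}{\Psi(z)}=\int_\lambda^\infty \frac{\ddr z}{\Psi(z)},
\]
obtained by subtracting the instances of \eqref{odev} for $\lambda$ and for $\lambda=\infty$. In the subcritical and critical cases one has $\kappa_t=0$, so $\ell_t/v_t(\infty)$ is a probability measure whose Laplace transform equals $1-v_t(\lambda)/v_t(\infty)$, with $v_t(\lambda),v_t(\infty)\to 0$ since the flow $\lambda\mapsto v_t(\lambda)$ is attracted to $0$. If $\Psi'(0+)>0$, the asymptotics $\Psi(z)\sim\Psi'(0+)z$ at $0$ turn the identity into $\log(v_t(\infty)/v_t(\lambda))\to\Psi'(0+)\int_\lambda^\infty\ddr z/\Psi(z)$, so by \eqref{qsdl} the Laplace transform of $\ell_t/v_t(\infty)$ converges to that of $\nu$, yielding $\ell_t/v_t(\infty)\to\nu$. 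If $\Psi'(0+)=0$, the rescaling $z=v_t(\lambda)u$ in the identity combined with $\Psi(z)/z\to 0$ at $0$ and a Fatou lower bound force $v_t(\lambda)/v_t(\infty)\to 1$; the Laplace transforms of $\ell_t/v_t(\infty)$ then vanish pointwise on $(0,\infty)$, and compactness of $[0,\infty]$ pins down $\delta_\infty$ as the only possible subsequential limit. In the supercritical case, $\rho>0$ is an attracting fixed point of $v_t$, hence $v_t(\lambda)\to\rho$ for every $\lambda>0$; in the absence of explosion ($\kappa_t=0$) this gives $\ell_t((0,\infty))=v_t(\infty)\to\rho$ and $\int e^{-\lambda z}\ell_t(\ddr z)=v_t(\infty)-v_t(\lambda)\to 0$, whence $\ell_t\to\rho\delta_\infty$ on $[0,\infty]$ by Stone-Weierstrass applied to the algebra spanned by the exponentials $e^{-\lambda\cdot}$.

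The main technical obstacle is the critical regime, where one has to rule out the possibility that $\ell_t/v_t(\infty)$ concentrates at a finite height; this is exactly where the rescaling $z=v_t(\lambda)u$ and the hypothesis $\Psi(z)/z\to 0$ are indispensable. A secondary delicate point is the supercritical explosion regime ($\int_0\ddr u/|\Psi(u)|<\infty$), in which $\kappa_t\to\rho$ and $\ell_t((0,\infty))\to 0$; there the killing atom of $X_{-t,0}$ at height $\infty$ must be incorporated into $\mathcal{E}_t$ and handled separately, and one expects the statement to require a mild reinterpretation involving the extended Lévy measure $\ell_t+\kappa_t\delta_\infty$.
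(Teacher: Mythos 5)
Your proposal is correct and follows essentially the same route as the paper: you identify $\mathcal{E}_t$ as a Poisson point process with intensity $\ddr x\otimes \ell_t(\ddr z)/a_t$, reduce the claim to weak convergence of $\ell_t/a_t$ through the Laplace transform $1-v_t(q)/a_t$, and extract the limits from \eqref{odev}, merely phrasing the asymptotics of $v_t(q)/v_t(\infty)$ via the subtracted integral identity instead of the paper's representation $\frac{v_t(q)}{v_t(\infty)}=\exp\left(-\int_q^\infty\frac{\Psi(v_t(u))}{v_t(u)}\frac{\ddr u}{\Psi(u)}\right)$ with monotone convergence. Your closing remark about the killing atom in the supercritical explosive regime agrees with the paper's implicit convention of counting $\kappa_t$ as mass of $\ell_t$ at $\infty$, and does not alter the limit $\rho\,\ddr x\otimes\delta_\infty(\ddr z)$.
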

\begin{remark}
In the supercritical case, flows of CSBPs can be renormalized to converge almost-surely. We refer to Duquesne and Labbé \cite{MR3164759}, Grey \cite{MR0408016},  and Foucart and Ma \cite{2016arXiv161106178F}.  Since for any time $t$, $X_{-t,0}$ and $X_{0,t}$ have the same law, we could therefore renormalize in law the size of the descendants at time $0$ of $x$ from time $-t$. Typically, $\Delta X_{-t,0}(x)$ is of order exponential in the finite mean case ($|\Psi'(0+)|<\infty$), and double exponential in the infinite mean case ($|\Psi'(0+)|=\infty$). 
\end{remark}
\begin{proof}
Under Grey's condition, $v_t(\infty)=\ell_t(]0,\infty])<\infty$, and $x \mapsto X_{-t,0}(x)$ is a compound Poisson process with no drift. Therefore, the point process $\mathcal{E}_t$ is a Poisson point process with intensity $\frac{\ell_t(\ddr x)}{a_t}$. Observe additionally that for any $q\geq 0$, 
\begin{equation}
  \label{eqn:laplTransform}
  \int_{0}^{\infty}e^{-qx}\frac{\ell_t(\ddr x)}{a_t}=1-\frac{v_t(q)}{a_t}.
\end{equation}

In the supercritical case ($\Psi'(0+)<0$), we have $\lim_{t \to \infty} v_t(q) = \rho$ for all $q > 0$ (while $v_t(0)=0$), and $a_t=1$ for all $t>0$. Therefore \eqref{eqn:laplTransform} shows that $\ell_t(\ddr x)$ converges weakly toward $\rho \delta_\infty(\ddr x)+(1-\rho) \delta_0(\ddr x)$. As a result, we conclude that $\mathcal{E}_t$ converges in law toward a Poisson point process on $(0,\infty] \times (0,\infty]$ with intensity $\rho \ddr x \otimes \delta_\infty(\dd z)$.

In the subcritical and critical cases, we have $\lim_{t \to \infty} v_t(\infty) = 0$, and we set $a_t = v_t(\infty)$. By \eqref{odev} and \eqref{cumulant}, we have $\frac{\ddr }{\ddr u}v_t(u)=\frac{\Psi(v_t(u))}{\Psi(u)}$. Therefore
\[
  \frac{v_t(q)}{v_t(\infty)}=\exp\left(-\int_{q}^{\infty}\frac{\ddr}{\ddr u}\log(v_t(u))\ddr u\right)=\exp\left(-\int_{q}^{\infty}\frac{\Psi(v_t(u))}{v_t(u)}\frac{\ddr u}{\Psi(u)}\right).
\]
One has $\lim_{t \to \infty} \frac{\Psi(v_t(u))}{v_t(u)}= \Psi'(0+)$, thus we obtain that $\lim_{t \to \infty} \frac{v_t(q)}{v_t(\infty)} = e^{-\Psi'(0+)\int_{q}^{\infty}\frac{\ddr u}{\Psi(u)}}$ by monotone convergence. This limit is $1$ in the critical case ($\Psi'(0+)=0$), which by \eqref{eqn:laplTransform} and thus $\frac{\ell_t(\ddr x)}{a_t}$ converges weakly towards $\delta_\infty$. In the subcritical case ($\Psi'(0+)>0$), we see that $\frac{\ell_t(\ddr x)}{a_t}$ converges weakly towards the probability measure $\nu$ with Laplace transform \eqref{qsdl}. We conclude the convergence of $\mathcal{E}_t$ to the stated limits.
\end{proof}

Let us describe in details the meaning of the above convergence, for supercritical, critical and subcritical CSBPs. Observe that $\mathcal{E}_t$ encodes information on the individuals at time $-t$ having a large family of descendants at time $0$. Thus, \eqref{eqn:originOfSpecies} gives information on the origin of the earliest ancestors of the population at time $0$. Depending on the sign of $\Psi'(0+)$, we have three different behaviours:
\begin{enumerate}
\item If $\Psi'(0+)<0$, a unique ancestor from time $-\infty$, located at an exponential random variable with parameter $\rho$, which generates all individuals at time $0$. This individual is the ancestor of the process.
\item If $\Psi'(0+)=0$, then $a_t:=v_t(\infty)\underset{t\rightarrow \infty}{\longrightarrow} 0$ and the whole population at time $0$ has a common ancestor, but the backward lineage of this ancestor converges in law as $t \to \infty$ towards $\infty$.
  \item If $\Psi'(0+)>0$, then the population at time $0$ is split into distinct families, each of which coming down from a different ancestor at time $-\infty$.
\end{enumerate}
In the (sub)critical case, individuals from generation $-t$ with descendance at time $0$ are located at distance $O(1/v_t(\infty))$ from $0$. Proposition~\ref{warmup} motivates a more complete study of the ancestral lineages of individuals alive in the population at time $0$.  Our main aim is to provide an almost-sure description of how the $(\mathcal{E}_t,t\geq 0)$ evolves and to get precise information on the sizes of the families.

We now introduce the inverse flow of the flow of subordinators $(X_{s,t}, s \leq t)$ and study some of its properties. We first define, for $s \leq t$ and $y \geq 0$
\[
  X^{-1}_{s,t}(y):=\inf\{x: X_{s,t}(x)>y\}.
\]
The process $X^{-1}_{s,t}$ is the right-continuous inverse of the càdlàg process $X_{s,t}$. Note that the individual $X^{-1}_{s,t}(y)$ is the ancestor alive at time $s$ of the individual $y$ considered at time $t\geq s$. It is therefore a natural process to introduce in order to study the genealogy of a CSBP backwards in time.
We call inverse flow the process $(\hat{X}_{s,t}(y), s \leq t, y  \geq 0)$ defined for all $s \leq t, y \geq 0$
as follows
\begin{equation}
  \label{eqn:defFlotInverse}
 \hat{X}_{s,t}(y) = X^{-1}_{-t,-s}(y).
\end{equation}
We first list some straightforward properties of inverse flows.

\begin{lemma}
\label{flowA}
The following properties hold:
\begin{enumerate}
  \item Almost surely, for every $s\leq t$ and $x,y>0$, we have $\{X_{s,t}(x)>y\}=\{\hat{X}_{-t,-s}(y)< x\}$.
  \item For every $t \geq 0$, $(\hat{X}_{r,s}, r \leq s \leq t)$ and $(\hat{X}_{r,s}, t \leq r \leq s)$ are independent.
  \item Almost surely, for every $s\leq t\leq u$, $\hat{X}_{s,u}=\hat{X}_{t,u}\circ \hat{X}_{s,t}$.
  \item For all $x \geq 0$, $\hat{X}_{0,0}(x) = x = \lim_{t \to 0} \hat{X}_{0,t}(x)$ in probability.
\end{enumerate}
\end{lemma}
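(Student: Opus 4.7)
The plan is to derive all four properties by translating the corresponding properties of the forward flow in Definition~\ref{flowdef2} through the definition $\hat{X}_{s,t}(y) = X^{-1}_{-t,-s}(y) = \inf\{z \geq 0 : X_{-t,-s}(z) > y\}$. Property (i) is the key bridge: it expresses the fundamental duality between a càdlàg non-decreasing function and its right-continuous inverse, and once it is established, (ii), (iii) and (iv) reduce to routine translations.

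For (i), I would expand the definition: monotonicity of $X_{s,t}$ makes $\{z : X_{s,t}(z) > y\}$ an interval of the form $[a,\infty)$ or $(a,\infty)$ with $a = \hat{X}_{-t,-s}(y)$. The inclusion $\{\hat{X}_{-t,-s}(y) < x\} \subset \{X_{s,t}(x) > y\}$ is then immediate from monotonicity, and the reverse inclusion fails only when $x$ is itself a jump time of $X_{s,t}$ over which $y$ lies. Since $X_{s,t}$ is a subordinator, it has no fixed discontinuities, so for each fixed $(s,t,x,y)$ the identity holds almost surely; a countable-density argument in $(s,t,x,y)$ combined with càdlàg-regularity then upgrades to the almost-sure statement quantified over all parameters.

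For (ii), each $\hat{X}_{r,s}$ is a deterministic functional of the subordinator $X_{-s,-r}$, so the $\sigma$-fields generated by the two inverse-flow families are contained in those generated by the forward-flow families $(X_{u,v}, -t \leq u \leq v)$ and $(X_{u,v}, u \leq v \leq -t)$, which are independent by Definition~\ref{flowdef2}(ii) applied at $-t$. For (iii), the cocycle identity $X_{-u,-s} = X_{-t,-s} \circ X_{-u,-t}$ from Definition~\ref{flowdef2}(iii) at $-u \leq -t \leq -s$ inverts---the right-continuous inverse of a composition of monotone càdlàg functions reverses the order---to $X^{-1}_{-u,-s} = X^{-1}_{-u,-t} \circ X^{-1}_{-t,-s}$, which rewrites as $\hat{X}_{s,u} = \hat{X}_{t,u} \circ \hat{X}_{s,t}$. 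For (iv), $\hat{X}_{0,0}(x) = \inf\{z : z > x\} = x$ since $X_{0,0}$ is the identity; for the convergence, (i) yields
\[
\{|\hat{X}_{0,t}(x) - x| > \varepsilon\} \subset \{X_{-t,0}(x+\varepsilon) \leq x\} \cup \{X_{-t,0}(x-\varepsilon) > x\},
\]
and Definition~\ref{flowdef2}(iv) implies $X_{-t,0}(x \pm \varepsilon) \to x \pm \varepsilon$ in probability as $t \to 0$, so the right-hand side has probability tending to $0$.

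The principal subtlety is the simultaneous almost-sure quantification in (i); for each fixed $(s,t,x,y)$ the identity is essentially a tautology of the definition of the right-continuous inverse, but transferring from pointwise to joint a.s.\ requires the density/càdlàg argument sketched above. Once this is secured, (ii)--(iv) are formal consequences of the forward-flow properties.
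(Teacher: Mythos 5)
Your treatments of (ii), (iii) and (iv) are correct and essentially the paper's own argument: (ii) is measurability of each $\hat{X}_{a,b}$ with respect to the forward flow together with Definition~\ref{flowdef2}(ii) applied at time $-t$; (iii) is the inversion rule $(f\circ g)^{-1}=g^{-1}\circ f^{-1}$ applied to the forward cocycle; (iv) is the stability of right-continuous inverses under pointwise convergence to the identity (your two-sided estimate is a valid implementation of what the paper gets from Lemma~\ref{lem:factsInverse}(iv)).

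The genuine gap is the last step of your argument for (i). The paper handles (i) as a deterministic, pathwise consequence of the inverse-function dictionary (Lemma~\ref{lem:factsInverse}(ii)), with no exceptional null sets and no approximation. You instead observe---correctly---that the strict-inequality equivalence can fail pathwise when $x$ is a jump location of $X_{s,t}$ with $X_{s,t}(x-)\leq y<X_{s,t}(x)$ (then $X_{s,t}(x)>y$ while $\hat{X}_{-t,-s}(y)=x$), you get the identity almost surely for each fixed $(s,t,x,y)$ from the absence of fixed discontinuities, and you then claim a countable-density plus c\`adl\`ag-regularity argument upgrades this to ``almost surely, for all $(s,t,x,y)$''. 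That upgrade cannot work: on the event that $X_{s,t}$ has jumps (almost sure for $t>s$ whenever $\ell_{t-s}\neq 0$), the random pairs $(x_0,y)$ formed by a jump time and a level inside the jump violate the strict-inequality identity; such pairs are never caught by a fixed countable dense set, and the indicators $\ind{X_{s,t}(x)>y}$ and $\ind{\hat{X}_{-t,-s}(y)<x}$ are not continuous in $(x,y)$ in the direction needed to pass to the limit. In short, the simultaneous statement you try to reach by approximation is exactly the statement your own jump example rules out. What does hold pathwise, for all parameters, is the one-sided pair $\hat{X}_{-t,-s}(y)<x \Rightarrow X_{s,t}(x)>y \Rightarrow \hat{X}_{-t,-s}(y)\leq x$, equivalently $\{\hat{X}_{-t,-s}(y)<x\}=\{X_{s,t}(x-)>y\}$.

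The repair is simple. Either prove (i) in this deterministic one-sided/left-limit form---this is what the paper's Lemma~\ref{lem:factsInverse}(ii) is meant to encapsulate (and your jump example shows that the clean pathwise equivalence does require the left limit on one side; for fixed $x,y$ this makes no difference, since subordinators have no fixed discontinuities)---or simply retain the fixed-$(s,t,x,y)$ almost-sure identity you already established. The latter is all that is used downstream: it yields the Siegmund duality \eqref{duality} and the semigroup identification in Theorem~\ref{flowinverse}, where the level is an independent exponential variable, so null sets in $(x,y)$ are harmless. As written, however, the density-upgrade step would fail.
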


\begin{remark}
The convergence in (\textit{iv})  also holds uniformly on compact sets.
\end{remark}

\begin{proof}
These results are an immediate consequence of Lemma~\ref{lem:factsInverse}, which describes well-known properties of right-continuous inverses, and the definition of flow of subordinators. More precisely, the first point is a consequence of Lemma~\ref{lem:factsInverse}(ii), the third one of Lemma~\ref{lem:factsInverse}(iii) and the fourth one follows from  Lemma~\ref{lem:factsInverse}(iv) and Definition~\ref{flowdef2}(iv).

Finally, the second point follows simply from the fact that for all $a \leq b \leq t$, $\hat{X}_{a,b}$ is measurable with respect to $(X_{r,s}, -t \leq r \leq s )$. Hence, by Definition~\ref{flowdef2}(ii), we conclude that (ii) holds.
\end{proof}

We shall denote $(\hat{X}_{t}(y), y\geq 0, t \geq 0)$ the flow of inverse subordinators $(\hat{X}_{0,t}(y), y\geq 0, t \geq 0)$. 
As noted above, it tracks backwards in time the ancestral lineages of the population at time $0$. Since individuals are ordered, $\hat{X}_{t}(y)$ can also be interpreted as the random size of the population at time $-t$ whose descendance at time $0$ has size $y$. Observe that by Lemma~\ref{flowA}(i) and Definition~\ref{flowdef2}(i), we have
\begin{equation}
  \label{duality}
  \forall s \leq t,\ \forall x,y \geq 0,\ \mathbb{P}(X_{s,t}(x)>y)=\mathbb{P}(\hat{X}_{s,t}(y)< x).
\end{equation}
The relation \eqref{duality} is known as Siegmund duality. We refer the reader for instance to Siegmund \cite{MR0431386} and Clifford and Sudbury \cite{MR781422}. 

\begin{theorem}
\label{flowinverse}
Fix $y>0$. The process $(\hat{X}_{t}(y),t\geq 0)$ is a Markov process in $(0,\infty)$. Its semigroup $(Q_t,t\geq 0)$ satisfies for any bounded measurable function $f$  and any $t\geq 0$
\begin{equation}
  \label{eqn:flowinverse}
  \mathbb{E}[Q_tf(\mathbbm{e}_q)]=\mathbb{E}[f(\mathbbm{e}_{v_t(q)})] \ \text{ for all } q>0
\end{equation}
where $\mathbbm{e}_q$  and $\mathbbm{e}_{v_t(q)}$ are exponential random variable with parameter $q$ and $v_t(q)$ and $\mathbbm{e}_q$ is independent of $(\hat{X}_t(y),t\geq 0,y\geq 0)$.
\end{theorem}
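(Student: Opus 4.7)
My plan is to split the proof into two parts: (a) establishing the Markov property and (b) identifying the one-dimensional marginal $\hat X_t(\mathbbm{e}_q)$ with an exponential of parameter $v_t(q)$.

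\textbf{Markov property.} For any $s,t\geq 0$, Lemma~\ref{flowA}(iii) gives the cocycle identity $\hat X_{0,s+t}(y)=\hat X_{s,s+t}\bigl(\hat X_{0,s}(y)\bigr)$. By Lemma~\ref{flowA}(ii), the random map $\hat X_{s,s+t}$ is independent of the $\sigma$-algebra generated by $(\hat X_{0,r}(y), r\leq s)$. Moreover, the definition $\hat X_{s,s+t}(y)=X^{-1}_{-s-t,-s}(y)$ together with Definition~\ref{flowdef2}(i) (stationarity in the time-shift of the forward flow) shows that $\hat X_{s,s+t}$ has the same law as $\hat X_{0,t}$. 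Consequently, conditionally on $\hat X_{0,s}(y)=z$, the variable $\hat X_{0,s+t}(y)$ has the law of $\hat X_{0,t}(z)$, giving the Markov property with transition kernel $Q_t(y,\cdot):=\mathrm{Law}(\hat X_t(y))$.

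\textbf{Semigroup identity.} The core step uses Siegmund duality \eqref{duality}. Let $\mathbbm{e}_q$ be exponentially distributed with parameter $q$, independent of the whole flow. For every $x>0$, conditioning on $\mathbbm{e}_q$ and applying \eqref{duality} gives
\begin{equation*}
\mathbb{P}\bigl(\hat X_t(\mathbbm{e}_q)<x\bigr)=\int_0^\infty q e^{-qy}\,\mathbb{P}\bigl(\hat X_t(y)<x\bigr)\,\dd y=\int_0^\infty q e^{-qy}\,\mathbb{P}\bigl(X_t(x)>y\bigr)\,\dd y.
\end{equation*}
Applying Fubini to swap the $\dd y$-integral and the expectation, I rewrite the right-hand side as $\mathbb{E}\bigl[\int_0^{X_t(x)} q e^{-qy}\,\dd y\bigr]=1-\mathbb{E}[e^{-qX_t(x)}]=1-e^{-xv_t(q)}$, where the last equality uses \eqref{cumulant}. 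Hence $\hat X_t(\mathbbm{e}_q)$ is exponentially distributed with parameter $v_t(q)$. For a bounded measurable $f$, this yields $\mathbb{E}[Q_t f(\mathbbm{e}_q)]=\mathbb{E}[f(\hat X_t(\mathbbm{e}_q))]=\mathbb{E}[f(\mathbbm{e}_{v_t(q)})]$.

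\textbf{Main obstacle.} There is no serious technical obstacle: once Siegmund duality and the Laplace transform \eqref{eqn:lapTransform} are in hand, the identification is a one-line Fubini computation. The only subtlety is checking that the Markov kernel $Q_t$ really does not depend on the past, which relies on the distributional stationarity of $\hat X_{s,s+t}$ in $s$; this is inherited from Definition~\ref{flowdef2}(i) via the inversion \eqref{eqn:defFlotInverse}. One should also remark that the state space $(0,\infty)$ is preserved, since $y>0$ ensures $\hat X_t(y)>0$ (as $X_{-t,0}$ starts from $0$ and is càdlàg), and $\hat X_t(y)<\infty$ follows because subordinators have $\sup_x X_{-t,0}(x)=\infty$ a.s.
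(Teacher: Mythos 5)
Your proof is correct and follows essentially the same route as the paper: the Markov property from the cocycle and independence properties of the inverse flow (Lemma~\ref{flowA}), and the semigroup identity from Siegmund duality combined with the Laplace transform \eqref{cumulant}, your Fubini computation being just an expanded form of the paper's one-line identity $\mathbb{P}(\hat{X}_{t}(\mathbbm{e}_q)>x)=\mathbb{E}[e^{-qX_{-t,0}(x)}]=e^{-xv_t(q)}$. The check that the state space $(0,\infty)$ is preserved also matches the paper's argument.
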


Observe that \eqref{eqn:flowinverse} characterizes the semigroup $Q_t$, by identification of the Laplace transforms, as it can be rewritten as: for all $q \geq 0$,
\[
  \int_0^\infty q e^{- qy} Q_t f(y) \ddr y = \int v_t(q) e^{-v_t(q)y} f(y) \ddr y,
\]
therefore $Q_tf$ is the inverse Laplace transform of $q \mapsto \frac{v_t(q)}{q}
\int e^{-v_t(q)y} f(y) \ddr y$.

\begin{proof}
We observe that the cocycle property and the independence, obtained in points (ii) and (iii) of Proposition~\ref{flowA} readily entail that $t \mapsto \hat{X}_t(y)$ has the Markov property. Moreover, if $\hat{X}_{0,t}(y) = 0$ then $X_{-t,0}(0) = y>0$, which is impossible, as $X_{-t,0}(0)$ is the value at time $t$ of a CSBP starting from mass $0$, and $0$ is an absorbing point for a CSBP. Similarly, $\hat{X}_{0,t}(y) = \infty$ yields that $\lim_{z \to \infty} X_{-t,0}(z) \leq y$, which is impossible as well as $X_{-t,0}(z)$ is a non-null subordinator.

Finally, we now turn to the computation of the semigroup of $\hat{X}(y)$, which is obtained through the Siegmund duality. Let $\mathbbm{e}_q$ be an independent exponential random variable with parameter $q$, we have
\[\mathbb{P}(\hat{X}_{t}(\mathbbm{e}_q)>x)=\mathbb{P}(X_{-t,0}(x)<\mathbbm{e}_q)=\mathbb{E}[e^{-qX_{-t,0}(x)}]=e^{-xv_{t}(q)},\]
which implies that \eqref{eqn:flowinverse} holds.
\end{proof}
The above theorem shows that the semigroup of $(\hat{X}_t)$ can be expressed in simple terms when applied to exponential distributions. This will motivate later on the study of the action of the flow $\hat{X}$ on Poisson point processes. 

We now observe that the Markov process $t \mapsto \hat{X}_{0,t}(y)$ can be straightforwardly extended as a Markov process on $[0,\infty]$.

\begin{proposition}[Boundaries and Feller property]
\label{boundaries}
Let $y > 0$ fixed, we denote by $(\hat{X}_t, t \geq 0)$ the Markov process $(\hat{X}_{t}(y), t \geq 0)$. 
\begin{enumerate}
  \item The boundary $0$ is an entrance boundary of $(\hat{X}_t,t\geq 0)$ if and only if $\int^{\infty}\frac{\ddr u}{\Psi(u)}<\infty$. In that case, $(Q_t,t\geq 0)$ is extended to $[0,\infty)$ by
\[Q_tf(0)=\int_{0}^{\infty}f(u)v_t(\infty)e^{-uv_t(\infty)}\ddr u.\]
Otherwise, we set $Q_tf(0) = f(0)$.
  \item  The boundary $\infty$ is an entrance boundary of $(\hat{X}_t,t\geq 0)$ if and only if $\int_{0}\frac{\ddr u}{|\Psi(u)|}<\infty$. In that case, $(Q_t,t\geq 0)$ is defined over $]0,\infty]$ with
\[Q_tf(\infty)=\int_{0}^{\infty}f(u)v_t(0)e^{-uv_t(0)}\ddr u.\] 
Otherwise, we set $Q_tf(\infty) = f(\infty)$.
  \item The semigroup $(Q_t,t\geq 0)$ defined over $[0,\infty]$ is Feller.
\end{enumerate}
\end{proposition}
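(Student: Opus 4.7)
The plan is to prove (i) and (ii) by passing to the limit in the Siegmund duality formula $\P(\hat{X}_t(y) < x) = \P(X_t(x) > y)$ from Lemma~\ref{flowA}(i), using Grey's Theorem~\ref{Grey} to identify the limiting distributions, and then to deduce (iii) from the resulting explicit description of the transition kernels.

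For part (i), I would send $y \downarrow 0$ in the duality; the right-hand side increases to $1 - \P(X_t(x) = 0)$, which by Theorem~\ref{Grey}(ii) equals $1 - e^{-xv_t(\infty)}$ precisely under Grey's extinction condition $\int^{\infty}\ddr u/\Psi(u) < \infty$ (with $v_t(\infty) \in (0,\infty)$ for all $t > 0$), and equals $1$ otherwise. In the former case this CDF identifies the weak limit of $\hat{X}_t(y)$ as an exponential random variable with parameter $v_t(\infty)$, which provides the announced entrance law and, after integration against $f$, the displayed formula for $Q_t f(0)$. In the latter case $\hat{X}_t(y) \to 0$ in probability, so $0$ is absorbing and one simply sets $Q_t f(0) = f(0)$. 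Part (ii) is the mirror image: sending $y \uparrow \infty$ and invoking Theorem~\ref{Grey}(iv) gives $\P(\hat{X}_t(y) < x) \to \P(X_t(x) = \infty) = 1 - e^{-x v_t(0)}$ under Grey's explosion condition, producing an exponential entrance law at $\infty$ with parameter $v_t(0)$; without the explosion condition, $\hat{X}_t(y) \to \infty$ in probability and $\infty$ is absorbing.

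For the Feller property (iii), I would verify separately the continuity of $y \mapsto Q_t f(y)$ on $[0,\infty]$ and the strong continuity $Q_t f \to f$ as $t \to 0^+$. The first amounts to the weak continuity of $y \mapsto \mathrm{Law}(\hat{X}_t(y))$: at interior $y$ this follows from the duality, since the distribution function $x \mapsto \P(X_{-t,0}(x) \leq y)$ is continuous in $y$ away from the atoms of $X_t(x)$, and infinite divisibility of $X_t(x)$ on $[0,\infty]$ confines those atoms to $\{0, \infty\}$; continuity at the endpoints is built in by parts (i) and (ii). The strong continuity at $t = 0$ follows from $\hat{X}_t(y) \to y$ in probability (Lemma~\ref{flowA}(iv)) at interior $y$, together with $v_t(\infty) \to \infty$ and $v_t(0) \to 0$ as $t \to 0$ (read off from the integral equation~\eqref{odev}), which forces the exponential entrance distributions to concentrate at the correct endpoints. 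The main obstacle I anticipate is upgrading these pointwise continuity statements to the uniform bounds required for Feller; the cleanest workaround is to use the Laplace-transform characterisation $\E[Q_t f(\mathbbm{e}_q)] = \E[f(\mathbbm{e}_{v_t(q)})]$ from Theorem~\ref{flowinverse}, check convergence against the dense family of exponential densities, and then exploit compactness of $[0,\infty]$ together with Stone--Weierstrass to conclude.
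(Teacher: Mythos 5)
Your parts (i) and (ii) follow essentially the paper's route: pass to the limit $y\downarrow 0$ (resp.\ $y\uparrow\infty$) in the duality \eqref{duality}, invoke Theorem~\ref{Grey} to identify the limit law as exponential with parameter $v_t(\infty)$ (resp.\ $v_t(0)$) exactly under the corresponding integral condition, and set $Q_tf(0)=f(0)$, $Q_tf(\infty)=f(\infty)$ otherwise. One step you omit and the paper carries out: to speak of an entrance boundary you must check that the extended kernels still satisfy Chapman--Kolmogorov at the added point, i.e.\ $Q_{t+s}f(0)=Q_t(Q_sf)(0)$ and likewise at $\infty$; this follows quickly from $v_{t+s}=v_t\circ v_s$, but it has to be said.

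The genuine gap is in (iii). The claim that infinite divisibility of $X_t(x)$ confines its atoms to $\{0,\infty\}$ is false: take $\Psi(q)=cq$ (or any finite-variation mechanism, where the subordinator $x\mapsto X_{-t,0}(x)$ has a positive drift $d_t$), so that $X_t(x)=xe^{-ct}$, an infinitely divisible law with all its mass at an interior point; more generally $\P(X_t(x)=d_tx)>0$ whenever $d_t>0$ and $\ell_t$ is finite. So the continuity of $y\mapsto\P(X_t(x)\le y)$ cannot be justified as you state it, and in fact the atoms of $X_t(x)$ are the wrong object to control: in the example above $\hat X_t(y)=ye^{ct}$ is perfectly continuous even though $X_t(x)$ is purely atomic. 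What matters is whether the random map $y\mapsto \hat X_t(y)=X_{-t,0}^{-1}(y)$ has a \emph{fixed} discontinuity at the given $y$, and a right-continuous inverse jumps only at levels where $X_{-t,0}$ has a constancy stretch; the paper's proof of (iii) argues precisely that for fixed $y\in(0,\infty)$ almost surely $y$ is not such a level, whence $Q_tf$ is continuous at interior points, with continuity at $0$ and $\infty$ holding by construction. Your proposed fallback does not repair this: the identity $\E[Q_tf(\mathbbm{e}_q)]=\E[f(\mathbbm{e}_{v_t(q)})]$ determines $Q_tf$ only Lebesgue-almost everywhere, so no Stone--Weierstrass argument can extract continuity in $y$ from it; and the ``uniform bounds'' you worry about are not needed anyway, since once $Q_t$ maps $C([0,\infty])$ to itself, pointwise convergence $Q_tf(y)\to f(y)$ as $t\to 0$ (Lemma~\ref{flowA}(iv) at interior points, $v_t(\infty)\to\infty$ and $v_t(0)\to 0$ at the endpoints) suffices for the Feller property on the compact state space $[0,\infty]$, which is exactly how the paper concludes.
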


\begin{remark}\label{inverseflowfrom0}
The Markov processes $(\hat{X}_{0,t}(0))$ and $(\hat{X}_{0,t}(\infty))$ have the following interpretations, in terms of the CSBP
\begin{enumerate} 
  \item The process $(\hat{X}_{0,t}(0),t\geq 0)$, starting from $0$ at time $0$, represents the smallest individual at generation $-t$ to have descendants at time $0$.  If $\int^\infty \frac{\ddr u}{|\Psi(u)|} < \infty$, there is extinction in finite time for the CSBP $X$ (i.e. with positive probability, $X_{-t,0}(x) = 0$). In that case $\hat{X}_{0,t}(0)$ is a non-trivial Markov process. If $\int^{\infty}\frac{\ddr u}{|\Psi(u)|}=\infty$, there is no extinction in finite time for the CSBP, thus all individuals at time $t$ have descendants at time $0$, $(\hat{X}_t(0),t\geq 0)\equiv 0$.
  \item The process $(\hat{X}_t(\infty),t\geq 0)$, starting from $\infty$, represents the smallest individual at generation $t$ with an infinite progeny at time $0$. If $\int_0 \frac{\ddr u}{|\Psi(u)|} < \infty$, there is explosion in finite time for the CSBP $X$ (i.e. with positive probability, $X_{-t,0}(x) = \infty$). In that case, $\hat{X}_{0,t}(\infty)$ is a non-trivial Markov process. If $\int_0 \frac{\ddr u}{|\Psi(u)|} = \infty$, there is no explosion in finite time and all individuals at time $t$ have finitely many descendants at time $0$. Thus $(\hat{X}_t(\infty),t\geq 0)\equiv \infty$ and $Q_tf(\infty):=f(\infty)$.
\end{enumerate}
\end{remark}

\begin{proof}
For any fixed time $t$, $(\hat{X}_t(x),x\in (0,\infty))$ is non-decreasing in $x$. Thus $\underset{x\rightarrow \infty}{\lim} \hat{X}_t(x)=\hat{X}_t(\infty)$ and
$\underset{x\rightarrow 0}{\lim} \hat{X}_t(x)=\hat{X}_t(0)$ exist almost-surely in $[0,\infty]$. Recall the duality relation \eqref{duality}
\[\mathbb{P}(\hat{X}_t(y)<x)=\mathbb{P}(y<X_t(x)).\]
The first point for the boundary $0$
is obtained as follows. By  the duality relation, we see that  
\[\mathbb{P}(\hat{X}_t(y)\geq x)=\mathbb{P}(y\geq X_t(x)).\]
By letting $y$ to $0$, we have 
\[\mathbb{P}(\hat{X}_t(0)\geq x)=\mathbb{P}(X_t(x)=0)=e^{-xv_t(\infty)}.\]
According to Theorem~\ref{Grey}-(ii), $\int^{\infty}\frac{\ddr u}{\Psi(u)}<\infty$ is a necessary and sufficient condition  for $v_t(\infty) < \infty$. It remains to justify the formula for $Q_tf(0)$. By using Theorem~\ref{flowinverse} and 
the facts that in probability, $\lim_{q \to \infty} \mathbbm{e}_q = 0$ and $\lim_{q \to \infty} v_t(q) = v_t(\infty)\in (0,\infty]$, we have for any continuous bounded function $f$ on $[0,\infty)$, 
\[ Q_tf(0)=\lim_{q \to \infty} \E(Q_tf(\mathbbm{e}_{q}))=\lim_{q \to \infty} \E(f(\mathbbm{e}_{v_t(q)}))=\E\left(f(\mathbbm{e}_{v_t(\infty)}) \right)\]
by dominated convergence. We deduce the formula for $Q_tf(0)$. We now prove that the semigroup property holds at $0$. By definition of $Q_tf(0)$, we have that
\[
  Q_{t+s}f(0) = \E\left( f(\mathbbm{e}_{v_{t+s}(\infty)}) \right) \quad \text{and} \quad Q_t(Q_sf)(0) = \E\left( Q_sf(\mathbbm{e}_{v_t(\infty)}) \right) = \E\left( f (\mathbbm{e}_{v_{s} \circ v_t(\infty)}) \right).
\]
Therefore, as $v_{t+s} = v_t \circ v_s$, we complete the proof of (\textit{i}).

The proof of (\textit{ii}) follows very similar lines to the proof of (\textit{i}), and is based on the fact that $\int_0 \frac{\ddr u}{|\Psi(u)|}<\infty$ is a necessary and sufficient condition for $v_t(0) >0$. The expression of $Q_tf(\infty)$ is found using that $\lim_{q \to 0} \mathbbm{e}_q = \infty$ in probability.
Finally, to prove that the semigroup $Q_t$ extended to $[0,\infty]$ is Feller, we observe that the random map $y\mapsto \hat{X}_{t}(y)$ jumps only on constant stretch of $X_{-t,0}$ (being its right-continuous inverse). There is no fixed value in $(0,\infty)$ at which $X_{-t,0}$ is constant and therefore $y\in (0,\infty)\mapsto \hat{X}_{t}(y)$ has no fixed discontinuities. This entails that for any continuous function $f$ over $[0,\infty]$, $Q_tf$ is continuous at any point $y\in (0,\infty)$.  By definition $Q_tf(x)\underset{x\rightarrow \infty}{\longrightarrow} Q_tf(\infty)$ and  $Q_tf(x)\underset{x\rightarrow 0}{\longrightarrow} Q_tf(0)$. The semigroup maps $C([0,\infty])$ in $C([0,\infty])$ and one only needs to show the pointwise continuity at $0$ of $Q_tf$, which follows from Proposition~\ref{flowA}(iv).
\end{proof}

We study now the long term behaviour of $(\hat{X}_t,t\geq 0)$ in the critical and subcritical case. By transience, we mean that $\hat{X}_t(x)\underset{t\rightarrow\infty}{\longrightarrow} \infty$ a.s. for any $x\in (0,\infty)$.
\begin{proposition}\label{transiencerecurrence}
Let $\Psi$ be a branching mechanism. We observe that
\begin{enumerate}
  \item if $\Psi$ is supercritical, then $\hat{X}$ is positive recurrent with stationary law $\mathbbm{e}_\rho$;
  \item if $\Psi$ is subcritical, then $\hat{X}$ is transient;
  \item if $\Psi$ is critical, then $\hat{X}$ is transient if and only if $\int_0 \frac{u}{\Psi(u)}\ddr u < \infty$, otherwise it is null recurrent.
\end{enumerate}
\end{proposition}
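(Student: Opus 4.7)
The plan is to use Siegmund duality~\eqref{duality} systematically, together with the semigroup identity~\eqref{eqn:flowinverse} acting on exponential distributions, to translate each claim about $\hat{X}$ into a tractable one about the CSBP.

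For (i), the ODE $\partial_t v_t(\lambda) = -\Psi(v_t(\lambda))$ combined with $\Psi(\rho) = 0$ yields $v_t(\rho) = \rho$ for every $t \geq 0$, so by Theorem~\ref{flowinverse} the law $\mathbbm{e}_\rho$ is invariant under $Q_t$. For convergence in law from a deterministic starting point $y > 0$, Siegmund duality gives $\mathbb{P}(\hat{X}_t(y) \geq x) = \mathbb{P}(X_t(x) \leq y)$, and Theorem~\ref{Grey}(i) shows $\mathbb{P}(X_t(x) \leq y) \to e^{-x\rho}$ since on $\{X_t(x) \to 0\}$ the event $\{X_t(x) \leq y\}$ holds eventually while on the complementary event $X_t(x) \to \infty$. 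Hence $\hat{X}_t(y) \to \mathbbm{e}_\rho$ in law from any starting point, yielding positive recurrence.

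For (ii), the first moment $\mathbb{E}[X_t(K)] = K e^{-\Psi'(0+) t}$, obtained from $\partial_t v_t'(0+) = -\Psi'(0+) v_t'(0+)$, combined with Markov and Siegmund duality, gives $\mathbb{P}(\hat{X}_t(y) \leq K) \leq \tfrac{K}{y} e^{-\Psi'(0+) t}$, which is integrable in $t$. Fubini then yields $\int_0^\infty \mathbbm{1}_{\{\hat{X}_t(y) \leq K\}} \ddr t < \infty$ almost surely, from which $\hat{X}_t(y) \to \infty$ a.s.\ follows via the Feller property of $\hat{X}$ on $[0,\infty]$ (Proposition~\ref{boundaries}).

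For (iii), the backbone is the change of variable $u = v_t(\lambda)$, $\ddr u = -\Psi(u) \ddr t$, which gives the identity $\int_0^\infty v_t(\lambda) \ddr t = \int_0^\lambda \frac{u\, \ddr u}{\Psi(u)}$. In the transient direction, the Markov-type bound $\mathbb{P}(X_t(K) > y) \leq (1 - e^{-K v_t(\lambda)})/(1 - e^{-\lambda y}) \leq C v_t(\lambda)$ combined with the finiteness of this integral reproduces the Fubini argument of (ii). In the recurrent direction, I first rule out positive recurrence: any finite invariant $\mu$ would satisfy $\mu([0,K]) = \int Q_t \mathbbm{1}_{[0,K]} \ddr \mu \to 0$ by dominated convergence (as $\mathbb{P}(\hat{X}_t(y) \leq K) \to 0$ in the critical case, since $v_t(q) \to 0$), a contradiction. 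Then the identity
\[
  \mathbb{E}^{\mathbbm{e}_q}\Bigl[\int_0^\infty \mathbbm{1}_{\{\hat{X}_t \leq K\}} \ddr t\Bigr] = \int_0^\infty (1 - e^{-K v_t(q)}) \ddr t = \infty
\]
combined with the monotonicity of $y \mapsto \mathbb{P}(\hat{X}_t(y) \leq K)$ inherited from Siegmund duality forces the Green function to be infinite from a set of starting points of positive Lebesgue measure, yielding recurrence by standard Feller arguments.

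The delicate step is passing from finiteness of the Green function to $\hat{X}_t(y) \to \infty$ \emph{almost surely}, as $t \mapsto \hat{X}_t(y)$ is not itself monotone. I expect this to rely on the Feller property on $[0, \infty]$ together with the spatial monotonicity $y \mapsto \hat{X}_t(y)$, possibly via a coupling with an exponential initial condition, for which the explicit identity $\hat{X}_t(\mathbbm{e}_q) \egaldistr \mathbbm{e}_{v_t(q)}$ is available and allows a sandwich argument with $\{\mathbbm{e}_q \leq y\}$ to transfer distributional statements to almost sure ones.
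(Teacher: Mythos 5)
Your overall route is in fact the paper's route: Siegmund duality \eqref{duality}, the explicit action of the semigroup on exponential laws (Theorem~\ref{flowinverse}), and the change of variables $\int_0^\infty v_t(\lambda)\,\ddr t=\int_0^{\lambda}\frac{u}{\Psi(u)}\ddr u$, which is exactly what produces the paper's resolvent identity \eqref{potential}. Your part (i) (invariance of $\mathbbm{e}_\rho$ from $v_t(\rho)=\rho$ plus convergence in law via Grey's theorem), your first-moment shortcut in the subcritical case, and your elimination of a finite invariant law in the critical case (which the paper does not spell out, although it asserts null recurrence) are all correct and welcome simplifications. The problem lies precisely at the step you flag as delicate, and it occurs twice.

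First, in (ii) and in the transient half of (iii), finiteness of $\E\bigl[\int_0^\infty \mathbbm{1}_{\{\hat{X}_t(y)\leq K\}}\ddr t\bigr]$ for every $K$ does not by itself yield $\hat{X}_t(y)\to\infty$ a.s., which is the paper's definition of transience: a process may revisit $[0,K]$ at arbitrarily large times while accumulating only finite occupation time there, and your proposed sandwich with $\hat{X}_t(\mathbbm{e}_q)\egaldistr\mathbbm{e}_{v_t(q)}$ only transfers fixed-time distributional information, not pathwise divergence. Some additional argument is mandatory; the paper supplies it by checking $0<U_0(x,]0,a[)<\infty$ for all $x$ and invoking Getoor's transience criterion (Proposition 2.2-(iv') in \cite{MR580144}). (Alternatively a strong Markov/renewal argument works: by monotonicity in the starting point and right-continuity at $t=0$, every entry into $[0,K]$ forces a uniformly positive expected sojourn in $[0,2K]$ during the following unit of time, so infinitely many returns would contradict the finite expected occupation time; but this, or Getoor, must be written out.) Second, in the recurrent half of (iii), your inference that divergence of $\int_0^\infty(1-e^{-Kv_t(q)})\ddr t=\E^{\mathbbm{e}_q}\bigl[\int_0^\infty\mathbbm{1}_{\{\hat{X}_t\leq K\}}\ddr t\bigr]$ together with monotonicity of $y\mapsto U_0(y,[0,K])$ ``forces the Green function to be infinite from a set of starting points of positive Lebesgue measure'' is not valid: a non-increasing function behaving like $y^{-2}$ near $0$ is finite at every $y>0$ yet has divergent integral against $qe^{-qy}\ddr y$, so the exponentially averaged divergence does not localize to any fixed starting point. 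Consequently ``recurrence by standard Feller arguments'' is not yet a proof; one needs either to establish $U_0(y,[0,K])=\infty$ for the given $y$ (this is where the paper's appeal to Getoor's Proposition 2.4-(i), i.e.\ the transience/recurrence dichotomy for the Markov process, does the work) or to import such a dichotomy explicitly. In short: the computations are right and largely coincide with the paper's, but the two passages from Green-function estimates to almost-sure statements (transience, recurrence) are genuinely missing.
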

\begin{remark} Intuitively, in the subcritical case, for any fixed $a>0$, individuals below level $a$ living at arbitrarily large time in the past will have no progeny at time $0$. Therefore the ancestral lineage of an individual $x$ living at time $0$, goes above any fixed level $a$ as time goes to $\infty$. This explains the transience. In the critical case, large oscillations can occur when $\int_{0}\frac{x}{\Psi(x)}\ddr x=\infty$. This latter condition is known see Duhalde et al. \cite{MR3264444} to entails that first entrance times of the CSBP have infinite mean, in such case the process $(\hat{X}_t,t\geq 0)$ is null recurrent. Note that if $\Psi(q)=cq^{\alpha}$ with $1\leq \alpha \leq 2$ then $(\hat{X}_t,t\geq 0)$ is null recurrent if $\alpha=2$ and transient if $\alpha<2$.
\end{remark}

\begin{proof}
We first prove (\textit{i}). Let $y\in (0,\infty)$. By duality \eqref{duality} and Theorem~\ref{Grey}-(i) \[\mathbb{P}(\hat{X}_t(y)<x)=\mathbb{P}(X_t(x)>y)\underset{t\rightarrow \infty}{\longrightarrow} \mathbb{P}(\text{non-extinction})=1-e^{-\rho x}.\]
Assume now $\Psi$ subcritical or critical. For any Borelian set $B$ and any $p>0$, set $$U_p(y,B):=\int_{0}^{\infty}e^{-pt}\mathbb{P}(\hat{X}_t(y)\in B)\ddr t.$$
Fix $q>0$, recall $\int_{v_t(q)}^{q}\frac{\ddr x}{\Psi(x)}=t$ and $v_\infty(q)=0$. One has
\begin{align*}
\mathbb{E}[U_p(\mathbbm{e}_q,B)]=\int_{0}^{\infty}U_p(y,B)qe^{-qy}\ddr y&=\int_{0}^{\infty}\int_{0}^{\infty}v_t(q)e^{-pt}e^{-uv_t(q)} \mathbbm{1}_B(u)\ddr u\ddr t\\
&=\int_{0}^{\infty}\mathbbm{1}_B(u)\ddr u\int_{0}^{q}e^{-p\int_{x}^{q}\frac{\ddr v}{\Psi(v)}}e^{-ux}\frac{x}{\Psi(x)}\ddr x.
\end{align*}
By monotone convergence
\begin{equation}\label{potential}
\underset{p\rightarrow 0}{\lim} \uparrow \int_{0}^{\infty}U_p(y,B)qe^{-qy}\ddr y=\int_{0}^{\infty}U_0(x,B)qe^{-qx}\ddr x
=\int_{0}^{\infty}\mathbbm{1}_B(u)\ddr u\int_{0}^{q}e^{-ux}\frac{x}{\Psi(x)}\ddr x.
\end{equation} 
Set $B=]0,a[$ for $a>0$, then
$$\int_{0}^{\infty}U_0(x,B)qe^{-qx}\ddr x=\int_{0}^{a}\ddr u\int_{0}^{q}e^{-ux}\frac{x}{\Psi(x)}\ddr x=\int_{0}^{q}\frac{1-e^{-ax}}{\Psi(x)}\ddr x.$$
In the subcritical case $\Psi'(0+)>0$, therefore $\int_{0}^{q}\frac{x}{\Psi(x)}\ddr x<\infty$ and for almost every $x\in ]0,\infty[$, one has $0<U_0(x,B)<\infty$. Since for any $x\leq y$, $\hat{X}_t(x)\leq \hat{X}_t(y)$ then $$\mathbb{P}(\hat{X}_t(x)<a)\geq \mathbb{P}(\hat{X}_t(y)<a)$$
therefore $U_0(x,B)\geq U_0(y,B)$ and then $0<U_0(x,B)<\infty$ for all $x$. We may now invoke Proposition 2.2-(iv') in Getoor \cite{MR580144}, by taking the increasing sequence $B_n:=]0,n[$. This  entails that the process $(\hat{X}_t,t\geq 0)$ is transient. In the critical case, if $\int_{0}^{q}\frac{x}{\Psi(x)}\ddr x<\infty$ then
the process is transient. If now $\int_{0}^{q}\frac{x}{\Psi(x)}\ddr x=\infty$ then by (\ref{potential}) for any set $B$ with positive Lebesgue measure, $U_0(x,B)=\infty$ for all $x$. By Proposition 2.4-(i) in \cite{MR580144}, we conclude that $(\hat{X}_t,t\geq 0)$ is recurrent.
\end{proof}

\section{The Feller flow}
\label{fellerflow}

In this section, we investigate the genealogy backwards in time of Feller CSBPs. These are continuous CSBPs with quadratic branching mechanisms of the form $\Psi : q \mapsto \frac{\sigma^{2}}{2}q^{2}-\beta q$, with $\beta \in \mathbb{R}$ and $\sigma^2 \geq 0$.  For any fixed $x$, the Feller CSBP $(X_t(x),t\geq 0)$ with mechanism $\Psi$  can be constructed as the solution of the stochastic differential equation
\[\ddr X_t(x)=\sigma\sqrt{X_t(x)}\ddr B_t+\beta X_t(x)\ddr t,\qquad X_0(x)=x\]
where $(B_t,t\geq 0)$ is a Brownian motion. We study here in detail the flow $(X_{s,t}(x),t\geq s, x\geq 0)$ of CSBPs with branching mechanism $\Psi$ and the inverse flow $(\hat{X}_{s,t}(x),t\geq s, x\geq 0)$. Many calculations can be made explicit in this setting, see for instance Pardoux \cite{MR2507503} for a study of the flow $(X_{s,t}(x),t\geq s, x\geq 0)$.

Note that $\Psi$ is subcritical if $\beta < 0$, critical if $\beta = 0$ and supercritical if $\beta > 0$. Moreover, in the latter case we have $\rho = \frac{2\beta}{\sigma^2}$. Observe also that the differential equation \eqref{odev} can be rewritten
\[
  \frac{\ddr v_t(\lambda)}{\ddr t} = v_t(\lambda) \left( \frac{\sigma^2}{2} v_t(\lambda) - \beta \right), \quad \text{with } v_0(\lambda) = \lambda,
\]
and it is a simple exercise to solve it into
\[
  v_t(\lambda) =
  \begin{cases}
    \frac{\lambda \beta e^{\beta t}}{\beta+\frac{\lambda \sigma^{2}}{2}\left(e^{\beta t}-1\right)} & \text{ if } \beta \neq 0\\
    \frac{\lambda}{1+\frac{\sigma^{2}\lambda t}{2}} & \text{  if } \beta = 0.
  \end{cases}
\]
Therefore, one can write $v_t(\lambda) = \int_{0}^{\infty} (1 - e^{-\lambda r}) \ell_t(\ddr r)$, by setting
\[
  \ell_t(\ddr r) =  v_t(\infty)^2 e^{-\beta t} e^{-v_t(\infty) e^{-\beta t} r } \ddr r 
\]
where by definition, $v_t(\infty) = \frac{2\beta }{\sigma^{2}(1-e^{-\beta t})}>0$ for $\beta \neq 0$ and $v_t(\infty) = \frac{2}{t\sigma^2}$ if $\beta =0$. Observe that in both cases, $\frac{\ell_t}{\ell_t([0,\infty])}$ is an exponential law with parameter $\hat{\beta}_t = v_t(\infty)e^{-\beta t}$, which can be rewritten as
\[
  \hat{\beta}_t =\begin{cases}
     \frac{2\beta }{\sigma^{2}(e^{\beta t}-1)} & \text{ if } \beta \neq 0\\
    \frac{2}{t \sigma^2} & \text{ if } \beta = 0.
   \end{cases}
\]

\begin{remark}
Observe that above, we often make a distinction between $\beta \neq 0$ and $\beta = 0$, but the functions $v_t$, $\ell_t$ or $\hat{\beta}_t$ that we defined are continuous at $\beta=0$.
\end{remark}

We now study the law of the inverse Feller flow $(\hat{X}_{s,t}(y), s \leq t, y \geq 0)$, in particular characterizing its marginal distributions as a process in the variable $t$ or $y$.
\begin{theorem}
\label{dualdiffusionflow}
The inverse flow $(\hat{X}_t(x), x\geq 0, t\geq 0)$  is characterized as follows. Setting
\[
  \forall t \geq 0, \lambda \geq 0, \hat{v}_t(\lambda)=\frac{\lambda\hat{\beta}_t}{\lambda+\hat{\beta}_te^{\beta t}},
\]
we have
\begin{enumerate}
  \item for any fixed $y\geq 0$, $(\hat{X}_t(y),t\geq 0)$ is a Markov process with semigroup given by
\[\mathbb{E}[e^{-\lambda \hat{X}_{t}(y)}]=e^{-y \hat{v}_t(\lambda)-\frac{\sigma^{2}}{2}\int_{0}^{t}\hat{v}_s(\lambda)\ddr s}.\]
  \item For any fixed $t$, $(\hat{X}_t(y),y\geq 0)$ is a subordinator with Laplace exponent $\hat{v}_t$ started from the positive random variable $\hat{X}_t(0)$ whose Laplace transform is  $\mathbb{E}[e^{-\lambda \hat{X}_{t}(0)}]=e^{-\frac{\sigma^{2}}{2}\int_{0}^{t}\hat{v}_s(\lambda)\ddr s}$.
\end{enumerate}
\end{theorem}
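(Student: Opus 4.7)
The plan is to exploit the explicit compound-Poisson structure of the subordinator $X_{-t,0}$ that has been made available by the preamble of this section. Since the Feller branching mechanism always satisfies Grey's extinction condition and $\sigma^2>0$ forces the subordinator to be driftless, $x\mapsto X_{-t,0}(x)$ is a compound Poisson process starting at $0$, with jump rate $T_t := v_t(\infty) = \hat{\beta}_t e^{\beta t}$ and i.i.d.\ $\mathrm{Exp}(\hat{\beta}_t)$ jumps. I would then pass this structure through the right-continuous inverse to obtain an explicit description of $y\mapsto \hat{X}_t(y)$, which will immediately yield (ii) modulo one integral identity.

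To carry this out, write $X_{-t,0}$ as a sum of i.i.d.\ $\mathrm{Exp}(T_t)$ holding times $E_1,E_2,\dots$ separated by i.i.d.\ $\mathrm{Exp}(\hat{\beta}_t)$ jumps $J_1,J_2,\dots$, and observe that $\hat{X}_t$ is constant equal to $E_1+\cdots+E_k$ on each interval $[V_{k-1},V_k)$ with $V_k:=J_1+\cdots+J_k$. Hence $y\mapsto \hat{X}_t(y)$ is itself a compound Poisson subordinator, with jump rate $\hat{\beta}_t$ and jump distribution $\mathrm{Exp}(T_t)$, started from $\hat{X}_t(0)=E_1\sim\mathrm{Exp}(T_t)$ independently of its increments; its Laplace exponent is
\[
  \hat{\beta}_t\Bigl(1 - \tfrac{T_t}{T_t+\lambda}\Bigr) \;=\; \frac{\hat{\beta}_t\,\lambda}{T_t+\lambda} \;=\; \hat{v}_t(\lambda),
\]
after using $T_t=\hat{\beta}_t e^{\beta t}$. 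The law $\hat{X}_t(0)\sim\mathrm{Exp}(T_t)$ can also be recovered directly from Siegmund duality \eqref{duality}: $\P(\hat{X}_t(0)<x)=\P(X_t(x)>0)=1-e^{-xT_t}$. To complete (ii) I would then check that the Laplace transform $T_t/(T_t+\lambda)$ of this law coincides with $e^{-\frac{\sigma^2}{2}\int_0^t \hat{v}_s(\lambda)\,\ddr s}$. This is a direct calculation: from $\int_{T_s}^\infty \ddr u/\Psi(u)=s$ one reads $T_s'=-\Psi(T_s)$, while the explicit expressions give $\hat{\beta}_s=T_s-2\beta/\sigma^2$, hence $\Psi(T_s)=\tfrac{\sigma^2}{2}\hat{\beta}_s T_s$. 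Combining these yields $\tfrac{\sigma^2}{2}\hat{v}_s(\lambda)=-\tfrac{\ddr}{\ddr s}\log\frac{T_s}{T_s+\lambda}$, and integrating from $0$ to $t$ using $T_0=\infty$ produces the desired identity.

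Finally, for (i) the Markov property of $t\mapsto\hat{X}_t(y)$ for fixed $y$ is already given by Theorem~\ref{flowinverse}, so only the semigroup formula remains. Decomposing $\hat{X}_t(y)=\hat{X}_t(0)+\bigl(\hat{X}_t(y)-\hat{X}_t(0)\bigr)$ with the two terms independent by (ii) and multiplying the Laplace transforms computed above gives
\[
  \E\bigl[e^{-\lambda\hat{X}_t(y)}\bigr]=e^{-y\hat{v}_t(\lambda)-\frac{\sigma^2}{2}\int_0^t\hat{v}_s(\lambda)\,\ddr s},
\]
as claimed. The main point of care is the algebraic identification of the Laplace transform of $\hat{X}_t(0)$ with the exponential of the integral; once the ODE $T_s'=-\Psi(T_s)$ is invoked the verification is routine, and since the expressions $v_t$, $\hat\beta_t$ and $T_t$ are all continuous at $\beta=0$ no separate treatment of the critical case is needed.
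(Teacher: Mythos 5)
Your proposal is correct and follows essentially the same route as the paper: you invert the compound Poisson representation of $X_{-t,0}$ (jump rate $\hat{\beta}_t e^{\beta t}$, i.i.d.\ $\mathrm{Exp}(\hat{\beta}_t)$ jumps) to identify $y\mapsto\hat{X}_t(y)$ as a compound Poisson subordinator with exponent $\hat{v}_t$ started from an independent $\mathrm{Exp}(\hat{\beta}_t e^{\beta t})$ variable, which is exactly the paper's representation \eqref{eqn:backwardFellerRepresentation}. Your check of the identity $\E[e^{-\lambda \hat{X}_t(0)}]=e^{-\frac{\sigma^2}{2}\int_0^t \hat{v}_s(\lambda)\,\ddr s}$ via $T_s'=-\Psi(T_s)$ and a logarithmic derivative is a harmless variant of the paper's direct algebraic verification, and it is consistent with the $\tfrac{\sigma^2}{2}$ factor in the statement.
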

\begin{remark} The map $(\hat{v}_t(\lambda),t\geq 0)$ is solution to 
\eqref{odev} with function $\hat{\Psi}(q):=\frac{\sigma^2}{2}q^{2}+\beta q$. The two-parameter process $(\hat{X}_t(x),t\geq 0, x\geq 0)$ is a flow of continuous-state branching processes with immigration with mechanisms $\hat{\Psi}$ and linear immigration $\hat{\Phi}(q):=\frac{\sigma^{2}}{2}q$. In particular, $(\hat{X}_t(x)-\hat{X}_t(0),t\geq 0)$ is a Feller CSBP with branching mechanism $\hat{\Psi}$.
\end{remark}
\begin{proof}
As $x \mapsto X_{-t,0}(x)$ is a subordinator with Lévy-Khintchine exponent
\[
  v_t(\lambda) = \hat{\beta}_t e^{\beta t} \int_0^\infty \left(1 - e^{-\lambda r} \right) \hat{\beta}_t e^{-\hat{\beta}_t r } \ddr r,
\]
we obtain that this is in fact a compound Poisson process, with jump rate $\hat{\beta}_t e^{\beta t}$ and exponential jump distribution with parameter $\hat{\beta}_t$. Therefore, writing $(N_x^{(t)}, t \geq 0)$ an homogeneous Poisson process with intensity $\hat{\beta}_te^{-\beta t}$ and $(x^{(t)}_i, i \geq 1)$ i.i.d. exponential random variables with parameter $\hat{\beta}_t$, one can rewrite $X_{-t,0}$ as
\begin{equation}
  \label{eqn:fellerRepresentation}
  \forall x \geq 0, X_{-t,0}(x) = \sum_{j=1}^{N^{(t)}_x} x^{(t)}_j.
\end{equation}

We set $(\tau^{(t)}_j, j \geq 1)$ the sequence of inter-arrival times of $(N^{(t)}_x, x \geq 0)$ which are i.i.d. exponential random variables, and reciprocally we write $M_y^{(t)} = \sup\{ n \geq 1: \sum_{i=1}^n x^{(t)}_i \leq y \}$ for all $y \geq 0$, which is the Poisson process with inter-arrival times $(x^{(t)}_j, j \geq 1)$. We observe that by \eqref{eqn:fellerRepresentation}, $\hat{X}_{0,t}$ being the right-continuous inverse of $X_{-t,0}$, we have
\begin{equation}
  \label{eqn:backwardFellerRepresentation}
  \hat{X}_{0,t}(y) = \sum_{j=1}^{M^{(t)}_y+1} \tau^{(t)}_j.
\end{equation}
Note that we have $\hat{X}_{0,t}(0) > 0$, contrarily to $X_{0,t}(0)=0$, but that $\hat{X}_{0,t}$ is also a compound Poisson process with exponential jump rate. The construction of $X_{-t,0}$ and $\hat{X}_{0,t}$ are represented on Figure \ref{renewal}.

\begin{figure}[!ht]
\centering \noindent
\includegraphics[height=.24 \textheight]{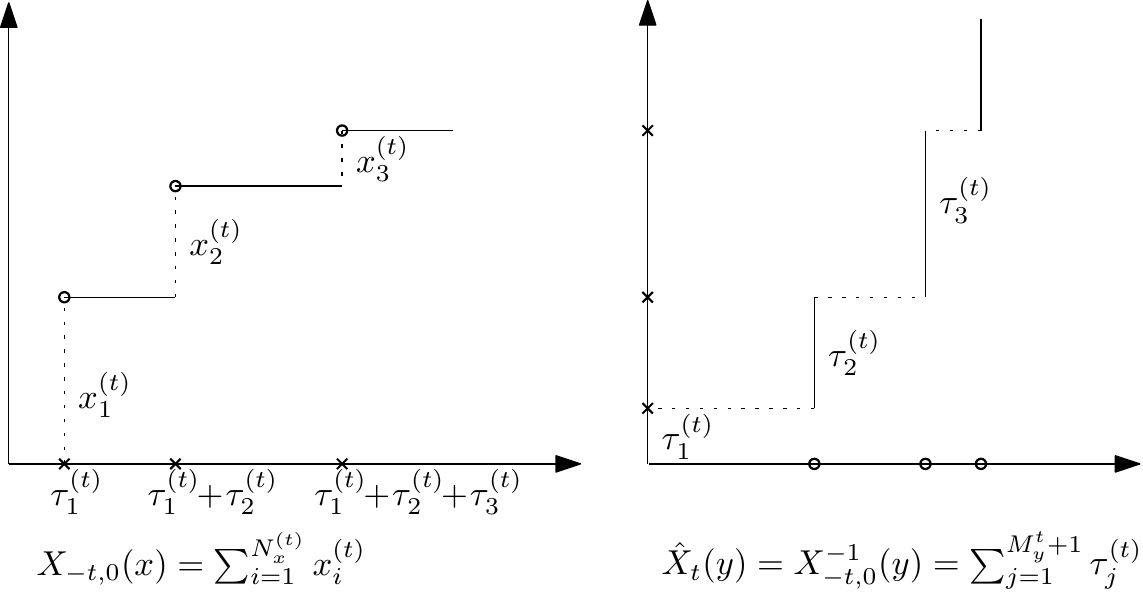}
\label{renewal}
\caption{Inverse of compound Poisson process}
\end{figure}

Note that by \eqref{eqn:backwardFellerRepresentation}, we have that
\[
  \E\left( e^{-\lambda \hat{X}_{0,t}(0)} \right) = \E\left( e^{-\lambda \tau^{(t)}_1} \right) = \frac{\hat{\beta}_t}{\hat{\beta_t} + e^{\beta t}\lambda},
\]
and moreover, for all $y \geq 0$
\[
  \mathbb{E}[e^{-\lambda (\hat{X}_t(y)-\hat{X}_t(0))}] = \exp\left( - y \frac{\lambda \hat{\beta}_t}{\hat{\beta}_t e^{\beta t} + \lambda} \right) = e^{-y \hat{v}_t(\lambda)}
\]
by straightforward Poisson computations. It remains to verify that
\[\mathbb{E}[e^{-\lambda \hat{X}_t(0)}]=e^{-\int_{0}^{t}\hat{v}_s(\lambda)\ddr s}.\]
We observe that $e^{-\int_{0}^{t}\hat{v}_s(\lambda)\ddr s}=\frac{\frac{\sigma^{2}}{2}\hat{v}_t(\lambda)+\beta}{\frac{\sigma^{2}}{2}\lambda+\beta}$, thus deduce that $\frac{\hat{\beta}_t}{\hat{\beta}_t+e^{\beta t}\lambda}=e^{-\int_{0}^{t}\hat{v}_s(\lambda)\ddr s}$  from the identity $\hat{v}_t(\lambda)=\frac{\lambda \hat{\beta}_t}{\lambda e^{\beta t}+\hat{\beta}_t}$,which concludes the proof.
\end{proof}

The inverse Feller flow being itself a flow of subordinators with explicit law, many quantities can be computed explicitly, such as the most recent common ancestor of a population. Picking two individuals $x \leq y$ at time $0$, the age $T_{x,y}$ of the most recent common ancestor of $x$ and $y$ is the first time $t$ such that there exists an individual $z$ at generation $-t$ that gave birth to both $x$ and $y$, or more precisely
\begin{equation}
  \label{eqn:tCoal}
  T_{x,y} = \inf\{ t \geq 0 : \hat{X}_{0,t}(x) = \hat{X}_{0,t}(y) \}.
\end{equation}
This definition of most recent common ancestor can naturally be generalized as follows: given $A$ a subset of $\R_+$, we set
\[
  T_A = \inf\{ t \geq 0 : \#\{\hat{X}_{0,t}(A)\} = 1\}.
\]
However, as  the partition of $\R_+$, $\cup_{z \geq 0} \left(\hat{X}_{0,t}\right)^{-1}(\{z\})$ is a partition in intervals, we have
\[
  T_A = T_{\inf A, \sup A} \quad \text{a.s.}
\]
Therefore, obtaining the law of $T_{x,y}$ will be enough to study the genealogy of the Feller flow.

\begin{proposition}
\label{prop:coalescentFeller}
For any $0 \leq x \leq y \leq z$, we have
\[
 \forall t \geq 0, \P\left( T_{x,y} \leq t\right) = e^{-\hat{\beta}_t(y-x)},
\]
and $T_{x,y}$ and $T_{y,z}$ are independent. In particular, we have
\[
  \P\left( T_{x,y} = \infty \right) =
  \begin{cases}
    e^{\frac{2\beta}{\sigma^2} (y-x)} & \text{if } \beta < 0\\
    0 &\text{if } \beta \geq 0.  
  \end{cases}
\]
\end{proposition}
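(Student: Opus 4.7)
The plan is to reduce $\{T_{x,y}\leq t\}$ to an event involving only $\hat{X}_{0,t}$ at a single time, and then to exploit the cocycle decomposition $\hat{X}_{0,s}=\hat{X}_{t,s}\circ \hat{X}_{0,t}$ together with the very explicit compound-Poisson description of $y\mapsto\hat{X}_{0,t}(y)$ furnished by Theorem~\ref{dualdiffusionflow}. First I would use the cocycle property (Lemma~\ref{flowA}(\textit{iii})) to observe that the event $\{\hat{X}_{0,s}(x)=\hat{X}_{0,s}(y)\}$ is nondecreasing in $s$, whence $\{T_{x,y}\leq t\}=\{\hat{X}_{0,t}(x)=\hat{X}_{0,t}(y)\}$. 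By Theorem~\ref{dualdiffusionflow}, $y\mapsto \hat{X}_{0,t}(y)$ is a subordinator with Laplace exponent $\hat{v}_t$ satisfying $\hat{v}_t(\infty)=\hat{\beta}_t<\infty$, hence a compound Poisson process (no drift) of jump rate $\hat{\beta}_t$ in the variable $y$. The probability of no jump on $(x,y]$ is therefore $e^{-\hat{\beta}_t(y-x)}$, yielding the marginal formula.

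For the joint law, assume without loss of generality $t\leq s$ and condition on $\hat{X}_{0,t}$. By Lemma~\ref{flowA}(\textit{ii})--(\textit{iii}), $\hat{X}_{t,s}$ is independent of $\hat{X}_{0,t}$ and has the law of the inverse Feller flow at time $s-t$. Writing $U_t=\hat{X}_{0,t}(y)-\hat{X}_{0,t}(x)$ and $V_t=\hat{X}_{0,t}(z)-\hat{X}_{0,t}(y)$, the event $\{T_{y,z}\leq s\}$ becomes $\{\hat{X}_{t,s}(b)=\hat{X}_{t,s}(c)\}$ with $b=\hat{X}_{0,t}(y)$ and $c=b+V_t$, which is equivalent to the forward subordinator $X_{-s,-t}$ straddling $(b,c]$ by a single jump. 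Here I would rely on the special feature of the Feller case: $X_{-s,-t}$ is a compound Poisson subordinator with $\mathrm{Exp}(\hat{\beta}_{s-t})$ jumps, so by the memoryless property its overshoot at any level $b$ is itself $\mathrm{Exp}(\hat{\beta}_{s-t})$, making the conditional probability equal to $e^{-\hat{\beta}_{s-t}V_t}$, independently of $b$.

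Putting this together gives $\P(T_{x,y}\leq t,\,T_{y,z}\leq s)=\E\bigl[\mathbf{1}_{\{U_t=0\}}e^{-\hat{\beta}_{s-t}V_t}\bigr]$, and by the spatial independence of increments of $y\mapsto\hat{X}_{0,t}(y)$, the variables $U_t$ and $V_t$ are independent, so the right-hand side factorizes as $e^{-\hat{\beta}_t(y-x)}\,e^{-(z-y)\hat{v}_t(\hat{\beta}_{s-t})}$. A direct algebraic check (using $\hat{\beta}_t=\tfrac{2\beta}{\sigma^2(e^{\beta t}-1)}$ and the formula for $\hat{v}_t$) yields the identity $\hat{v}_t(\hat{\beta}_{s-t})=\hat{\beta}_s$, hence $\P(T_{x,y}\leq t,\,T_{y,z}\leq s)=\P(T_{x,y}\leq t)\,\P(T_{y,z}\leq s)$, which gives both the independence and the correct joint law. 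The case $s\leq t$ is handled symmetrically. Finally, letting $t\to\infty$ in $\P(T_{x,y}>t)=1-e^{-\hat{\beta}_t(y-x)}$ and using $\hat{\beta}_t\to 0$ when $\beta\geq 0$ and $\hat{\beta}_t\to -2\beta/\sigma^2$ when $\beta<0$ leads to the displayed expression for $\P(T_{x,y}=\infty)$.

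The main obstacle I anticipate is the translation-invariance of the swallowing probability: the fact that $\P(\hat{X}_{t,s}(b)=\hat{X}_{t,s}(c)\mid b,c)$ depends on $c-b$ alone fails for a general CSBP and relies crucially on the exponential shape of the Lévy measure of the Feller subordinator. This memorylessness is precisely what the algebraic identity $\hat{v}_t(\hat{\beta}_{s-t})=\hat{\beta}_s$ encodes, and it is what makes the two-time joint law collapse into a product.
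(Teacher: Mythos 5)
Your proof is correct, and while the marginal law $\P(T_{x,y}\leq t)=e^{-\hat{\beta}_t(y-x)}$ is obtained essentially as in the paper (the paper lets $\lambda\to\infty$ in $\E[e^{-\lambda(\hat{X}_t(y)-\hat{X}_t(x))}]=e^{-(y-x)\hat{v}_t(\lambda)}$, which is the same computation as your ``no jump of the compound Poisson inverse on $(x,y]$''), your treatment of the independence is genuinely different. The paper argues in one line: by the remark following Theorem~\ref{dualdiffusionflow}, $(\hat{X}_t(y)-\hat{X}_t(x),t\geq 0)$ and $(\hat{X}_t(z)-\hat{X}_t(y),t\geq 0)$ are \emph{independent} Feller CSBPs with mechanism $\hat{\Psi}$, and $T_{x,y}$, $T_{y,z}$ are their respective absorption times at $0$, hence independent. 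You instead compute the joint law $\P(T_{x,y}\leq t,\,T_{y,z}\leq s)$ for $t\leq s$ by conditioning at the earlier time, using Lemma~\ref{flowA}(\textit{ii})--(\textit{iii}) and the memoryless overshoot of the exponential-jump compound Poisson $X_{-s,-t}$, and then check the factorization through the identity $\hat{v}_t(\hat{\beta}_{s-t})=\hat{\beta}_s$; each step is sound (note that $U_t$ and $V_t$ are indeed independent as increments of the subordinator $y\mapsto\hat{X}_{0,t}(y)$, and that the conditional probability $e^{-\hat{\beta}_{s-t}(c-b)}$ can also be read off directly from Theorem~\ref{dualdiffusionflow}(\textit{ii}) applied to the shifted flow $\hat{X}_{t,s}$, so the ``translation invariance'' you flag as the main obstacle is already packaged there). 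Your identity $\hat{v}_t(\hat{\beta}_{s-t})=\hat{\beta}_s$ is nothing but the semigroup property $\hat{v}_s=\hat{v}_t\circ\hat{v}_{s-t}$ evaluated at $\lambda=\infty$, which makes the factorization less miraculous than it looks. What the paper's route buys is brevity and a conceptual explanation (coalescence times are extinction times of independent branching processes), at the cost of leaning on the unproved flow-of-CSBPI remark; your route is more self-contained and only uses the explicit Feller/exponential structure, but for that very reason it does not suggest how the statement could extend beyond the quadratic mechanism. The computation of $\P(T_{x,y}=\infty)$ by letting $t\to\infty$ in $\hat{\beta}_t$ coincides with the paper's.
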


Among other things, this proposition proves that the population comes down from a single ancestor in critical or supercritical cases ($\beta \geq 0$), while in the subcritical case, for $\beta < 0$, the population at time $0$ can be separated into families with different ancestors at time $-\infty$.

\begin{proof}
This result is a consequence of the inverse flow representation of Theorem \ref{dualdiffusionflow}. Indeed, for all $x \leq y$ and $\lambda \geq 0$, we have
\[
  \mathbb{E}[e^{-\lambda (\hat{X}_t(y)-\hat{X}_t(x))}]=e^{-(y-x)\hat{v}_t(\lambda)},
\]
thus, letting $\lambda \to \infty$ we obtain $\mathbb{P}(T_{x,y}\leq t)=e^{-(y-x)\hat{v}_t(\infty)}$. Moreover, we observe that $\hat{v}_t(\infty) = \hat{\beta}_t$, proving the first equation.

By \eqref{eqn:tCoal}, and given that $(\hat{X}_{0,t}(y)-\hat{X}_{0,t}(x), t \geq 0)$ and $(\hat{X}_{0,t}(z)-\hat{X}_{0,t}(y), t \geq 0)$ are independent Feller CSBP with mechanism $\hat{\Psi}$, starting from $y-x$ and $z-y$ respectively, we conclude that $T_{x,y}$ and $T_{y,z}$ are independent.

To obtain $\P(T_{x,y} = \infty)$ we compute
\[
  \lim_{t \to \infty} \P(T_{x,y} \geq t) = \exp\left( - (y-x) \lim_{t \to \infty} \hat{\beta}_t \right) = \begin{cases} 0 &\text{if } \beta \geq 0 \\ e^{-(y-x)\frac{-2\beta}{\sigma^2} } & \text{if } \beta < 0, \end{cases}
\]
concluding the proof.
\end{proof}

\begin{remark}
A straightforward consequence of the above coalescent is that for any choice of $\{x_1,\ldots x_n\}$ of individuals at generation $0$, the coalescent tree of this family of individuals will only consist in binary merging. Indeed, for every pair $(x_i,x_{i+1})$ of consecutive individuals, their time of coalescence is independent from the time of coalescence of any other pair of consecutive individuals in the population, and has density with respect to the Lebesgue measure. Therefore, almost surely the first coalescing time will consists in the merging of only two neighbours.
\end{remark}

Proposition \ref{prop:coalescentFeller} readily entails the representation of the genealogical tree of the population at time $0$ as a functional of a Poisson point process. The following construction is reminiscent of the comb representation  by Lambert and Uribe Bravo \cite{LambertUribe}.

\begin{proposition}
\label{lem:comb}
There exists a Poisson point process  $N$ with intensity $\ddr x\otimes \mu(\ddr t)$ on $\R_+ \times (\R_+ \cup \{\infty\})$ where 
$$\mu(\ddr t)=\begin{cases}\frac{2\beta^{2}}{\sigma^{2}}\frac{e^{\beta t}}{(e^{\beta t}-1)^{2}}\ddr t &\text{ if } \beta\neq 0\\
\frac{2}{\sigma^{2}t^{2}}\ddr t &\text{ if } \beta=0,\\
\frac{2\beta^{2}}{\sigma^{2}}\frac{e^{\beta t}}{(1-e^{\beta t})^{2}}\ddr t + \frac{2|\beta|}{\sigma^2}\delta_{\infty} &\text{ if } \beta< 0
\end{cases}$$
such that almost surely, for any $0 \leq x \leq y$ and $t \geq 0$, we have
\[
  T_{x,y} <t \iff N([x,y] \times [t,\infty]) = 0.
\]
\end{proposition}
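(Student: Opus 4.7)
The plan is to construct $N$ explicitly from the flow and then verify, separately, that $N$ has the stated Poisson law and that it encodes the same coalescence times as the Feller flow. For each $s>0$, let $B_s\subset(0,\infty)$ denote the set of jump locations of the subordinator $y\mapsto \hat X_{0,s}(y)$; equivalently $B_s=\{X_{-s,0}(\tau_j),\,j\geq 1\}$ where $(\tau_j)$ are the jump times of the compound Poisson process $x\mapsto X_{-s,0}(x)$. The cocycle relation $X_{-t,0}=X_{-s,0}\circ X_{-t,-s}$ forces $B_t\subseteq B_s$ whenever $s\leq t$, so each $b\in\mathcal B:=\bigcup_{s>0}B_s$ has a well-defined disappearance time $t_0(b):=\sup\{t>0:b\in B_t\}\in(0,\infty]$. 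The candidate point process is $N:=\sum_{b\in\mathcal B}\delta_{(b,t_0(b))}$.

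For the Poisson property, Theorem~\ref{dualdiffusionflow} shows that $\hat X_{0,s}$ is a compound Poisson subordinator with jump rate $\hat\beta_s$ and exponential jump sizes, so for each fixed $s$ the set $B_s$ is a Poisson point process on $(0,\infty)$ with intensity $\hat\beta_s\,\ddr x$. To control the marks the plan is to decompose via the cocycle $\hat X_{0,t}=\hat X_{s,t}\circ \hat X_{0,s}$: by Definition~\ref{flowdef2}(ii) and stationarity, the backward flow $(\hat X_{s,t})_{t\geq s}$ is independent of $\hat X_{0,s}$ and has the same law as $(\hat X_{0,t-s})$. The boundary $b_j^s:=X_{-s,0}(\tau_j)$ disappears precisely when $\tau_j$ and $\tau_{j+1}$ coalesce in this independent backward flow, so $t_0(b_j^s)=s+T'_{\tau_j,\tau_{j+1}}$ with $T'$ the coalescence time of an independent Feller flow. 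The key observation is that inside the compound Poisson process $X_{-s,0}$ the jump times $(\tau_j)$ are independent of the jump sizes $(\xi_j)$; the $\xi_j$'s determine the positions $b_j^s$, while the ancestral gaps $\tau_{j+1}-\tau_j$, which are iid exponential with parameter $v_s(\infty)=\hat\beta_s e^{\beta s}$, together with the independent backward flow determine the marks. Consequently, conditionally on $B_s$ the marks $(t_0(b_j^s))_j$ are iid and independent of the positions, and a Laplace-transform computation using Proposition~\ref{prop:coalescentFeller} gives
\[
\P\bigl(t_0(b_j^s)\geq t\bigr)=1-\E\bigl[e^{-(\tau_{j+1}-\tau_j)\hat\beta_{t-s}}\bigr]=\frac{\hat\beta_{t-s}}{\hat\beta_s e^{\beta s}+\hat\beta_{t-s}}=\frac{\hat\beta_t}{\hat\beta_s},\qquad t\geq s,
\]
which matches $\mu([t,\infty])/\mu([s,\infty])$. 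Hence $N|_{\R_+\times[s,\infty]}$ is a PPP with intensity $\ddr x\otimes\mu|_{[s,\infty]}$, and letting $s\downarrow 0$ yields the full Poisson property on $\R_+\times(0,\infty]$; the atom $\frac{2|\beta|}{\sigma^2}\delta_\infty$ appearing for $\beta<0$ corresponds to the density $\hat\beta_\infty=2|\beta|/\sigma^2$ of eternal boundaries.

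The equivalence is then immediate: for $0\leq x<y$ and $t>0$, $T_{x,y}<t$ is equivalent to the existence of some $s<t$ at which $x$ and $y$ lie in a common cell, equivalently to the absence of any boundary of the partition in $(x,y]$ persisting up to time $t$, equivalently to $N((x,y]\times[t,\infty])=0$. Since $\mathcal B$ is a countable union of Poisson point processes and $N$ almost surely has distinct first coordinates, for each fixed $(x,y,t)$ one has $N([x,y]\times[t,\infty])=N((x,y]\times[t,\infty])$ almost surely, giving the claimed equivalence.

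The main delicate step will be the marking argument: showing that conditionally on $B_s$ the marks are iid and independent of the positions relies on the compound Poisson decoupling between the spatial jump sizes $\xi_j$ (which determine $B_s$) and the ancestral gaps $\tau_{j+1}-\tau_j$ (which, together with the independent backward flow, determine $t_0(b_j^s)$). It is this decoupling that elevates $N$ from a correlated marked process with the correct first marginal to a genuine two-dimensional Poisson point process with product intensity $\ddr x\otimes\mu(\ddr t)$.
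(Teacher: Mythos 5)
Your construction is correct in substance, but it takes a genuinely different route from the paper. The paper never returns to the flow after Proposition~\ref{prop:coalescentFeller}: it builds a simple point process $G$ directly from the field $(T_{x,y})$, using the a.s.\ identity $T_{x,z}=\max(T_{x,y},T_{y,z})$ and the independence of consecutive coalescence times to compute the avoidance function $\P(G([x,y]\times[t,\infty])=0)=\P(T_{x,y}<t)$, and then identifies $G$ with the Poisson process $N$ \emph{in law} via a monotone-class argument and the fact that avoidance functions characterize simple point processes (\cite[Theorem 10.9]{Kallenberg}). You instead build $N$ pathwise as the boundaries $B_s$ of the interval partition (the jump locations of $\hat{X}_{0,s}$), marked by their disappearance times, and get the Poisson structure from the marking theorem: $B_s$ is a Poisson process with intensity $\hat{\beta}_s\,\ddr x$ by Theorem~\ref{dualdiffusionflow}, the marks are i.i.d.\ and independent of the positions because the positions depend only on the jump sizes of $X_{-s,0}$ while the marks depend only on its jump times and on the independent flow $(\hat{X}_{s,t})_{t\geq s}$, and your computation $\P(t_0\geq t)=\hat{\beta}_{t-s}/(\hat{\beta}_se^{\beta s}+\hat{\beta}_{t-s})=\hat{\beta}_t/\hat{\beta}_s=\mu([t,\infty])/\mu([s,\infty])$ checks out in all three regimes (indeed $\mu([t,\infty])=\hat{\beta}_t$, the atom at $\infty$ supplying $\hat{\beta}_\infty=2|\beta|/\sigma^2$ when $\beta<0$), as does the identification $t_0(b^s_j)=s+T'_{\tau_j,\tau_{j+1}}$ up to null endpoint events. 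The trade-off: the paper's argument is shorter, needs only the two-point law, and produces $N$ only through a distributional identity, whereas yours is constructive and exhibits the atoms of $N$ as partition boundaries with their extinction times, which is closer to Popovic's coalescent point process and to the consecutive-coalescent picture of Section~\ref{sec:markovianCoalescent}, at the price of extra bookkeeping. To make your version complete you should record three fillable points: the mutual independence of $(T'_{\tau_j,\tau_{j+1}})_{j\geq 1}$ (not just for a triple), which follows from the independent CSBP increments used in the proof of Proposition~\ref{prop:coalescentFeller}; the passage $s\downarrow 0$, justified by consistency of the restrictions and $\mu|_{[s,\infty]}\uparrow\mu$; and, in the reverse implication of the equivalence, the local finiteness of $N$ away from $t=0$ (immediate from your construction, since atoms with mark at least $s$ sit on $B_s$), which rules out disappearance times in $(x,y]$ accumulating at $t$ from below. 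The closed-versus-half-open endpoint subtlety you flag is glossed over in the paper's proof as well and is harmless once the equivalence is read on $(x,y]$.
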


In other words, the coalescent time of $x$ and $y$ is given by the position of the largest atom in the point process $N([x,y] \times \cdot)$. In particular, in the critical case ($\beta = 0$), this result recovers the Brownian coalescent point process of Popovic \cite[Lemma~4 and Theorem~5]{MR2100386}.  In the subcritical case ($\beta < 0$) when two individuals have no common ancestor, there are separated by an infinite atom of the point process $N([x,y] \times \cdot)$.
\begin{proof}
We observe from Proposition \ref{prop:coalescentFeller} that for all $x \leq y \leq z$, $T_{x,y}$ and $T_{y,z}$ are independent and $T_{x,z} \egaldistr \max(T_{x,y},T_{y,z})$. Moreover, note by definition that $T_{x,z} \geq \max(T_{x,y},T_{y,z})$ a.s. This yields that for all $x \leq y \leq z$, 
\begin{equation}
  \label{eqn:equalityDefiningthePP}
  T_{x,z} = \max(T_{x,y}, T_{y,z}) \quad \text{a.s.}
\end{equation}
We consider the event of probability one for which the above equation is true simultaneously for all $x,y,z \in \mathbb{Q}_+$. As a result, the field $T_{x,y}$ is decreasing in $x$ and increasing in $y$. Therefore, there exists a càdlàg modification of the field satisfying \eqref{eqn:equalityDefiningthePP} simultaneously for all $x,y,z \in \R_+$.

As a result, we can construct a simple point process $G$ on $\R_+ \times (\R_+ \cup \{\infty\})$ satisfying
\[
  T_{x,y} < t \iff G([x,y] \times [t,\infty]) = 0,
\]
via the construction $G = \sum_{z \geq 0} \ind{\exists \epsilon > 0: T_{z-\epsilon,z-} < T_{z-\epsilon,z}} \delta_{z,T_{z-\epsilon,z}}$. The point process $G$ is simple (i.e. each atom in the point process has mass one). Moreover, note that
\begin{align*}
  \P(G([x,y] \times [t,\infty] = 0) &= \P(T_{x,y}<t) = \exp\left( - \int_t^\infty \int_x^y \hat{\beta}_s' \ddr z \ddr s - \int_x^y \hat{\beta}_\infty \ddr s  \right)\\
  &= \P(N ([x,y] \times [t,\infty])= 0),
\end{align*}
where $N$ is a Poisson point process with intensity $\ddr x \otimes (\hat{\beta}_t' \ddr t + \hat{\beta}_\infty \delta_{\infty}(\ddr t))$. Hence, by monotone classes theorem, for all measurable relatively compact set $B \subset \R_+ \times (\R_+ \cup \{\infty\})$, we have
\[
  \P(G(B) = 0) = \P(N(B) = 0).
\]
As a result, by \cite[Theorem 10.9]{Kallenberg}, we have $N \egaldistr G$, which concludes the proof.
\end{proof}

Pitman and Yor \cite[Sections 3 and 4]{MR656509}, see also \cite{MR3164759} for a more general setting, have shown that any flow of Feller's branching diffusions can be represented through a Poisson point process on $]0,\infty[\times \mathrm{C}$, where $\mathrm{C}$ denotes the space of continuous paths on $\mathbb{R}_{+}$. In our setting, the flow $(\hat{X}_t(x)-\hat{X}_t(0),t\geq 0)$ can be represented as follows: for all $t>0$,
\[\hat{X}_t(x)=\hat{X}_t(0)+\sum_{\substack{x_i\leq x \\ i\in I}}\hat{X}^{i}_t\]
where $\mathcal{N}=\sum_{i\in I}\delta_{(x_i,\hat{X}^{i})}$ is a PPP with intensity $\ddr x\otimes \mathbf{n}(\ddr X)$ and $\mathbf{n}$ is the so-called cluster measure (see \cite{MR3164759}). The atoms $(\hat{X}^i, i\in I)$ can be interpreted as the ancestral lineages of the initial individuals $(x_i,i\in I)$. They are independent Feller diffusions with mechanism $\hat{\Psi}$ starting from infinitesimal masses. For any $i\in I$, denote by $\zeta_i:=\inf\{t\geq 0; \hat{X}^{i}_t=0\}$. The time $\zeta_i$ represents a binary coalescence time between two ``consecutive'' individuals. By definition of $\mathbf{n}$, for any $t>0$, $\mathbf{n}(\zeta>t)=\hat{v}_t(\infty)$ and therefore $\sum_{i\in I}\delta_{(x_i, \zeta_i)}$ has the same law as $N$. We represent the ancestral lineages and their coalescences in Figure \ref{renewalfig}. Recall also from Remark \ref{inverseflowfrom0} that $\hat{X}_t(0)$ is the first individual from generation $-t$ to have descendants at time $0$.

\begin{figure}[!ht]
\centering \noindent
\includegraphics[height=.20 \textheight]{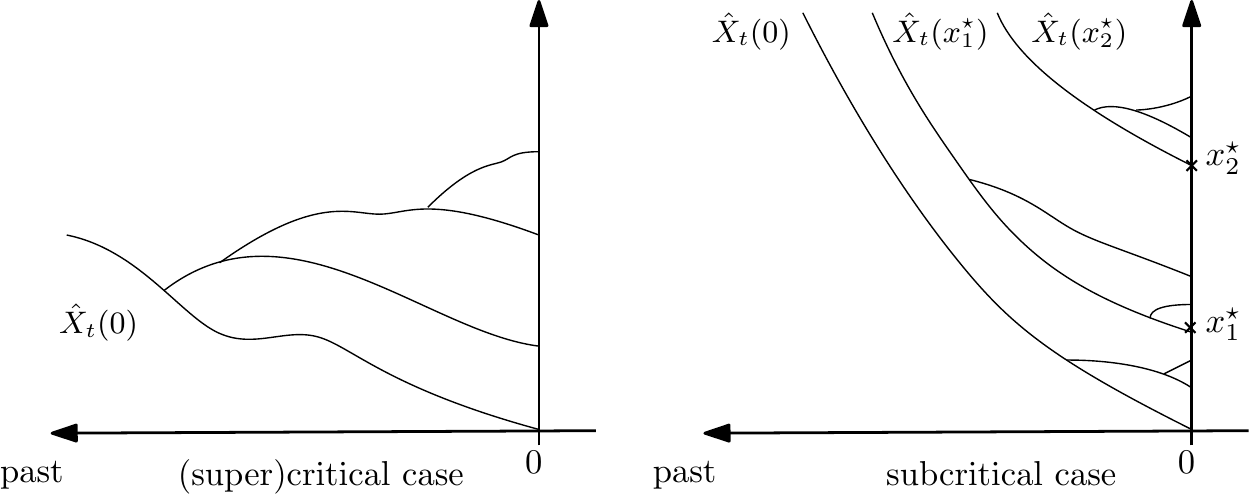}
\caption{Symbolic representation of ancestral lineages and their binary coalescences}
\label{renewalfig}
\end{figure}

In the subcritical case, $(\beta<0)$, $(\hat{X}_t(x)-\hat{X}_t(0),x\geq 0,t\geq 0)$ is a flow of supercritical CSBPs. Following Bertoin et al. \cite{MR2455180} (see also Pardoux \cite[Section 7]{MR2507503}), one can define the random sequence $(x^{\star}_n,n\geq 1)$ recursively as follows:
\[x_1^{\star}:=\inf\{x\geq 0; \hat{X}_t(x)-\hat{X}_t(0)\underset{t\rightarrow \infty}{\longrightarrow} \infty\}\text{ and }x_{n+1}^{\star}:=\inf\{x\geq x_n^{\star}; \hat{X}_t(x)-\hat{X}_t(0)\underset{t\rightarrow \infty}{\longrightarrow} \infty\}.\]
The random sequence $(x^{\star}_n,n\geq 1)$ is known as the initial prolific individuals of the flow of supercritical CSBPs  $(\hat{X}_t(x)-\hat{X}_t(0),x\geq 0,t\geq 0)$ and corresponds to the jumps times of a Poisson process with intensity $-\frac{2\beta}{\sigma^2}$. Within the framework of inverse flow, the random partition of $\mathbb{R}_+$: $([0,x_1^{\star}[,[x_1^{\star},x_{2}^{\star}[,...)$ corresponds to current families with distinct common ancestors. Note that the sequence $(x_n^\star, n \geq 1)$ is also the sequence of atoms of the point process $N(\cdot \times\{\infty\})$, defined in Lemma \ref{lem:comb}.

\bigskip

We observed in this section that the law of the flow $\hat{X}$ is explicit when $X$ is a Feller flow. When the branching mechanism $\Psi$ is not of the quadratic form, multiple births occur in the population. Thus, when time runs backward, coalescences of multiple lineages should arise. The law of the inverse flow $\hat{X}$ becomes then more involved. In the next section, we construct a simple class of Markovian coalescents which will allow us to encode easily multiple coalescences in lineages backward in time. The law of the lineage's location $(\hat{X}_t(x),t\geq 0)$ for a fixed individual $x\geq 0$ is studied further in Section \ref{martingaleproblem}.

\section{Consecutive coalescents}
\label{sec:markovianCoalescent}

In this section we study the genealogy of branching processes both forward and backward in time, using random partitions of consecutive integers. We shall see how to define a coalescent process in this framework and that the associated coalescent theory is elementary. In a second time, 
we apply these results to the genealogy of a population in a continuous-state branching process sampled according to a Poisson point process with intensity $\lambda$. In a third time, by making the parameter $\lambda$ increase to $\infty$, we obtain a full description of the genealogical tree of individuals in a CSBP under Grey's condition. 

\subsection{Consecutive coalescents in continuous-time Galton-Watson processes}

In this section, we construct a class of simple Markovian coalescents arising when studying the genealogy backward in time of continuous-time Galton-Watson processes. We begin by introducing the classical notation for coalescent processes on the space of partitions. For a more precise description of that framework, in the context of exchangeable coalescents, we refer to Bertoin's book \cite[Chapter 4]{MR2253162}, from which we borrow our definitions and notation.

Let $n \in \N \cup \{\infty\}$, we denote by $[n] = \{j \in \N : j \leq n\}$ the set of integer smaller or equal to $n$. We call \emph{consecutive partition} of $[n]$ a collection $C$ of disjoint subsets $\{C_1,C_2,\ldots\}$ with consecutive integers (i.e. intervals of $[n]$), such that $\bigcup C_i = [n]$. Without loss of generality, we will always assume that the subsets of the collection $C$ are ranked in the increasing order of their elements. We denote by $\mathcal{C}_n$ the set of consecutive partitions of $[n]$. Note that any $C \in \mathcal{C}_n$ is characterized by the ranked sequence of its blocks sizes $(\#C_1,\#C_2,\ldots)$, as
\[
   \forall j \in \N, \quad C_j = \left\{k \in \N : \#C_1 + \cdots + \#C_{j-1} < k \leq \#C_1 + \cdots + \#C_{j} \right\}.
\]
Clearly, a consecutive partition has at most one block with infinite size. For any $i,j\in [n]$, we write $i\overset{C}{\sim} j$ if and only if $i$ and $j$ belongs to the same block of $C$. For any $n\in \mathbb{N}\cup \{\infty\}$, we set $0_{[n]}=(\{1\},\{2\},...,\{n\})$ and $1_{[n]}=\{[n]\}$. We introduce some classical operations on $\mathcal{C}_n$. For each $k \leq n$ and $C \in \mathcal{C}_n$, we denote by
\[
  C_{|[k]}  = \left\{ C_j \cap [k], j \in \N \right\},
\]
the restriction of $C$ to $[k]$ and
\[ \#C_{|[k]}:=\#\{j \in \N : C_j\cap [k] \neq \emptyset\}, \]
the number of blocks of $C_{|[k]}$. Note that for any $m\leq k$, $(C_{|[k]})_{|[m]}=C_{|[m]}\in \mathcal{C}_m$.

We define a distance on $\mathcal{C}_\infty$ the set of consecutive partitions of $\N$ by setting
\[
  d(C,C') = \sup\{ n \in \N : C_{|[n]} = C'_{|[n]}\}^{-1}.
\]
Note that the metric space $(\mathcal{C}_\infty,d)$ is compact. We next introduce the coagulation operation. For any $C \in \mathcal{C}_n$ and $C' \in \mathcal{C}_{n'}$ such that $\#C\leq n'$, we define the partition $\Coag(C,C')$ by
\[
\Coag(C,C')_j = \bigcup_{i \in C'_j} C_i \quad \text{ for any } j \in \N.
\]
It is straightforward that $\Coag(C,C') \in \mathcal{C}_n$, as each of its blocks are the union of a consecutive sequence of consecutive blocks. Thus, $\Coag$ defines an internal composition law on $\mathcal{C}_\infty$. Moreover for any $C$, $C'$ such that $\#C\leq \#C'$ and $n\geq 1$
\[\Coag(C,C')_{|[n]}=\Coag(C_{|[n]}, C'_{|[n]})=\Coag(C_{|[n]}, C').\]
The operator $\Coag$ is therefore Lipschitz continuous with respect to $d$ and we easily see that it is associative. For any partition $C\in \mathcal{C}_n$, $\Coag(C,0_{[n]})=C$ and $\Coag(C,1_{[n]})=1_{[n]}$.

We are interested in random consecutive partitions such that blocks sizes $(\#C_j, j \geq 1)$ are i.i.d. random variables in $\N \cup \{\infty\}$. We observe that if $C$ and $C'$ are two independent random consecutive partitions with i.i.d. block sizes, then
\[
  \# \Coag(C,C')_j = \sum_{i \in C'_j} \#C_i \overset{\mathcal{L}}{=} \sum_{i \in C'_1} \#C_i,
\]
hence $\Coag(C,C')$ is a random consecutive partition with blocks coarser than those of $C$, and with i.i.d. sizes. In view of the very particular form of a consecutive partition, it is legitimate to question whether the framework of partitions is needed. However, the use of the operator $\Coag$ enables us to encode easily multiple coalescences and to follow closely the theory of exchangeable coalescents and its terminology. This encoding simplifies the main formulas we obtain when studying the genealogy of a continuous-state branching population.
 
\begin{definition}
\label{consecutivecoal}
A Markov process $(C(t),t\geq 0)$ with values in $\mathcal{C}_\mathbb{N}$ is called consecutive coalescent if its semigroup is given as follows: the conditional law of $C(t+s)$ given $C(t)=C$ is the law of $\Coag(C,C')$ where $C'$ is some random consecutive partition with i.i.d blocks sizes and whose law may depend on $t$ and $s$. A consecutive coalescent is said to be homogeneous if $C'$ depends only on $s$ and standard if $C(0)=0_{[\infty]}$.
\end{definition}

We now recall further well-known material on continuous-time Galton-Watson processes. We refer to Athreya and Ney \cite[Chapter III]{MR2047480} for more details on these processes. Consider a finite measure $\mu$ on $\mathbb{Z}_+$ such that $\mu(1)=0$. A continuous-time Galton-Watson process $(Z_t(n),t\geq 0)$ with reproduction measure $\mu$, is a Markov process counting the number of individuals in a random population with $n$ ancestors where all individuals behave independently, and each individual has an exponential lifetime $\zeta$ with parameter $\mu(\mathbb{Z}_+)$ and begets at its death a random number of children with probability distribution $\mu/\mu(\mathbb{Z}_+)$. The process $(Z_t(n),t\geq 0)$ is characterized in law by $\mu$ and thus by the function
\[
  \psi(x)=-\sum_{k=0}^{\infty}(x^{k}-x)\mu(k),\qquad x\in [0,1].
\]
The process $(Z_t(n),t\geq 0)$ satisfies the branching property
\begin{equation}
  \label{eqn:discretebranchProp}
  \forall n,m\geq 0, \forall t\geq 0, \  Z_t(n+m)\egaldistr Z_t(n)+\tilde{Z}_t(m) \qquad\text{ for any } n,m\in \mathbb{Z}_+
\end{equation}
where $(\tilde{Z}_t(m),t\geq 0)$ is a continuous-time Galton-Watson process independent of $(Z_t(n),t\geq 0)$, and with the same law as $(Z_t(m),t\geq 0)$. This entails that the generating function of $Z_t(n)$ for any $t\geq 0$ has the form 
\[
  \mathbb{E}[x^{Z_t(n)}]=u_t(x)^{n}, \qquad x\in [0,1], n\in \mathbb{Z}_+
\]
where for all $t \geq 0$,  $u_{t}(s)$ is the solution of $\int_{u_{t}(s)}^{s}\frac{\ddr z}{\psi(z)}=t$ for any $t\geq 0$. When $\mu$ has no mass at $0$, the process 
is called \textit{immortal}. Each individual has at least two children and $(Z_t(n),t\geq 0)$ is non-decreasing in time.

With the same procedure as in Definition~\ref{flowdef2}, we can represent the family of continuous-time branching processes by considering a flow of random walks $(Z_{s,t}(n),t\geq s, n\geq 1)$ satisfying the following properties:
\begin{enumerate}
  \item  for any $s\leq t$, $(Z_{s,t}(n),n\geq 0)$ is a continuous-time random walk whose jump law has support included in $\mathbb{N}$ and generating function $u_{t-s}$.
  \item  For every $t_1<t_2<...<t_p$, the random walks $(Z_{t_i,t_{i+1}}, i < p)$ are independent and satisfy
\[\forall n \geq 0, \ Z_{t_1,t_p}(n)=Z_{t_{p-1},t_p}\circ ...\circ Z_{t_{1},t_2}(n).\]
  \item For any $n \geq 1$ and $s \in \R$, $(Z_{s,t+s}(n),t\geq 0)$ is an immortal homogeneous continuous-time Galton-Watson process started from $n$ individuals.
\end{enumerate}

We now construct a flow of partitions describing the genealogy of an immortal continuous-time Galton-Watson process constructed via this flow of random walks. For any $s\leq t$, set
\[\overset{\rightarrow}{C}(s,t):=\left([|Z_{s,t}(i-1)+1,Z_{s,t}(i)|],i\geq 1\right),\]
with $Z_{s,t}(0)=0$. Then by (ii), for any $r < s < t$,
\[\overset{\rightarrow}{C}(r,t)=\Coag(\overset{\rightarrow}{C}(s,t),\overset{\rightarrow}{C}(r,s))\]
i.e. $\overset{\rightarrow}{C}_j(r,t)= \bigcup_{i\in \overset{\rightarrow}{C}_j(r,s)}\overset{\rightarrow}{C}_i(s,t)$
and by definition $\#\overset{\rightarrow}{C}_i(s,t)=Z_{s,t}(i)-Z_{s,t}(i-1)$ for any $s \leq t$ and any $i\geq 1$. We introduce the time-reversed flow of partition by defining for all $s\leq t$,
\[C(s,t)=\overset{\rightarrow}{C}(-t,-s).\]
We sum up the main straightforward properties of $C$ in the following proposition.

\begin{proposition}\label{flowpartitions}
The stochastic flow of consecutive partitions  $(C(s,t), -\infty \leq s\leq t\leq \infty)$ satisfies: 
\begin{enumerate}
  \item For any $s\leq u\leq t$ 
\[C(s,t)=\Coag(C(s,u),C(u,t)) \text{ a.s}\]
  \item If $s_1<s_2<...<s_n$, the partitions $C(s_1,s_2)$,..., $C(s_{n-1},s_n)$ are independent.
  \item The random variables $(\#C_i(s,t), i\geq 1)$ are valued in $\mathbb{N}$ and i.i.d.  
  \item $C(0,0)=0_{[\infty]}$nd $C(s,t)\rightarrow 0_{[\infty]}$ when $t-s\rightarrow 0$.
  \item  The random variable $C(s,t)$ has the same law as $C(0,t-s)$.
\end{enumerate}
\end{proposition}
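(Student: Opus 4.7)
The plan is to derive each of the five properties from the analogous property of the forward flow $\overset{\rightarrow}{C}(s,t) = ([|Z_{s,t}(i-1)+1, Z_{s,t}(i)|], i \geq 1)$, which itself inherits its structure from the flow of random walks $(Z_{s,t})$. The only ingredients needed are the time-reversal identity $C(s,t) = \overset{\rightarrow}{C}(-t,-s)$ and the elementary algebraic and continuity properties of the operator $\Coag$ recalled before the proposition, so the whole proof is essentially bookkeeping.

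For (i), I apply the cocycle identity $\overset{\rightarrow}{C}(r,t) = \Coag(\overset{\rightarrow}{C}(s,t),\overset{\rightarrow}{C}(r,s))$ (stated in the paragraph preceding the proposition) to the triple $-t \leq -u \leq -s$, which after relabeling yields $C(s,t) = \Coag(C(s,u), C(u,t))$. Property (ii) for $s_1 < \dots < s_n$ follows from the independence of the variables $\overset{\rightarrow}{C}(-s_{j+1},-s_j)$, $1 \leq j \leq n-1$, which is exactly point (ii) in the definition of the flow of random walks. For (iii), the $i$-th block of $\overset{\rightarrow}{C}(s,t)$ has size $Z_{s,t}(i) - Z_{s,t}(i-1)$, and since $(Z_{s,t}(n), n \geq 0)$ is by construction a random walk whose jump distribution is supported in $\mathbb{N}$, these increments are i.i.d.\ and $\mathbb{N}$-valued. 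Property (v) is the homogeneity in time of the Galton-Watson flow, which gives $\overset{\rightarrow}{C}(s,t) \overset{\mathcal{L}}{=} \overset{\rightarrow}{C}(0,t-s)$ and hence $C(s,t) \overset{\mathcal{L}}{=} C(0,t-s)$.

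The only step involving a small computation is (iv). The convention $Z_{s,s}(n) = n$ gives $\overset{\rightarrow}{C}(0,0)_i = \{i\}$ for every $i$, hence $C(0,0) = 0_{[\infty]}$. For the convergence $C(s,t) \to 0_{[\infty]}$ in $(\mathcal{C}_\infty,d)$ as $t-s \to 0$, by the definition of $d$ it suffices to show that for every fixed $N \in \mathbb{N}$, $\mathbb{P}(C(s,t)_{|[N]} = 0_{[N]}) \to 1$; equivalently, $\mathbb{P}(Z_{-t,-s}(N) = N) \to 1$. Since each of the $N$ ancestors has an independent exponential lifetime with parameter $q := \mu(\mathbb{Z}_+) < \infty$, the event $\{Z_{-t,-s}(N) = N\}$ contains the event that none of these $N$ ancestors has died during an interval of length $t-s$, whose probability $e^{-Nq(t-s)}$ tends to $1$. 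I do not expect any genuine obstacle beyond this short computation; the proposition is really a translation lemma between the forward and backward points of view.
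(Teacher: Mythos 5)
Your proof is correct and is exactly the argument the paper has in mind: the paper offers no written proof (it calls these "straightforward properties" of the construction), and your translation of each item through $C(s,t)=\overset{\rightarrow}{C}(-t,-s)$ back to the defining properties of the flow of random walks $(Z_{s,t})$ is the intended route. The only nitpick is in (iv): $C(s,t)_{|[N]}=0_{[N]}$ is equivalent to $Z_{-t,-s}(N-1)=N-1$ rather than to $Z_{-t,-s}(N)=N$, but since $\{Z_{-t,-s}(N)=N\}$ is contained in that event and its probability tends to $1$, your estimate still yields the convergence.
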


The Markov process $(C(t), t\geq 0)$ defined by $C(t):=C(0,t)$ for any $t\geq 0$ is an homogeneous standard consecutive coalescent in the sense of Definition \ref{consecutivecoal}. Note that by (i), for any $s,t\geq 0$, $$C(t+s)=\Coag(C(t),C(t,t+s))$$
namely for any $j\geq 1$,
\begin{equation}
  \label{reversedbranching}
  C_j(t+s)=\bigcup_{i\in C_j(t,t+s)}C_i(t),
\end{equation}
so that consecutive blocks are merging as time runs. 

\begin{remark}
One can readily check from \eqref{reversedbranching} that for any $i\geq 1$, $s\geq 0$ and $t\geq 0$
\begin{equation}
  \label{box}
  \#C_i(t+s)=\sum_{m=1}^{\#C_i(t,t+s)}\#C_{m+\sum_{k=1}^{i-1}\#C_k(t,t+s)}(t).
\end{equation}
Processes satisfying \eqref{box} have been studied  in discrete time by Grosjean and Huillet \cite{MR3581248}. 
\end{remark}

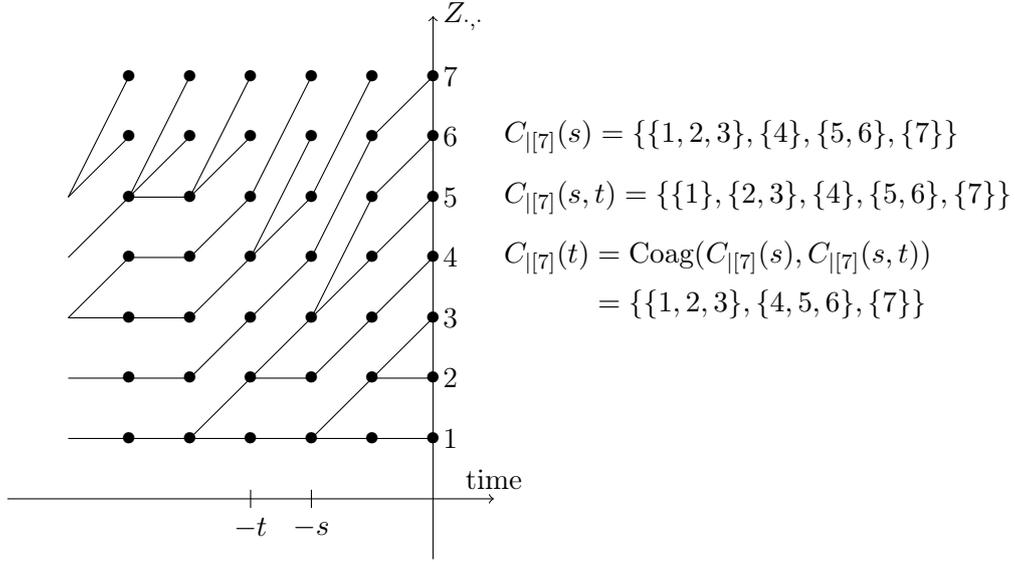
\begin{figure}
\centering
\begin{tikzpicture}[scale=0.8]
\draw [->] (-7,0) -- (1,0) node[above] {time};
\draw [->] (0,-1) -- (0,8) node[right] {$Z_{\cdot,\cdot}$};
\draw (0,1) node[right] {1};
\draw (0,2) node[right] {2};
\draw (0,3) node[right] {3};
\draw (0,4) node[right] {4};
\draw (0,5) node[right] {5};
\draw (0,6) node[right] {6};
\draw (0,7) node[right] {7};

\draw (-2,0.15) -- (-2,-0.15) node[below] {$-s$};
\draw (-3,0.15) -- (-3,-0.15) node[below] {$-t$};
\draw (1,6) node[right] {$C_{|[7]}(s) =\{ \{1,2,3\},\{4\},\{5,6\},\{7\} \}$};
\draw (1,5) node[right] {$C_{|[7]}(s,t) =\{ \{1\},\{2,3\},\{4\},\{5,6\},\{7\} \}$};
\draw (1,4) node[right] {$C_{|[7]}(t) = \Coag(C_{|[7]}(s),C_{|[7]}(s,t))$};
\draw (1,3.2) node[right] {$\phantom{C_{|[7]}(t) } = \{ \{1,2,3\},\{4,5,6\},\{7\} \}$};

\draw (-1,1) -- (0,1) node {$\bullet$};
\draw (-1,2) -- (0,2) node {$\bullet$};
\draw (-1,2) -- (0,3) node {$\bullet$};
\draw (-1,3) -- (0,4) node {$\bullet$};
\draw (-1,4) -- (0,5) node {$\bullet$};
\draw (-1,5) -- (0,6) node {$\bullet$};
\draw (-1,6) -- (0,7) node {$\bullet$};
\draw (-2,1) -- (-1,1) node {$\bullet$};
\draw (-2,1) -- (-1,2) node {$\bullet$};
\draw (-2,2) -- (-1,3) node {$\bullet$};
\draw (-2,3) -- (-1,4) node {$\bullet$};
\draw (-2,3) -- (-1,5) node {$\bullet$};
\draw (-2,4) -- (-1,6) node {$\bullet$};
\draw (-2,5) -- (-1,7) node {$\bullet$};
\draw (-3,1) -- (-2,1) node {$\bullet$};
\draw (-3,2) -- (-2,2) node {$\bullet$};
\draw (-3,2) -- (-2,3) node {$\bullet$};
\draw (-3,3) -- (-2,4) node {$\bullet$};
\draw (-3,4) -- (-2,5) node {$\bullet$};
\draw (-3,4) -- (-2,6) node {$\bullet$};
\draw (-3,5) -- (-2,7) node {$\bullet$};
\draw (-4,1) -- (-3,1) node {$\bullet$};
\draw (-4,1) -- (-3,2) node {$\bullet$};
\draw (-4,2) -- (-3,3) node {$\bullet$};
\draw (-4,3) -- (-3,4) node {$\bullet$};
\draw (-4,4) -- (-3,5) node {$\bullet$};
\draw (-4,5) -- (-3,6) node {$\bullet$};
\draw (-4,5) -- (-3,7) node {$\bullet$};
\draw (-5,1) -- (-4,1) node {$\bullet$};
\draw (-5,2) -- (-4,2) node {$\bullet$};
\draw (-5,3) -- (-4,3) node {$\bullet$};
\draw (-5,4) -- (-4,4) node {$\bullet$};
\draw (-5,5) -- (-4,5) node {$\bullet$};
\draw (-5,5) -- (-4,6) node {$\bullet$};
\draw (-5,5) -- (-4,7) node {$\bullet$};
\draw (-6,1) -- (-5,1) node {$\bullet$};
\draw (-6,2) -- (-5,2) node {$\bullet$};
\draw (-6,3) -- (-5,3) node {$\bullet$};
\draw (-6,3) -- (-5,4) node {$\bullet$};
\draw (-6,4) -- (-5,5) node {$\bullet$};
\draw (-6,5) -- (-5,6) node {$\bullet$};
\draw (-6,5) -- (-5,7) node {$\bullet$};
\end{tikzpicture}
\caption{Monotone labelling of an immortal Galton-Watson forest and its consecutive coalescent}
\end{figure}

By stationarity, for any $t\geq 0$, $C(t)\overset{\mathcal{L}}{=}\overset{\rightarrow}{C}(t)$. Therefore the coalescent process $(C(t),t\geq 0)$ is characterized by the reproduction measure $\mu$ of the associated continuous-time Galton-Watson process $(\#\overset{\rightarrow}{C}_1(t),t\geq 0)$. 
Moreover, note that by construction, for any $m\leq n$ 
\[(C_{|[m]}(t),t\geq 0)=((C_{|[n]}(t))_{|[m]},t\geq 0).\]
This consistency property ensures that the family of jump rates of $(C_{|[n]}(t),t\geq 0)$ characterizes the law of $(C(t),t\geq 0)$. In the next lemma, the coagulation rate of a consecutive coalescent restricted to $[n]$ is provided.
\begin{lemma}[Law of the $n$-coalescent]
\label{ratesofrestriction}
Let $n\in \N$ and $C\in \mathcal{C}_n$. Set $\#C=m$ and assume $C_{|[n]}(0)=C$. For any $j\in [m-1]$,
consider the consecutive partitions of $[m]$ 
\begin{itemize}
  \item[-] $C^{j,k}_{\mathrm{in}}:=(\{1\},...,\{j,...,j+k-1\},...,\{m\})$ for any $2\leq k\leq m-j$, and attach to each $C^{j,k}_{\text{in}}$ an independent exponential clock with parameter $\mu(k)$,
  \item[-] $C^{j}_{\mathrm{out}}:=(\{1\},...,\{j,...,m\})$, and attach to each $C^{j}_{\text{out}}$ an independent exponential clock with parameter $\bar{\mu}(m-j+1)$
\end{itemize}
where $\bar{\mu}(k):=\sum_{j=k}^{\infty}\mu(j)$ for any $k\in \mathbb{N}$. Then the process jumps from the partition $C_{|[n]}(t-)$ to $\text{Coag}(C_{|[n]}(t-),D)$
with $D$ the partition in $\{C^{j,k}_{\mathrm{in}}, C^j_\mathrm{out}\}$ associated to the first random clock that rings.
\end{lemma}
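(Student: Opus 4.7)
The plan is to exploit the flow representation of Proposition \ref{flowpartitions} to reduce the problem to an infinitesimal computation on i.i.d.\ block sizes. First I would observe that by Proposition \ref{flowpartitions}(i)--(ii), $(C_{|[n]}(t), t \geq 0)$ is a continuous-time Markov chain on the finite set $\mathcal{C}_n$, and that the cocycle property combined with the stationarity in (v) gives
\[
  C_{|[n]}(t+h) = \Coag\bigl(C_{|[n]}(t), C(t, t+h)_{|[m]}\bigr), \qquad m := \#C_{|[n]}(t),
\]
with $C(t, t+h) \overset{\mathcal{L}}{=} C(0, h)$. By construction of the flow of partitions, the blocks of $C(0, h)$ have i.i.d.\ sizes distributed as $Z_{0,h}(1)$, the value at time $h$ of the immortal continuous-time Galton--Watson process with reproduction measure $\mu$ started from a single individual. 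Since a single individual has an exponential lifetime of parameter $\mu(\mathbb{Z}_+)$ and, when it dies, produces $k \geq 2$ offspring with probability $\mu(k)/\mu(\mathbb{Z}_+)$, one has $\P(Z_{0,h}(1) = 1) = 1 - \mu(\mathbb{Z}_+) h + o(h)$ and $\P(Z_{0,h}(1) = k) = \mu(k) h + o(h)$ for every $k \geq 2$ as $h \downarrow 0$.

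The second step is a case analysis on the consecutive blocks $B_1, B_2, \ldots$ of $C(0, h)$. Since the sizes $(\#B_i)$ are i.i.d., the probability that two or more among $B_1, \ldots, B_m$ have size at least $2$ is $O(h^2)$ and can be ignored. Up to $o(h)$ terms, at most one of these blocks is ``large''. On the event $\{\#B_i = 1 \text{ for } i < j,\; \#B_j = k \geq 2\}$, the intersection $B_j \cap [m]$ equals $\{j, j+1, \ldots, \min(j+k-1, m)\}$, so two sub-cases arise. If $k \leq m-j$, the large block is entirely contained in $[m]$ and one has $C(0, h)_{|[m]} = C^{j,k}_{\mathrm{in}}$, an event of probability $\mu(k) h + o(h)$. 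If instead $k \geq m-j+1$, the block overflows $[m]$, the last $m-j+1$ blocks collapse and $C(0, h)_{|[m]} = C^{j}_{\mathrm{out}}$; summing over the admissible values of $k$, this event has probability $\bar{\mu}(m-j+1)\, h + o(h)$.

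Collecting these contributions yields, for every starting partition with $m$ blocks, the infinitesimal transition rate $\mu(k)$ for a coagulation by $C^{j,k}_{\mathrm{in}}$ and $\bar{\mu}(m-j+1)$ for a coagulation by $C^{j}_{\mathrm{out}}$, while every other non-trivial transition has zero rate. The competing exponential clocks description is then the standard probabilistic realization of a continuous-time Markov chain with the identified $Q$-matrix. The only step requiring genuine care is the overflow case: one must recognize that \emph{every} offspring number $\#B_j \geq m-j+1$ produces the same truncated partition $C^{j}_{\mathrm{out}}$, which is why the tail mass $\bar{\mu}(m-j+1)$ appears rather than a single $\mu(k)$; beyond this bookkeeping, the argument is a routine small-$h$ expansion.
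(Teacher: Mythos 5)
Your proof is correct and follows essentially the same route as the paper: both reduce, via the cocycle/restriction compatibility, to the increment partition $C(t,t+h)_{|[m]}$, identify its i.i.d.\ block sizes with the immortal Galton--Watson process $Z_h(1)$ started from one individual, and read off the rates $\mu(k)$ (block contained in $[m]$) and $\bar{\mu}(m-j+1)$ (overflow, i.e.\ $\#C_j(t,t+h)\geq m-j+1$), with the $O(h^2)$ bound ruling out simultaneous coalescences. Your small-$h$ expansion simply spells out in more detail what the paper states as $\lim_{s\to 0+}\frac{1}{s}\P(\#C_j(t,t+s)=k)=\mu(k)$ and $\lim_{s\to 0+}\frac{1}{s}\P(\#C_j(t,t+s)\geq m-j+1)=\bar{\mu}(m-j+1)$.
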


\begin{proof}
For any $n\geq 1$, any $t\geq 0$,
\[\Coag(C(t),C(t,t+s))_{|[n]}=\Coag(C_{|[n]}(t),C(t,t+s)).\]
Moreover, by associativity of the operator Coag, the restricted process $(C_{|[n]}(t),t\geq 0)$ starting from $C$, whose number of blocks is $m$, has the same law as the process $(\Coag(C,C_{|[m]}(t)),t\geq 0)$ where $(C_{|[m]}(t),t\geq 0)$ is the restriction at $[m]$ of the standard process started from $0_{[\infty]}$. Therefore, we only need to focus on the jump rates of the standard coalescent. 

For any $j$, the rate at which the process jumps from $0_{[m]}$ to $C_\mathrm{out}^{j}:=(\{1\},...,\{j,...,m\})$  is therefore given by 
\[\underset{s\rightarrow 0+}{\lim} \frac{1}{s}\mathbb{P}(\#C_j(t,t+s)\geq m-j+1).\]
Since $\#C_j(t,t+s)$ has the same law as the random variable $Z_s(1)$ where $(Z_t(1),t\geq 0)$ is a continuous-time Galton-Watson process with reproduction measure $\mu$, then the latter limit is $\bar{\mu}(m-j+1)$.

Similarly, for any $k\leq m-j-1$, the rate at which the process jumps from $0_{[m]}$ to $C_{\mathrm{in}}^{j,k}$ is
\[\underset{s\rightarrow 0+}{\lim} \frac{1}{s}\mathbb{P}(\#C_j(t,t+s)=k)=\mu(k).\]
There is no simultaneous births forward in time and therefore no simultaneous coalescences. 
\end{proof}

By letting $n$ and $m$ to $\infty$ in Lemma \ref{ratesofrestriction}, we see that the coalescences in the consecutive coalescent process $C$ valued in $\mathcal{C}_\infty$ can be described in the following way: to each block $j$ of $C$ is associated a family $(\mathbbm{e}_{j,k}, k\geq 2)$ of exponential clocks, that ring at rate $(\mu(k),k\geq 2)$. Each time a clock $\mathbbm{e}_{j,k}$ rings, the consecutive blocks $j$, $j+1$, ... , $j+k-1$ coalesce into one. Note that these clocks could also be used to construct the immortal Galton--Watson process forward in time: each time the clock $\mathbbm{e}_{j,k}$ rings, the $j$th individual produces $k$ children.

We now take interest in the number of blocks of a consecutive coalescent. Similarly to the continuous-state space, the dual process $\hat{Z}$ defined for any $n\in \mathbb{N}$ and any $t\geq 0$ by
\[\hat{Z}_t(n):=\min\{k\in \mathbb{N}: Z_{-t,0}(k)\geq n\}.\]
The process $\hat{Z}$ is a Markov process, and for all $n \in \N$, $\hat{Z}_t(n)$ is the ancestor at time $-t$ of the individual~$n$ considered at time $0$.

\begin{proposition}
\label{numberofblocks}
For any $t\geq 0$, and any $n,m\in \mathbb{N}$, $\#C_{|[n]}(t)=\hat{Z}_t(n)$ and
\begin{equation}
  \label{equivalencedCC}
  n\overset{C(t)}{\sim} m \Longleftrightarrow \hat{Z}_t(n)=\hat{Z}_t(m).
\end{equation}
For any $\ell \in [|2,n|]$, the process $(\#C_{|[n]}(t),t\geq 0)$ jumps from $\ell$ to $\ell-k+1$ at rate $(\ell-k)\mu(k)+\bar{\mu}(k)$ and is absorbed at $1$.
\end{proposition}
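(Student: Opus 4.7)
The plan is to deduce everything directly from the concrete construction of $C(t)$ via the flow of random walks together with Lemma~\ref{ratesofrestriction}.

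First I would identify the blocks of $C(t)=\overset{\rightarrow}{C}(-t,0)$ explicitly. By the construction preceding Proposition~\ref{flowpartitions}, the $i$-th block of $C(t)$ is the integer interval $[|Z_{-t,0}(i-1)+1,\,Z_{-t,0}(i)|]$, with the convention $Z_{-t,0}(0)=0$. Hence for any $n\in\N$, the integer $n$ sits in the $i$-th block iff $Z_{-t,0}(i-1)<n\leq Z_{-t,0}(i)$, and by monotonicity of $k\mapsto Z_{-t,0}(k)$ (the walk is non-decreasing since $\mu$ has no mass at $0$ or $1$), this is equivalent to $i=\min\{k: Z_{-t,0}(k)\geq n\}=\hat{Z}_t(n)$. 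Because the blocks of $C(t)$ are consecutive and ranked in increasing order, the number of blocks meeting $[n]$ is exactly the index of the block that contains $n$; this gives $\#C_{|[n]}(t)=\hat Z_t(n)$. The equivalence \eqref{equivalencedCC} is then immediate: $n\overset{C(t)}{\sim} m$ iff $n$ and $m$ lie in a common block, iff they have the same block index, iff $\hat Z_t(n)=\hat Z_t(m)$.

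For the jump rates, I would apply Lemma~\ref{ratesofrestriction} to $(C_{|[n]}(t),t\ge 0)$ started from an arbitrary configuration with $\ell$ blocks. A jump from $\ell$ blocks to $\ell-k+1$ blocks corresponds to merging exactly $k$ consecutive current blocks into one; by Lemma~\ref{ratesofrestriction}, such a merger occurs either through an \emph{inner} coagulation $C^{j,k}_{\mathrm{in}}$ (at rate $\mu(k)$ for each admissible $j$) or through an \emph{outer} coagulation $C^{j}_{\mathrm{out}}$ (at rate $\bar\mu(\ell-j+1)$). The admissible values of $j$ for the inner type are $j\in\{1,\ldots,\ell-k\}$, yielding a total rate $(\ell-k)\mu(k)$; the outer type contributes for the unique $j=\ell-k+1$ with rate $\bar\mu(k)$. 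Summing gives the claimed rate $(\ell-k)\mu(k)+\bar\mu(k)$. Absorption at $1$ is automatic: when $\ell=1$ no coagulation rule of Lemma~\ref{ratesofrestriction} applies, so the process stays at $1$ forever.

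I do not foresee a real obstacle: everything follows from the already-established identification of $C(t)$ with the time-reversed genealogical partition of the immortal Galton--Watson flow, plus a careful bookkeeping of the admissible indices $j$ in the two families of rules of Lemma~\ref{ratesofrestriction}. The one point worth double-checking is the compatibility of the ``$\mathrm{in}$'' and ``$\mathrm{out}$'' contributions at the extreme value $k=\ell$, where only the outer rule contributes (with rate $\bar\mu(\ell)$, sending the process to the absorbing state $1$), and this is indeed consistent with the formula $(\ell-k)\mu(k)+\bar\mu(k)$ at $k=\ell$.
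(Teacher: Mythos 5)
Your argument is correct and follows essentially the same route as the paper: identify the blocks of $C_{|[n]}(t)$ through the random walk $Z_{-t,0}$ to get $\#C_{|[n]}(t)=\hat{Z}_t(n)$ and the equivalence \eqref{equivalencedCC}, then read off the jump rates from Lemma~\ref{ratesofrestriction}. Your explicit bookkeeping of the admissible indices $j$ (the $\ell-k$ inner coagulations at rate $\mu(k)$ plus the unique outer one at rate $\bar{\mu}(k)$, including the boundary case $k=\ell$) is exactly what the paper leaves as ``readily obtained.''
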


\begin{proof}
By definition
\[
  C_{|[n]}(t)=([|1,Z_{-t}(1)|],[|Z_{-t}(1)+1,Z_{-t}(2)|],...,[|Z_{-t}(a-1)+1,n|])
\]
with $a=\#C_{|[n]}(t)=\min\{k\in \mathbb{N}: Z_{-t}(k)\geq n\}=:\hat{Z}_t(n)$. Consider now an integer $m\leq n$. If $\hat{Z}_t(m)=a$ then $m\in [|Z_{-t}(a-1)+1,n|]=C_{a}(t)\cap [n]$ and $m\overset{C(t)}{\sim} n$. The rates of jumps in $(\#C_{|[n]}(t),t\geq 0)$ are readily obtained by Lemma \ref{ratesofrestriction}.
\end{proof}

\begin{remark} Consecutive coalescents can be defined for a measure $\mu$ with a mass at $0$ from the relation \eqref{equivalencedCC}. However the process $(C(t),t\geq 0)$ in this case is inhomogeneous in time. We mention that the process $(\hat{Z}_t(n),t\geq 0)$ is studied by Li et al. in \cite{MR2409319}.
\end{remark}

\subsection{Consecutive coalescents in CSBPs through Poisson sampling}

We now explain how consecutive coalescents arise in the study of the backward genealogy of CSBPs. Loosely speaking, exchangeable bridges in the theory of exchangeable coalescents, \cite{MR1990057}, are replaced by subordinators and the sequence of uniform random variables by the arrival times of a Poisson process with intensity $\lambda$. The following typical random consecutive partitions will play a similar role as paintboxes for exchangeable coalescents.
\begin{definition}
\label{Poisson box}
We call $(\lambda,\phi)$-Poisson box a random consecutive partition $C$ obtained by setting
\[i\overset{C}{\sim} j \Longleftrightarrow X^{-1}(J_i)=X^{-1}(J_i),\]
where $X$ is a subordinator with Laplace exponent $\phi$ and $(J_j, j \geq 1)$ are the ranked atoms of an independent Poisson process with intensity $\lambda$.
\end{definition}

\begin{figure}[ht]
\centering
\begin{tikzpicture}
  \draw[->] (0,-0.5) -- (0,5) node[left] {$X(x)$};
  \draw[->] (-0.5,0) -- (6.5,0) node[below] {$x$};
  \draw (0,0) -- (2,0.5);
  \draw [densely dashed] (2,0.5) -- (2,1.25);
  \draw (2,1.25) -- (3,1.5);
  \draw [densely dashed] (3,1.5) -- (3,3);
  \draw (3,3) -- (4.5,3.375);
  \draw [densely dashed] (4.5,3.375) -- (4.5,3.875);
  \draw (4.5,3.875) -- (6.5,4.375);
  
  \draw [->] (7.3,0.25) node[right] {$J_1$} -- (7,0.25);
  \draw [->] (1,-0.3) node[below] {$J'_1$} -- (1,0);
  \draw [dotted] (7,0.25) -- (1,0.25) -- (1,0);

  \draw [->] (7.3,0.75) node[right] {$J_2$} -- (7,0.75);
  \draw [->] (2,-0.3) node[below] {$J'_2$} -- (2,0);
  \draw [dotted] (7,0.75) -- (2,0.75) -- (2,0);

  \draw [->] (7.3,1.7) node[right] {$J_3$} -- (7,1.7);
  \draw [->] (7.3,2) node[right] {$J_4$} -- (7,2);
  \draw [->] (7.3,2.8) node[right] {$J_5$} -- (7,2.8);
  \draw [->] (3,-0.3) node[below] {$J'_3$} -- (3,0);
  \draw [dotted] (7,1.7) -- (3,1.7);
  \draw [dotted] (7,2) -- (3,2);
  \draw [dotted] (7,2.8) -- (3,2.8) -- (3,0);

  \draw [->] (7.3,3.8) node[right] {$J_6$} -- (7,3.8);
  \draw [->] (4.5,-0.3) node[below] {$J'_4$} -- (4.5,0);
  \draw [dotted] (7,3.8) -- (4.5,3.8) -- (4.5,0);

  \draw [->] (7.3,4.125) node[right] {$J_7$} -- (7,4.125);
  \draw [->] (5.5,-0.3) node[below] {$J'_5$} -- (5.5,0);
  \draw [dotted] (7,4.125) -- (5.5,4.125) -- (5.5,0);
  \end{tikzpicture}
\caption{Construction of a random Poisson-box partition $C$, with subordinator $X$ and arrival times $(J_j, j \geq 1)$, satisfying $C_{|[7]}  = \{ \{1\},\{2\}, \{3,4,5\}, \{6\}, \{7\}\}$}
\end{figure}
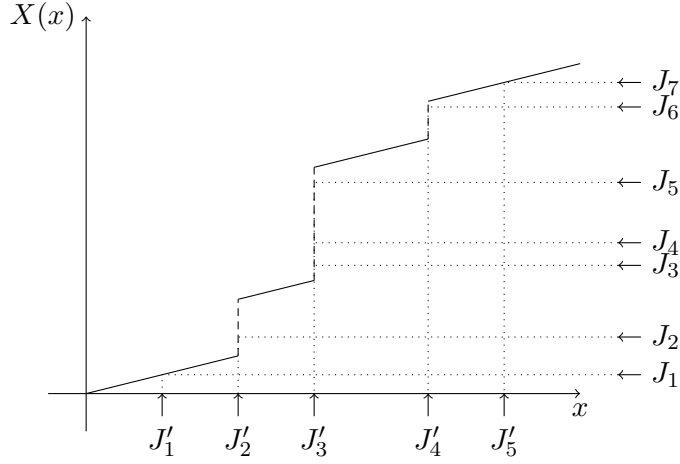
The $(\lambda,\phi)$-Poisson boxes will occur as typical random partitions in genealogical trees of CSBPs. More precisely, in the coalescent process describing the genealogy of individuals sampled according to a Poisson point process, the partitions will be distributed as $(\lambda,\phi)$-Poisson boxes. The following Lemma is proved in Appendix~\ref{sec:keyPPA}, and can be thought of as a revisiting of Pitman's discretization of subordinators \cite{MR1466546}.
\begin{lemma}\label{Poissonboxlaw}
Consider a subordinator $X$ with Laplace exponent 
\[
  \phi : \mu \mapsto d \mu + \int_{0}^{\infty}\left(1 - e^{-\mu x}\right) \ell(\ddr x)
\]
and $(J_k,k\geq 1)$ the arrival times of an independent Poisson process with intensity $\lambda$. Let $C$ be the $(\lambda,\phi)$-Poisson box constructed with $X$ and $(J_k,k\geq 1)$ and set 
for any $i\geq 1$, $J'_i:=X^{-1}(J_k)$ for $k\in C_i$. Then
\begin{enumerate}
  \item $C$ is a random consecutive partition with i.i.d blocks sizes and for any $k\geq 1$
\[
  \P(\#C_1=k)=\frac{1}{\phi(\lambda)}\int_{0}^{\infty}\frac{(\lambda x)^{k}}{k!}e^{-\lambda x} \ell(\ddr x) + d \ind{k=1}=(-1)^{k-1}\frac{\lambda^k}{k!}\frac{\phi^{(k)}(\lambda)}{\phi(\lambda)},
\]
i.e. $\E(s^{\#C_1}) = 1 - \frac{\phi(\lambda(1-s))}{\phi(\lambda)}$ for all $s \in [0,1]$.
  \item  The sequence $(J'_i,i\geq 1)$ are the arrival times of a Poisson process of intensity $\phi(\lambda)$. 
  \item $(J'_i,i\geq 1)$ and $C$ are independent.
\end{enumerate}
\end{lemma}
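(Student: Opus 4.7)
My approach is to apply the Poisson marking theorem after making a pathwise decomposition of the Poisson atoms $(J_k)$ along the natural splitting of the range of $X$. First I would observe that the range of $X$ in $[0, X_t]$ decomposes into (a) the drift trace, a continuous subset of total Lebesgue length $d\,t$, and (b) the jump intervals $(X(T_i-), X(T_i)]$ at each jump time $T_i\le t$, of length $\Delta_i = X(T_i) - X(T_i-)$. The right-continuous inverse $X^{-1}$ is strictly increasing on the drift trace but collapses each entire jump interval $(X(T_i-), X(T_i)]$ to the single point $T_i$. Consequently the atoms of $(J_k)$ that fall in the drift trace each yield a singleton block of $C$ at distinct positions, while all atoms falling in the $i$-th jump interval are pooled into a single block at position $J'_i = T_i$ of size $N_i$ equal to their count.

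The core of the argument is to identify the joint law of the decorated point process $\sum_i \delta_{(J'_i, \#C_i)}$. Conditioning on $X$ and exploiting independence of a Poisson process on disjoint subsets of $[0,\infty)$, the push-forward by $X^{-1}$ of the atoms landing in the drift trace is a Poisson process on $[0,\infty)$ of rate $\lambda d$ marked by the constant $1$; crucially, this conditional law does not depend on $X$, so the drift contribution is unconditionally Poisson of rate $\lambda d$ and independent of the jump structure. For each jump, the count $N_i$ of Poisson atoms in the corresponding jump interval is, conditionally on $\Delta_i$, a $\mathrm{Poisson}(\lambda \Delta_i)$ variable, the $N_i$ being independent across jumps. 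Since $(T_i, \Delta_i)_i$ is a Poisson point process of intensity $\ddr u \otimes \ell(\ddr x)$, the marking theorem applied to the retained marks $\sum_i \ind{N_i \geq 1}\delta_{(T_i, N_i)}$ yields a Poisson point process on $[0,\infty)\times \N$ of intensity $\ddr u \otimes \nu_J$ with $\nu_J(\{k\}) = \int_0^\infty \frac{(\lambda x)^k e^{-\lambda x}}{k!}\ell(\ddr x)$. Superposing with the independent drift contribution, the full process $\sum_i \delta_{(J'_i, \#C_i)}$ is a Poisson point process on $[0,\infty)\times \N$ with product intensity
\[
  \ddr u \otimes \tilde\nu(\ddr k), \qquad \tilde\nu(\{k\}) = \lambda d\, \ind{k=1} + \int_0^\infty \frac{(\lambda x)^k e^{-\lambda x}}{k!}\ell(\ddr x),
\]
whose total mass in $k$ equals $\phi(\lambda)$.

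From this product structure, the three claims follow at once: the positions $(J'_i)$ marginally form a Poisson process of rate $\phi(\lambda)$, giving (ii); the marks $\#C_i$ are i.i.d.\ with law $\tilde\nu/\phi(\lambda)$, matching the stated formula for $\P(\#C_1=k)$ and giving (i); positions and marks are independent, giving (iii). The generating-function form $\E(s^{\#C_1}) = 1 - \phi(\lambda(1-s))/\phi(\lambda)$ follows by the short computation $\sum_k s^k \tilde\nu(\{k\})/\phi(\lambda)$ using $\phi(\mu) = d\mu + \int(1-e^{-\mu x})\ell(\ddr x)$, and the derivative form by dominated differentiation of $\phi$ under the integral. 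The main delicacy is to make the drift/jump decomposition rigorous when $\ell$ is infinite (jumps accumulate near zero); this is best handled either by truncating $\ell$ to $\ell_\varepsilon(\ddr x) = \ind{x > \varepsilon}\ell(\ddr x)$, establishing the claim for the compound-Poisson-plus-drift approximation and passing to the limit via joint Laplace functionals, or equivalently by computing directly $\E[\exp(-\sum_i f(J'_i, \#C_i))]$ using Campbell's formula for $(T_i, \Delta_i)$ together with conditional Poisson computations inside each jump interval, and recognizing the exponent as that of a Poisson point process of intensity $\ddr u \otimes \tilde\nu$.
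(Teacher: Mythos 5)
Your argument is correct, and it reaches the conclusion by a genuinely different mechanism than the paper's proof (Lemma \ref{lem:keyPPA} in the appendix). You keep the pair $(X,N)$ as given, condition on $X$, split the atoms of $N$ according to whether they fall in the drift trace (of Lebesgue measure $d\,t$ on $[0,X_t]$) or in a jump interval $(X(T_i-),X(T_i)]$, and then invoke the restriction, marking and superposition theorems: conditionally on $X$ the counts $N_i$ are independent $\mathrm{Poisson}(\lambda\Delta_i)$, so $\sum_i \ind{N_i\geq 1}\delta_{(T_i,N_i)}$ is a Poisson point process with intensity $\ddr u\otimes\nu_J$, while the pushforward of the trace atoms by $X^{-1}$ has conditional law $\mathrm{Poisson}(\lambda d)$ not depending on $X$, hence is unconditionally Poisson and independent of the jump part; the resulting product intensity $\ddr u\otimes\tilde\nu$ with total mark mass $\phi(\lambda)$ gives (i)--(iii) simultaneously. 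The paper instead builds a coupling: it constructs $(\bar X,\bar N)$ from a single master Poisson point process in which each jump $(t_i,x_i)$ carries an a priori independent $\mathrm{Poisson}(\lambda)$ decoration $N^{(i)}$, verifies by a conditional Laplace-functional computation that $(\bar X,\bar N)$ has the same law as $(X,N)$, and then reads off the law of $M$ from the master process via Campbell's formula. Your route is more canonical (no coupling to validate, the objects are used as given) but places the burden on the conditioning and marking steps --- exactly the delicacy you flag when $\ell$ is infinite, and which your proposed truncation or direct Campbell/Laplace-functional computation handles; the paper's route avoids arguments about conditional distributions given $X$ at the cost of the extra verification that the constructed pair has the correct joint law. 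Incidentally, your normalisation $\lambda d\,\ind{k=1}/\phi(\lambda)$ for the drift contribution to $\P(\#C_1=1)$ is the one consistent with the generating function $1-\phi(\lambda(1-s))/\phi(\lambda)$; the additive term $d\,\ind{k=1}$ as displayed in the statement is a typo, so your formula is in fact the correct one.
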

\begin{remark} We shall also consider killed subordinators with a Laplace exponent that satisfies $\phi(0)=\kappa>0$, or equivalently $\ell(\{\infty\})=\kappa$. The above Lemma can be extended to this case. See Corollary \ref{corkilled}. The associated $(\lambda, \phi)$-Poisson box has finitely many blocks. Formulas in (i) hold true and additionally  each block has probability $\phi(0)/\phi(\lambda)$ to be infinite. The sequence $(J'_i, 1\leq i\leq \#C)$ forms the first arrival times of a Poisson process with intensity $\phi(\lambda)$.
\end{remark}

We now construct consecutive coalescent processes related to the genealogy of the flow of subordinators $(X_{s,t}(x), s \leq t, x \geq 0)$. Denote by $(J^{\lambda}_i,i\geq 1)$ the sequence of arrival times of an independent Poisson process with intensity $\lambda$. For any $t\geq 0$, we define $C^{\lambda}(t)$ as
\begin{equation}\label{defconsec}
  i\overset{C^{\lambda}(t)}{\sim} j \text{ if and only if } \hat{X}_{t}(J^{\lambda}_i)=\hat{X}_{t}(J^{\lambda}_j). 
\end{equation}
The next theorem describes the law of the  process $(C^{\lambda}(t),t\geq 0)$.
\begin{theorem}
\label{markovcoal}
For any $\lambda>0$, the partition-valued $(C^{\lambda}(t),t\geq 0)$ is a consecutive coalescent. Its semigroup is
Feller and its one-dimensional marginal law is characterized by 
\begin{equation}\label{generatingfunctionCC}
  \mathbb{E}[z^{\#C^{\lambda}_1(t)}]=1-\frac{v_t(\lambda(1-z))}{v_t(\lambda)}\text{ for any } z\in [0,1].
\end{equation}
There is no simultaneous coalescences and for any $k\geq 2$, the rate at time $t$ at which $k$ given consecutive blocks coalesce is
\begin{equation}
  \label{coagrate}
  \mu^{\lambda}_t(k):=\frac{\sigma^2}{2}v_t(\lambda)1_{\{k=2\}}+v_t(\lambda)^{k-1}\int_{]0,\infty[}\frac{x^{k}}{k!}e^{-v_t(\lambda)x}\pi(\ddr x).
\end{equation}
\end{theorem}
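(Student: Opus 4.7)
The plan is to recognize $C^\lambda(t)$ as a $(\lambda, v_t)$-Poisson box in the sense of Definition~\ref{Poisson box}, apply Lemma~\ref{Poissonboxlaw} to deduce its one-dimensional law, then use the cocycle property of the inverse flow together with a second application of Lemma~\ref{Poissonboxlaw} to obtain the consecutive coalescent structure. Finally, the coagulation rates are obtained by differentiating at $t = 0$.

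First I would observe that $x \mapsto X_{-t,0}(x)$ is a subordinator with Laplace exponent $v_t$, and $\hat{X}_t$ is its right-continuous inverse. Therefore the definition \eqref{defconsec} of $C^\lambda(t)$ says exactly that $i \overset{C^\lambda(t)}{\sim} j$ iff the arrival times $J^\lambda_i$ and $J^\lambda_j$ fall in the same constancy interval of $\hat{X}_t$, i.e. iff they are attributed to the same jump (or the drift part) of $X_{-t,0}$. This is precisely the construction of a $(\lambda, v_t)$-Poisson box. Lemma~\ref{Poissonboxlaw}(i) immediately gives the generating function \eqref{generatingfunctionCC}, since $\E(z^{\#C_1}) = 1 - v_t(\lambda(1-z))/v_t(\lambda)$.

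Next I would establish the consecutive coalescent property. Fix $s, t \geq 0$ and use the cocycle relation $\hat{X}_{s+t} = \hat{X}_{s,s+t} \circ \hat{X}_s$ from Lemma~\ref{flowA}(iii), together with the independence of $\hat{X}_{s,s+t}$ and $\hat{X}_s$ from Lemma~\ref{flowA}(ii). By Lemma~\ref{Poissonboxlaw}(ii)--(iii), picking one representative per block of $C^\lambda(s)$, the image points $J'_i := \hat{X}_s(J^\lambda_{k_i})$ form the arrival times of a Poisson process with intensity $v_s(\lambda)$, independent of $C^\lambda(s)$. Since $\hat{X}_{s,s+t}$ is an independent copy of the inverse flow of time length $t$, applying it to these Poisson points and using \eqref{defconsec} again shows that
\[
  C^\lambda(s+t) = \Coag\bigl(C^\lambda(s), D_{s,t}\bigr),
\]
where $D_{s,t}$ is a $(v_s(\lambda), v_t)$-Poisson box independent of $C^\lambda(s)$. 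This is the Markov property with transition described by coagulation against a random partition with i.i.d. block sizes, proving that $(C^\lambda(t), t \geq 0)$ is a consecutive coalescent. The Feller property of the semigroup on the compact space $(\mathcal{C}_\infty, d)$ follows from the continuity of $t \mapsto v_t(\lambda(1-z))$ and the fact that the law of $D_{s,t}$ on $\mathcal{C}_\infty$ depends continuously on $t$, combined with the Lipschitz continuity of $\Coag$.

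Finally I would compute the coagulation rates by short-time asymptotics of $D_{s,t}$. Writing $\nu = v_s(\lambda)$, Lemma~\ref{Poissonboxlaw}(i) gives
\[
  \mathbb{P}(\# D_{s,t,1} = k) = (-1)^{k-1} \frac{\nu^k}{k!} \frac{v_t^{(k)}(\nu)}{v_t(\nu)}.
\]
The ODE $\partial_t v_t(\mu) = -\Psi(v_t(\mu))$ with $v_0(\mu) = \mu$ gives, for $k \geq 2$, $v_t^{(k)}(\nu) = -\Psi^{(k)}(\nu)\, t + o(t)$, while $v_t(\nu) \to \nu$. Dividing by $t$ and letting $t \to 0$ yields the coalescence rate for a block of size $k$,
\[
  \mu^\lambda_s(k) = \frac{(-1)^k \Psi^{(k)}(\nu)}{k!} \nu^{k-1},
\]
which after inserting the Lévy--Khintchine representation \eqref{LKpsi} of $\Psi$ (noting that the $\sigma^2$ term only contributes at $k=2$) gives exactly \eqref{coagrate}. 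The absence of simultaneous coalescences follows from the fact that $\sum_{k \geq 2} \mu^\lambda_s(k) < \infty$ for each block and that distinct jumps of the underlying subordinator $X_{-t,0}$ occur at distinct locations almost surely, so the restricted process $(\#C^\lambda_{|[n]}(t), t \geq 0)$ is a pure-jump Markov chain with at most one coalescence per jump. The main technical obstacle is the justification of the expansion $v_t^{(k)}(\nu) = -\Psi^{(k)}(\nu) t + o(t)$ (requiring enough smoothness of $v_t$ in $t$, uniformly in a neighborhood of $\nu$), but this is handled by straightforward repeated differentiation of the ODE.
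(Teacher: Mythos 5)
Your proposal is correct and its skeleton coincides with the paper's: identify $C^{\lambda}(t)$ as a $(\lambda,v_t)$-Poisson box (giving \eqref{generatingfunctionCC} at once from Lemma~\ref{Poissonboxlaw}), then use the cocycle property $\hat{X}_{s+t}=\hat{X}_{s,s+t}\circ\hat{X}_s$, the independence of the flow increments and Lemma~\ref{Poissonboxlaw}(ii)--(iii) to get $C^{\lambda}(s+t)=\Coag(C^{\lambda}(s),D_{s,t})$ with $D_{s,t}$ a $(v_s(\lambda),v_t)$-Poisson box independent of $C^{\lambda}(s)$; this is exactly the paper's Lemma~\ref{semigroup}. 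Where you diverge is in extracting the rates: the paper computes $\lim_{s\to0}\frac1s\mathbb{E}[z^{\#C^{\lambda}_1(t,t+s)}-z]$ directly from the generating-function identity \eqref{generatingfunction} and the ODE \eqref{odev} (Lemma~\ref{coagratecomputation}), and then identifies the limit $\varphi^{\lambda}_t$ as the generating function of $\mu^{\lambda}_t$ by an explicit L\'evy--Khintchine manipulation (Lemma~\ref{findingtherate}); you instead differentiate the individual block-size probabilities $(-1)^{k-1}\frac{\nu^k}{k!}\frac{v_t^{(k)}(\nu)}{v_t(\nu)}$ in $t$ at $0$, using $v_t^{(k)}(\nu)=-\Psi^{(k)}(\nu)t+o(t)$ and reading off $\mu^{\lambda}_s(k)=(-1)^k\Psi^{(k)}(\nu)\nu^{k-1}/k!$, which indeed matches \eqref{coagrate}. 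The two computations are equivalent; the paper's avoids any interchange of $\lambda$-derivatives with the $t\to0$ limit, while yours needs the joint smoothness of $(t,\mu)\mapsto v_t(\mu)$, which does hold (the vector field $-\Psi$ is $C^\infty$ on $(0,\infty)$, so the flow depends smoothly on its initial condition), so your acknowledged ``technical obstacle'' is genuinely benign. Two small points you should tighten rather than gesture at: for the strong continuity part of the Feller property the relevant fact is that $D_{s,t}\to 0_{[\infty]}$ in probability as the increment length goes to $0$ (either via the generating function $\mathbb{E}[z^{\#D_1}]\to z$ together with the i.i.d.\ block sizes, or, as the paper does in Lemma~\ref{Fellerproperty}, via the uniform convergence $\hat{X}_{t,t+s}\to\mathrm{Id}$); and the absence of simultaneous coalescences is best justified not by distinctness of jump locations (a spatial statement) but by the i.i.d.\ structure of the increment Poisson box: in a window of length $h$ each block is non-singleton with probability $O(h)$ independently, so two coalescence events among the first $n$ blocks have probability $O(h^2)$.
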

In the supercritical case, by choosing for intensity $\lambda=\rho$, the process $(C^{\rho}(t),t\geq 0)$ becomes time-homogeneous. Corollary \ref{supercritical} is obtained by a direct application of Theorem \ref{markovcoal} since $v_t(\rho)=\rho$ for any $t\geq 0$.
\begin{corollary}\label{supercritical} Assume $\Psi$ supercritical and take $\lambda=\rho$. The coalescent process $(C^{\rho}(t),t\geq 0)$ is homogeneous in time and the coagulation rate of $k$ given consecutive blocks is
$$\mu^{\rho}(k):=\frac{\sigma^2}{2}\rho 1_{\{k=2\}}+\rho^{k-1}\int_{]0,\infty[}\frac{x^{k}}{k!}e^{-\rho x}\pi(\ddr x).$$
\end{corollary}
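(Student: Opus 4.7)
The plan is to deduce Corollary \ref{supercritical} as an immediate specialization of Theorem \ref{markovcoal} to $\lambda=\rho$, the key observation being that in the supercritical regime the curve $t\mapsto v_t(\rho)$ is stationary equal to $\rho$, which kills all $t$-dependence in both the coagulation rates and the transition law.

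First, I would establish $v_t(\rho)=\rho$ for every $t\geq 0$. Differentiating \eqref{odev} in $t$ yields the Cauchy problem $\partial_t v_t(\lambda)=-\Psi(v_t(\lambda))$ with $v_0(\lambda)=\lambda$. Since $\Psi'(0+)<0$, we are in the supercritical case where $\rho>0$, and by the definition $\rho=\inf\{z>0:\Psi(z)\geq 0\}$ combined with convexity of $\Psi$, one has $\Psi(\rho)=0$. The constant trajectory $t\mapsto\rho$ therefore solves the Cauchy problem with initial value $\rho$; by uniqueness this is precisely $v_t(\rho)$. Alternatively, the identity follows from the semigroup property $v_{t+s}=v_t\circ v_s$: the Laplace exponent $v_t$ is continuous, strictly increasing in $\lambda$, and maps $\rho$ to a fixed point of every $v_s$, forcing $v_t(\rho)=\rho$.

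Second, inserting $v_t(\rho)=\rho$ into the coagulation rate formula \eqref{coagrate} of Theorem \ref{markovcoal} gives
\[
\mu^{\rho}_t(k)=\frac{\sigma^2}{2}\rho\,\mathbbm{1}_{\{k=2\}}+\rho^{k-1}\int_{(0,\infty)}\frac{x^{k}}{k!}e^{-\rho x}\pi(\ddr x),
\]
which is independent of $t$ and is exactly the announced $\mu^{\rho}(k)$. To upgrade this time-independence of the rates into genuine time-homogeneity of the semigroup, I appeal to the description of the coalescent given by Theorem \ref{markovcoal}: for each $n$, the restricted chain $(C^{\rho}_{|[n]}(t),t\geq 0)$ is a pure-jump Markov process whose only $t$-dependence enters through the rates $(\mu^{\rho}_t(k))_{k\geq 2}$. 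Since these rates are now constant in $t$, each restricted chain is time-homogeneous; by the consistency property $(C^{\rho}_{|[m]}(t))=((C^{\rho}_{|[n]}(t))_{|[m]})$ for $m\leq n$, the projective limit $(C^{\rho}(t),t\geq 0)$ inherits time-homogeneity.

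I do not anticipate any genuine obstacle: the only mildly delicate point is justifying the ODE argument at $\lambda=\rho$, where \eqref{odev} is not directly defined as an integral; this is easily bypassed either by the semigroup/fixed-point argument sketched above or by working with the ODE formulation $\partial_t v_t = -\Psi(v_t)$, which is perfectly well-defined at the critical value $\rho$.
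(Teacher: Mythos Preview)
Your proposal is correct and follows essentially the same route as the paper: the paper's own proof is the single sentence ``obtained by a direct application of Theorem \ref{markovcoal} since $v_t(\rho)=\rho$ for any $t\geq 0$,'' and you have simply spelled out why $v_t(\rho)=\rho$ and why constant rates give time-homogeneity. One small remark: the passage from ``rates are constant in $t$'' to ``the semigroup is time-homogeneous'' can be read off even more directly from Lemma \ref{semigroup}, which shows that $C^{\lambda}(t,t+s)$ is a $(v_t(\lambda),v_s)$-Poisson box; with $\lambda=\rho$ this becomes a $(\rho,v_s)$-Poisson box whose law depends only on $s$, which is exactly the homogeneity clause in Definition \ref{consecutivecoal}.
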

\begin{remark}  
Bertoin et al. \cite{MR2455180} have shown that in any flow of supercritical CSBPs one can embedd an immortal continuous-time Galton-Watson process counting the so-called prolific individuals, whose lines of descent are infinite. The prolific individuals are located in $\mathbb{R}_+$ as the arrival times of a Poisson process with intensity $\rho$ at any time. Moreover, this continuous-time Galton-Watson process has reproduction measure $\mu^{\rho}$. The consecutive coalescent $(C^{\rho}(t),t\geq 0)$ represents its genealogy backward in time.
\end{remark}
We prove Theorem \ref{markovcoal}. We stress that by definition, from \eqref{defconsec},  $C^{\lambda}(t)$ is a $(\lambda,v_t)$-Poisson box and $C^{\lambda}(0)=0_{[\infty]}$ since $\hat{X}_0=\mathrm{Id}$.
Our first lemma  proves that the partition-valued process $(C^{\lambda}(t),t\geq 0)$ is Markovian in its own filtration and is a consecutive coalescent (possibly inhomogeneous in time) in the sense of Definition \ref{consecutivecoal}. 
\begin{lemma}
\label{semigroup}
For any $s,t\geq 0$
\begin{equation}
  \label{coag}
  C^{\lambda}(t+s)=\Coag(C^{\lambda}(t),C^{\lambda}(t,t+s))
\end{equation}
where $C^{\lambda}(t,t+s)$ is a $(v_t(\lambda),v_s)$-Poisson box which is independent of $C^{\lambda}(t)$. 
\end{lemma}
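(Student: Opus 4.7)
The plan is to combine the cocycle property of the inverse flow, namely $\hat{X}_{0,t+s}=\hat{X}_{t,t+s}\circ\hat{X}_{0,t}$ from Lemma~\ref{flowA}(iii), with two applications of Lemma~\ref{Poissonboxlaw}, one on each of the two independent flow segments $X_{-t,0}$ and $X_{-(t+s),-t}$. The identity will then follow essentially by unfolding the definitions of $\Coag$ and of the Poisson-box equivalence relations.

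First I would apply Lemma~\ref{Poissonboxlaw} to the subordinator $x\mapsto X_{-t,0}(x)$, whose Laplace exponent is $v_t$ by \eqref{eqn:lapTransform}, and to the independent Poisson process $(J^\lambda_k,k\geq 1)$ of intensity $\lambda$. Since $\hat{X}_t = X_{-t,0}^{-1}$, this directly identifies $C^\lambda(t)$ as a $(\lambda,v_t)$-Poisson box, and moreover shows that the common values
\[
  J'_i := \hat{X}_t(J^\lambda_k)\quad\text{for}\quad k\in C^\lambda_i(t)
\]
are the atoms $\mathcal{J}'$ of a Poisson process of intensity $v_t(\lambda)$ which is independent of $C^\lambda(t)$.

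Next I would define $C^\lambda(t,t+s)$ by declaring $a\overset{C^\lambda(t,t+s)}{\sim} b$ if and only if $\hat{X}_{t,t+s}(J'_a)=\hat{X}_{t,t+s}(J'_b)$. The map $\hat{X}_{t,t+s}$ is the right-continuous inverse of $x\mapsto X_{-(t+s),-t}(x)$, a subordinator with Laplace exponent $v_s$. By Definition~\ref{flowdef2}(ii), this segment of the flow is independent of $(X_{r,u},-t\leq r\leq u\leq 0)$, and therefore independent of the pair $(C^\lambda(t),\mathcal{J}')$, which is a functional of $X_{-t,0}$ and $(J^\lambda_k)$. Conditionally on $C^\lambda(t)$, the point process $\mathcal{J}'$ still has intensity $v_t(\lambda)$ (Step~1) and is jointly independent of $\hat{X}_{t,t+s}$, so the hypotheses of Lemma~\ref{Poissonboxlaw} are met with subordinator $X_{-(t+s),-t}$ and driving Poisson process $\mathcal{J}'$. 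Its conclusion yields that, conditionally on $C^\lambda(t)$, $C^\lambda(t,t+s)$ is a $(v_t(\lambda),v_s)$-Poisson box; as the conditional law does not depend on $C^\lambda(t)$, this gives both the stated distributional identity and the independence from $C^\lambda(t)$.

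Finally I would check the coagulation identity directly. Writing $a(i)$ for the index of the block of $C^\lambda(t)$ containing $i$, the cocycle property gives
\[
  \hat{X}_{t+s}(J^\lambda_i)=\hat{X}_{t,t+s}\!\bigl(\hat{X}_t(J^\lambda_i)\bigr)=\hat{X}_{t,t+s}(J'_{a(i)}),
\]
so that $i\overset{C^\lambda(t+s)}{\sim} j$ holds iff $a(i)\overset{C^\lambda(t,t+s)}{\sim} a(j)$, which by the very definition of $\Coag$ is the condition for $i,j$ to lie in the same block of $\Coag(C^\lambda(t),C^\lambda(t,t+s))$. The main obstacle is the independence bookkeeping in the second step: one must argue joint independence of $(C^\lambda(t),\mathcal{J}')$ from $\hat{X}_{t,t+s}$, not merely pairwise independence, so that Lemma~\ref{Poissonboxlaw} can be applied inside the conditional distribution. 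This is delivered cleanly by the segmental independence of flows in Definition~\ref{flowdef2}(ii) combined with the measurability of $(C^\lambda(t),\mathcal{J}')$ with respect to the flow on $[-t,0]$ and $(J^\lambda_k)$.
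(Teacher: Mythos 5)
Your proposal is correct and follows essentially the same route as the paper: define the intermediate partition $C^{\lambda}(t,t+s)$ from the images $J'_i=\hat{X}_t(J^{\lambda}_k)$, invoke Lemma~\ref{Poissonboxlaw} (the $J'_i$ form a Poisson process of intensity $v_t(\lambda)$ independent of $C^{\lambda}(t)$, and the flow segment $X_{-(t+s),-t}$ is independent of the earlier one) to identify it as a $(v_t(\lambda),v_s)$-Poisson box independent of $C^{\lambda}(t)$, and conclude via the cocycle identity $\hat{X}_{t+s}=\hat{X}_{t,t+s}\circ\hat{X}_t$ and the definition of $\Coag$. Your extra care with the joint independence bookkeeping only makes explicit what the paper leaves implicit.
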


\begin{proof}
For any $s,t\geq 0$ and all $l\geq 1$, set $J^{\lambda}_l(t):=\hat{X}_{t}(J^{\lambda}_i)$ for all $i \in C^{\lambda}_l(t)$. Let $C^{\lambda}(t,t+s)$ the random consecutive partition defined by \begin{center} $l\overset{C^{\lambda}(t,t+s)}{\sim} k$ if and only if $\hat{X}_{t,t+s}(J^{\lambda}_l(t))=\hat{X}_{t,t+s}(J^{\lambda}_k(t))$. \end{center}
Then by the key lemma \ref{Poissonboxlaw}-(ii), $C^{\lambda}(t,t+s)$ is a $(v_t(\lambda),v_s)$-Poisson box which is independent of $C^{\lambda}(t)$. Recall the cocycle property $\hat{X}_{t+s}=\hat{X}_{t,t+s}\circ \hat{X}_{t}$ (Theorem \ref{flowinverse}-i)). Let $i,j\in \mathbb{N}$. Set $k$ and $l$  such that $i\in C^{\lambda}_k(t)$ and $j\in C^{\lambda}_l(t)$. 
By the cocycle property, $\hat{X}_{t,t+s}(J^{\lambda}_k(t))= \hat{X}_{t,t+s}(J^{\lambda}_l(t))$ holds if and only if $i\overset{C^{\lambda}(t+s)}{\sim} j$ and (\ref{coag}) holds by definition of the operator $\Coag$, see \eqref{reversedbranching}.
\end{proof}
The generating function of the block's size at time $t$, given in \eqref{generatingfunctionCC}, is obtained by a direct application of the Key lemma \ref{Poissonboxlaw} since $C^{\lambda}(t)$ is a $(\lambda,v_t)$-Poisson box. We now show that the semigroup satisfies the Feller property.
\begin{lemma}\label{Fellerproperty}
The process $(C^{\lambda}(t),t\geq 0)$ is Feller and for any $t\geq 0$, $C^{\lambda}(t,t+s)\underset{s\rightarrow 0}{\longrightarrow} 0_{[\infty]}$ in probability.
\end{lemma}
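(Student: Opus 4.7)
The plan is to first establish the second assertion, namely that $C^\lambda(t,t+s)\to 0_{[\infty]}$ in probability as $s\to 0$, and then to deduce the Feller property from it by using the Lipschitz continuity of $\Coag$ and the compactness of $(\mathcal{C}_\infty, d)$.

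For the convergence of $C^\lambda(t,t+s)$, I would use Lemma~\ref{semigroup}, which asserts that $C^\lambda(t,t+s)$ is a $(v_t(\lambda),v_s)$-Poisson box, and then apply Lemma~\ref{Poissonboxlaw} to obtain that its block sizes form an i.i.d.\ sequence with generating function
\[
  \mathbb{E}\!\left[z^{\#C^\lambda_1(t,t+s)}\right] \;=\; 1-\frac{v_s\!\left(v_t(\lambda)(1-z)\right)}{v_s(v_t(\lambda))} \;=\; 1-\frac{v_s\!\left(v_t(\lambda)(1-z)\right)}{v_{t+s}(\lambda)},
\]
where I used the composition identity $v_s\circ v_t = v_{t+s}$. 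Evaluating at $z=0$, this yields
\[
  \mathbb{P}\!\left(\#C^\lambda_1(t,t+s)=1\right) \;=\; \frac{v_t(\lambda)\,v_s'(v_t(\lambda))}{v_{t+s}(\lambda)}.
\]
From the integral equation~\eqref{odev}, $v_s(\mu)\to \mu$ as $s\to 0$ for every $\mu>0$, and $\frac{\ddr}{\ddr \mu}v_s(\mu) = \Psi(v_s(\mu))/\Psi(\mu)\to 1$. Hence $\mathbb{P}(\#C^\lambda_1(t,t+s)=1)\to 1$ as $s\to 0$. Since the block sizes are i.i.d., for any $n\in\mathbb{N}$,
\[
  \mathbb{P}\!\left(C^\lambda(t,t+s)_{|[n]} = 0_{[n]}\right) \;\geq\; \mathbb{P}\!\left(\#C^\lambda_1(t,t+s)=1\right)^{n} \;\xrightarrow[s\to 0]{}\; 1,
\]
which, by definition of the distance $d$ on $\mathcal{C}_\infty$, is precisely the convergence $C^\lambda(t,t+s)\to 0_{[\infty]}$ in probability.

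For the Feller property, the semigroup acts, by Lemma~\ref{semigroup}, as
\[
  Q^t_s f(C) \;=\; \mathbb{E}\!\left[f\!\left(\Coag(C, C^\lambda(t,t+s))\right)\right], \qquad C\in\mathcal{C}_\infty,
\]
for any continuous $f:\mathcal{C}_\infty\to\mathbb{R}$. I would check the two conditions defining the Feller property. First, continuity of $Q^t_sf$: since $\mathcal{C}_\infty$ is compact under $d$, any continuous $f$ is bounded and uniformly continuous; combined with the Lipschitz continuity of $\Coag$ in its first argument noted after the definition of $\Coag$, this immediately gives that $C\mapsto Q^t_sf(C)$ is continuous by dominated convergence. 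Second, the strong continuity at $s=0$: the continuity of $\Coag$ in its second argument together with $\Coag(C,0_{[\infty]})=C$ and the convergence in probability established above yield $\Coag(C,C^\lambda(t,t+s))\to C$ in probability; boundedness of $f$ and dominated convergence then give $Q^t_s f(C)\to f(C)$ as $s\to 0$, uniformly in $C$ by compactness of $\mathcal{C}_\infty$.

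I do not anticipate a serious obstacle: all the ingredients (generating function of a Poisson box, composition formula $v_s\circ v_t=v_{t+s}$, Lipschitz continuity of $\Coag$, compactness of $\mathcal{C}_\infty$) are already in place. The only small point requiring care is the asymptotic $v_s\to\mathrm{id}$ as $s\to 0$, which is a direct consequence of~\eqref{odev}.
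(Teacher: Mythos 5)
Your proof is correct, but your argument for the key step $C^{\lambda}(t,t+s)\to 0_{[\infty]}$ in probability takes a genuinely different route from the paper. The paper argues pathwise: since $(J^{\lambda}_i(t), i\geq 1)$ are distinct points independent of $\hat{X}_{t,t+s}$, and $\hat{X}_{t,t+s}\to \mathrm{Id}$ uniformly on compact sets in probability (Lemma~\ref{flowA}(iv)), the first $n$ sampled ancestral positions remain distinct after a short extra time with probability tending to $1$, hence $\mathbb{P}(d(C^{\lambda}(t,t+s),0_{[\infty]})\leq 1/n)\to 1$. You instead work at the level of laws, using Lemma~\ref{semigroup} and Lemma~\ref{Poissonboxlaw} to get the generating function of the i.i.d.\ block sizes and then the asymptotics of $v_s$ as $s\to 0$; this buys an explicit expression for $\mathbb{P}(\#C^{\lambda}_1(t,t+s)=1)$ (hence a quantitative handle on the rate of decoalescence) and avoids invoking the uniform-on-compacts convergence of the inverse flow, while the paper's argument avoids any computation with $v_s$ and works for the flow directly. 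The Feller-property part of your proof (Lipschitz continuity of $\Coag$, $\Coag(C,0_{[\infty]})=C$, dominated convergence) is essentially the paper's. Two small points to tidy: your displayed formula for $\mathbb{P}(\#C^{\lambda}_1(t,t+s)=1)$ is obtained by differentiating the generating function at $z=0$ (equivalently, Lemma~\ref{Poissonboxlaw}(i) with $k=1$), not by evaluating it at $z=0$; and the identity $v_s'(\mu)=\Psi(v_s(\mu))/\Psi(\mu)$ degenerates to $0/0$ when $\mu=v_t(\lambda)=\rho$ (i.e.\ $\lambda=\rho$ in the supercritical case), so it is cleaner to bypass the derivative altogether by noting that for each fixed $z$, $\mathbb{E}[z^{\#C^{\lambda}_1(t,t+s)}]=1-v_s(v_t(\lambda)(1-z))/v_{t+s}(\lambda)\to z$ as $s\to 0$, which already forces $\mathbb{P}(\#C^{\lambda}_1(t,t+s)=1)\to 1$.
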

\begin{proof}
The Feller property corresponds to the continuity of the map
\[C\in \mathcal{C}_{\infty} \mapsto P^{\lambda}_t\varphi(C):=\mathbb{E}[\varphi(\Coag(C, C^{\lambda}(t)))]\]
for any  continuous function $\varphi$ from $\mathcal{C}_{\infty}$ to $\mathbb{R}_+$. This is clear since $\Coag$ is Lipschitz continuous. We now show the weak continuity of the semigroup. By definition $J_i^{\lambda}(t)=\hat{X}_t(J_k)$ for any $k\in C^{\lambda}(t)$ and for any $i\neq j$, $J^{\lambda}_i(t)\neq   J^{\lambda}_j(t)$. By Lemma \ref{flowA}-(ii) and independence between $(J_{i},i\geq 1)$ and $\hat{X}$, we see that  $(J^{\lambda}_i(t),i\geq 1)$ is independent of $\hat{X}_{t,t+s}$. By Lemma \ref{flowA}-(iv), since $\hat{X}_{t,t+s}(x)\underset{s\rightarrow 0}{\longrightarrow} x$ uniformly on compact sets, in probability, then for any $n$,
\[\mathbb{P}(\forall i\neq j \in [n],\ \hat{X}_{t,t+s}(J_i^{\lambda}(t))\neq \hat{X}_{t,t+s}(J_j^{\lambda}(t)))\underset{s\rightarrow 0}{\longrightarrow} 1.\]
Therefore $\mathbb{P}(d(C^{\lambda}(t,t+s),0_{[\infty]})\leq 1/n)\underset{s\rightarrow 0}{\longrightarrow} 1.$
\end{proof}
We now seek for the coagulation rate \eqref{coagrate}. 

\begin{lemma}
\label{coagratecomputation}
For any $z\in (0,1)$, $$\frac{1}{s}\mathbb{E}[z^{\#C^{\lambda}_1(t,t+s)}-z]\underset{s\rightarrow 0}{\longrightarrow} \varphi^{\lambda}_t(z):=\frac{\Psi(v_t(\lambda)(1-z))-(1-z)\Psi(v_t(\lambda))}{v_{t}(\lambda)}.$$
\end{lemma}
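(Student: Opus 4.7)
The plan is to combine the explicit generating function for the size of the first block of a Poisson box (Lemma \ref{Poissonboxlaw}) with the differential equation \eqref{odev} characterising $v_t$. By Lemma \ref{semigroup}, $C^{\lambda}(t,t+s)$ is a $(v_t(\lambda),v_s)$-Poisson box, so Lemma \ref{Poissonboxlaw}(i) yields
\[
  \mathbb{E}\!\left[z^{\#C^{\lambda}_1(t,t+s)}\right]=1-\frac{v_s\!\left(v_t(\lambda)(1-z)\right)}{v_s\!\left(v_t(\lambda)\right)}.
\]
Hence
\[
  \frac{1}{s}\,\mathbb{E}\!\left[z^{\#C^{\lambda}_1(t,t+s)}-z\right]
  =\frac{(1-z)\,v_s(v_t(\lambda))-v_s(v_t(\lambda)(1-z))}{s\,v_s(v_t(\lambda))}.
\]

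Next, I would use \eqref{odev} to produce a first-order expansion of $v_s$ near $s=0$. Differentiating $\int_{v_s(\mu)}^{\mu}\ddr z/\Psi(z)=s$ in $s$ gives $\partial_s v_s(\mu)=-\Psi(v_s(\mu))$, and since $v_0(\mu)=\mu$ we obtain the uniform-on-compacts expansion
\[
  v_s(\mu)=\mu-s\,\Psi(\mu)+o(s)\qquad(s\to 0).
\]
Applying this with $\mu=v_t(\lambda)$ and with $\mu=v_t(\lambda)(1-z)$, the numerator becomes
\[
  (1-z)\bigl[v_t(\lambda)-s\Psi(v_t(\lambda))\bigr]-\bigl[v_t(\lambda)(1-z)-s\Psi(v_t(\lambda)(1-z))\bigr]+o(s)
  = s\bigl[\Psi(v_t(\lambda)(1-z))-(1-z)\Psi(v_t(\lambda))\bigr]+o(s),
\]
while the denominator satisfies $s\,v_s(v_t(\lambda))=s\,v_t(\lambda)+o(s)$. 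Dividing and letting $s\to 0$ gives exactly
\[
  \varphi^{\lambda}_t(z)=\frac{\Psi(v_t(\lambda)(1-z))-(1-z)\Psi(v_t(\lambda))}{v_t(\lambda)},
\]
as desired.

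The only potential obstacle is justifying that the $o(s)$ error from the Taylor expansion of $v_s$ can legitimately be inserted inside the combinations above; this is immediate once one recalls that $s\mapsto v_s(\mu)$ is continuously differentiable on $[0,\varepsilon)$ (as a flow generated by the smooth function $-\Psi$ away from zeros of $\Psi$) and that $v_t(\lambda),v_t(\lambda)(1-z)\in(0,\infty)$, so everything happens in a compact subset of $(0,\infty)$ on which $\Psi$ is bounded and continuous. Apart from that, the computation is entirely algebraic.
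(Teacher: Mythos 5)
Your proposal is correct and follows essentially the same route as the paper: both start from the $(v_t(\lambda),v_s)$-Poisson box generating function $1-\frac{v_s(v_t(\lambda)(1-z))}{v_s(v_t(\lambda))}$ and then use the differential equation \eqref{odev} (equivalently the first-order expansion $v_s(\mu)=\mu-s\Psi(\mu)+o(s)$, the paper writing $v_s(v_t(\lambda))=v_{t+s}(\lambda)$ via the semigroup property) to pass to the limit. The only cosmetic difference is that the paper splits the numerator using $v_{t+s}(\lambda)-v_t(\lambda)$, whereas you expand both terms directly; the computations coincide.
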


\begin{proof}
Let $z \in (0,1)$, since by Lemma \ref{Poissonboxlaw}, the random variables $(J^{\lambda}_l(t),l\geq 1)$ are the arrival times of an independent  Poisson process with intensity $v_t(\lambda)$, then
\begin{equation}
  \label{generatingfunction}
  \mathbb{E}(z^{\#C^{\lambda}_1(t,t+s)})=1-\frac{v_{s}(v_t(\lambda)(1-z))}{v_{s}(v_{t}(\lambda))}.
\end{equation}
Thus 
\begin{align*}
  \frac{1}{s}\mathbb{E}[z^{\#C^{\lambda}_1(t,t+s)}-z]&=\frac{1}{s}\left[\frac{(1-z)v_{t+s}(\lambda)-v_s(v_t(\lambda)(1-z))}{v_{t+s}(\lambda)}\right]\\
  &=\frac{1}{s}\frac{(1-z)(v_{t+s}(\lambda)-v_t(\lambda))+v_t(\lambda)(1-z)-v_s(v_t(\lambda)(1-z))}{v_{t+s}(\lambda)}\\
  &\underset{s\rightarrow 0}{\longrightarrow}  \frac{\Psi(v_t(\lambda)(1-z))-(1-z)\Psi(v_t(\lambda))}{v_{t}(\lambda)}=: \varphi^{\lambda}(t).
\end{align*}
The latter convergence holds since $(v_t(\lambda),t\geq 0)$ solves \eqref{odev} and $v_{t+s}=v_{s}\circ v_t$.
\end{proof} 
By letting $\theta=v_t(\lambda)$ in the next technical lemma, we see that  for any $t\geq 0$, the measures $\mu^{\lambda}_t$ on $\mathbb{N}$ defined in \eqref{coagrate} have generating function $\varphi^{\lambda}_t$.
\begin{lemma}
\label{findingtherate}
Recall $\Psi$ in \eqref{LKpsi}. For any $z\in (0,1)$ and any $\theta\geq 0$, $$\frac{\Psi(\theta(1-z))-(1-z)\Psi(\theta)}{\theta}=\sum_{k=2}^{\infty}(z^{k}-z)p_\theta(k)$$ with $p_\theta(k)=\frac{\sigma^{2}}{2}\theta 1_{\{k=2\}}+\int_{0}^{\infty}\frac{\theta^{k-1}x^{k}}{k!}e^{-\theta x}\pi(\ddr x)$.
\end{lemma}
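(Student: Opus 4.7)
The plan is a direct computation, matching both sides using the Lévy-Khintchine decomposition of $\Psi$ and a power series expansion.

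First I would expand the left-hand side by linearity, applying formula \eqref{LKpsi} term by term to $\Psi(\theta(1-z))$ and $(1-z)\Psi(\theta)$. The linear contributions $-\beta\theta(1-z)$ cancel between the two, and the truncation corrections $\theta(1-z)x\mathbbm{1}_{x\leq 1}$ inside the integrals also cancel against their counterpart $(1-z)\theta x\mathbbm{1}_{x\leq 1}$. For the quadratic contribution, one uses $(1-z)^2-(1-z) = -z(1-z) = z^2-z$, which yields $\frac{\sigma^2}{2}\theta^2(z^2-z)$. After dividing by $\theta$, the left-hand side becomes
\[
\frac{\sigma^2}{2}\theta(z^2-z) + \frac{1}{\theta}\int_0^\infty\left[e^{-\theta(1-z)x} - (1-z)e^{-\theta x} - z\right]\pi(\ddr x).
\]

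Next I would expand the right-hand side. Pulling out the atomic $k=2$ contribution from $p_\theta(k)$ gives $(z^2-z)\frac{\sigma^2}{2}\theta$, which already matches the quadratic part of the left-hand side. It remains to show that
\[
\sum_{k=2}^{\infty}(z^k-z)\int_0^\infty\frac{\theta^{k-1}x^k}{k!}e^{-\theta x}\pi(\ddr x) = \frac{1}{\theta}\int_0^\infty\left[e^{-\theta(1-z)x} - (1-z)e^{-\theta x} - z\right]\pi(\ddr x).
\]
Writing $\frac{\theta^{k-1}}{k!}=\frac{1}{\theta}\frac{\theta^k}{k!}$, I would swap sum and integral (justified below) and compute the inner sum
\[
\sum_{k=2}^{\infty}(z^k-z)\frac{(\theta x)^k}{k!}=\bigl(e^{\theta xz}-1-\theta xz\bigr)-z\bigl(e^{\theta x}-1-\theta x\bigr)=e^{\theta xz}-ze^{\theta x}-(1-z).
\]
Multiplying by $e^{-\theta x}$ recovers exactly the integrand $e^{-\theta(1-z)x}-(1-z)e^{-\theta x}-z$, concluding the identity.

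The only technical point — which I expect to be the main (mild) obstacle — is to justify the Fubini interchange against the Lévy measure $\pi$, which only satisfies $\int(1\wedge x^2)\pi(\ddr x)<\infty$. For this I would verify integrability of $|e^{-\theta(1-z)x}-(1-z)e^{-\theta x}-z|$ against $\pi$: near $x=0$ a Taylor expansion shows this quantity is $O(x^2)$ (the $O(x)$ terms cancel by the same mechanism that made the $-\beta q$ drift disappear above), while for large $x$ it is uniformly bounded by $2$. Since $z\in(0,1)$, the series $\sum_{k\geq 2}(z^k-z)\frac{(\theta x)^k}{k!}e^{-\theta x}$ converges absolutely for every $x$ with the same $O(x^2)$ bound at the origin, so Tonelli-Fubini applies and the interchange is legitimate. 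This completes the proof.
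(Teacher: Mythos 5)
Your proof is correct, and it is in substance the same elementary verification as the paper's: both reduce the identity to expanding an exponential in a power series under the integral against $\pi$, producing the Poisson weights $\frac{\theta^{k-1}x^k}{k!}e^{-\theta x}$. The organization differs slightly. The paper routes the computation through the tilted mechanism $\Psi_\theta(u):=\Psi(u+\theta)-\Psi(\theta)$, writes $\Psi(\theta(1-z))=\Psi_\theta(-\theta z)+\Psi(\theta)$, and at the end recombines terms using the explicit formula for $\Psi'(\theta)$; you instead subtract $(1-z)\Psi(\theta)$ directly in the L\'evy--Khintchine form, letting the drift and the compensator $qx\mathbbm{1}_{x\leq 1}$ cancel at the outset, which avoids introducing $\Psi_\theta$ and $\Psi'(\theta)$ altogether and arrives at the single integrand $e^{-\theta(1-z)x}-(1-z)e^{-\theta x}-z$. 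Your version also makes explicit the Fubini/Tonelli justification ($O(x^2)$ behaviour at the origin, boundedness at infinity, and the fixed sign of $z^k-z$ for $z\in(0,1)$), a point the paper's proof passes over silently; note only that the identity as stated should be read for $\theta>0$ (or as a limit at $\theta=0$), in both your argument and the paper's, since both divide by $\theta$.
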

\begin{proof}
It is easy to see that $\Psi(\theta(1-z))=\Psi_\theta(-\theta z)+\Psi(\theta)$, where 
\[\Psi_\theta(u):=\Psi(u+\theta)-\Psi(\theta)=\Psi'(\theta)u+\frac{\sigma^2}{2}u^2+\int_0^\infty (e^{-ux}-1+ux)e^{-\theta x}\pi(\ddr x).\]
Then we have that 
\begin{align*}
\Psi_\theta(-\theta z)=&-\Psi'(\theta)\theta z +\frac{\sigma^2}{2}(\theta z)^2+\int_0^\infty (e^{\theta zx}-1-\theta zx)e^{-\theta x}\pi(\ddr x)\nonumber \\
=& -\Psi'(\theta)\theta z +\frac{\sigma^2}{2}(\theta z)^2+z\int_0^\infty \sum_{k=2}^\infty \frac{\theta^k x^k}{k!}e^{-\theta x}\pi(\ddr x)\\
&\qquad \qquad \qquad \qquad\qquad \qquad\qquad \quad+\theta\int_0^\infty \sum_{k=2}^\infty\frac{\theta^{k-1}x^k}{k!}(z^k-z)e^{-\theta x}\pi(\ddr x)\\
=& -\Psi'(\theta)\theta z +\frac{\sigma^2}{2}\theta^2z+z\int_0^\infty(e^{\theta x}-1-\theta x)e^{-\theta x}\pi(\ddr x)+\;\theta\sum_{k=2}^\infty (z^k-z)p_\theta(k)\\
=& -z\Psi(\theta)+\theta\sum_{k=2}^\infty (z^k-z)p_\theta(k).
\end{align*}
The last equality follows from the fact that
\[\Psi'(\theta)=-\beta+\sigma^2\theta+\int_0^1x(1-e^{-\theta x})\pi(\ddr x)-\int_1^\infty xe^{-\theta x}\pi(\ddr x).\]
Thus we have $\Psi(\theta(1-z))=(1-z)\Psi(\theta)+\theta\sum_{k=2}^\infty (z^k-z)p_\theta(k)$ for any $z\in (0,1)$.
\end{proof}
We now explain how coalescences take place in the process $(C^{\lambda}(t),t\geq 0)$. By construction, the laws of $(C^{\lambda}_{|[n]}(t),t\geq 0)$ for $n\geq 1$ are consistent and  as in Lemma \ref{ratesofrestriction} the family of jump rates  of $(C^{\lambda}_{|[n]}(t),t\geq 0)$ characterizes the law of $(C^{\lambda}(t),t\geq 0)$. The following lemma is obtained along the same lines as Lemma \ref{ratesofrestriction} but in an inhomogeneous time setting. 
\begin{lemma}
\label{inhomogeneousrate}
Let $n\geq 1$, the $n$-coalescent process $(C^{\lambda}_{|[n]}(t),t\geq 0)$ has jump rates characterized by $\mu^{\lambda}_t$ and the coalescence events are as follows. For all $t>0$, conditionally on $\#C_{|[n]}(t-)=m$, for any $j\leq m-1$,
consider the consecutive partitions of $[m]$ 
\begin{itemize}
  \item[-] $C^{j,k}_{\text{in}}:=(\{1\},...,\{j,...,j+k-1\},...,\{m\})$ for any $2\leq k\leq m-j$ and attach to each $C^{j}_{\text{in}}$ a random clock $\zeta^{j,k}_{\text{in}}$  with law
\[\mathbb{P}(\zeta^{j,k}_{\text{in}}>s)=\exp\left(-\int_{s}^{\infty}\mu^{\lambda}_r(k)\ddr r\right).\]
  \item[-] $C^{j}_{\text{out}}:=(\{1\},...,\{j,...,m\})$ and attach to each $C^{j}_{\text{out}}$ a random clock $\zeta^{j,k}_{\text{out}}$  with law
\[\mathbb{P}(\zeta^{j,k}_{\text{out}}>s)=\exp\left(-\int_{s}^{\infty}\bar{\mu}^{\lambda}_r(m-j+1)\ddr r\right).\]
\end{itemize}
Then the process jumps from the partition $C_{|[n]}(t-)$ to $\text{Coag}(C_{|[n]}(t-),D)$
with $D$ the partition in $\{C^{j,k}_{\mathrm{in}}, C^j_\mathrm{out}\}$ associated to the first random clock that rings.
\end{lemma}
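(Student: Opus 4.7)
The plan is to mirror the strategy of Lemma \ref{ratesofrestriction}, but account for time-inhomogeneity. I would first use Lemma \ref{semigroup} together with the associativity of $\Coag$ to obtain, for $s>0$,
\[
  C^{\lambda}_{|[n]}(t+s)=\Coag\bigl(C^{\lambda}_{|[n]}(t),\,C^{\lambda}_{|[m]}(t,t+s)\bigr),
\]
where, conditionally on $\#C^{\lambda}_{|[n]}(t)=m$, the partition $C^{\lambda}_{|[m]}(t,t+s)$ is the restriction to $[m]$ of a $(v_t(\lambda),v_s)$-Poisson box, and is independent of $C^{\lambda}_{|[n]}(t)$. Thus the restricted process is a pure jump Markov process, and its transitions at time $t$ are completely encoded by the infinitesimal behaviour of $C^{\lambda}_{|[m]}(t,t+s)$ as $s\to 0^+$.

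Next I would compute the per-block instantaneous coagulation rates. By Lemma \ref{Poissonboxlaw}(i), the sizes of the first $m-1$ blocks of $C^{\lambda}(t,t+s)$ are i.i.d.\ and the last (possibly truncated) block contains all Poisson points that sit after the $(m{-}1)$-th jump. Lemma \ref{coagratecomputation} combined with Lemma \ref{findingtherate} yields
\[
  \lim_{s\to 0^+}\frac{1}{s}\,\mathbb{P}\bigl(\#C^{\lambda}_1(t,t+s)=k\bigr)=\mu^{\lambda}_t(k)\qquad(k\geq 2),
\]
so by the i.i.d.\ property, for each fixed $j\leq m-1$ and $2\leq k\leq m-j$, the partition $C^{\lambda}_{|[m]}(t,t+s)$ equals $C^{j,k}_{\mathrm{in}}$ (one inner merger of $k$ consecutive blocks, no other merger) with probability $s\,\mu^{\lambda}_t(k)+o(s)$. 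For the ``out'' event $C^{j}_{\mathrm{out}}$ (blocks $j,\ldots,m$ merging), a standard truncation argument shows the rate is
\[
  \sum_{k\geq m-j+1}\mu^{\lambda}_t(k)=\bar\mu^{\lambda}_t(m-j+1),
\]
because the merger occurs precisely when the block of the Poisson box starting at position $j$ has size at least $m-j+1$.

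The third step is to rule out simultaneous coalescences: the probability that two \emph{disjoint} blocks of $C^{\lambda}(t,t+s)$ are both of size $\geq 2$ is, by independence of block sizes, bounded by $\bigl(\mathbb{P}(\#C^{\lambda}_1(t,t+s)\geq 2)\bigr)^2=O(s^2)$, hence negligible at the infinitesimal level. Consequently, conditionally on $C_{|[n]}(t-)$ with $\#C_{|[n]}(t-)=m$, the next jump is driven by the minimum of independent time-inhomogeneous Poisson clocks, one attached to each $C^{j,k}_{\mathrm{in}}$ with rate function $r\mapsto\mu^{\lambda}_r(k)$, and one attached to each $C^{j}_{\mathrm{out}}$ with rate function $r\mapsto\bar\mu^{\lambda}_r(m-j+1)$. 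This yields the stated distributions for $\zeta^{j,k}_{\mathrm{in}}$ and $\zeta^{j,k}_{\mathrm{out}}$, and identifies the jump as $\Coag(C_{|[n]}(t-),D)$ with $D$ the partition associated to the winning clock.

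The main obstacle is the handling of the ``outer'' block (the one containing the index $m$), because its size distribution differs from that of the first $m-1$ blocks: it is the size of the first block of the Poisson box conditioned to be at least the remaining length. One must therefore argue separately that the rate at which it exceeds $m-j+1$ is exactly $\bar\mu^{\lambda}_t(m-j+1)$, which follows from the Poisson thinning implicit in Lemma \ref{Poissonboxlaw} and the fact that $(J^{\lambda}_i(t))_{i\geq 1}$ has no terminal atom; once this point is settled, the rest is a routine decomposition of the transition probabilities as in the proof of Lemma \ref{ratesofrestriction}, with exponential clocks replaced by their time-inhomogeneous counterparts.
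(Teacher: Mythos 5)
Your proposal is correct and follows essentially the route the paper intends: the paper gives no detailed argument for Lemma \ref{inhomogeneousrate}, saying only that it is obtained "along the same lines as Lemma \ref{ratesofrestriction} but in an inhomogeneous time setting", and your write-up supplies exactly those lines — the restriction/consistency step via Lemma \ref{semigroup} and associativity of $\Coag$, the per-block rates $\mu^{\lambda}_t(k)$ and $\bar\mu^{\lambda}_t(m-j+1)$ extracted from Lemmas \ref{coagratecomputation} and \ref{findingtherate}, the $O(s^2)$ bound excluding simultaneous mergers, and the separate treatment of the block containing $m$. If anything, your argument is more explicit than the paper's, and the only point left implicit (matching the infinitesimal rates $r\mapsto\mu^{\lambda}_r(k)$ to the exact clock formulas displayed in the statement) is equally implicit in the paper.
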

\begin{proof}[Proof of Theorem \ref{discretizedcc}]
It follows directly by combination of Lemmas \ref{semigroup}--\ref{inhomogeneousrate}.
\end{proof}

We provide now some basic properties of the consecutive coalescent $(C^{\lambda}(t),t\geq 0)$. 
\begin{proposition}\label{limitbehavior} Fix $\lambda>0$. If $\Psi$ is critical or supercritical then $(C^{\lambda}(t),t\geq 0)$ converges almost-surely towards the partition $1_{\mathbb{N}}$. If $\Psi$ is subcritical, then the process $(C^{\lambda}(t),t\geq 0)$ converges almost-surely towards a partition $C^{\lambda}(\infty)$, whose law is characterized by $$\mathbb{E}[z^{\#C^{\lambda}_1(\infty)}]=1-e^{-\Psi'(0+)\int_{\lambda(1-z)}^{\lambda}\frac{\ddr u}{\Psi(u)}} \text{ for any } z\in (0,1).$$
In this case, individuals $(J^{\lambda}_1,J^{\lambda}_2,...)$ belong to families with i.i.d sizes distributed as $\#C^{\lambda}_1(\infty)$.
\end{proposition}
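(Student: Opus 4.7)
The plan is to combine a soft monotonicity argument, which gives almost-sure convergence for free, with the explicit one-block generating function from Theorem~\ref{markovcoal} in order to identify the law of the limit.

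First I would use that a consecutive coalescent is monotone in the coarsening order. From \eqref{coag}, every block of $C^{\lambda}(t+s)$ is a union of blocks of $C^{\lambda}(t)$, so for each $n$ the restriction $C^{\lambda}_{|[n]}(t)$ takes values in the finite set $\mathcal{C}_n$ and is non-decreasing in the coarsening order, hence eventually constant almost surely. By the consistency $(C^{\lambda}_{|[n]})_{|[m]}=C^{\lambda}_{|[m]}$, these limits assemble into a random partition $C^{\lambda}(\infty)\in\mathcal{C}_{\infty}$ with $C^{\lambda}(t)\to C^{\lambda}(\infty)$ almost surely for the metric $d$; in particular $\#C^{\lambda}_1(t)\to \#C^{\lambda}_1(\infty)$ in $\N\cup\{\infty\}$ almost surely.

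To identify the law of the limit, I would start from Theorem~\ref{markovcoal},
\[
\E\!\left[z^{\#C^{\lambda}_1(t)}\right]=1-\frac{v_t(\lambda(1-z))}{v_t(\lambda)}, \qquad z\in(0,1),
\]
and pass to the limit by bounded convergence (with the convention $z^{\infty}=0$). The key analytic tool is the same representation that appeared in the proof of Proposition~\ref{warmup}: differentiating \eqref{odev} yields $\partial_u v_t(u)=\Psi(v_t(u))/\Psi(u)$, and hence
\[
\frac{v_t(\lambda(1-z))}{v_t(\lambda)}=\exp\!\left(-\int_{\lambda(1-z)}^{\lambda}\frac{\Psi(v_t(u))}{v_t(u)\,\Psi(u)}\,\ddr u\right).
\]
A three-way case split then concludes the identification. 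In the supercritical regime $v_t(u)\to\rho>0$ for every $u>0$, so the ratio tends to $1$ and $\E[z^{\#C^{\lambda}_1(\infty)}]=0$ for all $z\in(0,1)$, forcing $\#C^{\lambda}_1(\infty)=\infty$ and $C^{\lambda}(\infty)=1_{\N}$ almost surely. In the critical and subcritical regimes $v_t(u)\to 0$ for every $u>0$; since $\Psi$ is continuous with $\Psi(0)=0$, the quotient $\Psi(v)/v$ is bounded on $[0,\lambda]$ and converges pointwise to $\Psi'(0+)$, so dominated convergence yields
\[
\int_{\lambda(1-z)}^{\lambda}\frac{\Psi(v_t(u))}{v_t(u)\,\Psi(u)}\,\ddr u \ \longrightarrow\ \Psi'(0+)\int_{\lambda(1-z)}^{\lambda}\frac{\ddr u}{\Psi(u)}.
\]
In the critical case ($\Psi'(0+)=0$) this vanishes and again $C^{\lambda}(\infty)=1_{\N}$; in the subcritical case ($\Psi'(0+)>0$) one obtains the stated formula, which is a genuine probability generating function of an almost surely finite integer because $\int_{0}\ddr u/\Psi(u)=\infty$ in this regime.

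For the final i.i.d.\ assertion I would write $C^{\lambda}(\infty)=\Coag(C^{\lambda}(t),\widetilde{C}^{\lambda}(t,\infty))$, where $\widetilde{C}^{\lambda}(t,\infty)$ is the almost-sure limit, as $s\to\infty$, of the $(v_t(\lambda),v_s)$-Poisson boxes $C^{\lambda}(t,t+s)$. Each $C^{\lambda}(t,t+s)$ has i.i.d.\ block sizes by Lemma~\ref{Poissonboxlaw} and is independent of $C^{\lambda}(t)$ by Lemma~\ref{semigroup}; since the joint law of any finite tuple of block sizes is a product measure whose marginals converge in $\N\cup\{\infty\}$, the limit $\widetilde{C}^{\lambda}(t,\infty)$ still has i.i.d.\ block sizes and remains independent of $C^{\lambda}(t)$. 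Consequently the block sizes of $C^{\lambda}(\infty)$ are i.i.d.\ sums of i.i.d.\ variables grouped according to an independent i.i.d.\ grouping, hence themselves i.i.d. The only really delicate step in the whole argument is the dominated convergence verification in the case split, but this is routine once the integral representation of $v_t(\lambda(1-z))/v_t(\lambda)$ is at hand.
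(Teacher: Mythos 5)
Your proposal is correct and follows essentially the same route as the paper: the limit law is identified exactly as in the paper's proof, via the representation $\frac{v_t(\lambda(1-z))}{v_t(\lambda)}=\exp\left(-\int_{\lambda(1-z)}^{\lambda}\frac{\Psi(v_t(u))}{v_t(u)}\frac{\ddr u}{\Psi(u)}\right)$ and the trichotomy on $\Psi'(0+)$ (the paper uses monotone rather than dominated convergence, and your boundedness of $\Psi(v)/v$ near $0$ should be justified by convexity of $\Psi$ and finiteness of $\Psi'(0+)$ in the (sub)critical regime rather than mere continuity, a one-line fix). The only other differences are organizational: you get almost-sure convergence from eventual constancy of each finite restriction (a monotone path in the finite poset $\mathcal{C}_n$) where the paper runs a block-by-block induction on $\#C^{\lambda}_i(t)$, and you make the i.i.d.\ assertion explicit (the paper leaves it implicit); note it also follows directly from the fact that $(\#C^{\lambda}_1(t),\dots,\#C^{\lambda}_n(t))$ is i.i.d.\ for every $t$ and converges almost surely.
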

\begin{proof} Recall that for any $t\geq 0$, $\frac{\ddr }{\ddr u}v_t(u)=\frac{\Psi(v_t(u))}{\Psi(u)}$. Therefore
$$ \frac{v_t(\lambda(1-z)}{v_t(\lambda)}=\exp\left(\int_{\lambda}^{\lambda(1-z)}\frac{\ddr}{\ddr u}\log(v_t(u))\ddr u\right)=\exp\left(\int_{\lambda}^{\lambda(1-z)}\frac{\Psi(v_t(u))}{v_t(u)}\frac{\ddr u}{\Psi(u)}\right).$$
If $\Psi'(0+)<0$, then $\frac{\Psi(v_t(u))}{v_t(u)}\underset{t\rightarrow \infty}\longrightarrow \Psi(\rho)/\rho=0$ and by monotone convergence $\frac{v_t(\lambda(1-z))}{v_t(\lambda)}\underset{t\rightarrow \infty}{\longrightarrow} 1$. By Theorem \ref{markovcoal}, we have $$\mathbb{E}[z^{\#C^{\lambda}_1(t)}]=1-\frac{v_t(\lambda(1-z))}{v_t(\lambda)}\underset{t\rightarrow \infty}{\longrightarrow}0.$$ 
The process $(\#C^{\lambda}_1(t),t\geq 0)$ is non-decreasing and thus converges almost-surely in $\bar{\mathbb{N}}$.  Therefore $\#C^{\lambda}_1(t) \underset{t\rightarrow \infty}{\longrightarrow} \infty$ a.s. Recall that $(C^{\lambda}(t),t\geq 0)$ converges almost surely towards $\mathbbm{1}_{\mathbb{N}}$ if and only if $C^{\lambda}_{|[n]}(t)=\mathbbm{1}_{[n]}$ for large enough $t$. Since $\#C_1(t)\underset{t\rightarrow\infty}{\longrightarrow} \infty$, then $\mathbb{P}(\#C_1(t)\geq n)=\mathbb{P}(T_{1,n}\leq t)\longrightarrow 1$ with $T_{1,n}$ the coalescence time of the ancestral lineages of $J^{\lambda}_1$ and $J^{\lambda}_n$. If $\Psi'(0+)\geq 0$, then $\frac{\Psi(v_t(u))}{v_t(u)}\underset{t\rightarrow \infty}\longrightarrow \Psi'(0+)$ and by monotone convergence $\frac{v_t(\lambda(1-z)}{v_t(\lambda)}\underset{t\rightarrow \infty}{\longrightarrow} e^{-\Psi'(0)\int_{\lambda(1-z)}^{\lambda}\frac{\ddr u}{\Psi(u)}}$. By Theorem \ref{markovcoal}, we have for any $i\geq 1$, $$\mathbb{E}[z^{\#C^{\lambda}_i(t)}]=1-\frac{v_t(\lambda(1-z))}{v_t(\lambda)}\underset{t\rightarrow \infty}{\longrightarrow}1-e^{-\Psi'(0+)\int_{\lambda(1-z)}^{\lambda}\frac{\ddr u}{\Psi(u)}}.$$
By monotonicity, $\#C^{\lambda}_1(t) \underset{t\rightarrow \infty}{\longrightarrow} \#C^{\lambda}_1(\infty)$ a.s. Therefore for large enough time $t_1$, for $t\geq t_1$, $C^{\lambda}_1(t)=C^{\lambda}_1(\infty)$. Since there is no coalescence between  blocks $C^{\lambda}_1$ and $C^{\lambda}_2$ after time $t_1$, the process $(\#C^{\lambda}_2(t),t\geq t_1)$ is non-decreasing and converges almost-surely towards $\#C^{\lambda}_2(\infty)$. By induction, for any $n_0$, there exists $t_{n_0}$ such that for any $t\geq t_{n_0}$, $\#C^{\lambda}_i(t)=\#C^{\lambda}_i(\infty)$ for all $i\leq n_0$. Thus, for any $t\geq t_{n_0}$, $C^{\lambda}_i(t)\cap [n_0]=C^{\lambda}_i(\infty)\cap [n_0]$, and then $d(C^{\lambda}(t),C^{\lambda}(\infty))\leq \frac{1}{n_0}$.
\end{proof}

The following proposition is a direct consequence of the strong law of large numbers.
\begin{proposition}[Singletons] \label{singleton}
For any $t\geq 0$, and any $\lambda>0$ there are infinitely many singleton blocks at time $t$ and 
$$\frac{\#\{i\in [n]; \#C^{\lambda}_i(t)=1\}}{n}\underset{n\rightarrow \infty}{\longrightarrow} D_t^{\lambda} \text{ a.s.}$$
with $D_{t}^{\lambda}=\frac{\lambda}{\Psi(\lambda)}\frac{\Psi(v_t(\lambda))}{v_t(\lambda)}$. This represents the proportion of ancestors that have not been involved in coalescences by time $t$.
\end{proposition}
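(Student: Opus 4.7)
The plan is to exploit the fact, established in Lemma \ref{Poissonboxlaw}, that $C^{\lambda}(t)$ is a $(\lambda, v_t)$-Poisson box and therefore has i.i.d.\ block sizes. Since the events $\{\#C^{\lambda}_i(t)=1\}$ are then independent and identically distributed indicators with positive probability, the strong law of large numbers immediately yields
\[
\frac{\#\{i\in [n]; \#C^{\lambda}_i(t)=1\}}{n}\underset{n\rightarrow \infty}{\longrightarrow} \mathbb{P}(\#C^{\lambda}_1(t)=1) \quad \text{a.s.,}
\]
and Borel--Cantelli (second lemma) gives a.s.\ infinitely many singletons simultaneously. It remains only to identify the limiting probability with $D^{\lambda}_t$.

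For this, I would invoke the first formula of Lemma \ref{Poissonboxlaw} with $\phi=v_t$, which at $k=1$ reads
\[
\mathbb{P}(\#C^{\lambda}_1(t)=1) \;=\; \frac{\lambda\, v_t'(\lambda)}{v_t(\lambda)}.
\]
The derivative $v_t'(\lambda)$ is then computed by differentiating Silverstein's identity \eqref{odev} with respect to $\lambda$: from $\int_{v_t(\lambda)}^{\lambda}\frac{\ddr z}{\Psi(z)}=t$ one obtains $\frac{1}{\Psi(\lambda)}=\frac{v_t'(\lambda)}{\Psi(v_t(\lambda))}$, hence $v_t'(\lambda)=\Psi(v_t(\lambda))/\Psi(\lambda)$. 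Substituting gives $\mathbb{P}(\#C^{\lambda}_1(t)=1) = \frac{\lambda}{\Psi(\lambda)}\frac{\Psi(v_t(\lambda))}{v_t(\lambda)} = D^{\lambda}_t$.

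No real obstacle is expected; the only mild subtlety is the case $\lambda=\rho$ (where $\Psi(\lambda)=0$), but then $v_t(\lambda)=\rho$ as well, so the ratio $\Psi(v_t(\lambda))/\Psi(\lambda)$ must be interpreted as the limit $v_t'(\rho)$, and the formula remains valid by continuity. The final sentence of the proposition, interpreting $D^{\lambda}_t$ as the proportion of ancestors that have not yet been involved in any coalescence by time $t$, is an immediate rephrasing: a singleton block $C^{\lambda}_i(t)$ corresponds exactly to a sampled individual $J^{\lambda}_i$ whose ancestral lineage $\hat{X}_{\cdot}(J^{\lambda}_i)$ has not merged with any neighbouring lineage up to time $t$.
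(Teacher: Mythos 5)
Your argument is correct and is essentially the paper's proof: the paper simply observes that the statement is a direct consequence of the strong law of large numbers applied to the i.i.d.\ block sizes of the $(\lambda,v_t)$-Poisson box $C^{\lambda}(t)$ given by Lemma~\ref{Poissonboxlaw}. Your explicit identification of $\mathbb{P}(\#C^{\lambda}_1(t)=1)=\lambda v_t'(\lambda)/v_t(\lambda)$ together with $\partial_\lambda v_t(\lambda)=\Psi(v_t(\lambda))/\Psi(\lambda)$ is exactly the computation the paper leaves implicit, so there is nothing further to add.
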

We have seen in Theorem \ref{Grey}-(iv) and Proposition \ref{boundaries} that when $\int_{0}\frac{\ddr x}{|\Psi(x)|}<\infty$, the CSBP explodes and $\infty$ is an entrance boundary of $(\hat{X}_t,t\geq 0)$. Recall that for any $t>0$, $\hat{X}_t(\infty)$ is the first individual from generation $t$ to have an infinite progeny at time $0$. 
\begin{proposition}[Coming down from infinity]\label{CDIlambda} For any $t>0$, $\#C^{\lambda}(t)<\infty$ a.s if and only if $\int_{0}\frac{\ddr x}{|\Psi(x)|}<\infty$. Moreover
\[v_t(0)\#C^{\lambda}(t)\underset{t\rightarrow 0}{\longrightarrow} \mathbbm{e}_{1/\lambda} \text{ in law}\]
where $\mathbbm{e}_{1/\lambda}$ is an exponential random variable with parameter $1/\lambda$.
\end{proposition}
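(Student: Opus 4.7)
The plan is to exploit the fact that $C^\lambda(t)$ is a $(\lambda,v_t)$-Poisson box, so by Lemma~\ref{Poissonboxlaw} its block sizes $(\#C^\lambda_i(t))_{i\ge 1}$ are i.i.d.\ random variables in $\mathbb{N}\cup\{\infty\}$, and to identify the law of $\#C^\lambda(t)$ as a geometric distribution. Letting $z\uparrow 1$ in the generating function identity \eqref{generatingfunctionCC} of Theorem~\ref{markovcoal} and invoking monotone convergence (note $z^\infty=0$ for $z\in[0,1)$) gives
\[
  \mathbb{P}(\#C^\lambda_1(t)<\infty) \;=\; 1-\frac{v_t(0)}{v_t(\lambda)},
\]
so each block is infinite with probability $p_t:=v_t(0)/v_t(\lambda)$.

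For the dichotomy, I would split on whether $v_t(0)=0$ or $v_t(0)>0$. In the first case every block is a.s.\ finite; since the blocks form a consecutive partition of $\mathbb{N}$ their sizes sum to $\infty$, so there must be infinitely many of them, i.e.\ $\#C^\lambda(t)=\infty$ a.s. In the second case the i.i.d.\ sequence $(\#C^\lambda_i(t))_{i\ge 1}$ a.s.\ contains an infinite entry; by the ordering convention on consecutive partitions recalled at the beginning of Section~\ref{sec:markovianCoalescent}, any block following an infinite one is empty, so $\#C^\lambda(t)$ equals the index $N:=\inf\{i\ge 1:\#C^\lambda_i(t)=\infty\}$ of the first infinite block, which is geometric on $\{1,2,\ldots\}$ with parameter $p_t$. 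The remark following \eqref{eqn:levykinDecompositionv_t} (or Theorem~\ref{Grey}(iv)) identifies $v_t(0)>0$ with Grey's explosion condition $\int_0 du/|\Psi(u)|<\infty$, settling the first assertion.

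For the scaling limit, the Laplace transform of the geometric law directly yields, for $s>0$,
\[
  \mathbb{E}\bigl[e^{-sv_t(0)\#C^\lambda(t)}\bigr]\;=\;\frac{p_t\,e^{-sv_t(0)}}{1-(1-p_t)e^{-sv_t(0)}}\;=\;\frac{p_t}{e^{sv_t(0)}-1+p_t}.
\]
As $t\to 0$, $v_t(0)\to v_0(0)=0$ and $v_t(\lambda)\to v_0(\lambda)=\lambda$, whence $p_t\to 0$. Using the expansion $e^{sv_t(0)}-1\sim sv_t(0)$ and the exact identity $v_t(0)/p_t=v_t(\lambda)$, the ratio above is asymptotic to $1/(1+sv_t(\lambda))$, which tends to $1/(1+s\lambda)$, i.e.\ to the Laplace transform of an exponential random variable $\mathbbm{e}_{1/\lambda}$. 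Convergence in distribution then follows from the continuity theorem for Laplace transforms.

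No step poses a serious obstacle; the only mild care needed lies in (a) checking that $\#C^\lambda(t)$ indeed coincides with the position $N$ of the first infinite block, which is immediate from the consecutive-partition convention, and (b) justifying the asymptotics $v_t(0)\to 0$ and $v_t(\lambda)\to\lambda$ as $t\to 0$, which follow from $v_0=\mathrm{id}$ and the (right-)continuity at $t=0$ of $t\mapsto v_t(\mu)$ provided by the integral equation \eqref{odev}.
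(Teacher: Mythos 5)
Your proposal is correct and follows essentially the same route as the paper: both identify $\#C^{\lambda}(t)$ as a geometric random variable with parameter $v_t(0)/v_t(\lambda)$ (each i.i.d.\ block being infinite with that probability, which is Grey's explosion condition $v_t(0)>0$) and then pass to the limit $t\to 0$ using $v_t(0)\to 0$ and $v_t(\lambda)\to\lambda$. The only cosmetic difference is that you verify the convergence in law via the Laplace transform of the rescaled geometric law, whereas the paper computes the tail probability $\mathbb{P}(v_t(0)\#C^{\lambda}(t)>x)=\bigl(1-\tfrac{v_t(0)}{v_t(\lambda)}\bigr)^{\floor{x/v_t(0)}}$ directly.
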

\begin{proof} Recall that $v_t(0)>0$ if and only if $\int_{0}\frac{\ddr x}{|\Psi(x)|}<\infty$. By Theorem \ref{markovcoal}, for any $i\geq 1$, $\mathbb{P}(\#C^{\lambda}_i(t)=\infty)=\frac{v_t(0)}{v_t(\lambda)}$ and therefore the number of blocks $\#C^{\lambda}(t)$ is a geometric random variable with parameter $\frac{v_t(0)}{v_t(\lambda)}$. For any fixed $x>0$, one has
\[\mathbb{P}(v_t(0)\#C^{\lambda}(t)>x)=\left(1-\frac{v_t(0)}{v_t(\lambda)}\right)^{\floor{\frac{x}{v_t(0)}}}\underset{t\rightarrow 0}{\longrightarrow} e^{-\frac{x}{\lambda }}. \qedhere\]
\end{proof}

\subsection{Backward genealogy of the whole population}

In the previous section, we have defined some coalescent processes arising from sampling initial individuals along a Poisson process with an arbitrary intensity $\lambda$. The consecutive coalescents obtained by this procedure are only approximating the backward genealogy. They give the genealogy of a random sample of the population. The objective of this subsection is to observe that when the Grey condition holds, one can define a consecutive coalescent matching with the complete genealogy of the population from any positive time. In all this section, assume the Grey's condition
\[
  \int^{\infty}\frac{\ddr x}{\Psi(x)}<\infty.
\]

Heuristically, we make $\lambda \to \infty$ in Theorem \ref{markovcoal}, to study the genealogy of the whole population. The limiting process would indeed characterize the genealogy of the CSBP as in this case, an everywhere dense sub-population would be sampled and its genealogy given,  which is enough to deduce the genealogical relationship between any pair of individuals. However, this method cannot work directly as one would have jump rates that may explode.

Fix a time $s>0$. The subordinator $(X_{-s,0}(x),x\geq 0)$ is a compound Poisson process with L\'evy measure $\ell_{s}(\ddr x)$ independent of $(X_{-t,-s}(x),x\geq 0, t\geq s)$. Let $(J^{v_{s}(\infty)}_i,i\geq 1)$ be the jump times of $(X_{-s,0}(x),x\geq 0)$. They are arrival times of a Poisson process with intensity $v_{s}(\infty)=\ell_{s}((0,\infty))$, independent of $(\hat{X}_{s,t},t\geq s)$. Consider $(C(s,t),t\geq s)$ the partition-valued process defined by $$i\overset{C(s,t)}{\sim} j \text{ iff }  \hat{X}_{s, t}(J^{v_{s}(\infty)}_i)=\hat{X}_{s, t}(J^{v_{s}(\infty)}_j).$$
The process $(C(s,t),t>s)$  provides a dynamical description of the genealogy of initial individuals whose most recent common ancestors are found at time $s>0$. The following theorem is a direct application of Theorem \ref{markovcoal}.

\begin{theorem}
\label{discretizedcc}
For any $s>0$, the flow of random consecutive partitions $(C(u,t),t\geq u\geq s)$ satisfies for any $t\geq u\geq s$, 
\begin{equation}\label{flowCst}
C(s,t)=\Coag(C(s,u),C(u,t)) \text{ a.s}
\end{equation}
where $C(u,t)$ is a Poisson box with parameters $(v_{u}(\infty),v_{t-u})$ independent of $C(s,u)$. Moreover for any $i\geq 1$, $t\geq s$ and $z\in (0,1)$, 
$$\mathbb{E}[z^{\#C_i(s,t)}]=1-\frac{v_{t-s}(v_s(\infty)(1-z))}{v_{t}(\infty)}$$
and the consecutive coalescent $(C(s,t),t>s)$ has coagulation rates $(\mu^{\infty}_t, t>s)$ with
\begin{equation}
  \label{coagrateinfty}
  \mu^{\infty}_t(k):=\frac{\sigma^2}{2}v_t(\infty)1_{\{k=2\}}+v_t(\infty)^{k-1}\int_{]0,\infty[}\frac{x^{k}}{k!}e^{-v_t(\infty)x}\pi(\ddr x)\ \text{ for any }k\geq 2.
\end{equation}
\end{theorem}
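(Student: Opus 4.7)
\medskip

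\noindent\textbf{Proof proposal.} The entire statement should be obtained as a direct specialization of Theorem~\ref{markovcoal}, with the external Poisson sampling of intensity $\lambda$ replaced by the (internally generated) jumps of $X_{-s,0}$. The plan is to treat the restricted flow $(\hat{X}_{s,s+r}, r \geq 0)$ as a copy of the standard inverse flow (using Lemma~\ref{flowA}(ii) and stationarity), sampled by an independent Poisson process of intensity $\lambda := v_s(\infty) = \ell_s((0,\infty))$, which is finite by Grey's condition and the remark following \eqref{eqn:levykinDecompositionv_t}.

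First I would record the key independence: under Grey's condition, $X_{-s,0}$ is a compound Poisson process so its jump locations $(J_i^{v_s(\infty)}, i\ge 1)$ are the arrival times of a Poisson process of intensity $v_s(\infty)$; by Definition~\ref{flowdef2}(ii), this sequence is independent of the sub-flow $(\hat{X}_{s,t}, t\ge s)$. This places us exactly in the setup of \eqref{defconsec}, with the flow $(\hat{X}_{s,s+r})_{r\ge 0}$ in place of $(\hat{X}_r)_{r\ge 0}$ and sampling intensity $\lambda=v_s(\infty)$. Thus $(C(s,s+r), r\ge 0)\stackrel{\mathcal{L}}{=} (C^{v_s(\infty)}(r), r\ge 0)$ as $\mathcal{C}_\infty$-valued processes.

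Next, the cocycle identity \eqref{flowCst} follows from Lemma~\ref{semigroup} (the proof there uses only the cocycle $\hat{X}_{s,t}=\hat{X}_{u,t}\circ\hat{X}_{s,u}$ of Lemma~\ref{flowA}(iii) and the fact that the intermediate points $J^\bullet_l(u):=\hat{X}_{s,u}(J^\bullet_l)$ for one representative $l$ per block of $C(s,u)$ form a Poisson process independent of $\hat{X}_{u,t}$, by Lemma~\ref{Poissonboxlaw}(ii,iii)). The Laplace exponent of the intermediate Poisson process is $v_{u-s}(v_s(\infty))$, which equals $v_u(\infty)$ by the composition identity \eqref{compositionv} and the continuity of $v_{u-s}$ on $(0,\infty)$; hence $C(u,t)$ is a Poisson box with parameters $(v_u(\infty),v_{t-u})$, independent of $C(s,u)$. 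The generating-function formula is then the direct application of Lemma~\ref{Poissonboxlaw}(i) with $(\lambda,\phi)=(v_s(\infty),v_{t-s})$, using once more that $v_{t-s}(v_s(\infty))=v_t(\infty)$.

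Finally, the coagulation rates are read off from Lemma~\ref{coagratecomputation} and Lemma~\ref{findingtherate} applied to the time-shifted coalescent: at time $u>s$, coalescence rates are those of $\mu^{v_s(\infty)}_{u-s}$, and substituting $v_{u-s}(v_s(\infty))=v_u(\infty)$ in \eqref{coagrate} gives exactly the expression \eqref{coagrateinfty} for $\mu_u^\infty(k)$. Since every step is a routine transcription of statements already established in Section~\ref{sec:markovianCoalescent}, no genuine obstacle is expected; the only point requiring a line of care is the identity $v_{u-s}(v_s(\infty))=v_u(\infty)$, which extends \eqref{compositionv} to $\lambda=\infty$ and holds because $v_s(\infty)<\infty$ under Grey's condition, so that $v_{u-s}$ can be evaluated at it by monotone continuity.
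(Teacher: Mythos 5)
Your proposal is correct and follows exactly the paper's route: the theorem is obtained as a direct application of Theorem~\ref{markovcoal} (via Lemmas~\ref{semigroup}--\ref{inhomogeneousrate}), with the sampling points taken to be the jump times of $X_{-s,0}$, which form a Poisson process of intensity $v_s(\infty)$ independent of $(\hat{X}_{s,t},t\geq s)$. Your extra care with the identity $v_{u-s}(v_s(\infty))=v_u(\infty)$, justified by continuity of $v_{u-s}$ and finiteness of $v_s(\infty)$ under Grey's condition, is precisely the point the paper leaves implicit, so you have in fact supplied more detail than the original one-line proof.
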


We see in the next corollary that by reversing time in any block of the consecutive coalescent $(C(s,t), 0<s\leq t)$, one obtains an inhomogeneous continuous-time Galton-Watson process.  Fix an horizon time $T>0$ and consider the consecutive partitions $C(T-t,T)$ for any $t\in [0,T[$.

\begin{corollary} The processes $(Z_i^{T}(t),0\leq t<T):=(\#C_i(T-t,T),0\leq t<T)$ are i.i.d inhomogeneous continuous-time Galton-Watson processes. For any $z\in [0,1]$, and any $t\in [0,T[$
\begin{equation}\label{reducedtreelaw}
\mathbb{E}[z^{Z_i^{T}(t)}]=1-\frac{v_{t}(v_{T-t}(\infty)(1-z))}{v_{T}(\infty)}.
\end{equation}
Moreover, denoting by $\gamma^{T}_i$, the time of its first jump, one has for any $t\in [0,T[$
\[\mathbb{P}(\gamma^{T}_i>t)=\frac{\Psi(v_{T}(\infty))}{v_{T}(\infty)}\frac{v_{T-t}(\infty)}{\Psi(V_{T-t}(\infty))}.\]
\end{corollary}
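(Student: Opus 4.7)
The plan is to read off everything from Theorem~\ref{discretizedcc} with $s$ replaced by $T-t$. First I will use the flow identity $C(T-t,T) = \Coag(C(T-t,T-s),C(T-s,T))$ for $0\leq s\leq t<T$. The crucial feature of consecutive partitions — that each block is a consecutive interval of $\mathbb{N}$ — lets me unfold the Coag operation into the clean branching decomposition
\[
Z_i^T(t) = \sum_{k=1}^{Z_i^T(s)}\#C_{m_i+k}(T-t,T-s),\qquad m_i := Z_1^T(s)+\cdots+Z_{i-1}^T(s).
\]
Since $C(T-t,T-s)$ is a Poisson box independent of $C(T-s,T)$ and has i.i.d.\ block sizes (Lemma~\ref{Poissonboxlaw}), this is exactly the branching decomposition of an inhomogeneous continuous-time Galton--Watson process: conditionally on $(Z_j^T(s))_j$, the $Z_i^T(t)$ are independent and each is a sum of $Z_i^T(s)$ i.i.d.\ copies of a single branching law whose generating function is $1-v_{t-s}(v_{T-t}(\infty)(1-z))/v_{T-s}(\infty)$. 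The joint i.i.d.\ property of the processes $(Z_i^T(\cdot))_{i\geq 1}$ will then follow by iterating this construction along a countable dense set of time instants.

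The formula \eqref{reducedtreelaw} is a direct instance of the generating-function statement of Theorem~\ref{discretizedcc}: $\mathbb{E}[z^{Z_i^T(t)}] = \mathbb{E}[z^{\#C_i(T-t,T)}] = 1 - v_t(v_{T-t}(\infty)(1-z))/v_T(\infty)$.

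For the first jump time $\gamma_i^T$, I observe that since $Z_i^T(0)=1$ and the process is non-decreasing, $\gamma_i^T$ is the first time the unique initial individual produces at least two offspring, which corresponds to the first instant when $\#C_1(T-t-\ddr t,T-t)\geq 2$. A short-time expansion using $v_{\ddr t}(\lambda)=\lambda-\ddr t\,\Psi(\lambda)+o(\ddr t)$ together with the Poisson-box formula $\mathbb{P}(\#C_1=1)=\lambda v_{\ddr t}'(\lambda)/v_{\ddr t}(\lambda)$ pins down the per-individual jump rate at time $t$ as $r(t) = \Psi'(v_{T-t}(\infty)) - \Psi(v_{T-t}(\infty))/v_{T-t}(\infty)$, so $\mathbb{P}(\gamma_i^T>t)=\exp\bigl(-\int_0^t r(u)\,\ddr u\bigr)$. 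The change of variable $w=v_{T-u}(\infty)$, for which Silverstein's equation \eqref{odev} gives $\ddr w = \Psi(w)\,\ddr u$, reduces the integral to $\int_{v_T(\infty)}^{v_{T-t}(\infty)}\bigl(\Psi'(w)/\Psi(w)-1/w\bigr)\,\ddr w$, which integrates in closed form to the logarithm of the ratio appearing in the stated formula.

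The main obstacle is the first step: articulating the i.i.d.\ Galton--Watson structure in the presence of the index-shift effect intrinsic to numbering blocks by their smallest element. The resolution is the observation that, for consecutive partitions, the Coag operation assigns to each block $i$ a \emph{consecutive} slice of indices of the finer partition, and distinct blocks are assigned disjoint slices; combined with the i.i.d.\ nature of Poisson-box block sizes this simultaneously yields the branching property within each $Z_i^T$ and the independence across $i$. Once this is in place, the generating function and the jump-time computation follow routinely from the two displayed identities above.
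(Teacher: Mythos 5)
Your treatment of the two main assertions is exactly the paper's: the paper also obtains \eqref{reducedtreelaw} as a direct specialization of Theorem~\ref{discretizedcc}, and it derives the branching structure from the flow identity \eqref{flowCst}, written as $C_i(T-(t+s),T)=\bigcup_{j\in C_i(T-t,T)}C_{j}(T-(t+s),T-t)$; your decomposition $Z_i^T(t)=\sum_{k=1}^{Z_i^T(s)}\#C_{m_i+k}(T-t,T-s)$ is just this identity with the index bookkeeping made explicit, and the independence and i.i.d.\ block-size inputs you invoke are precisely what Theorem~\ref{discretizedcc} (via Lemma~\ref{Poissonboxlaw}) supplies, so the i.i.d.-across-$i$ conclusion via finite-dimensional distributions is fine. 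The only place you genuinely diverge is the first-jump formula, which the paper's proof leaves implicit: you extract a per-individual birth rate $r(t)=\Psi'(v_{T-t}(\infty))-\Psi(v_{T-t}(\infty))/v_{T-t}(\infty)$ and integrate with $w=v_{T-u}(\infty)$, $\ddr w=\Psi(w)\ddr u$. I checked this: your $r(t)$ is indeed the total coagulation rate $\sum_{k\geq 2}\mu^{\infty}_{T-t}(k)$ (compare \eqref{coagrateinfty} with Lemma~\ref{findingtherate}), and the change of variables gives the stated ratio, so the route is valid. Two comments, though. First, there is a shorter and fully rigorous path that avoids any infinitesimal expansion: since $Z_i^T$ starts from $1$ and is non-decreasing, $\{\gamma_i^T>t\}=\{Z_i^T(t)=1\}$, and $\P(Z_i^T(t)=1)$ is the coefficient of $z$ in \eqref{reducedtreelaw}, namely $\frac{v_{T-t}(\infty)}{v_T(\infty)}\,\partial_\lambda v_t(v_{T-t}(\infty))$, which equals the claimed expression by $\partial_\lambda v_t(\lambda)=\Psi(v_t(\lambda))/\Psi(\lambda)$ (from \eqref{odev}) and $v_t(v_{T-t}(\infty))=v_T(\infty)$. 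Second, if you keep the hazard-rate route, the step from the $o(\ddr t)$ expansion to $\P(\gamma_i^T>t)=\exp(-\int_0^t r(u)\ddr u)$ should be justified; the cleanest way is to note that your own decomposition gives, for any $u<u'$, $\P(\gamma_i^T>u'\mid \gamma_i^T>u)=\P(\#C_1(T-u',T-u)=1)=\frac{v_{T-u'}(\infty)\Psi(v_{T-u}(\infty))}{v_{T-u}(\infty)\Psi(v_{T-u'}(\infty))}$ exactly (Lemma~\ref{Poissonboxlaw} with $k=1$), which telescopes over any subdivision of $[0,t]$ to the stated formula with no limiting argument at all.
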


\begin{proof}
The law of $Z^{T}_i(t)$ for fixed $t$ is obtained by a direct application of Theorem  \ref{discretizedcc}. Only remains to show the branching property. By \eqref{flowCst} for any $s$ and $t$ such that $0\leq t+s<T$
$$C(T-(t+s),T)=\Coag(C(T-(t+s),T-t),C(T-t,T))$$
which provides $C_i(T-(t+s),T)=\bigcup_{j\in C_i(T-t,T)}C_{j}(T-(t+s),T-t)$ and
the branching property.
\end{proof}

\begin{remark} The process $(Z_1^{T}(t), 0\leq t <T)$ corresponds to the \textit{reduced} Galton-Watson process obtained by Duquesne and Le Gall \cite[Theorem 2.7.1]{MR1954248} in the (sub)critical case. We refer also to Fekete et al. \cite{2017arXiv170203533F} for an approach with stochastic differential equations. In the supercritical case, since for any $t\geq 0$, $v_{T-t}(\infty)\underset{T\rightarrow \infty}{\longrightarrow} \rho$, 
we see in \eqref{reducedtreelaw} that $(Z^{T}_1(t), t\geq 0)$ converges, as $T$ goes to infinity, in the finite-dimensional sense, towards a Markov process $(Z^{\infty}(t),t\geq 0)$ whose semigroup satisfies
for any $z\in (0,1)$
\[\mathbb{E}[z^{Z^{\infty}(t)}]=1-\frac{v_{t}(\rho(1-z))}{\rho}.\]
Namely, $(Z^{\infty}(t),t\geq 0)$ is a continuous-time Galton-Watson process, homogeneous in time, with reproduction measure $\mu^{\rho}$. Heuristically, individuals from time $-t$ with descendants at time $T$ will correspond at the limit with prolific individuals. 
\end{remark}
The coalescent process $(C(s,t),t\geq s)$ only describes coalescence in families from time $s>0$. We define now a coalescent process from time $0$ by using the flow of subordinators. Denote by $\mathscr{C}_{\mathbb{R}_+}$ the space of partitions of $]0,\infty[$ into consecutive half-closed intervals. That is to say, partitions of the form $\mathscr{C}=(]0,x_1], ]x_1,x_2],...)$ for some non-decreasing sequence of positive real numbers $(x_i, i\geq 1)$. The space of consecutive partitions of $\N$, $(\mathcal{C}_\infty, \Coag)$, acts as follows on $\mathscr{C}_{\mathbb{R}_+}$:  for any $\mathscr{C}\in \mathscr{C}_{\mathbb{R}_+}$ and $C\in \mathcal{C}_\infty$, for any $i\geq 1$
\[\Coag(\mathscr{C},C)_i=\bigcup_{j\in C_i}\mathscr{C}_j\]
where $\mathscr{C}_j=]x_{j-1},x_{j}]$ and $x_0=0$.
The following theorem achieves one of our goals and has to be compared with our preliminary observation in Proposition \ref{warmup}. It describes completely the genealogy backwards in time as well as the sizes of asymptotic families.

\begin{theorem}\label{completeCC} 
Define the process $(\mathscr{C}(t),t>0)$ valued in $\mathcal{C}_{\mathbb{R}^{+}}$ as follows:
$$\mathscr{C}(t)=\left\{(X_{-t,0}(x-),X_{-t,0}(x)],x\in J_{-t}\right\}.$$
The process $(\mathscr{C}(t),t>0)$ is a time-inhomogeneous Markov process such that for any $t\geq s>0$, $$\mathscr{C}(t)=\Coag(\mathscr{C}(s), C(s,t)) \text{ a.s.}$$
In the critical or supercritical case, $\mathscr{C}(t)\underset{t\rightarrow \infty}{\longrightarrow} \mathbbm{1}_{[0,\infty]}$ a.s. In the subcritical case, $\mathscr{C}(t)\underset{t\rightarrow \infty}{\longrightarrow} \mathscr{C}(\infty)$ a.s and the length of a typical interval at the limit has for law the quasi-stationary distribution of the CSBP conditioned on the non-extinction:
$$\mathbb{E}[e^{-u|\mathscr{C}_1(\infty)|}]=1-\exp\left(-\Psi'(0+)\int_{u}^{\infty}\frac{\ddr v}{\Psi(v)}\right).$$
\end{theorem}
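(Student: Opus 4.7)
The plan is to combine the cocycle property of the flow with the results on sampled consecutive coalescents developed in the previous subsection.

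First, I would establish the coagulation identity $\mathscr{C}(t)=\Coag(\mathscr{C}(s),C(s,t))$ almost surely. Under Grey's condition the subordinator $X_{-s,0}$ is compound Poisson and its jump locations $J_{-s}$ form a Poisson process with intensity $v_s(\infty)$, independent of $(\hat{X}_{s,u},u\geq s)$ (this is precisely the setting used to define the process $C(s,\cdot)$ in Theorem~\ref{discretizedcc}). Writing $\mathscr{C}_j(s)=(X_{-s,0}(y_j-),X_{-s,0}(y_j)]$ for $y_j$ the $j$-th atom of $J_{-s}$, the cocycle $X_{-t,0}=X_{-s,0}\circ X_{-t,-s}$ forces two such intervals $\mathscr{C}_j(s)$ and $\mathscr{C}_k(s)$ to be contained in the same interval of $\mathscr{C}(t)$ if and only if $\hat{X}_{s,t}(y_j)=\hat{X}_{s,t}(y_k)$, i.e.\ if and only if $j\overset{C(s,t)}{\sim} k$. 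Since both partitions consist of consecutive intervals and the operator $\Coag$ merges consecutive blocks, this exactly yields the stated coagulation.

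The Markov property in $t$ of $(\mathscr{C}(t),t>0)$ follows from this identity together with the independence property of the flow (Definition~\ref{flowdef2}(ii)): given $\mathscr{C}(s)$, the future partition $\mathscr{C}(t)$ depends only on $\hat{X}_{s,t}$ and the atoms $J_{-s}$, both of which are conditionally independent of $(\mathscr{C}(r),r\leq s)$. The process is inhomogeneous since the law of $C(s,t)$ depends on both $s$ and $t$ through $v_s(\infty)$ and $v_{t-s}$.

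For the asymptotic behaviour, fix $s>0$ and apply Proposition~\ref{limitbehavior} to the consecutive coalescent $(C(s,t),t\geq s)$, which is a $(v_s(\infty),v_\cdot)$-sampled coalescent. In the critical and supercritical cases the proposition gives $C(s,t)\to 1_{\mathbb{N}}$ almost surely, so all the intervals of $\mathscr{C}(s)$ eventually merge and $\mathscr{C}(t)\to\mathbf{1}_{[0,\infty)}$ a.s. In the subcritical case, $C(s,t)\to C(s,\infty)$ a.s.\ with i.i.d.\ block sizes, and the coagulation identity gives the almost sure limit
\[
  |\mathscr{C}_1(\infty)|=\sum_{j\in C_1(s,\infty)}|\mathscr{C}_j(s)|,
\]
a geometric-type compound sum of i.i.d.\ interval lengths whose common distribution is $\ell_s/v_s(\infty)$, so that $\E[e^{-u|\mathscr{C}_1(s)|}]=1-v_s(u)/v_s(\infty)$.

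The last and main computational step is to identify the Laplace transform of $|\mathscr{C}_1(\infty)|$ with the quasi-stationary one. Conditioning on $\#C_1(s,\infty)$ and setting $z=1-v_s(u)/v_s(\infty)$, the generating function from Proposition~\ref{limitbehavior} yields
\[
  \E[e^{-u|\mathscr{C}_1(\infty)|}]=1-\exp\!\Bigl(-\Psi'(0+)\!\int_{v_s(u)}^{v_s(\infty)}\!\frac{\ddr w}{\Psi(w)}\Bigr).
\]
The crucial identity is the change of variable $w=v_s(y)$; using $\frac{\ddr}{\ddr y}v_s(y)=\Psi(v_s(y))/\Psi(y)$ (derived from \eqref{odev}) one obtains $\frac{\ddr w}{\Psi(w)}=\frac{\ddr y}{\Psi(y)}$, turning the integral into $\int_u^\infty\ddr y/\Psi(y)$ and giving the announced formula. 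This invariance of the integrand under the flow is the one non-routine point of the proof; everything else is an application of the already established consecutive-coalescent machinery. Finally, the independence of the limit on the auxiliary time $s$ (which one also reads off the formula) is a built-in consistency check of the coagulation construction.
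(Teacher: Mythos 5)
Your proposal is correct and follows the paper's skeleton: the coagulation identity via the cocycle $X_{-t,0}=X_{-s,0}\circ X_{-t,-s}$, and the almost sure convergence of $\mathscr{C}(t)$ deduced from the almost sure convergence of $(C(s,t),t\geq s)$ in Proposition~\ref{limitbehavior}, with $C(s,t)\to 1_{\mathbb{N}}$ in the critical and supercritical cases. Where you diverge is the identification of the limit law in the subcritical case: the paper simply invokes Proposition~\ref{warmup}, which already gives the weak convergence of $\ell_t/v_t(\infty)$ (the common law of the interval lengths at time $t$) to the quasi-stationary distribution, i.e.\ $v_t(u)/v_t(\infty)\to e^{-\Psi'(0+)\int_u^\infty \ddr x/\Psi(x)}$; you instead fix $s$, write $|\mathscr{C}_1(\infty)|=\sum_{j\in C_1(s,\infty)}|\mathscr{C}_j(s)|$, and compute the Laplace transform as a compound sum using the generating function of $\#C_1(s,\infty)$ from Proposition~\ref{limitbehavior} together with the change of variable $w=v_s(y)$, $\ddr w/\Psi(w)=\ddr y/\Psi(y)$. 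Both routes rest on the same identity $\frac{\ddr}{\ddr u}v_s(u)=\Psi(v_s(u))/\Psi(u)$, so the difference is modest; your version has the virtue of exhibiting directly that the answer does not depend on $s$, while the paper's is shorter because the limit computation was already done in Proposition~\ref{warmup}. One point you should state explicitly for the compound-sum step: the block $C_1(s,\infty)$ is a functional of the jump \emph{times} of $X_{-s,0}$ and of $(\hat{X}_{s,t},t\geq s)$, hence independent of the jump \emph{sizes} $(|\mathscr{C}_j(s)|)_{j\geq 1}$, which are i.i.d.\ with law $\ell_s/v_s(\infty)$; this independence (in the spirit of Lemma~\ref{Poissonboxlaw}(iii)) is what licenses conditioning on $\#C_1(s,\infty)$. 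With that remark added, your argument is complete.
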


\begin{proof} Recall that $X_{-t,0}=X_{-s,0}\circ X_{-t,-s}$ and $\hat{X}_{t}=\hat{X}_{s,t}\circ \hat{X}_s$. This entails that for any $x\in J_{-t}=\{J_j^{v_t(\infty)},j\geq 1\}$,
\begin{equation}\label{compositionbytheright} (X_{-t,0}(x-),X_{-t,0}(x)]=\bigcup_{\substack{y\in (0,\infty)\\ \hat{X}_{s,t}(y)=x}}(X_{-s}(y-),X_{-s}(y)].
\end{equation}
For any $t>0$ and any $j\geq 1$, set $\mathscr{C}_j(t)=(x_{j-1}(t),x_{j}(t)]=(X_t(J_{j}^{v_{t}(\infty)}-),X_t(J_{j}^{v_{t}(\infty)})]$ with $x_{0}(t)=0$.  By definition of $C(s,t)$ and \eqref{compositionbytheright}, we have
\[\mathscr{C}_i(t)=\bigcup_{j\in \mathcal{C}_i(s,t)}\mathscr{C}_j(s).\]
Proposition \ref{limitbehavior} ensures that $(C(s,t),t\geq s)$ converges almost-surely as $t$ goes to $\infty$. This entails the almost-sure convergence of $\mathscr{C}(t),t>0)$. In the supercritical or critical case, $C(s,t)\underset{t\rightarrow \infty}{\longrightarrow} 1_{\mathbb{N}}$ and then $\mathscr{C}(t)\underset{t\rightarrow \infty}{\longrightarrow} \mathbbm{1}_{]0,\infty[}$, where $\mathbbm{1}_{]0,\infty[}$ denotes the partition of $]0,\infty[$ with only one block. In the subcritical case, Proposition \ref{warmup} entails that for all $i\geq 1$ and $u\geq 0$, \[\mathbb{E}[e^{-u|\mathscr{C}_i(\infty)|}]=1-\exp\left(-\Psi'(0+)\int_{u}^{\infty}\frac{\ddr v}{\Psi(v)}\right).\] Note moreover that
 $\mathscr{C}(\infty)=\Coag(\mathscr{C}(s),C(s,\infty))$ for any $s>0$.
\end{proof}
\begin{figure}[!ht]
\centering \noindent
\includegraphics[height=.22 \textheight]{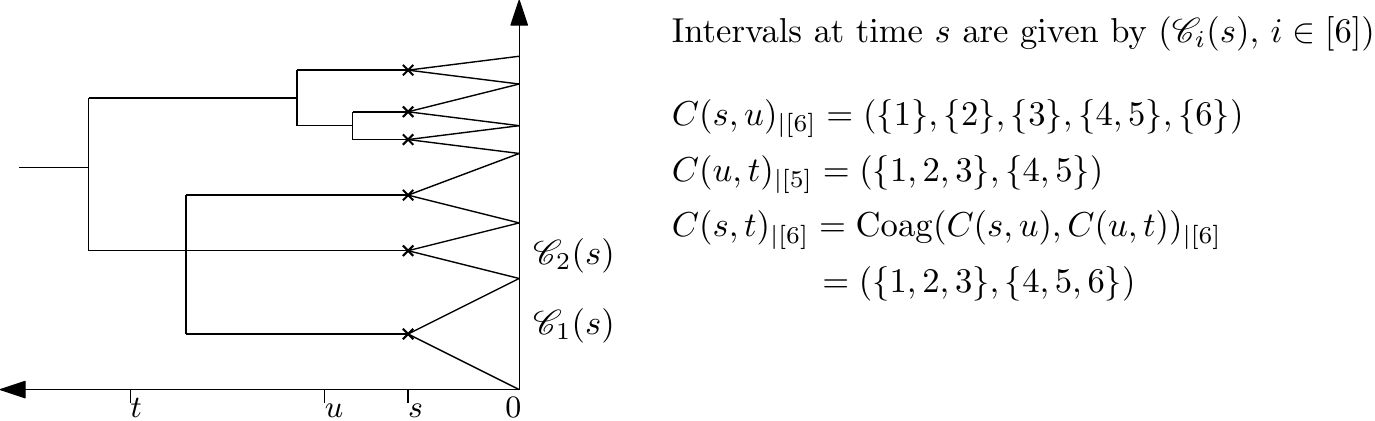}
\label{scoalescent}
\caption{Symbolic representation of the genealogy}
\end{figure}

\begin{remark}
Bertoin and Le Gall in \cite[Proposition 3]{MR2247827} have shown that in the critical case the L\'evy measures $(\ell_t,t>0)$ solve the following Smoluchowski equation
\begin{equation}\label{smolucho} \frac{\partial}{\partial t}<f,\ell_t>=v_t(\infty)\sum_{k=2}^{\infty}\mu^{\infty}_t(k)\int_{]0,\infty[^k}\big(f(x_1+...+x_k)-(f(x_1)+...+f(x_k))\big)\ell_t(\ddr x_1)...\ell_t(\ddr x_k)
\end{equation}
where $f$ is continuous function on $]0,\infty[$ with compact support and $<f,\ell_t>=\int_{]0,\infty[}f(x)\ell_t(\ddr x)$. The process $(\mathscr{C}(t),t>0)$ sheds some light on this deterministic equation since $\mu_t^{\infty}(k)$ is the rate in $(\mathscr{C}(t),t>0)$ at which $k$ given intervals coagulate and by the strong law of large numbers, \[\frac{1}{n}\sum_{i=1}^{n}\delta_{|\mathscr{C}_i(t)|}\underset{n\rightarrow \infty}{\longrightarrow} \frac{\ell_t(\ddr x)}{v_t(\infty)} \text{ a.s for any } t>0,\] where $|\mathscr{C}_i(t)|$ denotes the length of the $i^{\text{th}}$ interval in $\mathscr{C}(t)$. We refer the reader to Iyer et al. \cite{MR3313753}, \cite{iyer_leger_pego_2018} for recent works on Equation \ref{smolucho}.
\end{remark}
When the CSBP explodes, the individuals in the current generation have finitely many ancestors. The following proposition is the analogue of Proposition \ref{CDIlambda}.
\begin{proposition} Assume $\int_0\frac{\ddr u}{|\Psi(u)|}<\infty$, then the consecutive coalescent $(\mathscr{C}(t),t>0)$ comes down from infinity and \[\frac{v_t(0)}{v_t(\infty)}\#\mathscr{C}(t)\underset{t\rightarrow 0}{\longrightarrow} \mathbbm{e} \text{ in law }\]
where $\mathbbm{e}$ is a standard exponential random variable.
\end{proposition}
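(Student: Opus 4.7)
The plan is to identify the law of $\#\mathscr{C}(t)$ as a geometric random variable and then apply the classical fact that a geometric random variable rescaled by its success probability converges in law to the standard exponential distribution.

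Throughout this subsection Grey's condition for extinction is in force, so $\ell_t$ is a finite measure on $(0,\infty)$; the additional explosion condition $\int_0 \ddr u/|\Psi(u)| < \infty$ ensures, via Theorem~\ref{Grey}~(iv) and the identification $v_t(0) = \kappa_t$ noted after \eqref{eqn:levykinDecompositionv_t}, that the killing rate of the Lévy--Khintchine decomposition
\[
  v_t(\lambda) = v_t(0) + \int_0^\infty \bigl(1 - e^{-\lambda x}\bigr)\,\ell_t(\ddr x),
\]
is strictly positive and that $\ell_t\bigl((0,\infty)\bigr) = v_t(\infty) - v_t(0) < \infty$. In particular $x \mapsto X_{-t,0}(x)$ is a compound Poisson process with jump intensity $\ell_t$, killed (i.e.\ sent to $+\infty$) at an independent exponential time with parameter $v_t(0)$.

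By a standard competing exponentials argument, the first $k$ events of this process (jumps of $\ell_t$-type or killing) produce a killing event at the $k$-th step with probability $(1-p_t)^{k-1}p_t$, where
\[
  p_t := \frac{v_t(0)}{v_t(\infty)}.
\]
Since $\#\mathscr{C}(t) = \#J_{-t}$ is precisely the index of the first killing event, it is geometric on $\{1,2,\ldots\}$ with success probability $p_t$. This immediately gives $\#\mathscr{C}(t) < \infty$ almost surely for every $t > 0$, which, together with the blowup of the mean established below, constitutes the coming down from infinity.

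It remains to verify the rescaled limit. Since $v_0(\lambda) = \lambda$ on $[0,\infty]$ and $\lambda \mapsto v_t(\lambda)$ is monotone, for any $\lambda \in (0,\infty)$ we have $v_t(0) \leq v_t(\lambda) \to \lambda$ and $v_t(\infty) \geq v_t(\lambda) \to \lambda$ as $t \to 0$; letting $\lambda \to 0$ and $\lambda \to \infty$ respectively yields $v_t(0) \to 0$ and $v_t(\infty) \to \infty$, hence $p_t \to 0$. For any $x > 0$,
\[
  \mathbb{P}\bigl(p_t \#\mathscr{C}(t) > x\bigr) = (1-p_t)^{\lfloor x/p_t \rfloor} \xrightarrow[t \to 0]{} e^{-x},
\]
which is the survival function of the standard exponential $\mathbbm{e}$. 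There is no real obstacle in this argument: the proof closely mirrors that of Proposition~\ref{CDIlambda}, the only structural change being that the Poisson intensity $\lambda$ of the sampling process is now effectively replaced by the total jump rate $v_t(\infty) - v_t(0)$ of $X_{-t,0}$ itself, so the natural scaling is $p_t = v_t(0)/v_t(\infty)$ rather than $v_t(0)$.
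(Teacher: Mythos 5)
Your proof is correct and follows essentially the paper's own argument: identify $\#\mathscr{C}(t)$ as geometric with parameter $v_t(0)/v_t(\infty)$ (the paper does this by viewing the interval lengths as i.i.d.\ with law $\ell_t/v_t(\infty)$ having an atom of mass $v_t(0)$ at $\infty$, which is the same killed compound Poisson picture as your competing-exponentials argument), and then pass to the exponential limit exactly as in Proposition~\ref{CDIlambda}. No gaps.
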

\begin{proof}
For any $t>0$, the lengths of the intervals in $\mathscr{C}(t)$  are i.i.d random variable with law $\frac{\ell_t(\ddr x)}{v_t(\infty)}$. Under the assumption, $\int_0\frac{\ddr u}{|\Psi(u)|}<\infty$, $\ell_t(\{\infty \})=v_t(0)>0$ and therefore the number of intervals in $\mathscr{C}(t)$ has a geometric law with parameter $\frac{v_t(0)}{v_t(\infty)}$. The convergence in law is proved by a similar calculation as in Proposition \ref{CDIlambda}.\end{proof}

We saw in Proposition~\ref{numberofblocks} that the number of blocks in $(C_{|[n]}(t),t\geq 0)$, the coalescent process associated to a continuous-time Galton-Watson process, corresponds to the inverse flow of random walks $(\hat{Z}_t(n),t\geq 0)$ at a fixed level $n$. Recall that in continuous-state space the process $(\hat{X}_t(x),t\geq 0)$  can then be interpreted as the size of the ancestral population whose descendants at time $0$ form a family of size $x$. The study is more involved than in the discrete setting and is the aim of the next section.

\section{A martingale problem for the inverse flow}
\label{martingaleproblem}

We investigate the infinitesimal dynamics of $(\hat{X}_t,t\geq 0)$ through its extended generator $\hat{\mathcal{L}}$. Recall that we write $\mathcal{L}$ the generator of the CSBP with mechanism $\Psi$. As we consider the flow of subordinators over $[0,\infty]$, it is natural to express  $\mathcal{L}$ as follows for all $\mathcal{C}^2$ bounded function $G$:
\begin{equation}
  \label{eqn:generatorCSBP}
  \mathcal{L}G(x)=\frac{\sigma^{2}}{2}xG''(x)+\beta xG'(x)+\int_{0}^{\infty}\pi(\ddr h)\int_{0}^{\infty}\ddr u \left(G(\Delta_{h,u}(x))-G(x)-h\mathbbm{1}_{\{u\leq x\}}G'(x)\mathbbm{1}_{\{h\leq 1\}}\right)
\end{equation}
with $\Delta_{h,u}(x):=x+h\mathbbm{1}_{\{x \geq u\}}$.

\begin{theorem}\label{extendedgen}
For any function $F$ in $C^{2}_b$ \footnote{the space of twice differentiable bounded functions over $(0,\infty)$ with bounded continuous derivatives}, set
\begin{multline*}
  \hat{\mathcal{L}}F(z)=\frac{\sigma^{2}}{2}zF''(z)+\left(\frac{\sigma^2}{2}-\beta z\right)F'(z) \\
  + \int_{0}^{\infty} \!\!\pi(\ddr h)\int_{0}^{\infty} \!\!\ddr u\left[F(\psi_{h,u}(z))-F(z)+h\mathbbm{1}_{\{h\leq 1\}}F'(z)\mathbbm{1}_{\{z>u\}}\right]
\end{multline*}
with $$\psi_{h,u}(z):=z\mathbbm{1}_{[0,u]}(z)+u\mathbbm{1}_{[u,u+h]}(z)+(z-h)\mathbbm{1}_{[u+h,\infty[}(z).$$
Then for any $y>0$, $(\hat{X}_t(y), t\geq 0)$ solves the following well-posed martingale problem
\[\displaystyle \mathrm{(MP)} \qquad \left( F(\hat{X}_t(y))-\int_{0}^{t}\hat{\mathcal{L}}F(\hat{X}_{s}(y))\ddr s, t \geq 0\right)\] is a martingale for any function $F$ in $\mathscr{D}:=\{F\in C^{2}_b;\ xF'(x), xF''(x)\underset{x\rightarrow \infty}{\longrightarrow} 0\}$. 
\end{theorem}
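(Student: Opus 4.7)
The plan is to identify $\hat{\mathcal{L}}$ as the Siegmund dual of the CSBP generator $\mathcal{L}$, and then deduce the martingale problem and its well-posedness from the Feller property of $(Q_t)$ already established in Proposition \ref{boundaries}. The conceptual point that drives everything is that the map $\psi_{h,u}$ appearing in the jump part of $\hat{\mathcal{L}}$ is the right-continuous inverse of the CSBP jump map $\Delta_{h,u}(x)=x+h\mathbbm{1}_{x\geq u}$: this is exactly what one expects when inverting the flow $X_{-t,0}\mapsto \hat{X}_{0,t}$.

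Concretely, given $F\in\mathscr{D}$, I would write $F(y)=F(\infty)-\int_y^\infty F'(u)\,\ddr u$ and, using Siegmund duality \eqref{duality}, express the semigroup as
\[
  Q_tF(y)-F(y)=-\int_0^\infty F'(u)\bigl[\mathbb{P}(X_t(u)>y)-\mathbbm{1}_{u>y}\bigr]\,\ddr u.
\]
Differentiating at $t=0^+$ and formally interchanging with the integral, one obtains $\lim_{t\to 0}\frac{Q_tF(y)-F(y)}{t}=-\int_0^\infty F'(u)\,\mathcal{L}_uH_y(u)\,\ddr u$, where $H_y(u):=\mathbbm{1}_{u>y}$, to be understood via smooth mollifications of $H_y$. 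Computing this dual action term by term and matching with the formula in the statement: the diffusion part $\frac{\sigma^2}{2}uG''(u)$ applied to $H_y$ and integrated against $-F'(u)$ produces, by two integrations by parts using the decay $uF'(u),uF''(u)\to 0$ to kill boundary terms, exactly $\frac{\sigma^2}{2}yF''(y)+\frac{\sigma^2}{2}F'(y)$, the extra $\frac{\sigma^2}{2}F'(y)$ coming from differentiating the coefficient $u\mapsto\frac{\sigma^2}{2}u$; the drift $\beta uG'(u)$ yields $-\beta yF'(y)$; and the jump piece $H_y(\Delta_{h,u}(x))-H_y(x)$, after a simple case analysis on the position of $u$ relative to $y-h$ and $y$, integrates against $-F'(x)$ into $F(\psi_{h,u}(y))-F(y)$ precisely because $\psi_{h,u}$ inverts $\Delta_{h,u}$; the compensators match by the same algebra. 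One must then verify that $\hat{\mathcal{L}}F$ is bounded: the small-$h$ piece uses a second-order Taylor expansion and $\int_0^1 h^2\pi(\ddr h)<\infty$, the large-$h$ piece uses $\|F\|_\infty$ and $\pi([1,\infty))<\infty$.

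Once $\hat{\mathcal{L}}F$ is identified with the pointwise derivative of $Q_tF$ at $t=0$ for $F\in\mathscr{D}$, the martingale property follows from the Feller property of $(Q_t)$ by standard Dynkin/Kolmogorov arguments. For well-posedness, uniqueness holds because $(Q_t)$ is Feller and $\mathscr{D}$ contains a measure-determining family (e.g.\ linear combinations of the smooth Laplace test functions $y\mapsto e^{-\lambda y}$, for which the semigroup is explicitly determined by Theorem \ref{flowinverse}), so any càdlàg solution of $(\mathrm{MP})$ has the same one-dimensional marginals and, by the Markov property obtained from the closedness of the martingale problem, the same law as $\hat{X}(y)$. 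The main obstacle is the rigorous justification of the distributional computation of $\mathcal{L}H_y$ and in particular capturing the extra drift $\frac{\sigma^2}{2}F'(y)$; a cleaner alternative that sidesteps distributions is to verify the equivalent Laplace identity
\[
  \int_0^\infty q\,e^{-qy}\,\hat{\mathcal{L}}F(y)\,\ddr y=-\Psi(q)\frac{\ddr}{\ddr q}\mathbb{E}[F(\mathbbm{e}_q)]
\]
directly from the formula defining $\hat{\mathcal{L}}$, which matches the derivative of $\mathbb{E}[Q_tF(\mathbbm{e}_q)]=\mathbb{E}[F(\mathbbm{e}_{v_t(q)})]$ at $t=0$ and thus identifies the generator without any distributional manipulation.
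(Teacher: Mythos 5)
Your derivation of the operator itself is sound and is essentially the paper's route: you exploit the Siegmund duality $\mathbb{P}(\hat{X}_t(y)<x)=\mathbb{P}(X_t(x)>y)$, test against exponentials, and use the key algebraic fact that $\psi_{h,u}=\Delta_{h,u}^{-1}$ so that $\Delta_{h,u}(x)>y\iff x>\psi_{h,u}(y)$; your term-by-term bookkeeping (including the extra $\tfrac{\sigma^2}{2}F'(y)$ from differentiating the coefficient $u\mapsto\tfrac{\sigma^2}{2}u$, and the matching of compensators) reproduces Lemma \ref{computation}. The paper avoids your distributional computation of $\mathcal{L}H_y$ by keeping the time integral throughout: it establishes the identity $\int_0^\infty g(u)\,\mathbb{E}[F(\hat{X}_s(u))-F(u)]\,\ddr u=\int_0^\infty g(u)\,\ddr u\int_0^s\mathbb{E}[\hat{\mathcal{L}}F(\hat{X}_t(u))]\,\ddr t$ with careful Fubini justifications, then removes the Laplace average by injectivity and continuity in $u$, and only then invokes the Markov property. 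Your plan of identifying the pointwise derivative of $Q_tF$ at $t=0$ and then appealing to "standard Dynkin/Kolmogorov arguments" needs a bit more than you say (a pointwise right-derivative at $t=0$ does not by itself give the Dynkin martingale; you need the integrated identity, e.g.\ via domination uniform enough in the starting point, which your bound on the jump term would have to supply), but this is patchable and close in spirit to Lemmas \ref{welldefined}--\ref{martingale}.

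The genuine gap is the well-posedness. Your uniqueness argument — $(Q_t)$ is Feller, $\mathscr{D}$ contains Laplace-type test functions whose semigroup action is pinned down by Theorem \ref{flowinverse}, hence any càdlàg solution of $\mathrm{(MP)}$ has the same one-dimensional marginals and, "by the Markov property obtained from the closedness of the martingale problem", the same law — is circular. Theorem \ref{flowinverse} determines the semigroup of the particular solution $\hat{X}$; it says nothing about an arbitrary solution $Y$ of $\mathrm{(MP)}$, which need not be Markov and whose marginals cannot be compared to those of $\hat{X}$ merely because $\mathscr{D}$ is measure-determining. To conclude $\mathbb{E}[F(Y_t)]=Q_tF(y)$ for all solutions you would need one of: a core/range argument showing $(\lambda-\hat{\mathcal{L}})\mathscr{D}$ is dense (Ethier--Kurtz type uniqueness), a duality argument run against every solution $Y$ (which requires smooth duality functions, not the indicator $\mathbbm{1}_{\{y<x\}}$, and is not supplied by the randomized-exponential identity), or an analytic uniqueness theorem. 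Moreover, the Markov property of solutions is a consequence of well-posedness, not an ingredient for proving it. The paper closes exactly this hole in Lemma \ref{courregeform}: it rewrites $\hat{\mathcal{L}}$ in Courr\`ege form with jump kernel $\nu(z,\ddr h)$ and drift $b(z)$, checks the linear growth bound $b(z)\leq c(1+z)$, and invokes Kolokoltsov's uniqueness theorem for stochastically monotone one-dimensional Markov processes. Your proposal contains no substitute for this step, so as written the "well-posed" part of the statement is not proved.
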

\begin{remark}
\label{rem:42}
Note that $\psi_{h,u} = \Delta_{h,u}^{-1}$ is the right-continuous inverse function of $\Delta_{h,u}$. For any $y \geq 0$, if individual $u$ makes at time $t$ a progeny of size $h$, then $\psi_{h,u}(y)$ at time $t-$ is the infinitesimal parent of individual $y$ at time $t$: if $y<u$, then $y$ has no parent but himself, if $y\in [u,u+h]$, the parent of $y$ is $\psi_{h,u}(y)=u$, if $y>u+h$  then its parent is $\psi_{h,u}(y)=y-h$. If $y_1\neq y_2$ then $\psi_{h,u}(y_1)=\psi_{h,u}(y_2)$ if and only if $y_1,y_2\in [u,u+h]$.
\end{remark}
\begin{figure}[!ht]
\centering
\includegraphics[height=.16 \textheight]{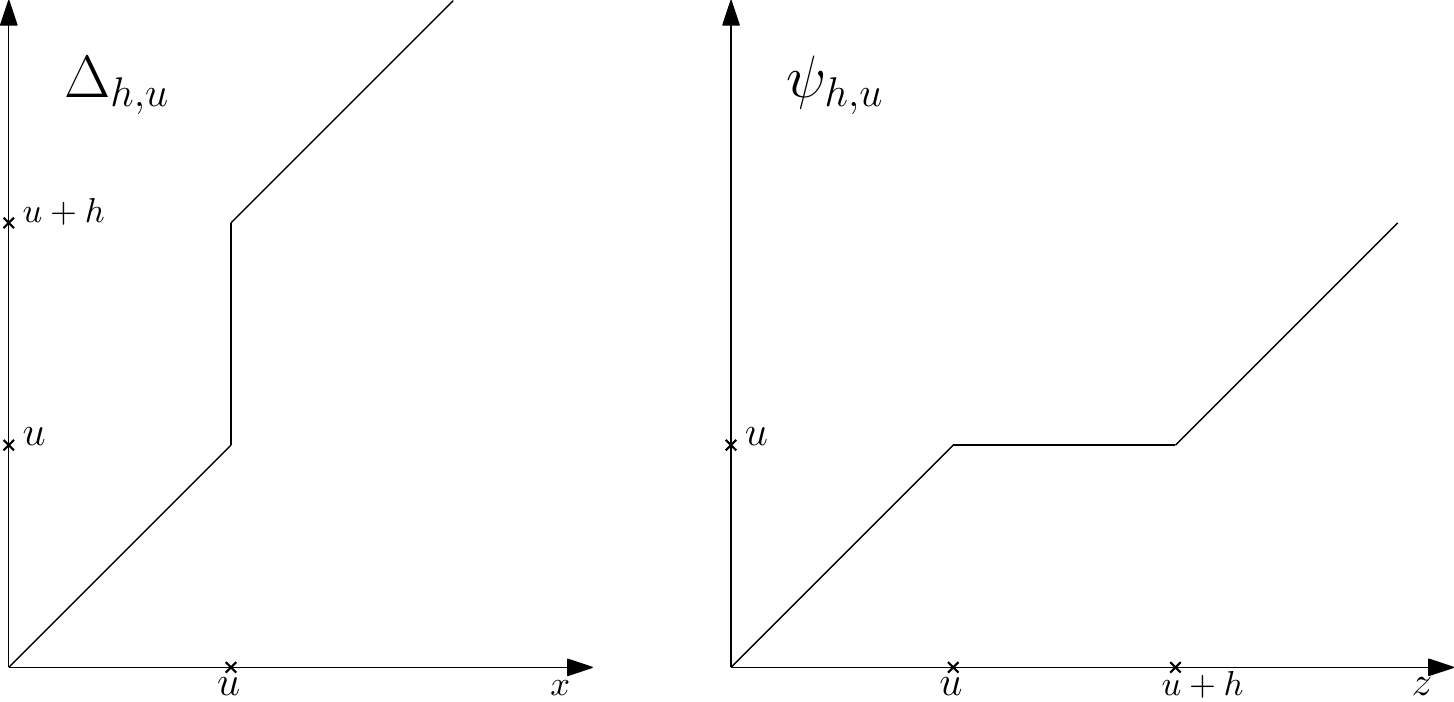}
\end{figure}

The proof of Theorem \ref{extendedgen} is divided in four lemmas.
\begin{lemma}\label{welldefined} The operator $\hat{\mathcal{L}}$ is well-defined on $C^{2}_b$.
\end{lemma}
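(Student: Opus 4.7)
The plan is to check that for every fixed $z \in (0,\infty)$ and every $F \in C^2_b$ the defining expression is pointwise finite. The local part $\frac{\sigma^{2}}{2}zF''(z)+(\frac{\sigma^2}{2}-\beta z)F'(z)$ is obviously finite since $F'$ and $F''$ are bounded, so the only real task is to show absolute convergence of the double integral. My strategy is to split the $(h,u)$-plane into three regions according to the three pieces of the piecewise-linear inverse $\psi_{h,u}$: Region A where $u\geq z$ (so $\psi_{h,u}(z)=z$), Region B where $(z-h)^+<u<z$ (so $\psi_{h,u}(z)=u$), and Region C where $0<u<z-h$, which is non-empty only when $h<z$ (so $\psi_{h,u}(z)=z-h$). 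On Region A both $F(\psi_{h,u}(z))-F(z)$ and $\mathbbm{1}_{\{z>u\}}$ vanish, so only Regions B and C contribute.

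For small jumps $h\leq 1$, I would use Taylor's theorem at second order around $z$. In Region C the $u$-integrand is constant in $u$, and
\[
(z-h)\bigl[F(z-h)-F(z)+hF'(z)\bigr] = (z-h)\cdot \tfrac{h^2}{2}F''(\xi_1)
\]
for some $\xi_1\in[z-h,z]$, which is bounded in modulus by $\tfrac{z}{2}\|F''\|_\infty h^2$. In Region B, the substitution $v=z-u$ turns the $u$-integral into
\[
\int_{0}^{\min(h,z)}\bigl[F(z-v)-F(z)\bigr]\,dv + hF'(z)\cdot\min(h,z),
\]
and a first-order Taylor expansion of $F(z-v)-F(z)=-vF'(z)+\tfrac{v^2}{2}F''(\xi_2(v))$ shows the leading $O(h)$-terms cancel, leaving an $O(h^2)$ remainder bounded by a multiple of $\|F''\|_\infty h^2$. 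Hence the total contribution for $h\leq 1$ is $O(h^2)$, which integrates against $\pi(\ddr h)$ near $0$ thanks to $\int_0 (1\wedge h^2)\pi(\ddr h)<\infty$.

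For large jumps $h>1$ the compensator term $h\mathbbm{1}_{\{h\leq 1\}}F'(z)$ disappears, and I would simply use $|F|\leq\|F\|_\infty$: Region B then contributes at most $2\|F\|_\infty\cdot\min(h,z)$ and Region C at most $2\|F\|_\infty\cdot(z-h)^+$, whose sum is bounded uniformly in $h>1$ by $2z\|F\|_\infty$. Since $\pi([1,\infty))<\infty$, this part integrates finitely. Combining the two regimes gives absolute convergence of the integral, proving $\hat{\mathcal{L}}F(z)$ is well-defined.

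There is no real obstacle here; the only thing to be careful about is the cancellation of the $O(h)$ terms in Region B for small $h$, which relies precisely on the linear compensator chosen with indicator $\mathbbm{1}_{\{h\leq 1\}}$, paralleling the form of the Lévy–Khintchine compensator in $\mathcal{L}$ from \eqref{eqn:generatorCSBP}.
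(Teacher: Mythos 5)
Your proof is correct and follows essentially the same route as the paper's: large jumps are controlled by the boundedness of $F$ together with the restriction of the $u$-integral to $u<z$ (giving a bound proportional to $z\|F\|_\infty$ per unit mass of $\pi$ on $(1,\infty)$), while for $h\leq 1$ the compensated integrand, integrated in $u$, is $O((h\wedge 1)^2)$ by a second-order Taylor argument, which is exactly the content of the paper's estimate \eqref{bound}; your region-wise splitting of the $u$-axis is just a more explicit version of the paper's single identity for $\psi_{h,u}(y)-y$. One minor imprecision, which does not affect validity: in your Region B the term left over after the first-order cancellation is $\tfrac{h^{2}}{2}F'(z)+O\bigl(h^{3}\|F''\|_\infty\bigr)$, so the $O(h^{2})$ bound there involves $\|F'\|_\infty$ and not only $\|F''\|_\infty$ (in fact no cancellation is needed in that region at all, since its $u$-length is at most $h$ and the integrand is already $O(h)$ there); the genuine use of the compensator is in Region C, which you treat correctly.
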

\begin{proof}
Let $F\in C^2_b$, for any $y>0$ 
\begin{multline*}
  \left \lvert \int_{1}^{\infty}\pi(\ddr h)\int_{0}^{\infty}\ddr u (F(\psi_{h,u}(y))-F(y))\right\lvert\\
  =\left\lvert \int_{1}^{\infty}\pi(\ddr h)\int_{0}^{y}\ddr u (F(\psi_{h,u}(y)-F(y))\right \lvert \leq y\bar{\pi}(1)2||F||_\infty
\end{multline*}
where $\bar{\pi}(x):=\int_{x}^{\infty}\pi(\ddr u)$ for any $x>0$. For any $y>0$, $u>0$ and $h>0$, we have
$$\psi_{h,u}(y)-y=(u-y)\mathbbm{1}_{[u,u+h]}(y)-h\mathbbm{1}_{[u+h,\infty[}(y)=(u-y)\mathbbm{1}_{[y-h,y]}(u)-h\mathbbm{1}_{[0,y-h]}(u).$$
Therefore, if $h<1$, we have that
\begin{align*}
&\left\lvert F(\psi_{h,u}(y))-F(y)+hF'(y)\mathbbm{1}_{\{y>u\}}\right\lvert\\
\leq &\left\lvert F(\psi_{h,u}(y))-F(y) - (\psi_{h,u}(y)-y)F'(y) \right\lvert + \left\lvert (\psi_{h,u}(y)-y+h\mathbbm{1}_{\{y>u\}})F'(y)\right\lvert\\
\leq& \frac{(\psi_{h,u}(y)-y)^2}{2}||F''||_\infty+\left|\psi_{h,u}(y)-y+h\mathbbm{1}_{\{y>u\}}\right| ||F'||_\infty.
\end{align*}
On the one hand,
\begin{align*}
\psi_{h,u}(y)-y+h\mathbbm{1}_{\{y>u\}}&=(u-y)\mathbbm{1}_{[y-h,y]}(u)-h\mathbbm{1}_{[0,y-h]}(u)+h\mathbbm{1}_{[0,y]}(u)\\
&=(u+h-y)\mathbbm{1}_{[y-h,y]}(u)\geq 0
\end{align*}
and
$$\int_{0}^{\infty}(u+h-y)\mathbbm{1}_{[y-h,y]}(u) \ddr u =\left[\frac{u^{2}}{2}\right]_{y-h}^{y}\!+(h-y)h=\frac{h^2}{2}.$$
On the other hand
$$\frac{(\psi_{h,u}(y)-y)^2}{2}=\frac{1}{2}((u-y)^2\mathbbm{1}_{[y-h,y]}(u)+h^{2}\mathbbm{1}_{[0,y-h]}(u))$$
and
$$\int_{0}^{\infty}  \frac{(\psi_{h,u}(y)-y)^2}{2} \ddr u = \int_{0}^{\infty}\frac{1}{2}((u-y)^2\mathbbm{1}_{[y-h,y]}(u)+h^{2}\mathbbm{1}_{[0,y-h]}(u))\leq \frac{h^{3}}{6}+\frac{h^2}{2}y.$$
Thus, for any $h\geq 0$
\begin{equation}\label{bound} \int^{\infty}_0 \lvert F(\psi_{h,u}(y))-F(y)+hF'(y)\mathbbm{1}_{\{h<1\}} \mathbbm{1}_{\{y>u\}}\lvert \ddr u\leq C(h\wedge 1)^2||F''||_\infty+\frac{(h\wedge 1)^2}{2}||F'||_\infty y
\end{equation}
with a certain constant $C$ independent of $y$ and $h$. The integral with respect to $\pi(\ddr h)$ in $\hat{\mathcal{L}}F$ is therefore convergent and $\hat{\mathcal{L}}$ well-defined.
\end{proof}
We now follow the same method as Bertoin and Le Gall in \cite[Theorem 5]{LGB2} to show that $\hat{\mathcal{L}}$ is an extended generator, i.e. that $\left(F(\hat{X}_t(u))-\int_{0}^{t}\hat{\mathcal{L}}F(\hat{X}_s(u))\ddr s, t \geq 0\right)$ is a martingale for all $F$ in $\mathscr{D}$. Let $g$ be a continuous function over $[0,\infty[$ and $f$ a function in $C^{2}_0$. Set $G(t)=\int_{0}^{t}g(u)\ddr u $ and $F(t)=\int_{t}^{\infty} f(x)\ddr x$. Note that
$$\int_{0}^{\infty}\int_{0}^{\infty}g(u)f(x)\mathbbm{1}_{\{x\geq u\}}\ddr u\ddr x=\int_{0}^{\infty}g(u)F(u)\ddr u =\int_{0}^{\infty}f(x)G(x)\ddr x.$$
Moreover, one classically has that
\begin{equation*}
  \int_{0}^{\infty}f(x)\mathbb{P}(\hat{X}_s(u)< x)\ddr x=\mathbb{E}[F(\hat{X}_s(u))] \quad \text{ and } \quad \int_{0}^{\infty}g(u)\mathbb{P}(X_s(x)> u)\ddr u=\mathbb{E}[G(X_s(x))].
\end{equation*}
Recall that by (\ref{duality}), we have $\mathbb{P}(\hat{X}_s(u)< x)=\mathbb{P}(X_s(x)> u)$ for all $x, u \geq 0$. Then, integrating this equality with respect to $f(x)g(u)\ddr x \ddr u$
provides 
\begin{equation}\label{eq1}
\int_{0}^{\infty}\ddr u g(u)\mathbb{E}[F(\hat{X}_s(u))-F(u)]=\int_{0}^{\infty}\ddr x f(x)\mathbb{E}[G(X_s(x))-G(x)].
\end{equation}
Therefore, a first step in the search for $\hat{\mathcal{L}}$ is computing the right-hand side of \eqref{eq1}.
\begin{lemma}\label{computation} Let $\lambda>0$ and $g(x)=e^{-\lambda x}$ for any $x\in \mathbb{R}_+$ then for any $F\in \mathscr{D}$
\begin{equation} \int_{0}^{\infty}\! \!\ddr x f(x)\mathbb{E}[G(X_s(x))-G(x)]\\
=\int_{0}^{\infty}g(u)\ddr u \int_{0}^{s}\ddr t \mathbb{E}\left[\hat{\mathcal{L}}^{d}F(\hat{X}_t(u))+\hat{\mathcal{L}}^{c}F(\hat{X}_t(u))\right].
\end{equation}
\end{lemma}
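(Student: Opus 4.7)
Plan. The strategy is to rewrite the left-hand side of the stated identity as a time integral against the CSBP generator $\mathcal{L}$, and then transfer the result to the inverse side via the duality built into Theorem~\ref{flowinverse}. First, since $G(x)=(1-e^{-\lambda x})/\lambda$ is bounded and smooth, the Laplace characterization \eqref{cumulant} gives $\mathbb{E}[G(X_s(x))]=(1-e^{-xv_s(\lambda)})/\lambda$, while differentiating \eqref{odev} yields $\tfrac{d}{dt}v_t(\lambda)=-\Psi(v_t(\lambda))$. Combining these,
\[
  \mathbb{E}[G(X_s(x))-G(x)]=-\frac{x}{\lambda}\int_0^s \Psi(v_t(\lambda))\,e^{-xv_t(\lambda)}\,\ddr t,
\]
so integrating against $f(x)$ and using Fubini rewrites the left-hand side of the lemma as
\[
  \text{(A)}\qquad -\frac{1}{\lambda}\int_0^s \Psi(v_t(\lambda))\int_0^\infty x f(x)\,e^{-xv_t(\lambda)}\,\ddr x\,\ddr t.
\]

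Next I would turn to the right-hand side. Setting $\varphi:=\hat{\mathcal{L}}F$, which is bounded and continuous thanks to the estimate \eqref{bound} of Lemma~\ref{welldefined}, the semigroup identity \eqref{eqn:flowinverse} rearranges into
\[
  \int_0^\infty e^{-\lambda u}\,Q_t\varphi(u)\,\ddr u=\frac{v_t(\lambda)}{\lambda}\int_0^\infty e^{-v_t(\lambda)y}\,\hat{\mathcal{L}}F(y)\,\ddr y.
\]
The crux of the proof is then the algebraic identity
\[
  \text{(B)}\qquad \int_0^\infty e^{-\mu y}\,\hat{\mathcal{L}}F(y)\,\ddr y=\int_0^\infty f(x)\,\mathcal{L}G_\mu(x)\,\ddr x,\qquad \mu>0,
\]
where $G_\mu(x)=(1-e^{-\mu x})/\mu$. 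Combined with the explicit form $\mathcal{L}G_\mu(x)=-\tfrac{x\Psi(\mu)}{\mu}e^{-\mu x}$ obtained directly from \eqref{LKpsi}, substituting $\mu=v_t(\lambda)$ and integrating over $t\in[0,s]$ turns the right-hand side of the lemma into (A), closing the argument.

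To prove (B) I would split both generators into diffusion and jump parts, as suggested by the notation $\hat{\mathcal{L}}^c+\hat{\mathcal{L}}^d$ in the statement. For the diffusion parts, integration by parts (using $F(\infty)=0$, $F'=-f$, $F''=-f'$) transforms $\int_0^\infty e^{-\mu u}\hat{\mathcal{L}}^c F(u)\,\ddr u$ into $(\beta-\tfrac{\sigma^2\mu}{2})\int_0^\infty xf(x)e^{-\mu x}\,\ddr x$, matching $\int_0^\infty f(x)\mathcal{L}^c G_\mu(x)\,\ddr x$. For the jump parts, on the CSBP side the inner $\ddr u$ integration collapses to $\int_0^\infty \pi(\ddr h)\,x[G_\mu(x+h)-G_\mu(x)-hG_\mu'(x)\mathbbm{1}_{h\leq 1}]$. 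On the dual side, $\psi_{h,v}$ must be analyzed piecewise on $[0,v]$, $[v,v+h]$, $[v+h,\infty)$; a Fubini in $(v,z)$ followed by the elementary identity $\int_0^\infty F(v)(1-\mu v)e^{-\mu v}\,\ddr v=\int_0^\infty vf(v)e^{-\mu v}\,\ddr v$ (recognize $(1-\mu v)e^{-\mu v}=\tfrac{d}{dv}[ve^{-\mu v}]$ and integrate by parts) reduces both sides of (B) to the same multiple of $\int_0^\infty\big[\tfrac{1-e^{-\mu h}}{\mu}-h\mathbbm{1}_{h\leq 1}\big]\pi(\ddr h)$.

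The main technical obstacle is this piecewise-then-Fubini treatment of the jump side: the right-inverse $\psi_{h,v}$ of $\Delta_{h,v}$ has no clean global formula, and the two compensators $h\mathbbm{1}_{h\leq 1}$ must be precisely matched through the auxiliary integration by parts. All interchanges of integrations are justified by the estimate \eqref{bound}, the integrability of $f$, the boundedness of $F$, and the L\'evy measure condition $\int(1\wedge x^2)\pi(\ddr x)<\infty$.
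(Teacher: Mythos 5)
Your proposal is correct in substance, but it follows a genuinely different route from the paper's. The paper never puts the left-hand side in closed form: it writes $\mathbb{E}[G(X_s(x))-G(x)]=\int_0^s\mathcal{L}P_tG(x)\,\ddr t$, keeps the semigroup $P_tG$ inside the generator, and converts each piece ($\mathcal{L}^c$, the jump term and its compensator) into the corresponding piece of $\hat{\mathcal{L}}F$ evaluated along $\hat{X}_t$, using the pathwise equivalence $\Delta_{h,u}(x)>y\iff x>\psi_{h,u}(y)$, two integrations by parts, and Fubini exchanges justified by Taylor bounds on $P_tG$. You instead compute both sides explicitly: the backward equation $\partial_t v_t(\lambda)=-\Psi(v_t(\lambda))$ turns the left-hand side into $-\frac{1}{\lambda}\int_0^s\Psi(v_t(\lambda))\int_0^\infty xf(x)e^{-xv_t(\lambda)}\ddr x\,\ddr t$, while the Laplace description \eqref{eqn:flowinverse} of $Q_t$ reduces the right-hand side to your static generator identity (B), together with the explicit formula $\mathcal{L}G_\mu(x)=-\frac{x\Psi(\mu)}{\mu}e^{-\mu x}$. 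Identity (B) is indeed true: for fixed $h$, with $A=\int_0^\infty xf(x)e^{-\mu x}\ddr x$ and $B=\int_0^\infty xF(x)e^{-\mu x}\ddr x$, the $u$-integrated jump term of $\hat{\mathcal{L}}$ tested against $e^{-\mu y}\ddr y$ equals $e^{-\mu h}B-B+\frac{1-e^{-\mu h}}{\mu}\int_0^\infty e^{-\mu y}F(y)\ddr y-h\ind{h\leq 1}A=\bigl(\frac{1-e^{-\mu h}}{\mu}-h\ind{h\leq 1}\bigr)A$, using your auxiliary integration by parts $A=\int_0^\infty e^{-\mu y}F(y)\ddr y-\mu B$, and this matches $\int_0^\infty xf(x)\bigl(G_\mu(x+h)-G_\mu(x)-hG_\mu'(x)\ind{h\leq 1}\bigr)e^{0}\ddr x$; the diffusion parts match as you indicate. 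Your route buys an explicit closed form for both sides and avoids manipulating $P_tG$ inside the jump generator; the paper's more pedestrian semigroup-level computation is the one that extends verbatim to the $p$-point motion (cf. the remark following Theorem \ref{extendedgen}), where a one-dimensional Laplace inversion is no longer available.

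One point needs repair: you assert that $\varphi=\hat{\mathcal{L}}F$ is bounded "thanks to \eqref{bound}", but \eqref{bound} (together with the crude bound $2\|F\|_\infty\bar{\pi}(1)\,y$ for jumps $h>1$) only yields linear growth, $|\hat{\mathcal{L}}F(y)|\leq c_1+c_2y$; for $F\in\mathscr{D}$ the continuous part is bounded, not the jump part. Consequently, the identity \eqref{eqn:flowinverse}, proved for bounded measurable functions, and the Fubini exchange of $\int_0^\infty g(u)\ddr u$ with $\int_0^s\ddr t$ must be extended to functions of linear growth. This is routine — one truncates $\varphi$ and dominates using $\mathbb{E}[\hat{X}_t(\mathbbm{e}_\lambda)]=1/v_t(\lambda)<\infty$, with $v_t(\lambda)$ bounded away from $0$ for $t\in[0,s]$ (the paper's own proof invokes exactly this finiteness to justify its Fubini step) — but it should be stated explicitly rather than swept under a boundedness claim.
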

\begin{proof} 
Assume that $g(v)=e^{-\lambda v}$ for some $\lambda>0$, the function $x\mapsto G(x)=\frac{1-e^{-\lambda x}}{\lambda}$ is in the domain of the generator $\mathcal{L}$ of the CSBP $(X_t,t\geq 0)$ (defined in \eqref{eqn:generatorCSBP}) and therefore
\begin{align*}
\mathbb{E}[G(X_s(x))-G(x)]&=\int_{0}^{s}\ddr t \mathcal{L}P_tG(x)=\int_{0}^{s}\ddr t \mathcal{L}^cP_tG(x)+\int_{0}^{s}\ddr t \mathcal{L}^d P_tG(x)
\end{align*}
where we write for all twice derivable function $H$,
\begin{align*}
  \mathcal{L}^c H(x) &= \frac{\sigma^{2}}{2}xH''(x)+\beta xH'(x)\\
  \mathcal{L}^d H(x) &= \int_{0}^{\infty}\pi(\ddr h)\int_{0}^{\infty}\ddr u \left(H(\Delta_{h,u}(x))-H(x)-h\mathbbm{1}_{\{u\leq x\}}H'(x)\mathbbm{1}_{\{h\leq 1\}}\right)
\end{align*}
which denote respectively the continuous and discontinuous parts of the generator $\mathcal{L}$. We start by studying the discontinuous part. For any $s\geq 0$, we can rewrite
\begin{multline}
  \label{eqn:intermediate}
  \int_{0}^{s}\ddr t \mathcal{L}^{d}P_tG(x)\\
  =\int_{0}^{s}\ddr t\int_{0}^{\infty}\pi(\ddr h)\int_{0}^{\infty}\ddr u\left[P_tG(x+h\mathbbm{1}_{\{u<x\}})-P_tG(x)-h(P_tG)'(x)\mathbbm{1}_{\{h<1\}}\mathbbm{1}_{\{u\leq x\}}\right].
\end{multline}

We first compute
\begin{align*}P_tG(x+h\mathbbm{1}_{\{u<x\}})-P_tG(x)&=\mathbb{E}[G(X_t(x+h\mathbbm{1}_{\{u<x\}})-G(X_t(x))]\\
&=\int_{0}^{\infty}g(v)\ddr v\left(\mathbb{P}(v\leq X_t(x+h\mathbbm{1}_{\{u<x\}}))-\mathbb{P}(v\leq X_t(x))\right).\\
&=\int_{0}^{\infty}g(v)\ddr v \left(\mathbb{P}(x+h\mathbbm{1}_{\{u\leq x\}}\geq \hat{X}_t(v))-\mathbb{P}(x\geq \hat{X}_t(v))
\right).
\end{align*}
By Lemma \ref{lem:factsInverse}(ii) and Remark \ref{rem:42}, one has $\Delta_{h,u}(x)>y$ if and only if $x>\psi_{h,u}(y)$, therefore
\begin{align*}
P_tG(x+h\mathbbm{1}_{\{u<x\}})-P_tG(x)&=\int_{0}^{\infty}g(v)\ddr v \left(\mathbb{P}(\psi_{h,u}(\hat{X}_t(v))\leq x)-\mathbb{P}(\hat{X}_t(v)\leq x)\right).
\end{align*}
Integrating with respect to $f(x)\ddr x$ we obtain
\begin{equation}
\label{eqn:firstResult}
\int_{0}^{\infty}\ddr x f(x) \left(P_tG(x+h\mathbbm{1}_{\{u<x\}})-P_tG(x)\right)=\int_{0}^{\infty}g(v)\ddr v \left(\mathbb{E}[F(\psi_{h,u}(\hat{X}_{t}(v)))]-F(\hat{X}_{t}(v))]\right).
\end{equation}

We now compute the compensated part of the discontinuous generator $\mathcal{L}^d$, by integration by part we have
\begin{equation}
\label{eqn:integrationByPart}
\int_{0}^{\infty}f(x)(P_tG)'(x)\mathbbm{1}_{\{u\leq x\}}\ddr x
  = f(\infty) P_tG(\infty) - f(u) P_tG(u) - \int_{u}^{\infty}f'(x)P_tG(x)\ddr x.
\end{equation}
Moreover, we observe that
\begin{align*}
  &\int_{u}^{\infty}f'(x)P_tG(x)\ddr x\\
 =&\int_{u}^{\infty}f'(x)\mathbb{E}\left[\int_{0}^{\infty}\mathbbm{1}_{\{X_{t}(x)>v\}}g(v)\ddr v\right]\ddr x =\int_{u}^{\infty}f'(x)\mathbb{E}\left[\int_{0}^{\infty}\mathbbm{1}_{\{x>\hat{X}_{t}(v)\}}g(v)\ddr v\right]\ddr x\\
  =&\mathbb{E}\left[\int_{0}^{\infty}g(v)\ddr v \int_{u}^{\infty}f'(x)\mathbbm{1}_{\{x>\hat{X}_{t}(v)\}}\ddr x \right]=\int_{0}^{\infty}g(v)\ddr v \left(f(\infty)-\mathbb{E}[f(\hat{X}_{t}(v)\vee u)]\right)\\
  =& G(\infty) f(\infty) - \int_0^\infty \mathbb{E}[f(\hat{X}_{t}(v)\vee u)] g(v)\ddr v.
\end{align*}
Therefore, as $P_tG(\infty) = G(\infty)$, \eqref{eqn:integrationByPart} becomes 
\begin{align*}
\int_{0}^{\infty}f(x)(P_tG)'(x)\mathbbm{1}_{\{u\leq x\}}\ddr x
&= -f(u) P_tG(u) + \int_0^\infty \mathbb{E}[f(\hat{X}_{t}(v)\vee u)] g(v)\ddr v\\
&=\int_{0}^{\infty} \left(\mathbb{E}[f(\hat{X}_{t}(v)\vee u)-f(u)\mathbb{P}(\hat{X}_t(v)\leq u)\right)g(v)\ddr v\\
&=\int_{0}^{\infty}\mathbb{E}[f(\hat{X}_{t}(v))\mathbbm{1}_{\{\hat{X}_t(v)>u\}}]g(v)\ddr v .
\end{align*}

Using the above result and \eqref{eqn:firstResult}, \eqref{eqn:intermediate} yields
\begin{align}
  &\int_{0}^{\infty}\ddr x f(x)\int_{0}^{s}\ddr t \mathcal{L}^{d}P_tG(x)\nonumber\\
  =&\int_{0}^{\infty}\! \!\ddr x f(x)\int_{0}^{s}\ddr t\int_{0}^{\infty}\! \!\pi(\ddr h)\int_{0}^{\infty}\! \!\ddr u\left[P_tG(x+h\mathbbm{1}_{\{u<x\}})-P_tG(x)-h(P_tG)'(x)\mathbbm{1}_{\{h<1\}}\mathbbm{1}_{\{u\leq x\}}\right]\nonumber\\
  =&\int_{0}^{s}\ddr t\int_{0}^{\infty}\!\!\!\!\pi(\ddr h)\int_{0}^{\infty}\ddr u\int_{0}^{\infty}\! \!\ddr x f(x)\left[P_tG(x+h\mathbbm{1}_{\{u<x\}})-P_tG(x)-h(P_tG)'(x)\mathbbm{1}_{\{h<1\}}\mathbbm{1}_{\{u\leq x\}}\right]\label{F1}\\
  =&\int_{0}^{s}\ddr t\int_{0}^{\infty}\!\!\!\!\pi(\ddr h)\int_{0}^{\infty}\ddr u \int_{0}^{\infty}g(v)\ddr v \Upsilon(t,h,u,v)\nonumber\\
  =&\int_{0}^{\infty}\!\!g(v)\ddr v\int_{0}^{s}\ddr t\int_{0}^{\infty}\! \!\pi(\ddr h)\int_{0}^{\infty}\! \!\ddr u  \Upsilon(t,h,u,v) \label{F2},
\end{align}
where 
\[\Upsilon:(h,u,t,v)\mapsto \left(\mathbb{E}[F(\psi_{h,u}(\hat{X}_{t}(v)))-F(\hat{X}_{t}(v))+hF'(\hat{X}_t(v))\mathbbm{1}_{\{\hat{X}_t(v)>u\}}\mathbbm{1}_{\{h\leq 1\}}]\right).\]
Above, \eqref{F1} and \eqref{F2} follow from applying Fubini's theorem, which we now justify. For any $t$ and $x$,
\[P_tG(x)=\frac{1-e^{-xv_t(\lambda)}}{\lambda} \text{ and } (P_tG)''(x)=-\frac{v_t(\lambda)^{2}}{\lambda}e^{-xv_t(\lambda)}.\]
Since $v_t(\lambda)e^{-xv_t(\lambda)}\leq \frac{1}{x}$ then $\sup_{[x,x+h]}|(P_tG)''(z)|\leq \frac{v_t(\lambda)}{\lambda x}$, and by Taylor's inequality
\begin{align*}
\lvert P_tG(x+h\mathbbm{1}_{\{u<x\}})-P_tG(x)-h(P_tG)'(x)\mathbbm{1}_{\{h<1\}}\mathbbm{1}_{\{u\leq x\}}\lvert&\leq \frac{v_t(\lambda)}{\lambda x}\frac{(h\wedge 1)^{2}}{2}\mathbbm{1}_{\{u\leq x\}}.
\end{align*}
Since $f$ is integrable then the upper bound is integrable with respect to $f(x)\ddr x \mathbbm{1}_{[0,s]}(t)\ddr t \pi(\ddr h)\ddr u$, which justifies the application of Fubini's theorem in \eqref{F1}.

We now explain why \eqref{F2} holds. Recall first that $f(z)=-F'(z)$. By Theorem \ref{flowinverse}, for any $q > 0$ we have $\mathbb{E}[\hat{X}_t(\mathbbm{e}_q)]=\frac{1}{v_t(q)}<\infty$ therefore $\mathbb{E}[\hat{X}_t(x)]<\infty$ for a.e. $x$. This, with the bound (\ref{bound}) allows us to conclude that $\Upsilon(h,u,t,v)$
is integrable with respect to $g(v)\ddr v \mathbbm{1}_{[0,s]}(t)\ddr t \pi(\ddr h) \ddr u$.

In a second time, we deal with the continuous part of the generator $\mathcal{L}^c$. Applying Fubini's theorem, one has
\begin{equation*}
\int_{0}^{\infty}\ddr x f(x)\int_{0}^{s}\mathcal{L}^{c}P_tG(x)\ddr t=\int_{0}^{s} \ddr t\int_{0}^{\infty}\ddr x f(x)\mathcal{L}^{c}P_tG(x). 
\end{equation*}
Set $h(x)=P_tG(x)$ and $\phi(x)=f'(x)\frac{\sigma^{2}}{2}x+f(x)\left(\frac{\sigma^{2}}{2}-\beta x\right)$. Since by assumption $F\in \mathscr{D}$, then $\underset{x\rightarrow \infty}{\lim} \phi(x)=\phi(\infty)=0$. We now compute $\int_{0}^{\infty}\ddr x f(x)\mathcal{L}^{c}P_tG(x)$. By two integration by parts
\begin{align*}
&\int_{0}^{\infty}\ddr x f(x)\mathcal{L}^{c}h(x)=\int_{0}^{\infty}\ddr x f(x)\left[\frac{\sigma^{2}}{2}xh''(x)-\beta x h'(x)\right]\\
&=\left[f(x)\frac{\sigma^{2}}{2}xh'(x)\right]_{0}^{\infty}-\int_{0}^{\infty}\ddr x \left[f'(x)\frac{\sigma^{2}}{2}x+f(x)\frac{\sigma^{2}}{2}\right]h'(x)+ \int_{0}^{\infty}\ddr x f(x)\beta xh'(x).\\
&=\left[f(x)\frac{\sigma^{2}}{2}xh'(x)\right]_{0}^{\infty}-\int_{0}^{\infty} \phi(x)h'(x)\ddr x\\
&=\left[f(x)\frac{\sigma^{2}}{2}xh'(x)\right]_{0}^{\infty}-\phi(\infty)h(\infty)+\int_{0}^{\infty} \phi'(x)h(x)\ddr x\\
&=-\phi(\infty)h(\infty)+\int_{0}^{\infty} \phi'(x)\mathbb{E}\left[\int_{0}^{\infty}g(u)\mathbbm{1}_{\{u\leq X_t(x)\}}\right]\ddr x \\
&=-\int_{0}^{\infty}\ddr u g(u)\int_{0}^{\infty}\ddr x \phi'(x)\mathbb{P}(\hat{X}_t(u)<x)\\
&=-\int_{0}^{\infty}\ddr u g(u)\mathbb{E}[\phi(\hat{X}_t(u))]=\int_{0}^{\infty}\ddr u g(u)\mathbb{E}[\hat{\mathcal{L}}^{c}F(\hat{X}_t(u))].
\end{align*}
We can now conclude as follows. One has
\begin{align}\label{laplacetrans}
\int_{0}^{\infty}\! \!\ddr u g(u)\mathbb{E}[F(\hat{X}_s(u))-F(u)]
&=\int_{0}^{\infty}\! \!\ddr x f(x)\mathbb{E}[G(X_s(x))-G(x)]  \\
&=\int_{0}^{\infty}g(v)\ddr v \int_{0}^{s}\ddr t \mathbb{E}\left[\hat{\mathcal{L}}^{d}F(\hat{X}_t(v))+\hat{\mathcal{L}}^{c}F(\hat{X}_t(v))\right]\qedhere. 
\end{align}
\end{proof}
\begin{lemma}\label{martingale} For any $F\in \mathscr{D}$ and any $y \geq 0$, $\displaystyle \left( F(\hat{X}_t(y))-\int_{0}^{t}\hat{\mathcal{L}}F(\hat{X}_{s}(y))\ddr s, t \geq 0\right)$ is a martingale. 
\end{lemma}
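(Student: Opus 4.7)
The plan is to invert the Laplace-transform identity from Lemma~\ref{computation} to obtain an expectation identity in the spatial variable, then to upgrade it to a conditional statement via the Markov property of $(\hat{X}_t(y), t \geq 0)$ established in Theorem~\ref{flowinverse}.

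First, I would exploit equation \eqref{laplacetrans} with $g(u) = e^{-\lambda u}$: since it holds for every $\lambda > 0$, the two locally integrable functions
\[
  u \mapsto \mathbb{E}[F(\hat{X}_s(u))] - F(u) \qquad \text{and} \qquad u \mapsto \int_{0}^{s} \mathbb{E}[\hat{\mathcal{L}}F(\hat{X}_t(u))] \, \ddr t
\]
share the same Laplace transform. By injectivity of the Laplace transform on locally integrable functions, they coincide for almost every $u > 0$. The linear growth control $|\hat{\mathcal{L}}F(z)| \leq C_0 + C_1 z$ that follows from combining the large-jump estimate and the Taylor bound \eqref{bound} ensures that the right-hand side is indeed locally integrable; the moment bound $\mathbb{E}[\hat{X}_t(\mathbbm{e}_q)] = 1/v_t(q)$ extracted from Theorem~\ref{flowinverse} then gives finiteness and local integrability in $u$.

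Second, I would extend the pointwise identity to every $u \geq 0$ by continuity in $u$. The left-hand side $u \mapsto Q_sF(u) - F(u)$ is continuous on $[0,\infty]$ by the Feller property of Proposition~\ref{boundaries}. For the right-hand side, I would first check that $\hat{\mathcal{L}}F$ is itself continuous on $(0,\infty)$: dominated convergence applied to the compensated integral, using the integrable majorant $C (h\wedge 1)^2 + \tfrac{1}{2}(h\wedge 1)^2 z \|F'\|_\infty$ from \eqref{bound} and the continuity of $F, F', F''$, yields continuity of $\hat{\mathcal{L}}F$. Combining this with the Feller property of $Q_t$ and a truncation/uniform-integrability argument based on the linear growth bound and the local boundedness of $y \mapsto \mathbb{E}[\hat{X}_t(y)]$ gives continuity of $y \mapsto Q_t(\hat{\mathcal{L}}F)(y)$, and a final application of dominated convergence in $t$ transfers the continuity to the time integral.

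Once the identity
\[
  \mathbb{E}[F(\hat{X}_s(y))] - F(y) = \int_{0}^{s} \mathbb{E}[\hat{\mathcal{L}}F(\hat{X}_t(y))] \, \ddr t
\]
is known for every $y \geq 0$ and $s \geq 0$, the martingale property is immediate from the Markov property obtained in Theorem~\ref{flowinverse}. Writing $\mathcal{F}_r = \sigma(\hat{X}_u(y), u \leq r)$, for $0 \leq r \leq t$ we have
\[
  \mathbb{E}[F(\hat{X}_t(y)) \mid \mathcal{F}_r] = Q_{t-r}F(\hat{X}_r(y)) = F(\hat{X}_r(y)) + \int_{0}^{t-r} Q_u \hat{\mathcal{L}}F(\hat{X}_r(y)) \, \ddr u,
\]
and Fubini rewrites the last integral as $\mathbb{E}\bigl[\int_{r}^{t} \hat{\mathcal{L}}F(\hat{X}_u(y)) \, \ddr u \bigm| \mathcal{F}_r\bigr]$. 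Subtracting $\int_{0}^{r} \hat{\mathcal{L}}F(\hat{X}_u(y)) \, \ddr u$ from both sides produces the martingale identity. Well-posedness follows because any solution of (MP) started at $y$ has the same one-dimensional marginals as those dictated by the Feller semigroup $(Q_t)$ characterized in Theorem~\ref{flowinverse}, and the class $\mathscr{D}$ is rich enough to separate laws.

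The main obstacle is the extension from almost every $u$ to every $u$ in the second step: $\hat{\mathcal{L}}F$ is not bounded but only has linear growth, so the standard Feller argument does not apply off the shelf. The whole difficulty is thus to control the family $(\hat{\mathcal{L}}F(\hat{X}_t(y_n)))_{n}$ uniformly in $y_n \to y$, which requires coupling the growth estimate \eqref{bound} with the moment information on $\hat{X}_t(y)$ inherited from the explicit Laplace transform in Theorem~\ref{flowinverse}.
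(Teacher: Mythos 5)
Your proposal is correct and follows essentially the same route as the paper: invert the Laplace-transform identity of Lemma~\ref{computation} to get the Dynkin-type identity $\E[F(\hat{X}_s(v))]-F(v)=\int_0^s\E[\hat{\mathcal{L}}F(\hat{X}_t(v))]\,\ddr t$ for almost every $v$, upgrade it to every $v$ via continuity in the spatial variable (Feller property on the left, continuity of $\hat{\mathcal{L}}F$ plus the bound \eqref{bound} and the monotonicity/moment control on the right), and then conclude by the Markov property through the same routine conditional-expectation computation. The closing remark on well-posedness is not needed for this lemma (the paper handles uniqueness separately via Kolokoltsov's criterion in Lemma~\ref{courregeform}), but it does not affect the validity of your argument for the martingale property.
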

\begin{proof}
Recall that $g(v)=e^{-\lambda v}$. We will show that \eqref{laplacetrans} entails that for any $v$ and any $s$:
\begin{equation}\label{injectivity}
\mathbb{E}[F(\hat{X}_s(v))-F(v)]=\int_{0}^{s}\ddr t \mathbb{E}\left[\hat{\mathcal{L}}^{d}F(\hat{X}_t(v))+\hat{\mathcal{L}}^{c}F(\hat{X}_t(v))\right].
\end{equation}

From the Feller property of $(\hat{X}_t,t\geq 0)$ and the continuity of $\hat{\mathcal{L}}F$ for any function $F$ in $\mathscr{D}$, the map $v\mapsto \mathbb{E}[\hat{\mathcal{L}}F(\hat{X}_t(v))]$ is continuous. For any $a>0$, and any $v\leq a$, $\hat{X}_t(v)\leq \hat{X}_t(a)$ a.s. therefore using the bound (\ref{bound}), we see that the function defined on $[0,a]$ by
\[
  \Xi(v) = \int_{0}^{s}\ddr t\int_{0}^{\infty}\pi(\ddr h)\int_{0}^{\infty}\! \!\ddr u  \left(\mathbb{E}[F(\psi_{h,u}(\hat{X}_{t}(v)))-F(\hat{X}_{t}(v))+hF'(\hat{X}_t(v))\mathbbm{1}_{\{\hat{X}_t(v)>u\}}\mathbbm{1}_{\{h\leq 1\}}]\right)
\]
is continuous, and since $a$ is arbitrary, the mapping is continuous on $[0,\infty[$. This corresponds to the continuity of $v\mapsto \int_{0}^{s}\ddr t \mathbb{E}\left[\hat{\mathcal{L}}^{d}F(\hat{X}_t(v))\right]$. On the other hand, one can check the continuity of $v\mapsto \int_{0}^{s}\ddr t \mathbb{E}\left[\hat{\mathcal{L}}^{c}F(\hat{X}_t(v))\right]$ and by injectivity of the Laplace transform, \eqref{laplacetrans}  entails \eqref{injectivity}. This provides the martingale problem, as the following routine calculation shows. Let $t\geq 0$ and $s\geq 0$. Denote by $(\mathcal{F}_s,s\geq 0)$ the natural filtration associated to $(\hat{X}_s(x),s\geq 0,x\geq 0)$, 
\begin{align*}
&\mathbb{E}\left[F(\hat{X}_{t+s}(x))-\int_{0}^{t+s}\hat{\mathcal{L}}F(\hat{X}_u(x))\ddr u \ |\mathcal{F}_s\right]\\
&=\mathbb{E}\left[F(\hat{X}_{t+s}(x))-\int_{s}^{t+s}\hat{\mathcal{L}}F(\hat{X}_u(x))\ddr u \ |\mathcal{F}_s\right]-\int_{0}^{s}\hat{\mathcal{L}}F(\hat{X}_u(x))\ddr u\\
&=\mathbb{E}_{\hat{X}_s(x)}\left[F(\hat{X}_t)-\int_{0}^{t}\hat{\mathcal{L}}F(\hat{X}_u)\ddr u\right]-\int_{0}^{s}\hat{\mathcal{L}}F(\hat{X}_u(x))\ddr u\\
&=F(\hat{X}_{s}(x))-\int_{0}^{s}\hat{\mathcal{L}}F(\hat{X}_u(x))\ddr u. \qedhere
\end{align*}
\end{proof}
In the following Lemma, we rewrite the generator $\hat{\mathcal{L}}$ of the one-point motion in its Courr\`ege form.
We refer to Kolokoltsov \cite{MR2858558} for a general study of generators of stochastically monotone Markov processes.
\begin{lemma}\label{courregeform} For any $f\in C^{2}_b$, 
$$\hat{\mathcal{L}}f(z)=\frac{\sigma^{2}}{2}zf''(x)+\int_{0}^{z}\left[f(z-h)-f(z)+hf'(z)\right]\nu(z,\ddr h)+b(z)f'(z)$$ with $$\nu(z,\ddr h):=\mathbbm{1}_{\{h\leq z\}}\left((z-h)\pi(\ddr h)+\bar{\pi}(h)\ddr h\right)$$ and $$b(z):=\int_{0}^{\infty}h(z\mathbbm{1}_{\{h\leq 1\}}\pi(\ddr h)-\nu(z,\ddr h))-\beta z+\frac{\sigma^{2}}{2}.$$
Moreover, the martingale problem $\mathrm{(MP)}$ is well-posed.
\end{lemma}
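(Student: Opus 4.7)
The claim has two parts: an algebraic reformulation of $\hat{\mathcal{L}}$ and the well-posedness of $\mathrm{(MP)}$. Existence of a solution to $\mathrm{(MP)}$ being furnished by Lemma \ref{martingale}, the whole point is (i) to perform the computation and (ii) to argue uniqueness.

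For (i), the key observation is the three-regime structure of $\psi_{h,u}(z)$: as a function of $u\ge 0$ (with $z,h>0$ fixed), it takes the value $z-h$ on $[0,(z-h)_+]$, the value $u$ on $[(z-h)_+,z]$, and the value $z$ on $[z,\infty)$. Since moreover $\mathbbm{1}_{\{z>u\}}$ vanishes on $[z,\infty)$, the inner $u$-integral in the definition of $\hat{\mathcal{L}}F(z)$ reduces to an integral over $[0,z]$. Splitting on $h \lessgtr z$ and performing the change of variables $v=z-u$ on the middle interval $[(z-h)_+,z]$ yields
\[
\int_0^z [F(\psi_{h,u}(z))-F(z)]\,\ddr u = (z-h)_+ [F(z-h)-F(z)] + \int_0^{z\wedge h}[F(z-v)-F(z)]\,\ddr v.
\]
Integrating against $\pi(\ddr h)$, swapping integration orders in the Lebesgue piece via $\int_0^\infty \pi(\ddr h)\int_0^{z\wedge h}\phi(v)\,\ddr v = \int_0^z \phi(v)\bar{\pi}(v)\,\ddr v$, and handling the compensator (which contributes $zF'(z)\int_0^1 h\,\pi(\ddr h)$) gives
\[
\hat{\mathcal{L}}F(z) = \frac{\sigma^2}{2}zF''(z) + \left(\frac{\sigma^2}{2}-\beta z + z\int_0^1 h\,\pi(\ddr h)\right)F'(z) + \int_0^z [F(z-h)-F(z)]\,\nu(z,\ddr h).
\]
Adding and subtracting $hF'(z)$ inside the $\nu$-integral then produces the announced Courrège form with the announced $b(z)$, and the integrability of $F(z-h)-F(z)+hF'(z)$ against $\nu(z,\cdot)$ follows from a second-order Taylor bound combined with $\int_0^z h^2\,\nu(z,\ddr h)<\infty$, which is controlled exactly as in Lemma \ref{welldefined} using $\int(1\wedge h^2)\pi(\ddr h)<\infty$.

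For (ii), the strategy is to identify $\hat{\mathcal{L}}$ on $\mathscr{D}$ as the infinitesimal generator of the Feller semi-group $(Q_t)$ built in Theorem \ref{flowinverse} and Proposition \ref{boundaries}. The martingale property of Lemma \ref{martingale}, combined with the Feller property, gives $\lim_{t\downarrow 0}(Q_tF-F)/t = \hat{\mathcal{L}}F$ for every $F\in\mathscr{D}$, so $\mathscr{D}$ sits in the Feller generator's domain and the two operators agree there. It remains to check that $\mathscr{D}$ is a core, for which it suffices that $\mathscr{D}$ is dense in $C([0,\infty])$ and stable under $Q_t$. The main obstacle is the $Q_t$-stability; this is obtained by differentiating under the expectation in $Q_tF(y)=\E[F(\hat{X}_t(y))]$, using the almost-sure monotonicity of $y\mapsto\hat{X}_t(y)$ together with the precise decay conditions $xF'(x),\ xF''(x)\to 0$ built into the definition of $\mathscr{D}$, which are tailored so as to be preserved by $Q_t$. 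Once $\mathscr{D}$ is known to be a core, uniqueness for $\mathrm{(MP)}$ follows from the standard Ethier--Kurtz criterion for martingale problems associated to Feller generators.
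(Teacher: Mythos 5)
Your part (i) is essentially the paper's own computation, just organized by integrating in $u$ first (using the three-regime form of $u\mapsto\psi_{h,u}(z)$) instead of splitting the $\pi(\ddr h)$-integral over $h\leq z$ and $h>z$ as the paper does; both routes are elementary Fubini manipulations and land on the same $\nu(z,\ddr h)$ and $b(z)$. One caveat: your intermediate display, where the compensator is pulled out as $z\int_0^1 h\,\pi(\ddr h)\,F'(z)$ and the jump part is written as $\int_0^z[F(z-h)-F(z)]\,\nu(z,\ddr h)$ without compensation, is an $\infty-\infty$ expression whenever $\int_0^1 h\,\pi(\ddr h)=\infty$, which the standing assumption $\int(1\wedge h^2)\pi(\ddr h)<\infty$ permits. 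To be rigorous you must keep the term $hF'(z)\mathbbm{1}_{\{h\leq 1\}}$ inside the integrals throughout (as the paper does); this is only a presentational repair, since the final Courr\`ege form and the definition of $b$ encode the cancellation correctly.

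The genuine gap is in part (ii). The paper does not argue through a core: it obtains well-posedness of $\mathrm{(MP)}$ from Theorem 5.1 of Kolokoltsov \cite{MR2858558} for stochastically monotone one-dimensional processes with generators in Courr\`ege form, the only substantive hypothesis being the linear growth bound $b(z)\leq c(1+z)$, which it verifies by an explicit estimate on $b$. Your alternative (identify $\hat{\mathcal{L}}$ on $\mathscr{D}$ with the generator of the Feller semigroup $(Q_t)$, show $\mathscr{D}$ is a core, invoke Ethier--Kurtz) is legitimate in outline, but its decisive step is missing: the invariance of $\mathscr{D}$ under $Q_t$ is asserted via ``differentiating under the expectation'' in $Q_tF(y)=\E[F(\hat{X}_t(y))]$, yet $y\mapsto\hat{X}_t(y)$ is merely a non-decreasing right-continuous inverse of a subordinator, not differentiable, and neither the paper nor your argument provides any regularity of the transition semigroup from which one could conclude that $Q_tF$ is $C^2_b$, let alone that $x(Q_tF)'(x)$ and $x(Q_tF)''(x)$ vanish at infinity. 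Without that, $\mathscr{D}$ being a core is unproved and the Ethier--Kurtz uniqueness criterion cannot be applied. A secondary omission in the same step: to deduce that $F\in\mathscr{D}$ lies in the domain of the Feller generator you need $t^{-1}(Q_tF-F)\to\hat{\mathcal{L}}F$ uniformly on $[0,\infty]$, which requires $\hat{\mathcal{L}}F$ to extend continuously to the boundary points; the bound \eqref{bound} only yields $|\hat{\mathcal{L}}F(z)|=O(1+z)$ for $F\in C^2_b$, so boundedness and continuity of $\hat{\mathcal{L}}F$ at $\infty$ for $F\in\mathscr{D}$ also needs an argument you do not give. As it stands, uniqueness is not established in your proposal; either complete the core argument with genuine semigroup regularity, or follow the paper and check the hypotheses of \cite{MR2858558}.
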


\begin{remark}\label{finitemeandrift}
The jump measure $\nu(z,\ddr h)$ can be compared to the jumps rate of $(\hat{Z}_t(n),t\geq 0)$ obtained in Proposition \ref{numberofblocks}. Moreover, in the finite mean case, $\int^{\infty}_1h\pi(\ddr h)<\infty$, the drift $b$ can be rewritten as follows
\begin{align*}
b(z)&=\int_{0}^{\infty}h(z\pi(\ddr h)-\nu(z,\ddr h))+\Psi'(0+)z+\frac{\sigma^{2}}{2}\\
&=z\int_{z}^{\infty}\bar{\pi}(h)\ddr h +\int_{0}^{z}h\bar{\pi}(h)\ddr h+\Psi'(0+)z+\frac{\sigma^2}{2}.
\end{align*}
In particular, for any $z>0$, $b'(z)=\int_{z}^{\infty}\bar{\pi}(\ddr h)+ \Psi'(0+)$, $b''(z)=-\bar{\pi}(z)$ and $b$ is concave. 
\end{remark}

\begin{proof}
Recall $\psi_{h,u}(z):=z\mathbbm{1}_{\{z\leq u\}}+(z-h)\mathbbm{1}_{[u+h,\infty[}(z)+u\mathbbm{1}_{[u,u+h]}(z)$. Note that:
\begin{align*}
&\int_{0}^{\infty}\pi(\ddr h)\int_{0}^{\infty}\ddr u\left[f(\psi_{h,u}(z))-f(z)+h\mathbbm{1}_{\{h\leq 1\}}f'(z)\mathbbm{1}_{\{z>u\}}\right]\\
  &=\int_{0}^{\infty} \!\!\pi(\ddr h)\int_{0}^{z}\ddr u\left[(f(z-h)-f(z))\mathbbm{1}_{[u+h,\infty[}(z))+(f(u)-f(z))\mathbbm{1}_{[u,u+h]}(z)+h\mathbbm{1}_{\{h\leq 1\}}f'(z)\right].
  \end{align*}
Therefore, one has  
\begin{align*}
\hat{\mathcal{L}}f(z)&=\int_{0}^{z}\pi(\ddr h)\left[(z-h)[f(z-h)-f(z)]+\int_{0}^{z}\ddr u [f(u)-f(z)]\mathbbm{1}_{[u,u+h]}(z)+zhf'(z)\mathbbm{1}_{h<1}\right]\\
  &\qquad +\int_{z}^{\infty}\pi(\ddr h)\left[\int_{0}^{z}\ddr u\left[f(u)-f(z)\right]\mathbbm{1}_{[u,u+h]}(z)+hzf'(z)\mathbbm{1}_{h<1}\right]\\
  &=I+II.
\end{align*}

For the first integral $I$:
\begin{align*}
  I&=\int_{0}^{z}\pi(\ddr h)\left[(z-h)[f(z-h)-f(z)+hf'(z)\mathbbm{1}_{h<1}]+h^{2}f'(z)\mathbbm{1}_{h<1}\right]\\
  &\qquad +\int_{0}^{z}\pi(\ddr h)\int_{0}^{z}\ddr u \ (f(u)-f(z))\mathbbm{1}_{\{u>z-h\}},
\end{align*}
one has
\[\int_{0}^{z}\pi(\ddr h)\int_{0}^{z}\ddr u \ (f(u)-f(z))\mathbbm{1}_{\{u>z-h\}}=\int_{0}^{z}(f(z-h)-f(z))(\bar{\pi}(h)-\bar{\pi}(z))\ddr h.\]
Thus
\begin{align*}
  I&=\int_{0}^{z}[f(z-h)-f(z)+hf'(z)\mathbbm{1}_{\{h\leq 1\}}](z-h)\pi(\ddr h)+\int_{0}^{z}(f(z-h)-f(z))(\bar{\pi}(h)-\bar{\pi}(z))\ddr h\\
  &\qquad\qquad\qquad \qquad +\int_{0}^{z}h^{2}\mathbbm{1}_{\{h\leq 1\}}f'(z)\pi(\ddr h)\\
  &=\int_{0}^{z}\left[f(z-h)-f(z)+hf'(z)\mathbbm{1}_{\{h\leq 1\}}\right]((z-h)\pi(\ddr h)+(\bar{\pi}(h)-\bar{\pi}(z))\ddr h)\\
  &\qquad\qquad\qquad \qquad+\int_{0}^{z}h^{2}\mathbbm{1}_{\{h\leq 1\}}\pi(\ddr h)f'(z)-\int_{0}^{z}h\mathbbm{1}_{h<1}(\bar{\pi}(h)-\bar{\pi}(z))\ddr h f'(z).
\end{align*}

For the second integral $II$:
\begin{align*}
  II&=\int_{z}^{\infty}\pi(\ddr h)\int_{0}^{z}\ddr u(f(u)-f(z))\mathbbm{1}_{[u,u+h]}(z)+\int_{z}^{\infty}\pi(\ddr h) zhf'(z)\mathbbm{1}_{h<1}\\
  &=\bar{\pi}(z)\int_{0}^{z}\ddr v [f(z-v)-f(z)]+zf'(z)\int_{z}^{\infty}\pi(\ddr h)h\mathbbm{1}_{h<1}.
\end{align*}
Summing both expressions, $I+II$ equals to:
\begin{eqnarray*}
  &&\int_{0}^{z}\left[f(z-h)-f(z)+hf'(z)\mathbbm{1}_{\{h\leq 1\}}\right]((z-h)\pi(\ddr h)+(\bar{\pi}(h)-\bar{\pi}(z)+\bar{\pi}(z))\ddr h)\\
  &&\quad+\bigg(\int_{0}^{z}h^{2}\mathbbm{1}_{\{h\leq 1\}}\pi(\ddr h)
  -\int_{0}^{z}h\mathbbm{1}_{h<1}(\bar{\pi}(h)-\bar{\pi}(z))\ddr h\\
  &&\quad\qquad +z\int_{z}^{\infty}h\mathbbm{1}_{h\leq 1}\pi(\ddr h)-\bar{\pi}(z)\int_{0}^{\infty}h\mathbbm{1}_{\{h<1\}}\ddr h \bigg)f'(z).\\
\end{eqnarray*}
Therefore
  \[I+II=\int_{0}^{z}\left[f(z-h)-f(z)+hf'(z)\mathbbm{1}_{\{h\leq 1\}}\right]\nu(z,\ddr h)+b_1(z)f'(z)\]
with
\begin{align*}
b_1(z)&:=\int_{0}^{z}\left(h^{2}\mathbbm{1}_{h\leq 1}\pi(\ddr h)-h\bar{\pi}(h)\mathbbm{1}_{h\leq 1} \ddr h
\right)+z\int_{z}^{\infty}\pi(\ddr h)h\mathbbm{1}_{h<1}\\
&=\int_{0}^{\infty}h\mathbbm{1}_{\{h\leq 1\}}(z\pi(\ddr h)-\nu(z,\ddr h)),
\end{align*}
and we obtain
\begin{align*}
I+II&=\int_{0}^{z}\left[f(z-h)-f(z)+hf'(z)\right]\nu(z,\ddr h)-\int_{0}^{z}\mathbbm{1}_{\{1<h\leq z\}}\nu(z,\ddr h)f'(z)+b_1(z)f'(z)\\
&=\int_{0}^{z}\left[f(z-h)-f(z)+hf'(z)\right]\nu(z,\ddr h)+b(z)f'(z). 
\end{align*}
We now verify uniqueness of the solution to $\mathrm{(MP)}$ by applying Theorem 5.1 of Kolokoltsov \cite{MR2858558}. Assumptions (i) and (ii) of the theorem can be readily checked. The third assumption (iii) is that for any $z>1$, $b(z)\leq c(1+z)$ for some $c>0$. Let $z>1$, one has
\begin{align*}
b(z)&=\int_{0}^{1}h(z\pi(\ddr h)-\nu(z,\ddr h))-\int_{1}^{z}h\nu(z,\ddr h)-\beta z+\frac{\sigma^{2}}{2}\\
&=\int_{0}^{1}(h^2\pi(\ddr h)+h\bar{\pi}(h)\ddr h)-z\int_{1}^{z}h\pi(\ddr h)+\int_{1}^{z}h^2\pi(\ddr h)-\int_{1}^{z}h\bar{\pi}(h)\ddr h-\beta z+\frac{\sigma^{2}}{2}\\
&\leq \int_{0}^{1}(h^2\pi(\ddr h)+h\bar{\pi}(h)\ddr h)+\frac{\sigma^{2}}{2}-\beta z\leq c(1+z)
\end{align*}
where for the first inequality we use the fact that  $-z\int_{1}^{z}h\pi(\ddr h)+\int_{1}^{z}h^2\pi(\ddr h)\leq 0$ and we choose a large enough $c$ for the second inequality.
\end{proof}

\begin{proof}[Proof of Theorem \ref{extendedgen}.]
It follows directly by combination of Lemmas \ref{martingale} and \ref{courregeform}.
\end{proof}

\begin{remark}
Similar computations to the ones made in the proof of Lemma \ref{computation} can be done for the $p$-point motion $(\hat{X}_t(y_1),...,\hat{X}_t(y_p))$ from the duality relation
\[\mathbb{P}(\hat{X}_t(y_1)<x_1,..., \hat{X}_t(y_p)<x_p)=\mathbb{P}(X_t(x_1)>y_1,...,X_t(x_p)>y_p)\]  
Consider for example the case $\sigma=\beta=0$. For any function $F$ in $C^{2}(\mathcal{D}_p)$, where we denote by $\mathcal{D}_p:=\{\mathrm{y}:=(y_1,...,y_p)\in ]0,\infty[^p, y_1\leq y_2...\leq y_p \}$, we set  
\begin{align*}
\hat{\mathcal{L}}F(\mathrm{y})
&=\int_{0}^{\infty}\pi(\ddr h)\int_{0}^{\infty}\ddr u\left[F(\psi_{h,u}(\mathrm{y}))-F(\mathrm{y})+h\mathbbm{1}_{\{h\leq 1\}}\sum_{i=1}^{p}\frac{\partial}{\partial y_i} F(\mathrm{y})\mathbbm{1}_{\{y_i>u\}}\right]
\end{align*}
with $\psi_{h,u}(\mathrm{y})=(\psi_{h,u}(y_1),...,\psi_{h,u}(y_p))$.
Then 
$$F(\hat{X}_t(\mathrm{y}))-\int_{0}^{t}\hat{\mathcal{L}}F(\hat{X}_s(\mathrm{y}))\ddr s$$
is a martingale, where
$\hat{X}_t(\mathrm{y})=(\hat{X}_t(y_1),...,\hat{X}_t(y_p))$.
\end{remark}

\section{Examples}
\label{sec:examples}

In this section, we apply the results obtained in the previous ones to the two following important examples: the stable CSBP and the Neveu CSBP. These CSBPs arise in many different frameworks and are known for instance to be closely related to the class of exchangeable coalescents called Beta-coalescents.

\subsection{Feller and stable CSBPs}

A stable CSBP is a continuous-state branching process with branching mechanism given by $\Psi : u \mapsto c_\alpha u^\alpha - \beta u$, for some $\alpha \in (1,2]$, $c_\alpha > 0$ and $\beta \in \R$. Note in particular that the Feller flow, whose inverse flow was studied in details in Section \ref{fellerflow} is a stable CSBP with $\alpha=2$. As a direct application of Theorem \ref{completeCC}, we obtain that the Markovian coalescent $(\mathscr{C}(t),t>0)$ associated to the Feller flow has coagulation rates
\[
  \mu^{\infty}_t=\frac{2\beta}{2(1-e^{-\beta t})}\delta_2(k).
\]
In particular, in the subcritical case ($\beta < 0$), $\mathscr{C}(t)$ converges almost-surely as $t \to \infty$ towards intervals with i.i.d. exponentially distributed lengths with parameter $\hat{\rho} = 2 \beta/c_2$. This corresponds to the partition of $\R_+$ into random intervals $(]0,x^{\star}_1[,]x^{\star}_1,x^{\star}_2[,...)$ corresponding to different ancestors at time $-\infty$ found in Section \ref{fellerflow}. 

We now assume that $\Psi(u)=c_{\alpha}u^{\alpha}-\beta u$ for some $\alpha\in (1,2)$, with $c_\alpha:=\frac{\Gamma(2-\alpha)}{\alpha(\alpha-1)}$ (which corresponds to a simple time dilatation). By assumption $\alpha>1$ and Grey's condition holds $\int^{\infty}\frac{\ddr u}{\Psi(u)}<\infty$. Solving the differential equation \eqref{odev} satisfied by $v_t(\lambda)$, we have in particular that
\[
  v_t(\infty) = \begin{cases}
    c_\alpha^{-\frac{1}{\alpha-1}}\left(\frac{1-e^{-(\alpha-1)\beta t}}{\beta}\right)^{-\frac{1}{\alpha-1}} &\text{if } \beta \neq 0\\
    \left(\Gamma(2-\alpha)/\alpha\right)^{-\frac{1}{\alpha-1}}t^{-\frac{1}{\alpha-1}} &\text{if } \beta = 0.
  \end{cases}
\]
For the stable CSBP, the coagulation rates of its associated Markovian coalescent $(\mathscr{C}(t),t>0)$ are given by 
\[  \mu_t^{\infty}(k):=v_t(\infty)\mu_\alpha(k),  \] 
with $\mu_\alpha(k):=\frac{\Gamma(k-\alpha)}{k!}$. The normalized associated probability measure is 
\begin{equation}
  \label{stablerate}
  \frac{\mu^{\infty}_t(k)}{\mu^{\infty}_t(\mathbb{N})}=\frac{\alpha(2-\alpha)...(k-1-\alpha)}{k!}
\end{equation}
which is time-independent. This probability distribution corresponds to the reproduction measure of prolific individuals in supercritical stable CSBP, see Example 3 in \cite{MR2455180}. It also appears in the study of reduced $\alpha$-stable trees and $\text{Beta}(2-\alpha, \alpha)$-exchangeable coalescents, see respectively Duquesne and Le Gall \cite[page 74]{MR1954248} and Berestycki et al. \cite[Section 5]{MR2349577}.

The inhomogeneous consecutive coalescent $(C(s,t),t>s)$ representing the genealogy of any stable CSBP from time $s>0$ is obtained by a deterministic time-change of the homogeneous consecutive coalescent $(\check{C}(t),t\geq 0)$ with coagulation rates $\mu_\alpha$ via the transformation: for any $t\geq s$,
\[
  C(s,t)=\check{C}\left(\int_{s}^{t}v_u(\infty)\ddr u\right).
\]
Note that $\int_{s}^{\infty}v_u(\infty)\ddr u=\int_{0}^{v_s(\infty)}\frac{z}{\Psi(z)}\ddr z$ which is finite if and only if $\Psi$ is subcritical ($\beta<0$).
According to Theorem \ref{completeCC}, in the subcritical case $(\mathscr{C}(t),t>0)$ converges almost-surely as $t \to \infty$ towards a partition of intervals with i.i.d. lengths with law $\nu_\alpha$ such  that 
\begin{equation}
  \label{qsd}
  \int_{0}^{\infty}e^{-uz}\nu_\alpha(\ddr z)=1-e^{\beta \int_{u}^{\infty}\frac{\ddr x}{c_\alpha x^{\alpha}-\beta x}} \text{ for any } u\geq 0.
\end{equation}

We now turn to the martingale problem satisfied by the inverse flow of the stable CSBP. One easily computes the drift and the jump measure from Remark \ref{finitemeandrift}.

\begin{proposition}
The process $(\hat{X}_t,t\geq 0)$ is characterized by the martingale problem associated to $\hat{\mathcal{L}}$, acting on $C^{2}_b$, given in Lemma \ref{courregeform}, with 
$$\nu(z,\ddr h)=\left((z-h)h^{-1-\alpha}+\frac{h^{-\alpha}}{\alpha}\right)\mathbbm{1}_{[0,z]}(h)\ddr h \text{ and } b(z)=\frac{1}{\alpha(\alpha-1)(2-\alpha)}z^{2-\alpha}-\beta z.$$
\end{proposition}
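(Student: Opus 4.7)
The proposition is essentially a direct computation: we just need to plug the stable Lévy measure into the Courrège form of $\hat{\mathcal{L}}$ given in Lemma~\ref{courregeform}. The approach has no real obstacle, so the plan is mostly bookkeeping of constants.

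The plan is the following. First, identify the ingredients of the stable branching mechanism: $\sigma^2=0$, $\Psi'(0+)=-\beta$, and the Lévy measure
\[
  \pi(\ddr h)=c'_\alpha h^{-1-\alpha}\ddr h,\qquad c'_\alpha=\frac{\alpha(\alpha-1)}{\Gamma(2-\alpha)}c_\alpha=1,
\]
using the normalization $c_\alpha=\Gamma(2-\alpha)/(\alpha(\alpha-1))$ fixed just above the statement. The tail is then $\bar{\pi}(h)=\int_h^\infty u^{-1-\alpha}\ddr u = h^{-\alpha}/\alpha$. Since $\alpha>1$, we are in the finite-mean regime ($\int_1^\infty h\,\pi(\ddr h)<\infty$), so Remark~\ref{finitemeandrift} applies.

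Second, substitute into the formula for $\nu(z,\ddr h)$ from Lemma~\ref{courregeform}:
\[
  \nu(z,\ddr h)=\mathbbm{1}_{[0,z]}(h)\bigl((z-h)\pi(\ddr h)+\bar{\pi}(h)\ddr h\bigr)=\mathbbm{1}_{[0,z]}(h)\left((z-h)h^{-1-\alpha}+\frac{h^{-\alpha}}{\alpha}\right)\ddr h,
\]
which is exactly the stated expression.

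Third, compute the drift using the simpler expression of Remark~\ref{finitemeandrift}:
\[
  b(z)=z\int_{z}^{\infty}\bar{\pi}(h)\ddr h+\int_{0}^{z}h\bar{\pi}(h)\ddr h+\Psi'(0+)z+\frac{\sigma^{2}}{2}.
\]
The two integrals give $z\cdot z^{1-\alpha}/(\alpha(\alpha-1))=z^{2-\alpha}/(\alpha(\alpha-1))$ and $z^{2-\alpha}/(\alpha(2-\alpha))$ respectively. Summing and using the partial-fraction identity
\[
  \frac{1}{\alpha(\alpha-1)}+\frac{1}{\alpha(2-\alpha)}=\frac{(2-\alpha)+(\alpha-1)}{\alpha(\alpha-1)(2-\alpha)}=\frac{1}{\alpha(\alpha-1)(2-\alpha)},
\]
together with $\Psi'(0+)=-\beta$ and $\sigma^2=0$, yields the announced expression $b(z)=z^{2-\alpha}/(\alpha(\alpha-1)(2-\alpha))-\beta z$. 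Well-posedness of the associated martingale problem then follows from Theorem~\ref{extendedgen} and Lemma~\ref{courregeform} (noting that $b(z)\leq c(1+z)$ holds trivially since $2-\alpha<1$), concluding the proof.
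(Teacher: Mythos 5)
Your proposal is correct and follows exactly the route the paper intends: the paper gives no separate proof beyond the sentence "One easily computes the drift and the jump measure from Remark \ref{finitemeandrift}", and your computation (with $c'_\alpha=1$, $\sigma^2=0$, $\Psi'(0+)=-\beta$, $\bar{\pi}(h)=h^{-\alpha}/\alpha$, and the two elementary integrals combined by the partial-fraction identity) is precisely that calculation, with well-posedness already supplied by Lemma \ref{courregeform}.
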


In the critical case, one can identify the law of $\hat{X}$ through some random-time change.

\begin{proposition}
If $\beta=0$, the process $(\hat{X}_t,t\geq 0)$ is a positive self-similar Markov process with index $a:=\alpha-1$. Namely for any $k>0$ and any $y>0$,
\[
  (k\hat{X}_{k^{-a}t}(y),t\geq 0)\overset{\mathcal{L}}{=}(\hat{X}_{t}(ky),t\geq 0).
\]
Moreover,
\[
  \log \hat{X}_t(x)=L_{\varphi^{x}(t)}
\]
where $\varphi^{x}(t):=\inf\{s>0; \int_{0}^{s}e^{(\alpha-1)L_u}\ddr u>t\}$ and $L$ is a spectrally negative L\'evy process started from $\log x$  with Laplace exponent
\[
  \kappa(q)=-d_{\alpha}q+\int_{-\infty}^{0}\left(e^{q z}-1+q(1-e^{z})\right)\nu_\alpha(\ddr z)
\]
with $\nu_\alpha(\ddr z)=\left(e^{z}(1-e^{z})^{-1-\alpha}+\frac{1}{\alpha}(1-e^{z})^{-\alpha}\right)e^{z}\ddr z$ and $d_{\alpha}=\frac{1}{\alpha(\alpha-1)(2-\alpha)}$.
\end{proposition}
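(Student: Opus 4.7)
The plan is to establish self-similarity of the one-point motion first, then invoke Lamperti's bijection between positive self-similar Markov processes and Lévy processes to obtain the claimed representation, and finally read off the Laplace exponent $\kappa$ from the generator $\hat{\mathcal L}$ identified in Section~\ref{martingaleproblem}.

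For the self-similarity, I would first observe that the critical stable CSBP with $\Psi(q)=c_\alpha q^\alpha$ satisfies the scaling $X_t(x)\egaldistr k^{-1}X_{k^{a}t}(kx)$ with $a=\alpha-1$; this is verified directly from the explicit form $v_t(\lambda)=(\lambda^{-(\alpha-1)}+(\alpha-1)c_\alpha t)^{-1/(\alpha-1)}$ obtained by solving \eqref{odev}, via a routine algebraic manipulation of the Laplace transform. Applying Siegmund duality \eqref{duality}, $\mathbb{P}(\hat X_t(y)<x)=\mathbb{P}(X_t(x)>y)$, and rearranging then yields the claimed $k\hat X_{k^{-a}t}(y)\egaldistr \hat X_t(ky)$ at the level of one-dimensional marginals, which extends to finite-dimensional distributions by the Markov property of $\hat X$ established in Theorem~\ref{flowinverse}.

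Combined with the Feller property proved in Proposition~\ref{boundaries}, this places $(\hat X_t(x))_{t\geq 0}$ in the class of positive $a$-self-similar Markov processes, and Lamperti's classical representation theorem immediately yields $\log\hat X_t(x)=L_{\varphi^x(t)}$ for $L$ a (possibly killed) Lévy process starting from $\log x$ and $\varphi^x$ the time change in the statement. To identify the law of $L$, I would use the Lamperti relation between generators $\hat{\mathcal L}f(z)=z^{-a}\mathcal A g(\log z)$ for $f(z)=g(\log z)$, which encodes the time change $\ddr\varphi^x/\ddr t=\hat X_t^{-a}$. Starting from the Courrège form of $\hat{\mathcal L}$ (Lemma~\ref{courregeform}) with $b(z)=d_\alpha z^{2-\alpha}$ and the $\nu(z,\ddr h)$ stated just above, I perform the substitution $h=z(1-e^y)$, $y<0$: this pushes the jump kernel forward precisely to $z^{-a}\nu_\alpha(\ddr y)$ and transforms the compensator $hf'(z)$ into $(1-e^y)g'(\log z)$; combined with $b(z)f'(z)=d_\alpha z^{-a}g'(\log z)$, this reveals $\mathcal A$ as the generator of a Lévy process with Lévy measure $\nu_\alpha$, truncation $h(z)=e^z-1$, and a linear drift term, whose Laplace exponent is then the $\kappa$ displayed in the statement.

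The main technical subtlety lies in the drift bookkeeping of the last step. Since the natural compensator $h\mathbbm{1}_{h\leq 1}$ in $\pi(\ddr h)$ and the Lamperti truncation $1-e^y$ differ by a quantity of order $y^2$ near $0$ but agree only asymptotically for large jumps, the three contributions to the drift of $L$ — the finite part of the jump kernel, the discrepancy between the two compensators, and the deterministic drift $b(z)$ — must be combined carefully so as to produce precisely the stated linear coefficient of $\kappa$. A clean cross-check is to test $\hat{\mathcal L}$ directly on the power function $f_q(z)=z^{-q}$: by self-similarity, $\hat{\mathcal L}f_q$ must be a scalar multiple of $z^{-q-a}$, and the multiplier is exactly $\kappa(-q)$, providing a direct route to $\kappa$ that bypasses any truncation issues.
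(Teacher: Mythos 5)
Your proposal is correct and follows essentially the same route as the paper: self-similarity of the critical stable CSBP transferred to $\hat{X}$ by Siegmund duality, Lamperti's representation, and identification of $\kappa$ through the generator --- your power-function cross-check $\hat{\mathcal{L}}p_q(z)=z^{q-(\alpha-1)}\kappa(q)$ is literally the paper's identification $\kappa(q)=x^{-q+\alpha-1}\hat{\mathcal{L}}p_q(x)$, and the substitution $h=z(1-e^{y})$ is just one explicit way of carrying out its ``simple computations''. Be aware, though, that this computation pins the drift as $+d_\alpha q$ rather than $-d_\alpha q$ (test $q=1$: the jump bracket vanishes and $\hat{\mathcal{L}}\,\mathrm{Id}(z)=b(z)=d_\alpha z^{2-\alpha}$, and moreover $\kappa'(0+)>0$ is forced by transience of $\hat{X}$ in the critical stable case, which fails with the minus sign), so your assertion that the bookkeeping lands exactly on the displayed $\kappa$ should be rechecked --- the discrepancy appears to lie in the sign printed in the statement rather than in your method.
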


\begin{proof}
Recall that the critical CSBP $(X_t,t\geq 0)$ is itself selfsimilar with index $a:=\alpha-1$. See for instance Kyprianou and Pardo \cite{MR2484167}. For any $k>0$ and any $x>0$, $kX_{k^{-a}t}(x)\overset{\mathcal{L}}{=}X_{t}(kx)$. Thus for any $y>0$
\[
  \mathbb{P}(k\hat{X}_{k^{-a}t}(y)\leq x)=\mathbb{P}({X}_{k^{-a}t}(x/k)\geq y)=\mathbb{P}(k^{-1}X_{t}(x)>y)=\mathbb{P}(\hat{X}_{t}(ky)\leq x).
\]
By Lamperti's representation of positive self-similar Markov process, see e.g. \cite[Chapter~13]{MR3155252}, $\hat{X}_t(x)$ is of the form $\exp(L_{\varphi^{x}(t)})$ for some L\'evy process $L$ where
$t\mapsto \varphi^{x}(t)$  the time-change given in the statement. To identify the Laplace exponent $\kappa$ of $L$, note that by $(\alpha-1)$-self-similarity, one has $\kappa(q)=x^{-q+\alpha-1}\hat{\mathcal{L}}p_{q}(x)$ with $p_q(x)=x^{q}$. The result follows from simple computations.
\end{proof}

\subsection{Neveu CSBP}

We now turn in this section to the Neveu CSBP. This CSBP has branching mechanism $\Psi(q)=q\log(q)$. Recall its L\'evy-Khintchine form
\[\Psi(q)=(\gamma-1)q+\int_{0}^{\infty}\left(e^{-qh}-1+qh\mathbbm{1}_{\{h\leq 1\}}\right)\frac{\ddr h}{h^2}, \text{ for any } q\geq 0\]
where $\gamma =\int_{1}^{\infty}e^{-y}y^{-2}\ddr y$ is the Euler-Mascheroni constant. Note that Grey's condition is not verified by this process. Solving the differential equation \eqref{odev} yields $v_t(\lambda)=\lambda^{e^{-t}}$ for any $t\geq 0$ and $\lambda \in (0,\infty)$. For any fixed $t$, the subordinator $(X_t(x),x\geq 0)$ is stable with parameter $e^{-t}$. For Neveu CSBP, the consecutive coalescent process $C^\lambda$ defined in Section \ref{sec:markovianCoalescent} happens to be homogeneous in time, and not to depend on $\lambda$.

\begin{proposition}
\label{prop:lawC}
For any $\lambda >0$, the consecutive process $(C^{\lambda}(t),t\geq 0)$ is an homogeneous consecutive coalescent whose coagulation rate $\mu$ is $\mu(k)=\frac{1}{k(k-1)}$ for any $k\geq 2$. The block sizes at time $t\geq 0$ have generating function $\mathbb{E}[z^{\#C_1(t)}]=1-(1-z)^{e^{-t}}$ and for any $k\geq 1$
\[
  \mathbb{P}(\#C_1(t)=k)=\frac{e^{-t}(2-e^{-t})...(k-1-e^{-t})}{k!}.
\]
\end{proposition}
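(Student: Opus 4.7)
My plan is to extract everything from Theorem \ref{markovcoal} once the Laplace exponent $v_t(\lambda)$ is identified. For Neveu's mechanism $\Psi(q) = q\log q$, the integral equation \eqref{odev} gives $\partial_t v_t(\lambda) = -v_t(\lambda)\log v_t(\lambda)$, which linearises under $w_t := \log v_t(\lambda)$ to $w_t' = -w_t$; with $w_0 = \log\lambda$ this yields $v_t(\lambda) = \lambda^{e^{-t}}$.

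Plugging this into the generating function formula \eqref{generatingfunctionCC} immediately gives
\[
\E\bigl[z^{\#C_1^{\lambda}(t)}\bigr] = 1 - \frac{(\lambda(1-z))^{e^{-t}}}{\lambda^{e^{-t}}} = 1 - (1-z)^{e^{-t}},
\]
which is already independent of $\lambda$. For the coagulation rate I would substitute $\sigma^2 = 0$ and $\pi(\ddr h) = h^{-2}\ddr h$ into \eqref{coagrate} and compute, for $k \geq 2$,
\[
\mu_t^{\lambda}(k) = \frac{v_t(\lambda)^{k-1}}{k!} \int_0^\infty x^{k-2} e^{-v_t(\lambda)x}\,\ddr x = \frac{v_t(\lambda)^{k-1}}{k!}\cdot\frac{(k-2)!}{v_t(\lambda)^{k-1}} = \frac{1}{k(k-1)}.
\]
The key observation is the cancellation between the prefactor $v_t(\lambda)^{k-1}$ and the $v_t(\lambda)^{-(k-1)}$ produced by the $\Gamma$-integral: this is specific to the L\'evy density $h^{-2}$ and yields simultaneously the homogeneity in time and the independence from the sampling intensity $\lambda$.

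Finally, to read off the probabilities I would Taylor-expand the generating function using the generalised binomial theorem:
\[
1 - (1-z)^{e^{-t}} = \sum_{k \geq 1} (-1)^{k+1} \binom{e^{-t}}{k} z^k,
\]
and then simplify $(-1)^{k+1}\binom{e^{-t}}{k}$ by factoring $-1$ out of each of the $k-1$ terms $(e^{-t}-j)$ with $j \geq 1$ to rewrite it as $e^{-t}(1-e^{-t})(2-e^{-t})\cdots(k-1-e^{-t})/k!$. There is no real obstacle here: the whole proposition reduces to the explicit solution of \eqref{odev} for $\Psi(q) = q\log q$ together with the fortunate cancellation in the rate integral that is peculiar to Neveu's mechanism.
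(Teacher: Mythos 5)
Your argument is correct and follows essentially the same route as the paper: solve \eqref{odev} to get $v_t(\lambda)=\lambda^{e^{-t}}$, plug it into Theorem \ref{markovcoal} so that the Gamma integral in \eqref{coagrate} cancels the prefactor $v_t(\lambda)^{k-1}$ and yields the rate $\frac{1}{k(k-1)}$, independent of $t$ and $\lambda$, and read the block-size law off the Poisson-box generating function \eqref{generatingfunctionCC}. The only cosmetic difference is that you expand $1-(1-z)^{e^{-t}}$ via the generalized binomial theorem (correctly exhibiting the factor $(1-e^{-t})$ that the paper's displayed product leaves implicit in its ellipsis), whereas the paper simply refers to the calculations around Lemma 7 of Pitman \cite{MR1466546}.
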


\begin{proof}
By Theorem \ref{markovcoal}, and applying the change of variable $u=v_t(\lambda)x$, we see that for any $k\geq 2$,
\[
  \mu^{\lambda}_t(k)=v_t(\lambda)^{k-1}\int_{0}^{\infty}\frac{x^{k}}{k!}e^{-v_t(\lambda)x}\frac{\ddr x}{x^2}=\frac{1}{k(k-1)}
\]
which does not depend on $\lambda$ nor on $t$. Since $C(t)$ is a $(\lambda, v_t)$-Poisson box with $v_t(q)=q^{e^{-t}}$, the other statements can be obtained by a direct application of Theorem \ref{markovcoal}. See also the calculations around Lemma 7 in Pitman \cite{MR1466546}.
\end{proof}

\begin{lemma}
\label{convergence}
Consider a consecutive coalescent $(C(t),t\geq 0)$ with coagulation rate $\mu(k)=\frac{1}{k(k-1)}$ for any $k\geq 2$ then, as $n$ goes to $\infty$
\[
  \left(\frac{\#C_{|[nx]}(t)}{n^{e^{-t}}}, t\geq 0,x\geq 0\right)\Longrightarrow (\hat{X}_t(x),t\geq 0, x\geq 0)
\]
in finite-dimensional sense in time and in the Skorokhod topology in $x$. 
\end{lemma}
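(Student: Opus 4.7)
By Proposition~\ref{prop:lawC}, the consecutive coalescent $(C^{\lambda}(t),t\geq 0)$ associated with Neveu's CSBP has coagulation rate $\mu(k)=1/(k(k-1))$ independently of $\lambda>0$; since this also characterizes the coalescent $(C(t),t\geq 0)$ in the statement and both start from $0_{[\infty]}$, the two processes have the same law. The idea is to use the Neveu coupling with the specific choice $\lambda=n$: we realize $(C(t),t\geq 0)$ as $(C^{n}(t),t\geq 0)$ on a probability space supporting both a rate-$n$ Poisson process $(J^{n}_k)_{k\geq 1}$ and the Neveu inverse flow $(\hat{X}_t)_{t\geq 0}$. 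By Lemma~\ref{Poissonboxlaw}(ii), for every $t>0$ the distinct values $\hat{X}_t(J^{n}_k)$, listed in increasing order as $(J_i'^{(t)})_{i\geq 1}$, form the arrivals of a homogeneous Poisson process of intensity $v_t(n)=n^{e^{-t}}$. Writing $N^{(t)}(y):=\#\{i\geq 1: J_i'^{(t)}\leq y\}$ for its counting function and using that blocks of a Poisson box are consecutive intervals of $\mathbb{N}$, one obtains the exact identity
\[
\#C^{n}_{|[\lfloor nx\rfloor]}(t)=N^{(t)}\bigl(\hat{X}_t(J^{n}_{\lfloor nx\rfloor})\bigr),\qquad x\geq 0, \ t\geq 0.
\]

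The scaling limit then reduces to three applications of the law of large numbers, which I would chain together in this explicit identity. First, the classical LLN for the Poisson process $(J^{n}_k)_k$ yields $J^{n}_{\lfloor nx\rfloor}\to x$ in probability, uniformly on compact sets in $x$. Second, for $t>0$ the subordinator $X_{-t,0}$ is $e^{-t}$-stable, hence its marginals are absolutely continuous; thus $\hat{X}_t$ has no fixed discontinuity, and continuity in probability of $\hat{X}_t$ at each $x\geq 0$ gives $\hat{X}_t(J^{n}_{\lfloor nx\rfloor})\to \hat{X}_t(x)$ in probability. Third, since $n^{e^{-t}}\to\infty$, the Poisson LLN gives $N^{(t)}(y)/n^{e^{-t}}\to y$ in probability, uniformly in $y$ on compacts. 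Combining these yields, for every $t\geq 0$ and $x\geq 0$, the pointwise convergence in probability
\[
\frac{\#C^{n}_{|[\lfloor nx\rfloor]}(t)}{n^{e^{-t}}}\longrightarrow \hat{X}_t(x).
\]
The case $t=0$ is trivial since $\hat{X}_0=\mathrm{Id}$ and $\#C^{n}_{|[\lfloor nx\rfloor]}(0)=\lfloor nx\rfloor$.

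Because the coupling uses a single Poisson process $(J^{n}_k)$ and a single flow $(\hat{X}_t)$, the three LLNs above apply simultaneously for any finite tuple $t_1,\dots,t_p$, producing joint convergence in probability at each $x$, which gives the desired finite-dimensional convergence in the time variable. To upgrade to Skorokhod convergence in $x$ for each (or jointly for) $t_j$, I would use that both $x\mapsto\#C^{n}_{|[\lfloor nx\rfloor]}(t_j)/n^{e^{-t_j}}$ and $x\mapsto\hat{X}_{t_j}(x)$ are c\`adl\`ag non-decreasing, and that the limit has no fixed point of discontinuity; a classical criterion for monotone c\`adl\`ag processes then promotes pointwise convergence at continuity points to convergence in the Skorokhod topology. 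The main technical point is to ensure uniformity on compacts in the composition $N^{(t)}\circ\hat{X}_t\circ J^{n}_{\lfloor n\,\cdot\,\rfloor}$; this is handled by combining monotonicity of each factor with the uniform-on-compacts LLNs for the two Poisson processes and for $J^{n}_{\lfloor nx\rfloor}\to x$. I expect the only delicate step to be a clean statement of this uniform-on-compact composition lemma; once granted, the rest of the argument is a direct chaining of classical LLNs through the explicit identity displayed above.
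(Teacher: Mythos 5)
Your proof is correct, but it takes a genuinely different route from the paper's. The paper fixes $t$, views $n\mapsto Z_{-t,0}(n)=\sum_{j\leq n}\#C_j(t)$ as a random walk whose step law (computed from Proposition~\ref{prop:lawC}) lies in the domain of attraction of a stable law of index $e^{-t}$, invokes Prokhorov's stable extension of Donsker's theorem to get convergence of the rescaled walk to a stable subordinator, and then transfers the statement to $\#C_{|[nx]}(t)$ by continuity of the right-continuous inverse map in the Skorokhod topology; the finite-dimensional statement in time is then deduced from the cocycle properties of $C$ and $\hat{X}$. You instead realize the coalescent, for each $n$, as the $(n,v_t)$-Poisson box built over a single Neveu inverse flow (legitimate, since the coagulation rates $1/(k(k-1))$ characterize the law), and chain three Poisson laws of large numbers through the exact identity $\#C^{n}_{|[\lfloor nx\rfloor]}(t)=N^{(t)}(\hat{X}_t(J^{n}_{\lfloor nx\rfloor}))$, which is valid by monotonicity of $\hat X_t$ and Lemma~\ref{Poissonboxlaw}(ii). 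What each approach buys: the paper's argument is short but imports a nontrivial external result (the stable invariance principle) and is terse about the multi-time step, whereas your coupling is elementary, gives convergence in probability of the coupled versions, and yields the joint-in-time limit (the flow $(\hat X_{t_1}(x),\dots,\hat X_{t_p}(x))$) automatically without the cocycle argument. Two small points to tighten: the absence of fixed discontinuities of $\hat X_t$ should be justified by the fact that the stable-$e^{-t}$ subordinator $X_{-t,0}$ is a.s.\ strictly increasing (no constancy stretches), rather than by absolute continuity of its one-dimensional marginals; and the uniform-on-compacts composition step you flag is indeed needed, but it follows from the standard argument combining monotonicity of the prelimit in $x$, pointwise convergence in probability on a dense set, and continuity of the limit, so there is no real gap.
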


\begin{proof}
We simply prove the convergence in law of $\left(\frac{\#C_{|[nx]}(t)}{n^{e^{-t}}},x\geq 0\right)$ toward $\hat{X}_t$ for a fixed value of $t$, with the Skorokhod topology. Then, the finite-dimensional convergence is deduce from the cocycle property of $\hat{X}$ (Proposition~\ref{flowA}) and $C$ (Proposition~\ref{flowpartitions}).
For any $t > 0$ and $n\in \mathbb{N}$, set $Z_{-t,0}(n)=\sum_{j=1}^{n}\#C_{j}(t)$. The process $(Z_{-t,0}(n),n\geq 0)$ is a random walk and from Proposition~\ref{prop:lawC} we see that
\begin{align*}
  \P(\#C_1(t) = k) &= \frac{e^{-t}(2-e^{-t})...(k-1-e^{-t})}{k!}\\
   &= e^{-t} \frac{\Gamma(k-e^{-t})}{\Gamma(2-e^{-t}) \Gamma(k+1)} \sim_{k \to \infty} \frac{e^{-t}}{\Gamma(2-e^{-t})} k^{-1-e^{-t}}.
\end{align*}
Therefore, the law of $\#C_1$ is in the domain of attraction of a stable random variable with parameter $e^{-t}$. 
Using an extension of Donsker's theorem to stable distributions, due to Prokhorov \cite{MR0084896}, we obtain that
\[
\left( \frac{Z_{-t,0}\left(\floor{n^{e^{-t}} x}\right)}{n} , x \geq 0 \right) \underset{n \to \infty}{\Longrightarrow} (\tilde{X}_t(x),x\geq 0),
\]
where $\tilde{X}_t$ is a stable subordinator with Laplace exponent $\lambda \mapsto \lambda^{e^{-t}}$.

To conclude, we observe that $(\hat{X}_t(x), x \geq 0)$ has the same law as $\tilde{X}_t^{-1}$ the right-continuous inverse of $\tilde{X}_t$ and that $(\#C_{|[n]}(t), n \geq 0)$ is the right continuous inverse of $(Z_{-t,0}(n),n\geq 0)$. Hence, as $f \mapsto f^{-1}$ is continuous for the Skorokhod topology, we have convergence in law of $\left(\frac{\#C_{|[nx]}(t)}{n^{e^{-t}}},x\geq 0\right)$ toward $\hat{X}_t$.
\end{proof}

\begin{lemma}[M\"ohle \cite{MR3333734}, Mittag-Leffler process]
The process $(\hat{X}_t,t\geq 0)$ has for generator
\[
  \hat{\mathcal{L}}f(z)=z\int_{0}^{z}\left(f(z-h)-f(z)+hf'(z)\right)\frac{\ddr h}{h^2}+((1-\gamma)z-z\log(z))f'(z).
\]
\end{lemma}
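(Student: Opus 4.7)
The plan is to derive the asserted generator by specializing the general Courr\`ege expression of $\hat{\mathcal{L}}$ from Lemma~\ref{courregeform} to the Neveu mechanism. From the L\'evy--Khintchine decomposition of $\Psi(q)=q\log q$ recorded in the excerpt, one reads off the characteristics $\sigma^{2}=0$, $\pi(\ddr h)=h^{-2}\ddr h$, and $\beta=1-\gamma$; in particular $\bar{\pi}(h)=1/h$ for all $h>0$.

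Substituting these data into the jump kernel $\nu(z,\ddr h)=\mathbbm{1}_{\{h\leq z\}}\bigl((z-h)\pi(\ddr h)+\bar{\pi}(h)\ddr h\bigr)$ of Lemma~\ref{courregeform}, the algebraic identity
\[
\frac{z-h}{h^{2}}+\frac{1}{h}=\frac{z}{h^{2}}
\]
collapses $\nu(z,\ddr h)$ to $\frac{z}{h^{2}}\mathbbm{1}_{\{h\leq z\}}\ddr h$. Plugging this back into the nonlocal part of $\hat{\mathcal{L}}f$ reproduces exactly the integral $z\int_{0}^{z}[f(z-h)-f(z)+hf'(z)]\ddr h/h^{2}$ appearing in the statement.

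For the drift, I would rewrite the integrand in
\[
b(z)=\int_{0}^{\infty}h\bigl(z\mathbbm{1}_{\{h\leq 1\}}\pi(\ddr h)-\nu(z,\ddr h)\bigr)-\beta z
\]
as $\frac{z}{h}\bigl(\mathbbm{1}_{\{h\leq 1\}}-\mathbbm{1}_{\{h\leq z\}}\bigr)\ddr h$, which is supported on the symmetric difference $[0,1]\triangle[0,z]$. Splitting into the two cases $z\leq 1$ (where the sign of the indicator is $+1$ on $(z,1]$) and $z>1$ (where it is $-1$ on $(1,z]$) and integrating $z/h$ against Lebesgue measure, both cases yield $-z\log z$. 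Combined with the remaining $-\beta z$ term, one recovers the drift coefficient displayed in the lemma.

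The whole argument is thus a one-line algebraic identity for the jump measure followed by a case-split integral for the drift. The only genuinely delicate point is that, since Neveu has infinite mean, each of $\int_{0}^{1}h\pi(\ddr h)$ and $\int_{0}^{z}h\nu(z,\ddr h)$ diverges logarithmically at the origin; the finiteness of $b(z)$ relies precisely on the built-in cancellation in Lemma~\ref{courregeform}'s definition of the drift, which implements the compensation of small jumps in $\hat{\mathcal{L}}$.
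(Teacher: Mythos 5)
Your route is exactly the paper's: specialize Lemma~\ref{courregeform} to $\sigma^{2}=0$ and $\pi(\ddr h)=h^{-2}\ddr h$, use $(z-h)h^{-2}+h^{-1}=zh^{-2}$ to collapse $\nu(z,\ddr h)$ to $\frac{z}{h^{2}}\mathbbm{1}_{\{h\leq z\}}\ddr h$, and evaluate the drift integral $z\int_{0}^{\infty}h^{-1}\left(\mathbbm{1}_{\{h\leq 1\}}-\mathbbm{1}_{\{h\leq z\}}\right)\ddr h=-z\log z$ by the same case split on $z\lessgtr 1$. The jump part and the $-z\log z$ computation are correct, and your remark about the compensation making $b(z)$ finite is fine (in fact, after combining the indicators the integrand is supported away from $0$, so no divergence remains at all).

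The one concrete problem is the sign bookkeeping for $\beta$. You read off $\beta=1-\gamma$ and then assert that the leftover term $-\beta z$ combines with $-z\log z$ to give the stated coefficient; but with $\beta=1-\gamma$ you get $-\beta z=(\gamma-1)z$, hence $b(z)=(\gamma-1)z-z\log z$, which has the wrong sign on the linear part. The correct identification against the convention $\Psi(q)=\frac{\sigma^{2}}{2}q^{2}-\beta q+\int_{0}^{\infty}\left(e^{-qh}-1+qh\mathbbm{1}_{\{h\leq 1\}}\right)\pi(\ddr h)$ is $\beta=\gamma-1$: differentiating in $q$ and using the Frullani/Euler representation of the Euler--Mascheroni constant gives $\int_{0}^{\infty}\left(e^{-qh}-1+qh\mathbbm{1}_{\{h\leq 1\}}\right)h^{-2}\ddr h=(\gamma-1)q+q\log q$, so $q\log q=(1-\gamma)q+\int_{0}^{\infty}\left(e^{-qh}-1+qh\mathbbm{1}_{\{h\leq 1\}}\right)h^{-2}\ddr h$ and therefore $-\beta z+\frac{\sigma^{2}}{2}=(1-\gamma)z$, which is exactly what the paper's proof plugs in. You inherited the sign from the displayed L\'evy--Khintchine form of Neveu's mechanism at the start of the subsection, which indeed carries a typo; but as written your drift step does not produce the coefficient $(1-\gamma)z$ of the lemma, and the mismatch goes unremarked. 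With $\beta=\gamma-1$ your argument coincides with the paper's proof.
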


\begin{proof}
By Lemma \ref{courregeform}, we see that $\nu(z,\ddr h)=\mathbbm{1}_{\{h\leq z\}}\left((z-h)\pi(\ddr h)+\bar{\pi}(h)\ddr h\right)=\mathbbm{1}_{\{h\leq z\}}\frac{z}{h^{2}}$. For any $z\geq 0$,
\begin{align*}
  b(z)&=(1-\gamma)z+\int_{0}^{\infty}\frac{z}{h}\left(1_{\{h\leq 1\}}-1_{\{h\leq z\}}\right)\ddr h\\
  &=z\int \frac{1}{h}\left(1_{\{h\leq 1\}}-1_{\{h\leq z\}}\right) 1_{\{z\leq 1\}}\ddr h-z\int \frac{1}{h}\left(1_{\{h\leq z\}}-1_{\{h\leq 1\}}\right) 1_{\{z>1\}}\ddr h\\
  &=(1-\gamma)z+z\int_{z}^{1}\frac{\ddr h}{h}1_{\{z\leq 1\}}-z\int_{1}^{z}\frac{\ddr h}{h}1_{\{z>1\}}=(1-\gamma)z-z\log(z).\qedhere
\end{align*}
\end{proof}

\begin{proposition}[Bertoin and Baur \cite{MR3399834}]
The process $(\log \hat{X}_t, t\geq 0)$ is a generalized Ornstein-Uhlenbeck process: 
\begin{equation}\label{OU}
  \log \hat{X}_t=\log(x)+L_t-\int_{0}^{t}\log \hat{X}_s\ddr s
\end{equation}
where $(L_t,t\geq 0)$ is a spectrally negative L\'evy process with Laplace exponent
\[
  \kappa(q)=-\gamma q +\int_{-\infty}^{0}\left(e^{qu}-1-qu\right)\frac{e^{u}}{(1-e^{u})^2}\ddr u.
\]
\end{proposition}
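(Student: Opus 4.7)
The goal is to identify $L_t := \log\hat{X}_t - \log x + \int_0^t \log\hat{X}_s\,\ddr s$ as the claimed spectrally negative Lévy process by computing the generator of $Y_t := \log\hat{X}_t$ and matching it with the one of the generalized Ornstein--Uhlenbeck SDE.

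First I would apply Theorem~\ref{extendedgen} to test functions of the form $g(z) = f(\log z)$ with $f \in C_c^\infty(\R)$, which lie in $\mathscr{D}$. Writing Neveu's parameters $\pi(\ddr h) = \ddr h/h^2$, $\bar{\pi}(h) = 1/h$, $\sigma = 0$ and $b(z) = (1-\gamma)z - z\log z$, the Courrège form of Lemma~\ref{courregeform} combined with the change of variable $u = \log(1-h/z)$ (which maps $h \in (0,z)$ onto $u \in (-\infty,0)$) transforms the state-dependent jump kernel $z\,\ddr h/h^2$ into the state-independent measure $\nu(\ddr u) := e^u(1-e^u)^{-2}\,\ddr u$ on $(-\infty,0)$, while $((1-\gamma)z-z\log z)g'(z)$ becomes $((1-\gamma)-y)f'(y)$ with $y = \log z$. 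In particular, the generator of $Y$ acquires a spatially homogeneous jump part:
\[
\mathcal{G} f(y) = \int_{-\infty}^0 \bigl(f(y+u) - f(y) + (1-e^u) f'(y)\bigr)\,\nu(\ddr u) + ((1-\gamma) - y)\,f'(y).
\]

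Next I would reconcile this with the target Laplace exponent $\kappa$. Both $(1-e^u)$ and $u$ are admissible compensators since $\int u^2\,\nu(\ddr u) < \infty$ and $\int_{|u|>1} |u|\,\nu(\ddr u) < \infty$. The key algebraic identity
\[
\int_{-\infty}^0 (e^u - 1 - u)\,\nu(\ddr u) = 1
\]
can be checked via $t = e^u$: the integral becomes $\int_0^1 (t - 1 - \log t)(1-t)^{-2}\,\ddr t$ which, after expanding $-s - \log(1-s) = \sum_{k\ge 2} s^k/k$ with $s = 1 - t$, reduces to the telescoping sum $\sum_{k\ge 2} \frac{1}{k(k-1)} = 1$. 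It then yields the equivalent expression
\[
\mathcal{G} f(y) = -(\gamma + y) f'(y) + \int_{-\infty}^0 \bigl(f(y+u) - f(y) - u f'(y)\bigr)\,\nu(\ddr u),
\]
so that $\mathcal{G} = \mathcal{L}_L - y\,\partial_y$, where $\mathcal{L}_L$ is the generator of a spectrally negative Lévy process of drift $-\gamma$ and Lévy measure $\nu$, hence of Laplace exponent $\kappa$.

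To produce the SDE itself, I would write the canonical semimartingale decomposition of $Y_t$, obtained from the martingale problem of $\hat{\mathcal{L}}$ by a localization argument applied to the unbounded map $\log$ (valid because (MP) is well-posed by Lemma~\ref{courregeform} and $\hat{X}_t \in (0,\infty)$ almost surely). Since the jumps of $Y$ form a Poisson random measure on $\R_+ \times (-\infty, 0)$ with state-independent compensator $\ddr s \otimes \nu(\ddr u)$, the Lévy--Itô decomposition yields
\[
L_t := Y_t - \log x + \int_0^t Y_s\,\ddr s = -\gamma t + \int_0^t\!\!\int_{-\infty}^0 u\,\tilde{N}(\ddr s, \ddr u),
\]
which is a Lévy process with Laplace exponent $\kappa$. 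The main obstacle is the bookkeeping of compensators: since $\int_{-1}^0 |u|\,\nu(\ddr u) = +\infty$, one cannot naïvely split the integrand of $\mathcal{G}$, and the identity above is precisely what reconciles the $(1-e^u)$-compensation naturally produced by the change of variable with the $u$-compensation appearing in the Lévy--Khintchine formula for $L$.
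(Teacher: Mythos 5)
Your proposal is correct, and its core — computing the generator of $Y_t=\log \hat{X}_t$ from $\hat{\mathcal{L}}$ via the change of variables $u=\log(1-h/z)$, which turns the state-dependent kernel $z\,\ddr h/h^{2}$ into the fixed measure $\nu(\ddr u)=e^{u}(1-e^{u})^{-2}\ddr u$ — is exactly the computation the paper performs (the paper compresses it into ``a computation provides''). Where you genuinely diverge is the last step: the paper stops at the generator and invokes Sato--Yamazato \cite[Theorem 3.1]{MR738769} to assert that a process with a generator of Ornstein--Uhlenbeck type is a weak solution of \eqref{OU}, whereas you re-derive the SDE directly, using the fact that a state-independent jump kernel forces the jump measure of $Y$ to be a Poisson random measure with compensator $\ddr s\otimes\nu(\ddr u)$, and then reading off $L_t=-\gamma t+\int_0^t\!\int_{-\infty}^0 u\,\tilde{N}(\ddr s,\ddr u)$ from the semimartingale decomposition. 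Your route is more self-contained (at the cost of the localization/compensator bookkeeping you flag, which would need a few more lines to be fully rigorous), while the paper's citation disposes of that step instantly. Two further points in your favour: you make explicit the identity $\int_{-\infty}^{0}(e^{u}-1-u)\,\nu(\ddr u)=1$ that reconciles the $(1-e^{u})$-compensation produced by the change of variables with the $u$-compensation in $\kappa$ — this is precisely the hidden content of the paper's ``computation'' — and your form of the generator, $-(\gamma+y)f'(y)+\int_{-\infty}^{0}\left(f(y+u)-f(y)-uf'(y)\right)\nu(\ddr u)$, carries the linear term $-yf'(y)$ that is required for the OU structure but is missing from the paper's displayed formula for $\mathcal{A}$ (evidently a typo there).
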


\begin{proof}
By injectivity of $g:x\mapsto \log(x)$, the generator of $(Y_t,t\geq 0):=(\log \hat{X}_t,t\geq 0)$ is given by $\mathcal{A}f(y)=\hat{\mathcal{L}}(f\circ g)(g^{-1}(y))$ and a computation provides
\[
  \mathcal{A}f(y)=\int_{-\infty}^{0}\left( f(y+u)-f(y)-uf'(y)\right)\nu(\ddr u)-\gamma f'(y),
\]
with $\nu(\ddr u)=\frac{e^{u}}{(1-e^{u})^2}\ddr u$. It is well-known that the process with generator $\mathcal{A}$ is a weak solution of the equation (\ref{OU}). See for instance,  Sato and Yamazato \cite[Theorem 3.1]{MR738769}.
\end{proof}

The last two statements already appear in the study of the Bolthausen-Sznitman coalescent. We explain now some connections between the Neveu consecutive coalescent and the Bolthausen-Sznitman exchangeable coalescent. The following is a rephrasing of an observation made by Hénard \cite{MR3375893} and M\"ohle \cite{MR3333734}. Denote by $(N_t^{(n)},t\geq 0)$ the number of blocks in a Bolthausen-Sznitman coalescent started from $n$ blocks.  Recall that $(N_t^{(n)},t\geq 0)$ jumps from $n$ to $n-k+1$ at rate $\frac{n}{k(k+1)}$ for any $k\in [|2,n|]$. By Proposition 
\ref{numberofblocks}, one can check that   
$(\#C_{|[n]}(t),t\geq 0)$ loses $k$ blocks at the same rate as $(N_t^{(n)},t\geq 0)$. Therefore $(N_t^{(n)},t\geq 0)$ and  $(\#C_{|[n]}(t),t\geq 0)$ have the same law and by Lemma \ref{convergence}, as $n$ goes to $\infty$
\[\left(\frac{N_t^{(n)}}{n^{e^{-t}}},t\geq 0\right)\Longrightarrow (\hat{X}_t(1),t\geq 0)\]
in the Skorohod topology. Such result was shown by M\"ohle in \cite[Theorem 1.1]{MR3333734}, Kukla and M\"ohle in \cite[Theorem 2.1-(a)]{MR3758343} with different techniques. We refer also to 
Bertoin and Baur \cite[Theorem 3.1-(i)]{MR3399834} for an almost-sure convergence. The connections between Neveu's consecutive coalescent and Bolthausen-Sznitman exchangeable one are not surprising since it is known that for any initial size $x$, the genealogy of i.i.d random variables sampled in $[0,x]$ is described by a Bolthausen-Sznitman coalescent, see Bertoin and Le Gall \cite[Theorem 4]{MR1771663}.

\bigskip

Several natural questions on the inverse flow and its consecutive coalescent have not been addressed here and are left for possible future works. It might be interesting for instance to look for a complete description of the two-parameter flow $(\hat{X}_t(x),t\geq 0,x\geq 0)$ in the general case, as given for the Feller flow in Section \ref{fellerflow}. Moreover, the genealogy of the total population has only been characterized under the Grey's condition. When this condition is not fulfilled the process $(\mathscr{C}(t),t\geq 0)$ cannot be described by a single consecutive coalescent on $\mathbb{N}$. We recall that Duquesne and Winkel \cite{MR2322700} have described the genealogy forward in time of a CSBP (including those without Grey's condition) through a collection of  continuous-time Galton-Watson processes. A natural question is thus to see if in a dual way, one can represent the backward genealogy of the total population with a collection of consecutive coalescents on $\mathbb{N}$.

\bigskip

\textbf{Acknowledgement.} C.F and B.M are partially supported by the French National Research Agency (ANR): ANR GRAAL, ANR MALIN and LABEX MME-DII. C.M is supported by the NSFC of China
(11671216).  C.F would like to thank T. Duquesne and V. Rivero for fruitful discussions at early stages of this work.

\appendix

\section{Intermediary results}
\label{sec:appendix}

\subsection{Right-continuous inverse}

In this section, we recall and compile some elementary properties on right continuous inverse of càdlàg non-decreasing functions. As multiple competing definitions of generalized inverse coexist, it can be challenging to find a single reference for the results we need. Therefore we give a short proof of these well-known facts, in order to be self-contained. Let $f$ be a càdlàg non-decreasing function on $\R_+$, we denote by
\begin{equation}
  \label{eqn:rightContinuousInverse}
  f^{-1} : y \in [0,\infty) \mapsto \inf\{ x \geq 0 : f(x) > y \}
\end{equation}
its right continuous inverse.

\begin{lemma}
\label{lem:factsInverse}
Let $f,(f_n,n \geq  1), g$ be càdlàg non-decreasing functions on $\R_+$.
\begin{enumerate}
  \item The function $f^{-1}$ is non-decreasing and càdlàg.
  \item For every $x,y \geq 0$, we have $f(x) > y \iff f^{-1}(y) < x$.
  \item We have $(f \circ g)^{-1} = g^{-1} \circ f^{-1}$.
  \item If $\lim_{n \to \infty} f_n = \mathrm{Id}$ pointwise, then $\lim_{n \to \infty} f_n^{-1} = \mathrm{Id}$ pointwise, with $\mathrm{Id}$ being the identity function on $[0,\infty)$.
\end{enumerate}
\end{lemma}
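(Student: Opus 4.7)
The plan is to establish the four items in order, each being a short direct deduction from the definition \eqref{eqn:rightContinuousInverse} of the right-continuous inverse.

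For (i), the monotonicity of $f^{-1}$ is immediate: when $y_1 \leq y_2$, the inclusion $\{x : f(x) > y_2\} \subseteq \{x : f(x) > y_1\}$ yields $f^{-1}(y_1) \leq f^{-1}(y_2)$. To prove right-continuity at a fixed $y$, I would take a sequence $y_n \downarrow y$ and set $L := \lim_n f^{-1}(y_n) \geq f^{-1}(y)$ (by monotonicity). For the matching upper bound, given $\epsilon > 0$ pick $z$ with $z \leq f^{-1}(y)+\epsilon$ and $f(z) > y$; since $y_n \to y < f(z)$, eventually $f(z) > y_n$, so $f^{-1}(y_n) \leq z \leq f^{-1}(y)+\epsilon$, hence $L \leq f^{-1}(y)+\epsilon$. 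Left limits exist automatically for any monotone function.

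For (ii), the implication $f^{-1}(y) < x \Rightarrow f(x) > y$ is direct: there exists $z$ with $f^{-1}(y) \leq z < x$ and $f(z) > y$, and monotonicity gives $f(x) \geq f(z) > y$. The converse uses right-continuity of $f$: $f(x) > y$ gives $x \in \{z : f(z) > y\}$ and hence $f^{-1}(y) \leq x$; to upgrade to strict inequality, one notes that $f^{-1}(y) = x$ together with $f(x) > y$ would force $f(x-) \leq y < f(x)$, placing $y$ inside a jump of $f$ at $x$ --- a situation which, in the subordinator context where this lemma will be invoked, is excluded for each fixed $x$ almost surely.

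For (iii), I would apply (ii) at the point $g(x)$: the equivalence $f(g(x)) > y \iff f^{-1}(y) < g(x)$ yields
\[
  \{x \geq 0 : f(g(x)) > y\} = \{x \geq 0 : g(x) > f^{-1}(y)\},
\]
and taking infima on both sides gives $(f\circ g)^{-1}(y) = g^{-1}(f^{-1}(y))$. For (iv), fix $y \geq 0$ and $\epsilon > 0$. From $f_n(y+\epsilon) \to y+\epsilon > y$ one deduces $f_n(y+\epsilon) > y$ for all large $n$, whence $f_n^{-1}(y) \leq y+\epsilon$. When $y > 0$, symmetrically $f_n(y-\epsilon) \to y-\epsilon < y$ gives $f_n(y-\epsilon) < y$ for large $n$; monotonicity of $f_n$ then forces $\{x : f_n(x) > y\} \subseteq (y-\epsilon,\infty)$, so $f_n^{-1}(y) \geq y - \epsilon$. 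Sending $\epsilon \to 0$ yields $f_n^{-1}(y) \to y$, with $y = 0$ handled solely by the upper bound together with $f_n^{-1}(0) \geq 0$. The main obstacle is the subtle strict-vs-non-strict distinction in (ii); parts (i), (iii), (iv) are then routine, with (iii) relying on (ii).
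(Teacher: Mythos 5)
Parts (i), (iii) and (iv) of your proposal are correct and follow essentially the paper's own route: for (i) your sequential argument for right-continuity is just a reformulation of the paper's interchange of infima, for (iii) you apply (ii) at the point $g(x)$ and take infima exactly as the paper does, and for (iv) you test at $y\pm\epsilon$ as in the paper (arguing directly by monotonicity rather than quoting (ii), which is immaterial). The problem is (ii).

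In (ii) you announce that the converse "uses right-continuity of $f$", but your argument never does: the step $f(x)>y\Rightarrow f^{-1}(y)\le x$ is immediate from the definition \eqref{eqn:rightContinuousInverse}, and the upgrade to the strict inequality is simply outsourced to the remark that the bad configuration $f(x-)\le y<f(x)$ is "excluded almost surely for each fixed $x$ in the subordinator context". That cannot serve as a proof here: the lemma is a purely deterministic statement, and it is invoked later in pathwise form (Lemma~\ref{flowA}(i) is an almost-sure assertion simultaneously in all $x,y$, and your own proof of (iii) applies the equivalence of (ii) at every point $g(x)$ at once), so a fixed-$(x,y)$ null-set argument is not an admissible substitute. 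The step your proposal is missing — and which is the actual content of the paper's proof of (ii) — is the inequality $f(f^{-1}(y))\ge y$, obtained from right-continuity of $f$ by letting $u\downarrow f^{-1}(y)$ along points with $f(u)>y$; it yields the companion implication $f^{-1}(y)\le x\Rightarrow f(x)\ge y$, which together with the direction you did prove is what the duality arguments later rely on. You are right to sense that the strict "iff" as literally stated can fail when $y$ lies in the jump interval $[f(x-),f(x))$ (the paper's proof in fact only delivers the two one-sided implications just described), but the repair must remain deterministic; as written, your (ii) is not established, and consequently your (ii)-based derivation of (iii) inherits the gap.
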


\begin{remark}
Dini's theorems imply that both convergences in (iv) hold uniformly on compact sets.
\end{remark}

\begin{proof}
Let $f$ be a càdlàg non-decreasing function, note that for all $y < z$, we have
\[\{x \geq 0 : f(x) > z\} \subset \{x \geq 0 : f(x) > y\}.\]
Therefore $f^{-1}(y) \leq f^{-1}(z)$, which proves that $f^{-1}$ is increasing. In particular, it has left limits at each point. We now observe that for all $y \geq 0$, as $f$ is non-decreasing,
\[
  \inf_{z > y} f^{-1}(z) = \inf\{  \inf\{ x \geq 0 : f(x) > z \}, z > y \} = \inf\{ x \geq 0 : f(x) > y \} = f^{-1}(y),
\]
proving that $f^{-1}$ is right continuous at point $y$, which proves (i).

Let $x,y \geq 0$, we first assume that $f^{-1}(y) < x$. Then by definition of $f^{-1}$, there exists $u \in [f^{-1}(y), x)$ such that $f(u)>y$. As $f$ is non-decreasing, we deduce that $f(x) \geq f(u) > y$.

We now assume that $f^{-1}(y) \leq x$. As $f$ is non-decreasing, we observe that $f(x) \geq f(f^{-1}(y))$. Therefore, the only thing left to prove is that 
\begin{equation}
  \label{eqn:fcircfmoins}
  \forall y \geq 0, f(f^{-1}(y)) \geq y
\end{equation} 
We write $z = f^{-1}(y)$. By definition of $f^{-1}(y)$, for all $\epsilon > 0$, there exists $u < z + \epsilon$ such that $f(u) > y$. Then, as $f$ is right-continuous, we have $f(z) = \inf_{u > z} f(u)$, thus for all $\eta > 0$, there exists $\epsilon > 0$ such that if $u < z + \epsilon$, then $f(u) < f(z) + \eta$. As a result, for all $\eta > 0$, there exists $u < z + \epsilon$ such that $y < f(u) < f(z) + \eta$. This inequality being true for all $\eta > 0$, we therefore conclude that $f(z) \geq y$, completing the proof \eqref{eqn:fcircfmoins}. We thus deduce that $f(x) \geq y$, completing the proof of (ii).

In a third time, we note that given $f$ and $g$ two càdlàg non-decreasing functions on $\R_+$, we have for all $y \geq 0$,
\[
  (f \circ g)^{-1}(y) = \inf \{ z \geq 0 : (f \circ g) (z) > y\} = \inf\{ z \geq 0 : f(g(z)) > y\}.
\]
By point (ii), this can therefore be rewritten as
\[
  (f \circ g)^{-1}(y) = \inf\{ z \geq 0 : g(z) > f^{-1}(y)\} = g^{-1} \circ f^{-1}(y),
\]
proving point (iii).

We finally prove the last point. Let $(f_n)$ be a sequence of non-decreasing càdlàg functions such that $\lim_{n \to \infty} f_n = \mathrm{Id}$ pointwise. We prove that for all $y\geq 0$, $\lim_{n \to \infty} f^{-1}_n(y) =  y$. Let $\epsilon > 0$, by point (ii), we have that
\[
  f^{-1}_n(y) < y + \epsilon \iff f_n(y+\epsilon) > y.
\]
As $\lim_{n \to \infty} f_n(y+\epsilon) = y+\epsilon$, we conclude that for all $n$ large enough, $f^{-1}_n(y) < y + \epsilon$. Similarly, we also have
\[
  f^{-1}_n(y) \geq y - \epsilon \iff f_n(y-\epsilon) \geq y.
\]
therefore $f^{-1}_n(y) \geq y - \epsilon$ for all $n$ large enough by pointwise convergence of $f_n$ at point $y-\epsilon$. This concludes the proof of (iv).
\end{proof}

\subsection{Discretization of subordinators}
\label{sec:keyPPA}

In this section, we introduce the key lemma allowing to study the genealogical structure of CSBPs. Namely, we prove that the pullback measure of a Poisson process by a subordinator is a Poisson process decorated by i.i.d. integer-valued random variables.

\begin{lemma}
\label{lem:keyPPA}
Let $\lambda \geq 0$ and $(X(x), x \geq 0)$ be a subordinator with Lévy-Khinchine exponent
\[
  \phi : \mu \mapsto d \mu + \int \left(1 - e^{-\mu x}\right) \ell(\ddr x).
\]
We denote by $N$ an independent Poisson point process with intensity $\lambda$, and we write $(J_j, j \geq 1)$ the positions of the atoms of $N$, ranked in the decreasing order. Then, setting $M = \sum_{j = 1}^\infty \delta_{X^{-1}(J_j)}$ the image measure of $N$ by $X^{-1}$, we have
\[
  M = \sum_{j=1}^\infty Z_j \delta_{J_j'},
\]
where $(J_j', j \geq 1)$ are the atoms of a Poisson point process with intensity $\phi(\lambda)$ and $(Z_j, j \geq 1)$ are i.i.d. random variables, independent of $(J'_j, j \geq 1)$, such that
\[
  \P(Z_1=k)=\frac{1}{\phi(\lambda)}\int_{0}^{\infty}\frac{(\lambda x)^{k}}{k!}e^{-\lambda x} \ell(\ddr x) + d \ind{k=1}=(-1)^{k-1}\frac{\lambda^k}{k!}\frac{\phi^{(k)}(\lambda)}{\phi(\lambda)},
\]
i.e. $\E(z^{Z_1}) = 1 - \frac{\phi(\lambda(1-z))}{\phi(\lambda)}$ for all $z \in [0,1]$.
\end{lemma}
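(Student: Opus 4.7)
The plan is to use the Lévy-Itô decomposition of $X$ together with the marking and superposition theorems for Poisson point processes. Write $X(x) = d x + X^J(x)$ where $X^J(x) = \sum_{s \leq x} \Delta X(s)$ is the pure jump part, and let $\Pi = \sum_i \delta_{(s_i,h_i)}$ denote the Poisson point process of its jumps on $\R_+\times\R_+$, with intensity $ds \otimes \ell(dh)$. Since $N$ is independent of $X$, I can work conditionally on $X$ throughout.

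The first step would be to decompose the atoms of $M = \sum_j \delta_{X^{-1}(J_j)}$ according to their origin: every $N$-atom $J_j$ falling in a jump interval $(X(s_i-),X(s_i)]$ is pulled back to $s_i$, while every $J_j$ in the complementary "drift range" of $X$ has a unique preimage lying outside $\{s_i\}$. Writing $Y_i := \#\{j : J_j \in (X(s_i-),X(s_i)]\}$, this gives the splitting $M = \sum_i Y_i \delta_{s_i}\,\ind{Y_i \geq 1} + M^{\mathrm{drift}}$. Conditioning on $X$, the restriction property of the Poisson process $N$ yields that the $Y_i$ are independent with $Y_i \sim \mathrm{Poisson}(\lambda h_i)$, and that $M^{\mathrm{drift}}$ is (independently) a Poisson point process on $\{s : X(s-)=X(s)\}$ with intensity $\lambda d\, ds$, the factor $d$ arising as the constant slope of $X$ on drift intervals.

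The second step is to apply the marking theorem to $\Pi$. The marked process $\sum_i \delta_{(s_i,Y_i)}$ is then Poisson on $\R_+\times\Z_{\geq 0}$ with intensity $ds \otimes \rho(dk)$, where $\rho(\{k\})=\int_0^\infty \frac{(\lambda h)^k}{k!}e^{-\lambda h}\ell(dh)$. Restricting to $Y_i \geq 1$ discards the $k=0$ atoms, and superposing with $M^{\mathrm{drift}}$ (which lives on the disjoint, a.s. complementary, random set $\R_+ \setminus \{s_i\}$ and contributes mass $1$ at each atom at rate $\lambda d$) produces a Poisson point process $\widetilde M = \sum_j \delta_{(J_j',Z_j)}$ on $\R_+\times\N$ with intensity $ds \otimes \nu(dk)$ where
\[
\nu(\{k\}) = \int_0^\infty \frac{(\lambda h)^k}{k!}e^{-\lambda h}\ell(dh) + \lambda d\,\ind{k=1}.
\]
Summing over $k$ gives $\nu(\N) = \int (1-e^{-\lambda h})\ell(dh) + \lambda d = \phi(\lambda)$, so the locations $(J_j')$ are a Poisson process of rate $\phi(\lambda)$ and the marks $(Z_j)$ are i.i.d. with law $\nu/\phi(\lambda)$, independent of the locations. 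A direct power series expansion of $\phi(\lambda(1-z))$ then yields $\E(z^{Z_1}) = 1 - \phi(\lambda(1-z))/\phi(\lambda)$ and the equivalent expression $(-1)^{k-1}\lambda^k \phi^{(k)}(\lambda)/(k!\,\phi(\lambda))$ for $\P(Z_1=k)$.

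The main obstacle is the careful bookkeeping of the drift contribution when $d>0$: one must argue that the pullback of $N$ by $X^{-1}$ restricted to the drift range is itself Poisson on the (random) set of non-jump times, and that it is independent of the jump-contribution process. This is handled by conditioning on $\Pi$ and using that, on each maximal drift interval $(s_i,s_{i+1})$, the map $X$ is affine with slope $d$, so the image Poisson process of rate $\lambda$ becomes a Poisson process of rate $\lambda d$ in the variable $s$; since the two contributions are (conditionally on $X$) supported on the disjoint sets $\{s_i\}_i$ and its complement, their superposition is automatically Poisson with the summed intensity, after which the unconditional statement follows.
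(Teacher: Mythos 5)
Your route is genuinely different from the paper's. You condition on $X$ and pull $N$ back through $X^{-1}$, identifying the law of $M$ with the standard Poisson toolbox (restriction to disjoint sets, marking of the jump process $\Pi$, mapping, superposition), whereas the paper goes the other way: it builds the pair $(X,N)$ jointly from a single ``master'' point process $\sum_i\delta_{(t_i,x_i,N^{(i)})}$ with intensity $\ddr t\otimes\ell(\ddr x)\otimes\mathcal{P}^\lambda(\ddr N)$, verifies by a conditional Laplace-functional computation that the reconstructed $\bar N$ is a Poisson process of intensity $\lambda$ independent of $\bar X$, and then reads $M$ off the master process, identifying its law via Campbell's formula. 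Your decomposition $M=\sum_i Y_i\ind{Y_i\geq 1}\delta_{s_i}+M^{\mathrm{drift}}$ with $Y_i\sim\mathrm{Poisson}(\lambda h_i)$ conditionally on $X$, the marking theorem giving intensity $\ddr s\otimes\rho(\ddr k)$, and the final superposition and normalisation yielding $\phi(\lambda)$ and the generating function are all correct; the independence of $M^{\mathrm{drift}}$ from the marked jump process deserves one explicit line (conditionally on $X$ the two are functionals of $N$ on disjoint sets, and the conditional law of $M^{\mathrm{drift}}$ does not depend on $X$, whence unconditional independence), but that is a minor addition. Your approach has the advantage of working directly with the given pair $(X,N)$, at the price of needing a fact about the range of $X$, which is precisely where the gap lies.

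The step whose justification does not survive in the stated generality is the drift part. You argue that on each ``maximal drift interval $(s_i,s_{i+1})$'' the map $X$ is affine with slope $d$; such intervals exist only when $\ell$ is finite (compound Poisson plus drift), while the lemma allows $\ell(\R_+)=\infty$, in which case the jump times are a.s.\ dense and there is no nondegenerate interval free of jumps. The conclusion you want is nevertheless true and can be justified as follows: conditionally on $X$, the atoms of $N$ lying in the closed range of $X$ form a Poisson process with intensity $\lambda\,\mathrm{Leb}$ restricted to that range, and the image of this measure under $X^{-1}$ (injective on the range up to a countable set) is $\lambda d\,\mathrm{Leb}$ on $\R_+$, because for every $t$,
\[
  \mathrm{Leb}\bigl(\overline{X([0,t])}\bigr)=X(t)-\sum_{s\leq t}\Delta X(s)=dt.
\]
The mapping theorem then shows that $M^{\mathrm{drift}}$ is, conditionally on $X$, a Poisson process with intensity $\lambda d\,\ddr s$, which is exactly what your superposition step requires. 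With this replacement (or alternatively by approximating $X$ with finite-activity subordinators), your proof is complete.
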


\begin{proof}
The proof is based on a joint construction by the same ``master'' Poisson point process of the subordinator $X$ and the Poisson point process $N$, in such a way that $M$ becomes a simple functional of that master point process. To see why such a construction is possible, we write
\[
  \phi(\lambda) = d \lambda + \int (1 - e^{-\lambda x}) \ell(\ddr x),
\]
with $d \geq 0$ the drift and $\ell$ the Lévy measure of $X$ on $\R_+$. By the Lévy-Itô décomposition, one can write
\[
  \forall t \geq 0, \quad  X_t = d t +  \sum_{0 \leq s \leq t} x_t,
\]
with $(t,x_t)_{t \geq 0}$ being the atoms of a Poisson point process with intensity $\ddr t \otimes \ell(\ddr x)$. The proof being slightly simpler for $d=0$, we focus here on the case $d>0$.

Recalling that $\mathcal{D}$ denote the set of càdlàg non-decreasing functions on $\R_+$, we introduce the point process $R = \sum_{i \in I} \delta_{(t_i,x_i,N^{(i)})}$ on $\R_+ \times \R_+ \times \mathcal{D}$ with intensity $\ddr t \otimes \ddr x \otimes \mathcal{P}^\lambda(\ddr N)$, with $\mathcal{P}^\lambda$ begin the law of a Poisson point process with intensity $\lambda$ on $\R_+$. We also set $N^{(0)}$ an independent Poisson point process with intensity $\lambda$. We then define
\[
  \bar{X}_t = d t + \sum_{i \in I} x_i \ind{t_i \leq t},
\]
which is a subordinator with Lévy-Khinchine exponent $\phi$. Then, denoting $(J^{(i)}_j, j \geq 1)$ the atoms of the Poisson point process $N^{(i)}$, we set
\[
  \bar{N} = \sum_{j =1}^\infty \delta_{\bar{X}_{J^{(0)}_j/d}} + \sum_{i \in I} \sum_{j=1}^\infty \delta_{\bar{X}_{t_i-} + J^{(i)}_j} \ind{J^{(i)}_j < x_i}.
\]
Heuristically, the point process $\bar{N}$ can be thought of as follows: $\R_+$ is divided in intervals $\cup_{i \in I} [\bar{X}_{t_i-},\bar{X}_{t_i}]$ corresponding to jumps in the subordinator $\bar{X}$ and the remaining space $I$ corresponding to points with an antecedent by $X$. Atoms are added to the interval $[\bar{X}_{t_i-},\bar{X}_{t_i}]$ according to the point process $N^{(i)}$, and to the set $I$ with the point process $N^0$. It should then be heuristically clear that $\bar{N}$ is a Poisson point process with intensity $\lambda$ independent of $\bar{X}$. To verify it, we compute its conditional Laplace transform against a smooth locally compact test function $f$. By construction, $(N^{(i)}, i \in I \cup\{0\})$ are i.i.d. Poisson point process with intensity $\lambda$, which are further independent from $X$, thus
\begin{align*}
  &\E\left(\exp\left(- \crochet{N,f} \right) \middle| X \right)\\
  =&\E\left( \exp\left( - \sum_{j \geq 0} f(X_{J^{(0)}_j}/d)\right) \middle| X \right) \prod_{i \in I} \E\left( \exp\left( - \sum_{j \geq 0} f(J^{(i)}_j + X_{t_i-}) \ind{J^{(i)}_j < x_i} \right) \middle| X\right)\\
  = &\exp\left( -\lambda \int_0^\infty 1 - e^{-f(X_{s/d})} \ddr s - \lambda \sum_{i \in I} \int_{X_{t_i-}}^{X_{t_i}} 1 - e^{-f(X_s)} \ddr s  \right) \quad \text{a.s.}\\
  = &\exp\left( \lambda \int 1 - e^{-f(x)} \ddr x \right),
\end{align*}
by change of variable, using that $X_t' = d$ at all continuity points $t$ of $X$.

As a result, the couple $(\bar{X},\bar{N})$ has same law as $(X,N)$ given in the lemma. Moreover, we have immediately by construction that
\[
  \bar{M} := \bar{X}^{-1} \ast \bar{N} = \sum_{j =1}^\infty \delta_{J^{(0)}_j/d} + \sum_{i \in I} N^{(i)}([0,x_i])\delta_{t_i}.
\]
and computing the law of that point process is straightforward by the definition of $R$. Indeed, by independence, for any continuous function $f$ with compact support, we have
\[
  \E\left( \exp\left( - \crochet{N,f} \right)\right) = \E\left( \exp\left( - \sum_{j \geq 1} f(J^{(0)}_j/d) \right) \right) \E\left( \exp\left( - \sum_{i \in I} N^{(i)}([0,x_i]) f(t_i) \right)\right).
\]
Then, using Campbell's formula, we have both
\[
\E\left( \exp\left( - \sum_{j \geq 1} f(J^{(0)}_j/d) \right) \right)  = \exp\left( \lambda d \int 1 - e^{-f(x)} \ddr x\right)
\]
\[
\E\left( \exp\left( - \sum_{i \in I} N^{(i)}([0,x_i]) f(t_i) \right)\right) = \exp\left( \int 1 - e^{-N([0,x])f(t)} \ddr t \ell(\ddr x) \mathcal{P}^\lambda(\ddr N) \right).
\]
But as under law $\mathcal{P}^\lambda$, $N([0,x])$ is a Poisson random variable with parameter $\lambda x$, the last inequality can be written, by Fubini theorem
\begin{align*}
  \E\left( \exp\left( - \sum_{i \in I} N^{(i)}([0,x_i]) f(t_i) \right)\right) &=\exp\left( \int 1 - \exp\left( \lambda x (e^{-f(t)} - 1) \right) \ddr t \ell(\ddr x) \right)\\
  &= \exp\left( - \int \frac{\phi\left(\lambda (1 - e^{-f(t)})\right)}{\phi(\lambda)} \ddr t \right).
\end{align*}
We deduce that the Laplace transform of $\bar{M}$ is the same as the one of $M$ given in the lemma, which concludes the proof.
\end{proof}

This result can be straightforwardly extended to killed subordinators, by constructing it as a limit of non-killed subordinators. For the sake of completeness, we add a proof of the following result.
\begin{corollary}
\label{corkilled}
Let $\lambda \geq 0$ and $(X(x), x \geq 0)$ be a subordinator with Laplace exponent
\[
  \phi : \mu \mapsto \kappa + d \mu + \int\left( 1 - e^{-\mu x} \right)\ell(\ddr x).
\]
With the same notation as in the previous lemma, we have $M = \sum_{j=1}^\infty Z'_j \delta_{J_j'}$, where $(J_j', j \geq 1)$ are the atoms of a Poisson point process with intensity $\phi(\lambda)$, $(Z_j, j \geq 1)$ are i.i.d. random variables, independent of $(J'_j, j \geq 1)$, such that
\[
  \P(Z_1=k)=\frac{1}{\phi(\lambda)}\int_{0}^{\infty}\frac{(\lambda x)^{k}}{k!}e^{-\lambda x} \ell(\ddr x) + d \ind{k=1}=(-1)^{k-1}\frac{\lambda^k}{k!}\frac{\phi^{(k)}(\lambda)}{\phi(\lambda)},
\]
\[
  \text{and} \qquad Z'_j = \begin{cases} Z_j & \text{ if } \sup_{i < j} Z_i < \infty, \\ 0 &\text{ otherwise.}\end{cases}
\]
\end{corollary}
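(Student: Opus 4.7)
The plan is to reduce to Lemma~\ref{lem:keyPPA} by realizing the killed subordinator $X$ as an unkilled subordinator $X_0$, with Laplace exponent $\phi_0 = \phi - \kappa$, sent to $+\infty$ at an independent exponential time $T \sim \mathrm{Exp}(\kappa)$. Explicitly, set $X(x) = X_0(x)$ for $x < T$ and $X(x) = \infty$ for $x \geq T$; this construction has Laplace exponent $\phi$ as required.

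First I would compute the right-continuous inverse: if $y < X_0(T-)$ there is some $x < T$ with $X_0(x) > y$, so $X^{-1}(y) = X_0^{-1}(y) < T$; if $y \geq X_0(T-)$ then $X(x) \leq y$ for $x < T$ while $X(x) = \infty > y$ for $x \geq T$, so $X^{-1}(y) = T$. In other words $X^{-1}(y) = X_0^{-1}(y) \wedge T$. Hence
\[
M = \sum_{j \geq 1} \delta_{X^{-1}(J_j)} = \sum_{j : J_j < X_0(T-)} \delta_{X_0^{-1}(J_j)} + \Big(\#\{j : J_j \geq X_0(T-)\}\Big)\, \delta_T,
\]
and the mass of $\delta_T$ is almost surely infinite, since $N$ has infinitely many atoms above the finite value $X_0(T-)$. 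Lemma~\ref{lem:keyPPA} applied to $X_0$ and $N$ identifies the first sum with the restriction to $[0,T)$ of $M_0 = \sum_k Z_{0,k}\, \delta_{J'_{0,k}}$, where $(J'_{0,k})$ is a Poisson point process of intensity $\phi_0(\lambda)$ independent of the i.i.d.\ sequence $(Z_{0,k})$ with p.g.f.\ $1 - \phi_0(\lambda(1-z))/\phi_0(\lambda)$, the whole being independent of $T$.

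Next I would invoke superposition: embed $T$ as the first atom of an auxiliary Poisson point process $(T_\ell)$ of intensity $\kappa$, independent of $X_0$ and $N$. The superposition of $(J'_{0,k})$ and $(T_\ell)$, ranked, is a Poisson point process $(J'_j)$ of intensity $\phi_0(\lambda) + \kappa = \phi(\lambda)$, each of whose atoms is independently of type $0$ or type $T$ with probabilities $\phi_0(\lambda)/\phi(\lambda)$ and $\kappa/\phi(\lambda)$. Assign to each atom the mass $Z_j := Z_{0,k}$ when $J'_j$ is the $k$-th type-$0$ atom and $Z_j := \infty$ when $J'_j$ is of type $T$. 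The sequence $(Z_j)$ is then i.i.d.\ and independent of $(J'_j)$, with the distribution stated in the corollary: computing its p.g.f.\ gives
\[
\E(z^{Z_1}) = \frac{\phi_0(\lambda)}{\phi(\lambda)}\Big(1 - \frac{\phi_0(\lambda(1-z))}{\phi_0(\lambda)}\Big) = 1 - \frac{\phi(\lambda(1-z))}{\phi(\lambda)},
\]
and identifying coefficients (using $\phi^{(k)} = \phi_0^{(k)}$ for $k \geq 1$) recovers the stated pointwise probabilities together with a mass $\kappa/\phi(\lambda) = \phi(0)/\phi(\lambda)$ at $\infty$.

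Finally, letting $j^\star$ be the index at which $T$ appears in $(J'_j)$, the decomposition of $M$ above rewrites as $M = \sum_{j \leq j^\star} Z_j\, \delta_{J'_j}$: type-$0$ atoms before $T$ contribute their masses $Z_{0,k}$, the atom $J'_{j^\star} = T$ carries the infinite mass, and atoms $J'_j$ for $j > j^\star$ (whether of type $0$ or of type $T$) do not contribute, because $X^{-1}(J_i) = T$ for every $J_i \geq X_0(T-)$ while the auxiliary atoms $T_\ell$ for $\ell \geq 2$ are spurious. Since $\sup_{i<j} Z_i < \infty$ precisely when $j \leq j^\star$, this is exactly $M = \sum_{j \geq 1} Z'_j\, \delta_{J'_j}$ with $Z'_j = Z_j\, \mathbf{1}_{\sup_{i<j} Z_i < \infty}$, as claimed. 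The only real subtlety is the superposition step: one must check that the auxiliary Poisson process used to embed $T$ is independent of everything else, so that the marks $(Z_j)$ are genuinely i.i.d.; the rest is bookkeeping around the right-continuous inverse.
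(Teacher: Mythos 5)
Your proof is correct, and it takes a genuinely different (and more direct) route than the paper. The paper's own argument realizes the killed subordinator as the $r\to\infty$ limit of honest subordinators $Y^r(t)=Y(t)+rR(t)$, with $R$ an independent Poisson process of rate $\kappa$, applies Lemma~\ref{lem:keyPPA} to each $Y^r$, and then observes that the atoms occurring before the first jump time $T$ of $R$ do not depend on $r$ while the atoms beyond collapse onto $T$ with a mark tending to $\infty$, the identification of the limiting law being left as ``straightforward Poisson computations''. You instead work with the killed subordinator itself: the identity $X^{-1}=X_0^{-1}\wedge T$, with $T\sim\mathrm{Exp}(\kappa)$ independent of the unkilled part $X_0$, lets you apply Lemma~\ref{lem:keyPPA} only to $X_0$, and you then identify the law by embedding $T$ as the first atom of an auxiliary rate-$\kappa$ Poisson process and invoking superposition/thinning, so that the merged positions form a Poisson process of intensity $\phi_0(\lambda)+\kappa=\phi(\lambda)$ and the marks are i.i.d.\ with generating function $1-\phi(\lambda(1-z))/\phi(\lambda)$ and an atom of mass $\kappa/\phi(\lambda)$ at $\infty$. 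Both arguments rest on the same two ingredients (the unkilled lemma, and the representation of the killing time by an independent exponential clock attached to a rate-$\kappa$ Poisson process), but your version avoids the limiting step entirely, which makes explicit what the paper leaves implicit: why the mark at $T$ is infinite, why $Z'_j=0$ after the first infinite mark, and where the defect $\kappa/\phi(\lambda)$ in the law of $Z_1$ comes from. The one point to keep in view, which you do flag, is that the auxiliary atoms $(T_\ell)_{\ell\geq 2}$ are constructed on an enlarged probability space independently of $(X_0,N,T)$; since the corollary only asserts a representation of $M$ with the stated joint law, this enlargement is harmless.
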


\begin{proof}
Let $Y$ be a subordinator with Laplace exponent $\lambda \mapsto d \lambda + \int 1 - e^{-\lambda x} \ell(\ddr x)$, and $R$ an independent Poisson process with intensity $\kappa$. Observe that for all $r > 0$, the process defined by
\[
  Y^r(t) = Y(t) + r R(t), \quad t \geq 0,
\]
is a Lévy process, and that $X = \lim_{r \to \infty} Y^r$ is a Lévy process with Laplace exponent $\phi$. We set $(J_j, j \geq 1)$, $({J'}^r_j, j \geq 1)$ and $(Z^r_j, j \geq 1)$ the quantities obtained by applying Lemma \ref{lem:keyPPA}, and
\[T = \inf\{ t > 0 : R_t = 1\}.\]
Observe that for all $j$ such that ${J'}^r_j < T$, the quantities ${J'}^r_j$ and $Z^r_j$ do not depend on $r$. On the contrary, for all $j$ such that $J_j > T$, as $r \to \infty$, all value $(Y^r)^{-1}(J_j)$ converge toward $T$, and the associated value of $Z^r$ to the atom at position $T$ converges toward $\infty$.

Explicit formulas for the law of $Z^\infty$ are straightforward Poisson computations.
\end{proof}

\bibliographystyle{amsalpha}
%\bibliography{../doku}
\newcommand{\etalchar}[1]{$^{#1}$}
\providecommand{\bysame}{\leavevmode\hbox to3em{\hrulefill}\thinspace}
\providecommand{\MR}{\relax\ifhmode\unskip\space\fi MR }
% \MRhref is called by the amsart/book/proc definition of \MR.
\providecommand{\MRhref}[2]{%
  \href{http://www.ams.org/mathscinet-getitem?mr=#1}{#2}
}
\providecommand{\href}[2]{#2}

\end{document}